\newtheorem{theorem}{Theorem}[section]
\newtheorem{lemma}[theorem]{Lemma}
\newtheorem{proposition}[theorem]{Proposition}
\newtheorem{corollary}[theorem]{Corollary}
\theoremstyle{definition}
\newtheorem{case}{Case}
\theoremstyle{remark}
\newtheorem{remark}[theorem]{Remark}
\numberwithin{equation}{section}
\newcommand{\ad}{\text{ad}} 
\newcommand{\Ad}{\text{Ad}} 
\newcommand{\Aez}{\mathcal{A}_{\be,\bz}}
\newcommand{\Bcs}{B_{\mathbf{c},\mathbf{s}}} 
\newcommand{\bc}{{\mathbf c}} 
\newcommand{\be}{{\boldsymbol{\epsilon}}} 
\newcommand{\bet}{\boldsymbol{\eta}}
\newcommand{\bs}{{\mathbf s}} 
\newcommand{\bz}{\boldsymbol{\zeta}}
\newcommand{\Bcc}{B_{\mathbf{c}}} 
\newcommand{\Beps}{B_{\be}}
\newcommand{\C}{\mathbb{C}} 
\newcommand{\ct}{\mathcal{T}} 
\newcommand{\cs}{\epsilon} 
\newcommand{\field}{\mathbb{K}} 
\newcommand{\G}{\Gamma}
\newcommand{\K}[1]{L_{#1}} 
\newcommand{\Kw}[1]{K_{\varpi_{#1}'}} 
\newcommand{\lie}[1]{\mathfrak{#1}} 
\newcommand{\N}{\mathbb{N}} 
\renewcommand{\O}{\Omega^- \!\!- \Omega^+}
\renewcommand{\t}{\tau} 
\newcommand{\Uqg}{U_q(\lie{g})} 
\newcommand{\Uow}{U_0^{\Theta}} 
\newcommand{\widet}{\widetilde} 
\newcommand{\Z}{\mathcal{Z}} 
\title{Braid group actions for quantum symmetric pairs of type AIII/AIV}
\author{Liam Dobson}
\address{Liam Dobson, School of Mathematics, Statistics and Physics, Newcastle University, Herschel Building, Newcastle upon Tyne NE1 7RU, UK}
\email{liam.dobson.maths@gmail.com}
\subjclass[2010]{17B37; 81R50}
\keywords{Quantum groups, quantum symmetric pairs, braid groups}
\begin{document}

\begin{abstract}
   In the present paper we construct braid group actions on quantum symmetric pair coideal subalgebras of type AIII/AIV.
   This completes the proof of a conjecture by Kolb and Pellegrini in the case where the underlying Lie algebra is $\lie{sl}_n$.
   The braid group actions are defined on the generators of the coideal subalgebras and the defining relations and braid relations are verified by explicit calculations.
\end{abstract}	

\maketitle

\section{Introduction}
\subsection{Background}
Let $\lie{g}$ be a complex semisimple Lie algebra and $\Uqg$ the corresponding Drinfeld-Jimbo quantised enveloping algebra. In the theory of quantum groups, a crucial role is played by Lusztig's braid group action on $\Uqg$, see \cite{b-Lus94}. This braid group action provides an algebra automorphism $T_w$ of $U_q(\lie{g})$ for each element $w$ in the Weyl group $W$ of $\lie{g}$.

Let $\theta: \lie{g} \rightarrow \lie{g}$ be an involutive Lie algebra automorphism and let $\lie{k} = \{ x \in \lie{g} \mid \theta(x) = x \}$ denote the fixed Lie subalgebra. Recall from \cite{a-Araki62} that involutive automorphisms of $\lie{g}$ are parameterised up to conjugation by combinatorial data $(X, \t)$ attached to the Dynkin diagram of $\lie{g}$. Here $X \subset I$ where $I$ denotes an index set for the nodes of the Dynkin diagram of $\lie{g}$, and $\t: I \rightarrow I$ is a diagram automorphism. In a series of papers, G. Letzter constructed and investigated quantum group analogues of $\lie{k}$, see \cite{a-Letzter99a, MSRI-Letzter, a-Letzter03}. More precisely, she defined families of coideal subalgebras $\Bcs = \Bcs(X,\t) \subset \Uqg$  which are quantum group analogues of $U(\lie{k})$ depending on parameters $\bc$ and $\bs$. The algebras $\Bcs$ can be described explicitly in terms of generators and relations. We call $(\Uqg, \Bcs)$ a quantum symmetric pair and we refer to $\Bcs$ as a quantum symmetric pair coideal subalgebra.

There exists a braid group action on the fixed Lie subalgebra $\lie{k}$ by Lie algebra automorphisms. Let $W$ denote the Weyl group of $\lie{g}$ with corresponding braid group $Br(\lie{g})$, generated by elements $\{ \varsigma_i \mid i \in I\}$. We write $Br(W_X)$ to denote the subgroup of $Br(\lie{g})$ generated by $\{\varsigma_i \mid i \in X\}$; this is the braid group corresponding to the parabolic subgroup $W_X \subset W$. Further, associated to the pair $(X, \t)$ is a restricted root system $\Sigma$ with Weyl group $\widet{W}$ generated by elements $\widet{\sigma}_i$ parameterised by the $\t$-orbits in $I \setminus X$. The group $\widet{W}$ can be considered as a subgroup of $W$. Let $Br(\widet{W}) \subset Br(\lie{g})$ denote the corresponding braid group, generated by elements $\widet{\varsigma}_i$. Then the semidirect product $Br(W_X) \rtimes Br(\widet{W}) \subset Br(\lie{g})$ acts on $\lie{k}$ by Lie algebra automorphisms.

It was conjectured by Kolb and Pellegrini that there exists a quantum group analogue of this action on $\Bcs$ by algebra automorphisms \cite[Conjecture~1.2]{a-KP11}. This conjecture has been proved in type AII and in all cases where $X = \emptyset$ with the help of computer calculations \cite{a-KP11}. In the case $\lie{g} = \lie{sl}_n$, this leaves one substantial case in Araki's list \cite[p.32]{a-Araki62}, namely the type AIII/AIV with $X \neq \emptyset$. This case is shown in Figure \ref{Fig:AIII}.

\subsection{Results}
In the present paper we construct an action of $Br(W_X) \rtimes Br(\widet{W})$ on $\Bcs$ by algebra automorphisms in type AIII/AIV, hence completing the proof of Kolb and Pellegrini's conjecture for $\lie{g} = \lie{sl}_n$. 
    \begin{figure}[h!] 
        \centering
       \begin{tikzpicture}
            [white/.style={circle,draw=black,inner sep = 0mm, minimum size = 3mm},
		    black/.style={circle,draw=black,fill=black, inner sep= 0mm, minimum size = 3mm}]
		
		    \node[white] (first) [label = above:{\scriptsize $1$}] {};		
		    \node[white] (second) [right=of first] [label = above:{\scriptsize $2$}] {}
			    edge (first);
		    \node[white] (third) [right = 1.5cm of second] [label = above:{\scriptsize $r$}] {}
			    edge [dashed] (second);
			
		    \node[black] (fourth) [below right = 0.2cm and 0.3cm of third] {}
			    edge (third);
		    \node[black] (fifth) [below = of fourth] {}
			    edge [dashed] (fourth);
			
		    \node[white] (sixth) [below left = 0.2cm and 0.3cm of fifth] [label =                      below:{\scriptsize $n-r+1$}] {}
			    edge (fifth)
			    edge	 [latex'-latex' , shorten <=3pt, shorten >=3pt, bend left=30, densely dotted] node[auto,swap] {} (third);
		    \node[white] (seventh) [left =1.5cm of sixth] [label = below:{\scriptsize $n-1$}] {}
			    edge [dashed] (sixth)
			    edge	 [latex'-latex' , shorten <=3pt, shorten >=3pt, bend left=30, densely dotted] node[auto,swap] {} (second);
		    \node[white] (last) [left = of seventh] [label = below:{\scriptsize $n$}] {}
			    edge (seventh)
			    edge	 [latex'-latex' ,shorten <=3pt, shorten >=3pt, bend left=30, densely dotted] node[auto,swap] {} (first);
        \end{tikzpicture}
        \caption{The Satake diagram of type AIII/AIV} \label{Fig:AIII}
    \end{figure}
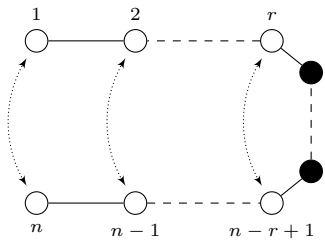
In this case $Br(W_X)$ is a classical braid group in $n-2r+1$ strands and $Br(\widet{W})$ is isomorphic to an annular braid group in $r$ strands. The subgroups $Br(W_X)$ and $Br(\widet{W})$ of $Br(\lie{g})$ commute and hence their semidirect product is just a direct product. Moreover, the parameters $\bs$ satisfy $\bs = (0, 0, \dotsc, 0)$ and hence we write $\Bcc = B_{\bc, \mathbf{0}}$. The following is the main result of this paper.

\medskip

\noindent \textbf{Theorem A.} (Theorem~3.8) \textit{Let $(X,\t)$ be a Satake diagram of type AIII/AIV with $X \neq \emptyset$. Then there exists an action of $Br(W_X) \times Br(\widet{W})$ on $\Bcc$ by algebra automorphisms. The action of $Br(\widet{W})$ on $\Bcc$ is given by algebra automorphisms $\ct_i$ defined by \cref{Eqn:cti,Eqn:ctr}.} 

\medskip

The action of $Br(W_X)$ on $\Bcc$ coincides with the Lusztig action, see \cite[Section~4.1]{a-BW16}. However, the action of $Br(\widet{W}) \subset Br(\lie{g})$ on $\Uqg$ does not restrict to an action on the coideal subalgebra $\Bcc$. Taking guidance from \cite{a-KP11}, the Lusztig automorphisms corresponding to elements of $\widet{W} \subset W$ are used in order to construct algebra automorphisms $\ct_i$ for $1 \leq i \leq r$. In particular, let $T_{\widet{\sigma_i}}$ for $1 \leq i \leq r$ be the Lusztig automorphism defined by \eqref{Eqn:Tw} and \eqref{Eqn:widet_sigma}. We then construct elements $\ct_i(B_j)$ for $j \in I \setminus X$ such that $\ct_i(B_j)$ and $T_{\widet{\sigma}_i}(B_j)$ have identical terms containing maximal powers of the generators $F_k$ of $\Uqg$ for $k \in I$, up to a factor.
For $1 \leq i \leq r-1$ the results of \cite{a-KP11} imply that $\ct_i$ defines an algebra automorphism of $\Bcc$. In this case, our definition of $\ct_i(B_j)$ coincides with the definition in \cite[Equation~4.11]{a-KP11} up to a factor.

The proof of Theorem A proceeds in three steps. First we verify that the formulae for $\ct_r$ given in \eqref{Eqn:ctr} define an algebra automorphism of $\Bcc$. Secondly, we show that the algebra automorphisms $\ct_i$ for $1 \leq i \leq r$ satisfy the braid relations for $Br(\widet{W})$. By \cite[Section~4.2]{a-KP11} we only need to show that $\ct_r\ct_{r-1}\ct_r\ct_{r-1} = \ct_{r-1}\ct_r\ct_{r-1}\ct_r$ and $\ct_i\ct_r = \ct_r\ct_i$ for $1 \leq i \leq r-2$ holds on each generator of $\Bcc$. This gives an action of $Br(\widet{W})$ on $\Bcc$ by algebra automorphisms. Finally, we show directly that the actions of $Br(W_X)$ and $Br(\widet{W})$ commute by case-by-case checks. 

We emphasise that all of the original results of the present paper are established without the use of computer calculations. 
The fact that the maps $\ct_i$ for $1 \leq i \leq r-1$ define algebra automorphisms of $\Bcc$ translates from \cite{a-KP11} where it was verified by computer calculations using the package \verb|QUAGROUP| of the computer algebra program \verb|GAP|. We do not reprove this fact. However, the calculations in the present paper suggest that one can prove Theorem \ref{Thm:cti} (\cite[Theorem~4.6]{a-KP11}) without the use of computer calculations.

In \cite{a-LW19a}, M. Lu and W. Wang developed a Hall algebra approach to the construction of quantum symmetric pairs with $X = \emptyset$ for $\lie{g}$ of type ADE (also excluding type $\text{A}_n$ for $n$ even if $\t \neq \text{id}$). In this setting they subsequently constructed Bernstein-Gelfand-Ponomarev type reflection functors in \cite{a-LW19b} which recover the corresponding braid group action in \cite{a-KP11}. At the end of \cite[Section~1.5]{a-LW19b}, they express great interest to develop this approach fully to cover general Satake diagrams with $X \neq \emptyset$. The braid group action for quantum symmetric pairs of type AIII/AIV with $X \neq \emptyset$ constructed in the present paper provides a crucial test case for any such generalisations. Formula \eqref{Eqn:ctr} indicates that the general setting will be substantially more complicated.

\subsection{Organisation}
In Section \ref{Sec:Prelims} we recall fixed Lie subalgebras of type AIII/AIV and their corresponding quantum symmetric pairs. We also recall the braid group action of $Br(W_X) \times Br(\widet{W})$ on the fixed Lie subalgebra in this case.

In \cref{Sect:Braid_Action} we present the main results of the present paper. We recall the action of $Br(W_X)$ on $\Bcc$ and define the algebra automorphisms $\ct_i$, giving an action of $Br(\widet{W})$ on $\Bcc$. We also show that the two actions commute. In \cref{Sec:Proof1,Sec:Proof2} we prove that $\ct_r$ is an algebra automorphism of $\Bcc$ and that the automorphisms $\ct_i$ satisfy the braid relations for $Br(\widet{W})$, respectively. This requires the use of many involved relations in $\Bcc$, which are given in \cref{Appendix}.

\subsection*{Acknowledgement} The author is grateful to Stefan Kolb for useful comments and advice.

\section{Preliminaries} \label{Sec:Prelims}
\subsection{Braid group actions on fixed Lie subalgebras of type AIII/AIV} \label{Sec:TypeAIII}
Let $\lie{g} = \lie{sl}_{n+1}(\C)$ for $n \in \N$ with Cartan subalgebra $\lie{h}$ consisting of traceless diagonal $(n+1) \times (n+1)$ matrices. 
Let $\Phi \subset \lie{h}^{\ast}$ be the corresponding root system. 
Choose a set $\Pi = \{\alpha_i \mid i \in I\}$ of simple roots where $I = \{1, 2, \dotsc, n\}$ denotes an index set for the nodes of the Dynkin diagram of $\lie{g}$. 
    \begin{center}
       \begin{tikzpicture}
            [white/.style={circle,draw=black,inner sep = 0mm, minimum size = 3mm},
		    black/.style={circle,draw=black,fill=black, inner sep= 0mm, minimum size = 3mm}]
		
		    \node[white] (first) [label = below:{\scriptsize $1$}] {};		
		    \node[white] (second) [right=of first] [label = below:{\scriptsize $2$}] {}
			    edge (first);
	        \node[white] (third) [right = 1.5cm of second] [label = below:{\scriptsize $n-1$}] {}
	            edge [dashed] (second);
	        \node[white] (last) [right = of third] [label = below:{\scriptsize $n$}] {}
	            edge (third);
        \end{tikzpicture}
    \end{center}
Let $Q = \mathbb{Z}\Pi$ denote the root lattice of $\lie{g}$. 
Let $\varpi_i \in \lie{h}^{\ast}$ denote the $i^{\text{th}}$ fundamental weight and let $P = \sum_{i \in I}\mathbb{Z}\varpi_i$ denote the weight lattice of $\lie{g}$. 
Write $W$ to denote the Weyl group of $\lie{g}$, generated by reflections $\sigma_i$ for $i \in I$. 
Fix a $W$-invariant scalar product $(-,-)$ on the real vector space spanned by $\Phi$ such that $(\alpha, \alpha) = 2$ for all roots $\alpha$. 
For $i, j \in I$ let
    \begin{equation} \label{Eqn:Cartan}
        a_{ij} =
            \begin{cases}
                2 & \mbox{if $i = j$,}\\
                -1 & \mbox{if $|i-j| = 1$,}\\
                0 & \mbox{otherwise}
            \end{cases}
    \end{equation}
denote the entries of the Cartan matrix of $\lie{g}$. 
Let $Br(\lie{g})$ denote the classical braid group corresponding to $\lie{g}$. 
This is the group generated by elements $\{\varsigma_i \mid i \in I\}$ subject to relations
    \begin{align} 
        \varsigma_i\varsigma_j &=\varsigma_j\varsigma_i & &\mbox{if $a_{ij} = 0$,} \label{Eqn:Braid_Rel1}\\
        \varsigma_i\varsigma_j\varsigma_i &= \varsigma_j\varsigma_i\varsigma_j & &\mbox{if $a_{ij} = -1$.} \label{Eqn:Braid_Rel2}
    \end{align}
Let $\{e_i, f_i, h_i \mid i \in I\}$ denote a set of Chevalley generators of $\lie{g}$ and define
    \begin{equation} \label{Eqn:Ad}
        \Ad(\varsigma_i) = \exp(\ad(e_i)) \exp(\ad(-f_i)) \exp(\ad(e_i))
    \end{equation}
for $i \in I$ where $\exp$ denotes the exponential series and $\ad$ denotes the adjoint action. Then there exists a group homomorphism
\begin{equation} \label{Eqn:Steinberg_Action}
	\textup{\Ad}: Br(\lie{g}) \rightarrow \textup{Aut}(\lie{g})
\end{equation}
such that $\textup{\Ad}(\varsigma_i)$ is given by \cref{Eqn:Ad}, see \cite[Lemma~56]{b-Ste67}.

Let $\theta: \lie{g} \rightarrow \lie{g}$ be an involutive Lie algebra automorphism and let $\lie{k} = \{ x \in \lie{g} \mid \theta(x) = x\}$ denote the corresponding fixed Lie subalgebra. 
Recall from  \cite[Section~7]{a-Letzter03} and \cite[Section~2.4]{a-Kolb14} that involutive automorphisms of $\lie{g}$ are classified up to conjugation via Satake diagrams $(X,\t)$ where $X \subset I$ and $\t: I \rightarrow I$ is a diagram automorphism. 
Throughout this paper, we consider Satake diagrams of type AIII/AIV, as indicated by \cite[Table~1]{a-Araki62} and Figure \ref{Fig:AIII}. In particular we fix $r \in \N$ such that $1 \leq r \leq \lceil \frac{n}{2} \rceil - 1$ and let $X = \{ r+1, r+2, \dotsc, n-r\} \neq \emptyset$. The diagram automorphism $\t$ is given by 
    \begin{equation} \label{Eqn:diagaut}
        \t(i) = n-i+1
    \end{equation}
for each $i \in I$. 
This can be lifted to a Lie algebra automorphism, also denoted by $\t$. For any $J \subset I$, let $W_J$ denote the parabolic subgroup of $W$ generated by $\{ \sigma_i \mid i \in J \}$ and let $Br(W_J)$ denote the associated braid group, generated by $\{ \varsigma_i \mid i \in J \}$. We denote by $w_J$ and $m_J$ the longest element in $W_J$ and the corresponding element of $Br(W_J)$, respectively.

By \cite[Theorem~2.5]{a-Kolb14}, the involution $\theta$ is given by
\begin{equation} \label{Eqn:theta}
	\theta = \Ad(s) \circ \Ad(m_X) \circ \omega \circ \t
\end{equation}
where $\Ad(s):\lie{g} \rightarrow \lie{g}$ is a Lie algebra automorphism such that restriction of $\Ad(s)$ to any root space is given by multiplication by a scalar, see \cite[Section~5.1]{a-BK19} and $\omega:\lie{g} \rightarrow \lie{g}$ denotes the Chevalley involution.

Generally, the braid group action on $\lie{g}$ given by Equation \eqref{Eqn:Steinberg_Action} does not restrict to an action on $\lie{k}$.
We consider instead a suitable subgroup of $Br(\lie{g})$ that depends on $X \subset I$ and $\t: I \rightarrow I$. For any $1 \leq i \leq r$ let
    \begin{equation} \label{Eqn:widet_sigma}
        \widet{\sigma}_i = w_{ \{i, \t(i)\} \cup X }w_X^{-1}
            =   \begin{cases}
                    \sigma_i\sigma_{\t(i)} & \mbox{if $1 \leq i \leq r-1$,}\\
                    \sigma_r\sigma_{r+1} \dotsm \sigma_{n-r+1} \dotsm \sigma_{r+1}\sigma_r & \mbox{if $i=r$}
                \end{cases}
    \end{equation}
and denote by $\widet{W}$ the subgroup of $W$ generated by $\{ \widet{\sigma}_i \mid 1 \leq i \leq r \}$. The subgroup $\widet{W}$ can be interpreted as the Weyl group of the restricted root system $\Sigma$ of the symmetric Lie algebra $(\lie{g}, \theta)$, see  \cite[Section~2.2]{a-DK18}. Let $Br(\widet{W})$ denote the subgroup of $Br(\lie{g})$ generated by the elements
    \begin{equation} \label{Eqn:widet_braid}
        \widet{\varsigma}_i = m_{ \{i, \t(i)\} \cup X }m_X^{-1}
            =   \begin{cases}
                    \varsigma_i\varsigma_{\t(i)} & \mbox{if $1 \leq i \leq r-1$,}\\
                    \varsigma_r\varsigma_{r+1} \dotsm \varsigma_{n-r+1} \dotsm \varsigma_{r+1}\varsigma_r & \mbox{if $i=r$.}
                \end{cases}
    \end{equation}
The elements $\widet{\varsigma}_i$ satisfy the relations
    \begin{align*}
        \widet{\varsigma}_i\widet{\varsigma}_j &= \widet{\varsigma}_j\widet{\varsigma}_i & &\mbox{if $a_{ij} = 0$ and $1 \leq i,j \leq r$,}\\
        \widet{\varsigma}_i\widet{\varsigma}_j\widet{\varsigma}_i &= \widet{\varsigma}_j\widet{\varsigma}_i\widet{\varsigma}_j & &\mbox{if $a_{ij}=-1$ and $1 \leq i,j < r$,} \\
        \widet{\varsigma}_i\widet{\varsigma}_j\widet{\varsigma}_i\widet{\varsigma}_j &= \widet{\varsigma}_j\widet{\varsigma}_i\widet{\varsigma}_j\widet{\varsigma}_i & &\mbox{if $i = r, j = r-1$.}
    \end{align*}

Since $Br(W_X)$ and $B_r(\widet{W})$ commute, we consider the subgroup $Br(W_X) \times Br(\widet{W})$.
We state without proof the version of \cite[Lemma~2.1]{a-KP11} corresponding to the present case. In many cases, $\Ad(s)$ appearing in Equation \eqref{Eqn:theta} does not commute with $\Ad(b)$ for $b \in Br(W_X) \times Br(\widet{W})$. For this reason, conjugating the action $\Ad$ by a Lie algebra isomorphism $\psi_s:\lie{g} \rightarrow \lie{g}$ depending on $\Ad(s)$ is necessary. Details of this construction can be found in \cite[Section~7.1]{Dob19}. 
    
    \begin{lemma}[{\cite[Lemma~7.6]{Dob19}}] \label{Lemma:Braid_Act_k}
        Under the action $\psi_s \circ \textup{Ad} \circ \psi_s^{-1}$ the subgroup $Br(W_X) \times Br(\widet{W})$ maps $\lie{k}$ to itself.
    \end{lemma}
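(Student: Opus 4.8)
The plan is to reduce the statement to a computation on root spaces, exploiting the fact that $\theta$ and the braid group generators are both built out of the Steinberg action $\Ad$ twisted by the scalar automorphism $\Ad(s)$. Recall from Equation \eqref{Eqn:theta} that $\theta = \Ad(s) \circ \Ad(m_X) \circ \omega \circ \t$, and observe that $\lie{k}$ is precisely the $+1$-eigenspace of $\theta$. Setting $\theta' = \psi_s^{-1} \circ \theta \circ \psi_s$, the claim that $\psi_s \circ \Ad(b) \circ \psi_s^{-1}$ preserves $\lie{k}$ is equivalent, after conjugating by $\psi_s$, to the claim that $\Ad(b)$ preserves the fixed space of $\theta'$. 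The point of introducing $\psi_s$ is exactly that, by construction in \cite[Section~7.1]{Dob19}, the conjugated involution $\theta'$ is chosen so that it \emph{commutes} with $\Ad(b)$ for $b \in Br(W_X) \times Br(\widet{W})$; once commutativity is in hand, preservation of the fixed space is immediate.

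First I would make the reduction to commutativity explicit: if $\Ad(b) \circ \theta' = \theta' \circ \Ad(b)$ and $x \in \lie{g}$ satisfies $\theta'(x) = x$, then $\theta'(\Ad(b)(x)) = \Ad(b)(\theta'(x)) = \Ad(b)(x)$, so $\Ad(b)(x)$ lies in the fixed space of $\theta'$. Thus the whole statement follows from the single identity
\begin{equation} \label{Eqn:commute_plan}
    \Ad(b) \circ \theta' = \theta' \circ \Ad(b) \qquad \text{for all } b \in Br(W_X) \times Br(\widet{W}).
\end{equation}
Since both sides are automorphisms of $\lie{g}$ and $Br(W_X) \times Br(\widet{W})$ is generated by $\{\varsigma_i \mid i \in X\}$ together with $\{\widet{\varsigma}_i \mid 1 \leq i \leq r\}$, it suffices to verify \eqref{Eqn:commute_plan} on these generators, and because $\Ad$ is a group homomorphism it is enough to check it generator-by-generator.

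The substance of the proof is the generator check for \eqref{Eqn:commute_plan}, and here I would separate the scalar factor from the Weyl-group combinatorics. Writing $\theta' = \Ad(s') \circ \Ad(m_X) \circ \omega \circ \t$ for the appropriately conjugated scalar automorphism $\Ad(s')$, I would first handle the purely Weyl-theoretic core $\Ad(m_X) \circ \omega \circ \t$: the diagram automorphism $\t$ and the Chevalley involution $\omega$ manifestly normalise $Br(W_X) \times Br(\widet{W})$ (indeed $\t$ permutes the generators according to \eqref{Eqn:diagaut}, and $\widet{\varsigma}_i$ is $\t$-invariant by its very definition in \eqref{Eqn:widet_braid}), and $m_X$ is central enough in the relevant subgroup that conjugation by $\Ad(m_X)$ sends each generator to another braid-group element commuting appropriately. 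The key computation is that conjugation by the composite $\Ad(m_X) \circ \omega \circ \t$ fixes each $\widet{\varsigma}_i$ and each $\varsigma_i$ for $i \in X$ up to the action already accounted for. The remaining, and genuinely delicate, step is controlling the scalar automorphism: I would need to check that the scalars by which $\Ad(s')$ acts on the relevant root spaces are compatible with the permutation of root spaces induced by $\Ad(b)$, which is precisely the reason $\psi_s$ was introduced in the first place.

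The main obstacle I anticipate is this last scalar-compatibility check. Unlike $\t$ and $\omega$, which act combinatorially and are transparently compatible with the braid relations, the automorphism $\Ad(s)$ multiplies each root space by a scalar that need not be constant across the orbit of a given root under $\Ad(b)$; this is exactly the mismatch that Lemma~\ref{Lemma:Braid_Act_k} records when it says $\Ad(s)$ "does not commute with $\Ad(b)$" in general. The whole role of $\psi_s$ is to reabsorb these scalars, and verifying that the \emph{conjugated} scalar automorphism $\Ad(s')$ does commute with each $\Ad(\widet{\varsigma}_i)$ will require tracking, for the long braided generator $\widet{\varsigma}_r = m_{\{r,\t(r)\}\cup X}m_X^{-1}$ in particular, how the scalars propagate along the long word $\sigma_r \sigma_{r+1} \dotsm \sigma_{n-r+1} \dotsm \sigma_{r+1}\sigma_r$. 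Since the detailed construction of $\psi_s$ is imported from \cite[Section~7.1]{Dob19} and the corresponding statement there is \cite[Lemma~7.6]{Dob19}, I would lean on that reference for the scalar bookkeeping and present the argument above as the structural skeleton, flagging the scalar step as the one requiring the machinery of \cite{Dob19}.
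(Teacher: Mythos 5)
There is nothing in the paper to compare against here: the lemma is explicitly stated \emph{without proof} as the type AIII/AIV analogue of \cite[Lemma~2.1]{a-KP11}, with both the construction of $\psi_s$ and the actual verification delegated to \cite[Section~7.1, Lemma~7.6]{Dob19}. Measured against that, your skeleton is sound and more informative than the paper's treatment. The reduction is correct: the fixed space of $\theta' = \psi_s^{-1} \circ \theta \circ \psi_s$ is $\psi_s^{-1}(\lie{k})$, so the lemma is equivalent to $\Ad(b)$ preserving that fixed space; commutativity of $\Ad(b)$ with $\theta'$ suffices; and checking generators suffices because $\Ad$ is a group homomorphism and the centraliser of $\theta'$ is a subgroup. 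Your combinatorial claims are also right in outline: $\t$ permutes the $\Ad(\varsigma_i)$ according to \eqref{Eqn:diagaut}, $\omega$ fixes each $\Ad(\varsigma_i)$ in the adjoint action, and conjugation by $\Ad(m_X)$ flips the generators indexed by $X$ while fixing each $\widet{\varsigma}_i$ --- the same facts behind the paper's assertion that $Br(W_X)$ and $Br(\widet{W})$ commute. But at the decisive step you make exactly the move the paper makes: the sole substantive content of the lemma, namely that conjugation by $\psi_s$ repairs the failure of the scalar automorphism $\Ad(s)$ to commute with $\Ad(b)$ (the scalar bookkeeping along the long word defining $\widet{\varsigma}_r$), is outsourced to \cite{Dob19}. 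So your proposal is no less complete than the paper's own treatment, but it is also not a proof: the hole sits exactly where the paper chose to leave one, and its value is that it isolates precisely what \cite[Lemma~7.6]{Dob19} must deliver. One refinement worth recording: exact commutativity is stronger than necessary --- agreement of $\Ad(b) \circ \theta' \circ \Ad(b)^{-1}$ with $\theta'$ on the fixed space would already do --- so before citing the thesis for your formulation you should confirm which of the two statements it actually proves.
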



\subsection{Quantum symmetric pairs of type AIII/AIV} \label{Sec:QSPs_AIII}
Let $\field$ be a field of char\-acteristic zero and $q$ an indeterminate. Denote by $\field(q^{1/2})$ be the field of rational functions in $q^{1/2}$ with coefficients in $\field$. 
Following \cite{b-Jantzen96} and \cite{b-Lus94} the Drinfeld-Jimbo quantised enveloping algebra $\Uqg = U_q(\lie{sl}_{n+1}(\C))$ is the associative $\field(q^{1/2})$-algebra generated by elements $E_i, F_i, K_{\mu}$ for $i \in I$ and $\mu \in P$ satisfying the following relations:
    \begin{enumerate}
        \item $K_0 = 1$, $K_{\mu}K_{\lambda}= K_{\mu + \lambda}$ for all $\mu, \lambda \in P$.
        \item $K_{\mu}E_i = q^{(\alpha_i, \mu)}E_iK_{\mu}$ for all $i \in I$, $\mu \in P$.
        \item $K_{\mu}F_i = q^{-(\alpha_i, \mu)}F_iK_{\mu}$ for all $i \in I$, $\mu \in P$.
        \item $E_iF_j - F_jE_i = \delta_{ij}\tfrac{K_i - K_i^{-1}}{q-q^{-1}}$ for all $i,j \in I$.
        \item Quantum Serre relations,
    \end{enumerate}

We use the notation $K_i = K_{\alpha_i}$ for $i \in I$ and $K_{\mu}^{-1} = K_{-\mu}$ for $\mu \in P$ throughout. We make the quantum Serre relations (5) more explicit. Let $p$ denote the non-commutative polynomial in two variables given by
    \begin{equation} \label{Eqn:polynomial}
        p(x,y) = x^2y - (q+q^{-1})xyx + yx^2.
    \end{equation}
Then the quantum Serre relations can be written as
    \begin{align}
        E_iE_j = E_jE_i, \quad F_iF_j = F_jF_i \quad &\mbox{if $a_{ij}=0$,} \label{Eqn:QSerre1}\\
        p(E_i,E_j) = p(F_i,F_j) = 0 \quad &\mbox{if $a_{ij}=-1$.} \label{Eqn:QSerre2}
    \end{align}

Analogously to \eqref{Eqn:Steinberg_Action}, there exists an action of the braid group $Br(\lie{g})$ on $\Uqg$ by algebra automorphisms, see \cite[39.4.3]{b-Lus94}. Under this action the generator $\varsigma_i \in Br(\lie{g})$ is mapped to the Lusztig automorphism $T_i$ as in \cite[Section~8.14]{b-Jantzen96}. We recall explicitly how $T_i$ acts on the generators of $\Uqg$. For any $a,b \in \Uqg$, $c \in \field(q^{1/2})$ let
    \begin{equation} \label{Eqn:Bracket}
        [a,b]_c = ab - cba
    \end{equation}
For any $i, j \in I$ and $\mu \in P$ we have
    \begin{equation} \label{Eqn:Ti(sl2)}
        T_i(E_i) = -F_iK_i, \quad T_i(F_i) = -K_i^{-1}E_i,
    \end{equation}
and
    \begin{align}
        T_i(K_{\mu}) 
            &= K_{\sigma_i(\mu)},  \label{Eqn:TiKj}\\
        T_i(E_j)
            &=  \begin{cases}
                    E_j & \mbox{if $a_{ij} = 0$,}\\
                    [E_i, E_j]_{q^{-1}} & \mbox{if $a_{ij} = -1$,}
                \end{cases} \label{Eqn:TiEj} \\
        T_i(F_j)
            &=  \begin{cases}
                    F_j & \mbox{if $a_{ij} = 0$,}\\
                    [F_j, F_i]_{q} & \mbox{if $a_{ij} = -1$,}
                \end{cases}\label{Eqn:TiFj}
    \end{align}
For any $w \in W$ with reduced expression $w = \sigma_{i_1}\sigma_{i_2} \dotsm \sigma_{i_t}$ we write
    \begin{equation} \label{Eqn:Tw}
        T_w := T_{i_1}T_{i_2} \dotsm T_{i_t}.
    \end{equation}
 For any $J= \{a, a+1,\dotsc, b-1, b\} \subset I$ with $a < b$ we define elements
\begin{align}
E_J^+ &:= \big[ E_a, [E_{a+1}, \dotsc , [E_{b-1}, E_b]_{q^{-1}} \dotsc ]_{q^{-1}}\big]_{q^{-1}}, \label{Eqn:EJ+}\\
E_J^- &:= \big[ E_b, [E_{b-1}, \dotsc , [E_{a+1}, E_a]_{q^{-1}} \dotsc ]_{q^{-1}} \big]_{q^{-1}} \label{Eqn:EJ-}
\end{align}
and similarly we write
\begin{align}
F_J^+ &:= \big[ F_a, [F_{a+1}, \dotsc, [F_{b-1}, F_b]_q \dotsc ]_q\big]_q, \label{Eqn:FJ+}\\
F_J^- &:= \big[F_b, [F_{b-1}, \dotsc, [F_{a+1}, F_a]_q \dotsc ]_q\big]_q. \label{Eqn:FJ-}
\end{align}
Additionally, let
\begin{equation} \label{Eqn:KJ}
K_J = K_aK_{a+1} \dotsm K_{b-1}K_b.
\end{equation}
If $J = \{a\} \subset I$ then we write
\begin{equation} \label{Eqn:EJFJ-oneelt}
E_J^+ = E_J^- = E_a, \quad F_J^+ = F_J^- = F_a.
\end{equation}
For later use, we note the following formulae, which follow from \cite[Lemma~3.4]{a-Kolb14}. We have
\begin{align}
T_{w_X}(F_X^{+}) &= -K_X^{-1}E_X^{+}, & T_{w_X}(F_X^-) &= -K_X^{-1}E_X^-, \label{Eqn:TwXFX} \\
T_{w_X}(E_X^{+}) &= -F_X^{+}K_X, & T_{w_X}(E_X^-) &= -F_X^-K_X. \label{TwXEX}
\end{align}

Following \cite{a-Letzter99a} and the conventions of \cite{a-Kolb14} we now recall the definition of quantum symmetric pair coideal subalgebras for Satake diagrams $(X,\t)$ of type AIII/AIV. Let $\mathcal{M}_X = U_q(\lie{g}_X)$ denote the subalgebra of $\Uqg$ generated by $\{E_i, F_i, K_i^{\pm 1} \mid i \in X\}$. Let $U_{\Theta}^0$ = $\field(q^{1/2})\langle K_{\mu} \mid \mu \in P, -w_X \circ \t(\mu) = \mu \rangle$. By construction, $K_i \in U_{\Theta}^0$ for $i \in X$ and $K_{\varpi_i- \varpi_{\t(i)}} \in U_{\Theta}^0$ for $i \in I$. 
We use the notation 
\begin{equation} \label{Eqn:varpi'}
\varpi_i' = \varpi_i -\varpi_{\t(i)} \quad \mbox{for $i \in I$.} 
\end{equation}
Quantum symmetric pair coideal subalgebras depend on a choice of parameters $\be = (\cs_i)_{i \in I \setminus X} \in (\field(q^{1/2})^{\times})^{I \setminus X}$ satisfying additional constraints. We assume for the remainder of this paper that
    \begin{equation} \label{Eqn:cs_cond}
        \cs_i = \cs_{\t(i)} \quad \mbox{for $i \in I \setminus X \cup \{r,\t(r)\}$,}
    \end{equation}
compare with \cite[Section~5.1]{a-Kolb14}.

\begin{remark}
	The parameters $\cs_i$ relate to the parameters $c_i$ and $s(i)$ seen in \cite{a-Kolb14}, \cite{a-BK15} and \cite{a-BK19} in the following way: $\cs_i = c_is(\t(i))$ for each $i \in I \setminus X$.  
\end{remark}

Following \cite[Definition~5.1, 5.6]{a-Kolb14} we denote by $\Beps = \Beps(X,\t)$ the subalgebra of $\Uqg$ generated by $\mathcal{M}_X$, $U_{\Theta}^0$ and the elements
    \begin{equation} \label{Eqn:Bi_AIII}
        B_i = 
            \begin{cases}
                F_i - \cs_iE_{\t(i)}K_i^{-1} & \mbox{if $i \neq r, \t(r)$,}\\
                F_r - \cs_r[E_X^+, E_{\t(r)}]_{q^{-1}}K_r^{-1} & \mbox{if $i=r$,}\\
                F_{\t(r)} - \cs_{\t(r)}[E_X^-, E_r]_{q^{-1}}K_{\t(r)}^{-1} & \mbox{if    $i=\t(r)$.} 
            \end{cases}
    \end{equation}
for all $i \in I \setminus X$. For consistency, we set $B_i = F_i$ and $\cs_i = 0$ for $i \in X$.

We recall now the defining relations of $\Beps$, following \cite[Section~7]{a-Letzter03} and \cite[Section~7]{a-Kolb14}. For $i \in I \setminus X$ let
    \begin{equation}
        L_i = K_iK_{\t(i)}^{-1}
    \end{equation}
and define
    \begin{equation} \label{Eqn:Zi}
        \mathcal{Z}_i =  
            \begin{cases}
                -(1-q^{-2})E_X^+\K{\t(r)} & \mbox{if $i=r$,}\\
                -(1-q^{-2})E_X^-\K{r} & \mbox{if $i=\t(r)$,}\\
                -\K{\t(i)} & \mbox{otherwise.}
            \end{cases}
    \end{equation}
Further, let
    \begin{equation} \label{Eqn:Gi}
        \G_i:= \cs_i\Z_i - \cs_{\t(i)}\Z_{\t(i)}
    \end{equation}
for $i \in I \setminus X$.
Then the algebra $\Beps$ is generated over $\mathcal{M}_X\Uow$ by the elements $B_i$ for $i \in I \setminus X$, subject to the relations
    \begin{align}
        B_iK_{\mu} &= q^{(\mu,\alpha_i)}K_{\mu}B_i
            & &\mbox{for $i \in I \setminus X$, $K_{\mu} \in U_{\Theta}^0$,} \label{Eqn:BiRel1}\\
        B_iE_j &= E_jB_i
            & &\mbox{for $i \in I \setminus X$, $j \in X$,} \label{Eqn:BiRel2}\\
        B_iB_j - B_jB_i &= \delta_{i, \t(j)}(q-q^{-1})^{-1}\G_i
            & &\mbox{for $i \in I \setminus X$, $j \in I$, $a_{ij}=0$,} \label{Eqn:BiRel3}\\
       p(B_i,B_j) &= 0
            & &\mbox{for $i, j \in I$, $a_{ij} = -1$.}\label{Eqn:BiRel4}
    \end{align}
%
    

\section{Main Results} \label{Sect:Braid_Action}
Recall from Lemma \ref{Lemma:Braid_Act_k} that an action of $Br(W_X) \times Br(\widet{W})$ on $\lie{k}$ by Lie algebra automorphisms is obtained by restriction of the action of $Br(\lie{g})$ on $\lie{g}$. We now construct an analogous braid group action in the setting of quantum symmetric pairs of type AIII/AIV. Recall that the algebra automorphisms $T_i$ for $i \in X$ give rise to a representation of $Br(W_X)$ on $\Uqg$. In \cite[Section~4.1]{a-BW16} it was shown that $\Beps$ is invariant under the automorphisms $T_i$ for $i \in X$. 

    \begin{theorem}[{\cite[Section~4.1]{a-BW16}}] \label{Thm:WX_Act}
        There exists an action of $Br(W_X)$ on $\Beps$ by algebra automorphisms such that the generator $\varsigma_i \in Br(W_X)$ is mapped to the Lusztig automorpshism $T_i$.
    \end{theorem}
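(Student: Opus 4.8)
The plan is to realise the action by the assignment $\varsigma_i \mapsto T_i|_{\Beps}$ for $i \in X$, and to verify only that this is well defined. Since the family $\{T_i \mid i \in I\}$ already defines the $Br(\lie{g})$-action on $\Uqg$, the sub-family $\{T_i \mid i \in X\}$ automatically satisfies the relations \eqref{Eqn:Braid_Rel1}--\eqref{Eqn:Braid_Rel2} of $Br(W_X)$. Hence it suffices to prove that each $T_i$ with $i \in X$ restricts to an algebra automorphism of $\Beps$. As $T_i$ is an algebra automorphism of $\Uqg$ whose inverse $T_i^{-1}$ is of the same type, it is enough to show both $T_i(\Beps) \subseteq \Beps$ and $T_i^{-1}(\Beps) \subseteq \Beps$, which together force $T_i(\Beps) = \Beps$. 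Both inclusions reduce to checking that $T_i^{\pm 1}$ maps each algebra generator of $\Beps$ into $\Beps$, i.e. the generators $E_j, F_j, K_j^{\pm 1}$ ($j \in X$) of $\mathcal{M}_X$, the generators of $U_{\Theta}^0$, and the elements $B_j$ for $j \in I \setminus X$.

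The first two families are immediate. For $j \in X$, the formulae \eqref{Eqn:Ti(sl2)}--\eqref{Eqn:TiFj} involve only $j$ and its neighbours; since $X$ is connected and we only apply $T_i$ to generators indexed by $X$, the result stays in $\mathcal{M}_X$, and $T_i^{\pm 1}|_{\mathcal{M}_X}$ is the Lusztig automorphism of $U_q(\lie{g}_X)$. For $U_{\Theta}^0$ we use $T_i(K_\mu) = K_{\sigma_i(\mu)}$ from \eqref{Eqn:TiKj}; as $i \in X$ we have $\sigma_i \in W_X$, and one checks that $\sigma_i$ preserves the lattice $\{\mu \in P \mid -w_X \circ \t(\mu) = \mu\}$ defining $U_{\Theta}^0$, using that $\t$ restricts to a diagram automorphism of $X$ and so commutes with the $W_X$-action in the required sense. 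Thus $T_i^{\pm 1}(U_{\Theta}^0) \subseteq U_{\Theta}^0$.

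The essential case is the image of $B_j$, $j \in I \setminus X$, under $T_i$ with $i \in X$. Inspecting adjacencies in Figure \ref{Fig:AIII}, the only nodes of $X$ adjacent to $I \setminus X$ are the endpoints $r+1$ (adjacent to $r$) and $n-r$ (adjacent to $\t(r)$). For any interior $i \in X$ and any $j \in I\setminus X$ with $j \neq r,\t(r)$, the maps \eqref{Eqn:Ti(sl2)}--\eqref{Eqn:TiFj} fix $F_j$, $E_{\t(j)}$ and $K_j$, so $T_i(B_j) = B_j$ by \eqref{Eqn:Bi_AIII}. For $j = r$ or $\t(r)$ and interior $i$, the key observation is that the nested $q$-commutators $E_X^{\pm}$ from \eqref{Eqn:EJ+}--\eqref{Eqn:EJ-} span the one-dimensional root space of the highest root $\alpha_{r+1}+\dots+\alpha_{n-r}$ of $\lie{g}_X$, which is fixed by every interior $\sigma_i$; hence $T_i(E_X^{\pm}) = E_X^{\pm}$ and again $T_i(B_j) = B_j$. (One must check the scalar is exactly $1$: any factor $c\neq 1$ would produce a spurious term proportional to $F_j\notin\Beps$.) This isolates the genuinely nontrivial interactions as $T_{r+1}$ and $T_{n-r}$ acting on $B_r$ and $B_{\t(r)}$, together with the degenerate small-$X$ cases in which the two endpoints coincide or become adjacent (e.g. $n = 2r+1$ or $n = 2r+2$), which I would dispatch by direct inspection.

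The main obstacle is exactly these boundary computations. For instance, applying $T_{r+1}$ to $B_r = F_r - \cs_r[E_X^+, E_{\t(r)}]_{q^{-1}}K_r^{-1}$ one uses \eqref{Eqn:TiFj} to get $T_{r+1}(F_r) = [F_r, F_{r+1}]_q$, \eqref{Eqn:TiKj} to get $T_{r+1}(K_r^{-1}) = K_r^{-1}K_{r+1}^{-1}$, and \eqref{Eqn:TiEj} to track how $T_{r+1}$ reshapes $E_X^+$ into the root vector for $\alpha_{r+2}+\dots+\alpha_{n-r}$ inside $\mathcal{M}_X$; the analogous bookkeeping is needed for $T_{r+1}(B_{\t(r)})$, where both $E_X^-$ and $E_r$ are moved. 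The target is to show each such image is a $\field(q^{1/2})$-multiple of a $B$-type element for the shortened interval lying in $\mathcal{M}_X U_{\Theta}^0 \Beps$, with the scalars $\cs_r,\cs_{\t(r)}$ and the factors $K_X, \K{\t(r)}, \K{r}$ of \eqref{Eqn:Zi}--\eqref{Eqn:Gi} reproduced correctly so that no generator outside $\Beps$ survives; I expect this to follow from repeated use of the $q$-Serre/Jacobi identities and the relations \eqref{Eqn:BiRel2} and \eqref{Eqn:BiRel4} in $\Beps$, aided by \eqref{Eqn:TwXFX}--\eqref{TwXEX}. The inverse inclusions $T_i^{-1}(\Beps)\subseteq\Beps$ are handled identically using the inverse Lusztig formulae, completing the proof that $T_i$ restricts to an automorphism of $\Beps$ for each $i \in X$.
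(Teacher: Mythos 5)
Your reduction is the right one, and it matches how this result is actually established in the literature; note that the paper itself offers no proof of this theorem at all --- it cites \cite[Section~4.1]{a-BW16} and merely records the outcome of those computations as formula \eqref{Eqn:TiBj}. So your argument must stand on its own, and as written it has two genuine gaps. The first is that the crux --- the action of the endpoint automorphisms $T_{r+1}^{\pm 1}$, $T_{\t(r+1)}^{\pm 1}$ on $B_r$ and $B_{\t(r)}$ --- is never carried out; it is deferred with ``I expect this to follow''. This is where the entire content of the theorem lies, since the braid relations and the cases $\mathcal{M}_X$, $U_{\Theta}^0$, and $B_j$ with $j \neq r, \t(r)$ are routine. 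Moreover, the target you anticipate is not what the computation yields: $T_{r+1}(B_r)$ is not a $\field(q^{1/2})$-multiple of a ``$B$-type element for the shortened interval'' (such an element, built from $E_{X\setminus\{r+1\}}^{+}$, is not even obviously a member of $\Beps$). The correct closed forms are the $q$-commutators recorded in \eqref{Eqn:TiBj}, namely $T_{r+1}(B_r) = [B_r, F_{r+1}]_q$ and $T_{r+1}^{-1}(B_r) = [F_{r+1}, B_r]_q$, which manifestly lie in $\Beps$. Proving the theorem means verifying precisely these identities, i.e.\ computing $T_{r+1}^{\pm 1}$ on $\cs_r[E_X^+, E_{\t(r)}]_{q^{-1}}K_r^{-1}$ (and the mirror cases, including $T_{r+1}$ on $B_{\t(r)}$, where $E_X^-$ contains $E_{r+1}$ innermost) and matching terms; \cref{AppLem:7} shows the flavour of bookkeeping required, and none of it appears in your proposal.

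The second gap is the interior-node argument. You justify $T_i(E_X^{\pm}) = E_X^{\pm}$ for $i \in X \setminus \{r+1, \t(r+1)\}$ by saying $E_X^{\pm}$ spans the one-dimensional root space of the highest root of $\lie{g}_X$, which is fixed by interior $\sigma_i$. That reasoning is classical, not quantum: in $\Uqg$ the weight space of $U_q^+$ of weight $\alpha_{r+1} + \dots + \alpha_{\t(r+1)}$ is not one-dimensional (already for $|X| = 2$ it is spanned by $E_{r+1}E_{r+2}$ and $E_{r+2}E_{r+1}$), and a Lusztig automorphism is not determined by its effect on weights, so weight considerations cannot pin down $T_i(E_X^{\pm})$ even up to scalar. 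Flagging that ``the scalar must be checked'' does not repair this, because the proposed mechanism produces no scalar to check. The identity is true, but it requires the kind of braid/Serre manipulation the paper performs in \cref{AppLem:6} for the analogous statement $T_j(F_X^+) = F_X^+$: rewrite the innermost brackets as images under $T_j^{\mp 1}$ and invoke the braid relations, or realise $E_X^{\pm}$ as $T_w(E_k)$ for a suitable $w \in W_X$ and $k \in X$.
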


We give this action explicitly on the elements $B_i$ for $i \in I \setminus X$. For $i \in X$ and $j \in I \setminus X$ we have
    \begin{equation} \label{Eqn:TiBj}
        T_i(B_j) =
            \begin{cases}
                B_j & \mbox{if $a_{ij} = 0$,}\\
                [B_j, F_i]_q & \mbox{if $a_{ij}=-1$,}
            \end{cases}
           \quad
         T_i^{-1}(B_j) =
         	\begin{cases}
	         	B_j & \mbox{if $a_{ij} = 0$,}\\
	         	[F_i, B_j]_q & \mbox{if $a_{ij} = -1$.}
         	\end{cases}
    \end{equation}
It follows that
\begin{align}
    T_{w_X}(B_r) &= [B_r, F_X^+]_q, \label{Eqn:TwxBr}\\
    T_{w_X}(B_{\t(r)}) &= [B_{\t(r)}, F_X^-]_q. \label{Eqn:TwxBtr}
\end{align}
Similarly, one also obtains
\begin{align}
    T_{w_X}^{-1}(B_r) &= [F_X^-, B_r]_q, \label{Eqn:TwxiBr}\\
    T_{w_X}^{-1}(B_{\t(r)}) &= [F_X^+, B_{\t(r)}]_q. \label{Eqn:TwxiBtr}
\end{align}
We now construct the action of $Br(\widet{W})$ on $\Beps$ by algebra automorphisms. 
For reasons observed in \cref{Eqn:cti,Eqn:ctr} we now consider an extension $\field'$ of the field $\field(q^{1/2})$ that contains $\sqrt{\cs_i}$ for $i \in I \setminus X$.
For $1 \leq i \leq r$ the algebra automorphisms
    \begin{equation} \label{Eqn:wideTi}
        \widet{T}_i := T_{\widet{\sigma}_i}
            =   \begin{cases}
                    T_iT_{\t(i)} & \mbox{if $1 \leq i \leq r$,}\\
                    T_rT_{r+1} \dotsm T_{\t(r)} \dotsm T_{r+1}T_r & \mbox{if $i =r$}
                \end{cases}
    \end{equation}
do not leave $\Beps$ invariant. However, they are still used as a guide to the construction of a braid group action on $\Beps$. The general strategy is similar to that of \cite{a-KP11}. We first define the action of the generators $\widet{\varsigma}_i$ for $1 \leq i \leq r-1$.

For $1 \leq i \leq r-1$ and $j \in I \setminus X$ define
    \begin{equation} \label{Eqn:cti}
        \ct_i(B_j)
            =   \begin{cases}
                    q^{-1}B_{\t(j)}\K{\t(j)} & \mbox{if $j = i$ or $j = \t(i)$,}\\
                    \big(q\cs_i \big)^{-1/2} [B_j, B_i]_q & \mbox{if $a_{ij}=-1$,}\\
                    \big(q\cs_{\t(i)} \big)^{-1/2} [B_j, B_{\t(i)}]_q & \mbox{if $a_{\t(i)j} = -1$,}\\
                    B_j & \mbox{if $a_{ij}=0$ and $a_{\t(i)j}=0$.}
                \end{cases}
    \end{equation}

\begin{theorem} \label{Thm:cti}
     Suppose $(X, \t)$ is a Satake diagram of type AIII with $X = \{r+1, \dotsc \t(r+1)\}$ and $1 \leq r \leq \lceil \frac{n}{2} \rceil - 1$. Let $1 \leq i \leq r-1$.
        \begin{enumerate} [label = \textup{(\arabic*)}]
            \item There exists a unique algebra automorphism $\ct_i$ of $\Beps$ such that $\ct_i(B_j)$ is given by \cref{Eqn:cti} for $j \in I \setminus X$ and $\ct_i|_{\mathcal{M}_X\Uow} = \widet{T}_i|_{\mathcal{M}_X\Uow}$.
            \item The inverse automorphism $\ct_i^{-1}$ is given by 
                \begin{equation} \label{Eqn:cti_inv}
                    \ct_i^{-1}(B_j)
                        =   \begin{cases}
                                qB_{\t(j)}\K{j} & \mbox{if $j = i$ or $j = \t(j)$,}\\
                                \big(q\cs_i \big)^{-1/2} [B_i, B_j]_q & \mbox{if $a_{ij}=-1$,}\\
                                \big(q\cs_{\t(i)} \big)^{-1/2} [B_{\t(i)}, B_{j}]_q & \mbox{if $a_{\t(i)j} = -1$,}\\
                                B_j & \mbox{if $a_{ij}=0$ and $a_{\t(i)j}=0$}
                            \end{cases}
                \end{equation}
            and $\ct_i^{-1}|_{\mathcal{M}_X\Uow} = \widet{T}_i|_{\mathcal{M}_X\Uow}$.
            \item The relation $\ct_i\ct_{i+1}\ct_i = \ct_{i+1}\ct_i\ct_{i+1}$ holds for $1 \leq i < r-1$. Further the relation $\ct_i\ct_j = \ct_j\ct_i$ holds for $a_{ij}=0$ with $1 \leq i,j \leq r-1$.
        \end{enumerate}
\end{theorem}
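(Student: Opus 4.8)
The plan is to establish \cref{Thm:cti} by transporting \cite[Theorem~4.6]{a-KP11} through an explicit rescaling, rather than reverifying the automorphism and braid properties directly; the only genuine work is to make the phrase ``up to a factor'' quantitative. First I would introduce the rescaling isomorphisms of quantum symmetric pair coideal subalgebras. For a tuple $(a_j)_{j \in I \setminus X}$ of nonzero scalars in $\field'$ there is a Hopf-algebra automorphism of $\Uqg$ fixing $\mathcal{M}_X\Uow$ and scaling $F_j \mapsto a_j F_j$, $E_j \mapsto a_j^{-1}E_j$ for $j \in I \setminus X$; applied to the elements $B_j$ of \cref{Eqn:Bi_AIII} it restricts to an algebra isomorphism $\rho: \Beps \to B_{\be'}$ onto the coideal subalgebra with parameters $\cs_j' = \cs_j/(a_ja_{\t(j)})$, sending $B_j$ to $a_j$ times the corresponding generator of $B_{\be'}$. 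The product $a_ja_{\t(j)}$ rather than $a_j$ alone is forced by \cref{Eqn:BiRel3} and the shape of $\G_j$. Taking $a_j = \sqrt{\cs_j}$, which is available precisely because we passed to $\field'$, normalises the parameters to those used in \cite{a-KP11}.

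Since $\ct_i^{\mathrm{KP}}$ of \cite[Equation~4.11]{a-KP11} is an algebra automorphism of $B_{\be'}$ by \cite[Theorem~4.6]{a-KP11}, its conjugate $\ct_i := \rho^{-1}\ct_i^{\mathrm{KP}}\rho$ is an algebra automorphism of $\Beps$. A short computation on generators then confirms part~(1): for $j \in \{i, \t(i)\}$ the factors $a_i/a_{\t(i)} = \sqrt{\cs_i/\cs_{\t(i)}} = 1$ (using $\cs_i = \cs_{\t(i)}$ for $i \le r-1$) reproduce the coefficient $q^{-1}$ of \cref{Eqn:cti}, while for $a_{ij} = -1$ the factor $a_i^{-1} = \cs_i^{-1/2}$ reproduces the coefficient $(q\cs_i)^{-1/2}$; on $\mathcal{M}_X\Uow$ the map restricts to $\widet{T}_i$. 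Uniqueness is automatic since $\mathcal{M}_X\Uow$ and the $B_j$ generate $\Beps$. Parts~(2) and~(3) are then formal: $\ct_i^{-1} = \rho^{-1}(\ct_i^{\mathrm{KP}})^{-1}\rho$ yields \cref{Eqn:cti_inv}, and the braid and commutation relations among the $\ct_i^{\mathrm{KP}}$ for $1 \le i \le r-1$ transport verbatim to the $\ct_i$. As independent checks I would verify $\ct_i\ct_i^{-1} = \mathrm{id}$ on generators --- on $\mathcal{M}_X\Uow$ this is $\widet{T}_i^{\,2} = \mathrm{id}$, valid because $\widet{\sigma}_i = \sigma_i\sigma_{\t(i)}$ is an involution for $i \le r-1$ (the nodes $i, \t(i)$ being non-adjacent) and $\widet{T}_i$ is trivial on $\mathcal{M}_X$, while the case $j \in \{i,\t(i)\}$ uses $\widet{T}_i(\K{i}) = \K{i}^{-1}$ and the case $a_{ij}=-1$ reduces to a two-term $q$-commutator identity in which the $(q\cs_i)^{\pm 1/2}$ cancel --- and verify the braid relations on generators, where on $\mathcal{M}_X\Uow$ they descend from the braid relations among the Lusztig automorphisms $\widet{T}_i = T_{\widet{\sigma}_i}$.

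The main obstacle lies entirely in part~(1): constructing $\rho$ and checking that it intertwines the two normalisations exactly, so that the conjugate $\rho^{-1}\ct_i^{\mathrm{KP}}\rho$ reproduces every displayed coefficient of \cref{Eqn:cti}, including the behaviour at the nodes $r$ and $\t(r)$ where $\cs_r$ and $\cs_{\t(r)}$ need not agree. Should one wish to make parts~(2) and~(3) self-contained --- as the introduction indicates is possible --- the technical heart instead becomes the iterated $q$-commutator expansions on the $B_j$ and the bookkeeping of the square-root and $q$-power factors, which is exactly the computation carried out by computer in \cite{a-KP11}.
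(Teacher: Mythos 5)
Your strategy --- transporting \cite[Theorem~4.6]{a-KP11} through a rescaling isomorphism --- is in the same spirit as the paper's proof, which likewise translates Kolb--Pellegrini rather than reproving the computer-verified relations, and your coefficient bookkeeping at the nodes where $\cs_j = \cs_{\t(j)}$ is correct. The gap is in the mechanism itself, at exactly the point you flag as ``the main obstacle'': for any rescaling induced by $F_j \mapsto a_jF_j$, $E_j \mapsto a_j^{-1}E_j$ with $K_\mu$ fixed, the induced parameter change is $\cs_j' = \cs_j/(a_ja_{\t(j)})$, and since the product $a_ja_{\t(j)}$ is symmetric under $j \leftrightarrow \t(j)$, the ratio $\cs_r/\cs_{\t(r)} = \cs_r'/\cs_{\t(r)}'$ is an invariant of \emph{every} such $\rho$. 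Hence $\rho$ can land in the Kolb--Pellegrini normalisation $\cs_j'=1$ for all $j$ only if $\cs_r = \cs_{\t(r)}$. But condition \eqref{Eqn:cs_cond} constrains $\cs_i$ only for $i \notin X \cup \{r,\t(r)\}$, so $\cs_r \neq \cs_{\t(r)}$ is allowed; this is precisely why \cref{lemma:phi_anti-inv} is an ``if and only if'' and why Section~4.2 of the paper exists at all. With your choice $a_j = \sqrt{\cs_j}$ one gets $\cs_r' = \sqrt{\cs_r/\cs_{\t(r)}} \neq 1$ in general, so $\ct_i^{\mathrm{KP}}$ is not available as an automorphism of the target algebra $B_{\be'}$ and the conjugation $\rho^{-1}\ct_i^{\mathrm{KP}}\rho$ is undefined. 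This is not a cosmetic issue: the automorphism property of $\ct_i$ must be checked against the relation $B_rB_{\t(r)} - B_{\t(r)}B_r = (q-q^{-1})^{-1}\G_r$ of \eqref{Eqn:BiRel3}, in which $\cs_r$ and $\cs_{\t(r)}$ enter independently through \eqref{Eqn:Gi}, so the case you cannot reach is a case that genuinely has to be proved.

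There are two ways to close the gap, and comparing them with the paper is instructive. The paper does not normalise the parameters at $r,\t(r)$ at all; it observes that the only defining relations whose verification is sensitive to the new coefficients are those of type \eqref{Eqn:BiRel3} for pairs $(j,\t(j))$ with $a_{ij}=-1$, and it checks the identity
\begin{align*}
	[B_j, B_i]_q[B_{\t(j)},B_{\t(i)}]_q - [B_{\t(j)},B_{\t(i)}]_q[B_j,B_i]_q
		&= \frac{q\cs_i}{q-q^{-1}}\big(\cs_j\ct_i(\Z_j)-\cs_{\t(j)}\ct_i(\Z_{\t(j)}) \big)
\end{align*}
directly in $\Beps$, as an identity valid for \emph{arbitrary} $\cs_j, \cs_{\t(j)}$ (in particular for $j=r$ with $\cs_r \neq \cs_{\t(r)}$); combined with $\cs_i = \cs_{\t(i)}$ for $i \leq r-1$, this both fixes the normalisation in \eqref{Eqn:cti} and covers the case your $\rho$ cannot reach. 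Alternatively, if you insist on a pure transport argument, the isomorphism must be allowed to rescale the Cartan part of the coideal subalgebra: Watanabe's isomorphism $\Aez$ together with the map $f$ of \eqref{Eqn:f_map}, which the paper uses in \cref{Lemma:ctri_alg_end} for $\ct_r^{-1}$, sends $L_r \mapsto \zeta_r^{-1}L_r$ with $\zeta_r\zeta_{\t(r)}=1$, so that $\cs_r'/\cs_{\t(r)}' = (\cs_r/\cs_{\t(r)})\zeta_r^2$ and the offending ratio can be moved --- at the cost of a further field extension and of no longer fixing $\Uow$ pointwise, which must then be tracked through your parts (2) and (3). Once part (1) is secured by either route, the rest of your outline does go through.
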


    \begin{proof}
    The result follows from \cite[Theorems~4.3 and 4.6]{a-KP11} where the only difference occurs in $\ct_i(B_j)$ when $a_{ij}=-1$ or $a_{\t(i)j}=-1$. Here, one checks that
        \begin{align*}
            [B_j, B_i]_q[B_{\t(j)},B_{\t(i)}]_q &- [B_{\t(j)},B_{\t(i)}]_q[B_j,B_i]_q\\ &=\frac{q\cs_i}{q-q^{-1}}\big(\cs_j\ct_i(\Z_j)-\cs_{\t(j)}\ct_i(\Z_{\t(j)}) \big).
        \end{align*}
    Hence for symmetry reasons and the fact that $\cs_i = \cs_{\t(i)}$ for $1 \leq i \leq r-1$, we choose $\ct_i(B_j)$ and $\ct_i(B_{\t(j)})$ as in \cref{Eqn:cti}.
    \end{proof}

\begin{remark}
In \cref{Eqn:cti,Eqn:cti_inv} the coefficients $\cs_i$ appear whereas they did not in \cite{a-KP11}. This is because Kolb and Pellegrini took $\cs_i = 1$ for all $i \in I$ in their paper. 
\end{remark}

It remains to construct the algebra automorphism $\ct_r$. For ease of notation, let $C = \big(q\cs_r\cs_{\t(r)}\big)^{-1/2}$. Recall from Equation \eqref{Eqn:varpi'} that we set $\varpi'_i = \varpi_i - \varpi_{\t(i)}$ for $i \in I$. Define
    \begin{equation} \label{Eqn:ctr}
        \ct_r(B_j)
            =   \begin{cases}
                    q^{-1}B_r\K{r}K_{\varpi'_{r+1}} &\mbox{if $j=r$,}\\
                    q^{-1}B_{\t(r)}\K{\t(r)}K_{\varpi'_{\t(r+1)}} &\mbox{if $j = \t(r)$,}\\
                    C\big( \big[B_{r-1}, [B_r,[F_X^+, B_{\t(r)}]_q]_q\big]_q\\ 
                        \quad{}+ q\cs_{\t(r)}B_{r-1}\K{r}K_X^{-1}\big) & \mbox{if $j = r-1$,}\\
                    C\big( \big[B_{\t(r-1)}, [B_{\t(r)},[F_X^-, B_{r}]_q]_q\big]_q\\ 
                        \quad{}+ q\cs_rB_{\t(r-1)}\K{\t(r)}K_X^{-1}\big) & \mbox{if $j = \t(r-1)$,}\\
                    B_j & \mbox{otherwise}
                \end{cases}
    \end{equation}
for $j \in I \setminus X$. The following theorem establishes that \cref{Eqn:ctr} defines an algebra automorphism $\ct_r:\Beps \rightarrow \Beps$.

    \begin{theorem} \label{Thm:ctr_Aut}
        Suppose $(X, \t)$ is a Satake diagram of type AIII/AIV with $X = \{r+1, \dotsc \t(r+1)\}$ and $1 \leq r \leq \lceil \frac{n}{2} \rceil - 1$.
        \begin{enumerate} [label = \textup{(\arabic*)}]
            \item There exists a unique algebra automorphism $\ct_r$ of $\Beps$ such that $\ct_r(B_j)$ is given by \cref{Eqn:ctr} and $\ct_r|_{\mathcal{M}_X\Uow} = \widet{T}_r|_{\mathcal{M}_X\Uow}$.
            \item The inverse automorphism $\ct_r^{-1}$ is given by
                \begin{equation} \label{Eqn:ctri}
                \ct_r^{-1}(B_j)
                    =   \begin{cases}
                        qB_r\K{\t(r)}K_{\varpi'_{\t(r+1)}} &\mbox{if $j=r$,}\\
                        qB_{\t(r)}\K{r}K_{\varpi'_{r+1}} &\mbox{if $j = \t(r)$,}\\
                        C\big( \big[B_{\t(r)}, [F_X^-,[B_r, B_{r-1}]_q]_q\big]_q\\ 
                            \quad{}+ \cs_{r}B_{r-1}\K{\t(r)}K_X^{-1}\big) & \mbox{if $j = r-1$,}\\
                        C\big( \big[B_{r}, [F_X^+,[B_{\t(r)}, B_{\t(r-1)}]_q]_q\big]_q\\ 
                            \quad{}+ \cs_{\t(r)}B_{\t(r-1)}\K{r}K_X^{-1}\big) & \mbox{if $j = \t(r-1)$,}\\
                        B_j & \mbox{otherwise}
                    \end{cases}
                \end{equation}
        \end{enumerate}
        and $\ct_r^{-1}|_{\mathcal{M}_X\Uow}= \widet{T}_r^{-1}|_{\mathcal{M}_X\Uow}$.
    \end{theorem}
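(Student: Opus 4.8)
The plan is to construct $\ct_r$ as an algebra endomorphism of $\Beps$ by prescribing its values on a generating set and checking that every defining relation is preserved, and then to prove invertibility by verifying that \cref{Eqn:ctri} supplies a two-sided inverse. Since $\Beps$ is generated by $\mathcal{M}_X\Uow$ together with the elements $B_j$ for $j \in I \setminus X$, uniqueness in part~(1) is immediate: $\ct_r$ is determined by the prescribed values $\ct_r|_{\mathcal{M}_X\Uow} = \widet{T}_r|_{\mathcal{M}_X\Uow}$ and \cref{Eqn:ctr}, so the content is existence. Two preliminary facts make the prescription well posed. First, because $\widet{\sigma}_r = w_{\{r,\t(r)\}\cup X}w_X^{-1}$ permutes $\{\alpha_k \mid k \in X\}$ and normalises the sublattice $\{\mu \in P \mid -w_X\circ\t(\mu) = \mu\}$, the Lusztig automorphism $\widet{T}_r$ restricts to an algebra automorphism of $\mathcal{M}_X\Uow$; hence $\ct_r|_{\mathcal{M}_X\Uow}$ is automatically a homomorphism and all internal relations of $\mathcal{M}_X\Uow$ are respected. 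Second, each term on the right-hand side of \cref{Eqn:ctr} lies in $\Beps$, since $F_X^{\pm} \in \mathcal{M}_X$, the factors $\K{r}$, $K_{\varpi'_{r+1}}$ and $K_X^{-1}$ lie in $\Uow$, and the $B_j$ lie in $\Beps$.

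With these in place, $\ct_r$ extends to an endomorphism of $\Beps$ as soon as it preserves \cref{Eqn:BiRel1,Eqn:BiRel2,Eqn:BiRel3,Eqn:BiRel4}. I would dispatch the two structural relations first. Relation \cref{Eqn:BiRel1} is a weight computation: one checks that every term of $\ct_r(B_j)$ is a $K$-weight vector of weight $\widet{\sigma}_r(-\alpha_j)$, whereupon $W$-invariance of $(-,-)$ together with $\ct_r(K_\mu) = K_{\widet{\sigma}_r(\mu)}$ reproduces the required scalar $q^{(\mu,\alpha_j)}$. Relation \cref{Eqn:BiRel2} asks that $\ct_r(B_j)$ commute with $\widet{T}_r(E_k)$ for $k \in X$, which I would settle from the explicit action of $\widet{T}_r$ on $\mathcal{M}_X$. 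Throughout, the guiding principle from the Introduction is a valuable check: since $\ct_r(B_j)$ and $\widet{T}_r(B_j)$ share their terms of maximal $F$-degree up to a scalar, and since $\widet{T}_r$ is an algebra automorphism of $\Uqg$ under which the original relations hold, the top-degree part of each transformed relation must vanish automatically, leaving only lower-order corrections to be verified directly.

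I expect the main obstacle to be the relations involving $B_{r-1}$ and $B_{\t(r-1)}$, whose images under $\ct_r$ are the triple-nested $q$-brackets in \cref{Eqn:ctr}. On the one hand, the quantum Serre relations \cref{Eqn:BiRel4} for $(r-2,r-1)$ and $(r-1,r)$, together with their $\t$-images, require substituting such a nested bracket into the cubic polynomial $p$; on the other hand, the commutator relation \cref{Eqn:BiRel3} for $(r-1,\t(r-1))$ pairs two of them and must reproduce $(q-q^{-1})^{-1}\G_{r-1}$. Expanding these produces large expressions, which I would reduce by repeated use of the commutation identities collected in \cref{Appendix}. Two features keep the work organised: the factorised shape $\ct_r(B_r) = q^{-1}B_r\K{r}K_{\varpi'_{r+1}}$ lets one move the group-like monomial across using \cref{Eqn:BiRel1}, so that the relations coupling $B_r$ to $B_{r-1}$ and to $B_{\t(r)}$ (the latter producing $\G_r$) stay structured; and, since $\G_r, \G_{r-1} \in \mathcal{M}_X\Uow$, the target right-hand sides $\ct_r(\G_r) = \widet{T}_r(\G_r)$ and $\ct_r(\G_{r-1}) = \widet{T}_r(\G_{r-1})$ are computed directly from \cref{Eqn:Zi,Eqn:Gi} via the Lusztig action, providing the exact expressions to match.

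Finally, for part~(2), I would observe that \cref{Eqn:ctri} arises from the same scheme with $\widet{T}_r^{-1}$ as guide, so the same verification shows it defines an endomorphism $\ct_r'$ of $\Beps$ with $\ct_r'|_{\mathcal{M}_X\Uow} = \widet{T}_r^{-1}|_{\mathcal{M}_X\Uow}$. It then suffices to check that $\ct_r \circ \ct_r'$ and $\ct_r' \circ \ct_r$ fix every generator: on $\mathcal{M}_X\Uow$ this is $\widet{T}_r\widet{T}_r^{-1} = \mathrm{id}$, and on the $B_j$ it is a direct computation from the explicit formulae \cref{Eqn:ctr,Eqn:ctri}. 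This identifies $\ct_r'$ with $\ct_r^{-1}$ and establishes $\ct_r \in \mathrm{Aut}(\Beps)$.
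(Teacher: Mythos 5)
Your overall architecture matches the paper for part (1): uniqueness is immediate from generation, and existence is established by checking that the defining relations \cref{Eqn:BiRel1,Eqn:BiRel2,Eqn:BiRel3,Eqn:BiRel4} are preserved, with the genuinely hard work concentrated exactly where you place it — the images of $B_{r-1}$ and $B_{\t(r-1)}$. This is precisely the paper's Section \ref{Subsec:ctr_endo}, which reduces everything to four relations (\cref{Eqn:ctrCheck1,Eqn:ctrCheck2,Eqn:ctrCheck3,Eqn:ctrCheck4}) and dispatches them via the appendix identities. Where you genuinely diverge is part (2). You propose to show that the map defined by \cref{Eqn:ctri} is an endomorphism by running the analogous direct computations a second time, with $\widet{T}_r^{-1}$ as the guide. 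The paper instead avoids repeating the computations: it constructs an algebra anti-involution $\phi$ of $\Beps$ satisfying $\ct_r^{-1} = \phi \circ \ct_r \circ \phi$, which immediately yields the endomorphism property — but $\phi$ exists if and only if $\cs_r = \cs_{\t(r)}$, a condition not forced by \cref{Eqn:cs_cond}. To remove that constraint the paper transports the result to general parameters along Watanabe's rescaling isomorphism $\Aez$ (over a field extension $\field_1$) together with an auxiliary automorphism $f$. Your route buys self-containedness and avoids the field extension entirely, at the cost of roughly doubling the computational burden; the paper's route buys economy of computation at the cost of extra machinery. Notably, the paper's own remark after Corollary \ref{Cor:ctri_endo_general} explicitly concedes that your direct route is viable, so this is a legitimate alternative rather than an error. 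Both approaches then coincide again in the final step: verifying $\ct_r \circ \ct_r^{-1} = \ct_r^{-1} \circ \ct_r = \mathrm{id}$ on the generators $B_j$, which in the paper is the nontrivial content of Propositions \ref{Prop:ctrinv_Part1} and \ref{Prop:ctrinv_Part2}.

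Two caveats. First, be careful with the phrase ``the same verification'': for general parameters with $\cs_r \neq \cs_{\t(r)}$ there is no (anti)automorphism of $\Beps$ carrying the formulae \cref{Eqn:ctr} to the formulae \cref{Eqn:ctri} — that is exactly why the paper's $\phi$ argument needs the extra parameter-transport step — so the computations for $\ct_r^{-1}$ are structurally parallel to, but not formally deducible from, those for $\ct_r$; they must actually be carried out. Second, your claim that every term of $\ct_r(B_j)$ is a $K$-weight vector of weight $\widet{\sigma}_r(-\alpha_j)$ is not literally true (the two summands of $B_r$, and hence of $\ct_r(B_r)$, have different weights, and only one equals $\widet{\sigma}_r(-\alpha_r)$); what is true, and what suffices for \cref{Eqn:BiRel1}, is that all terms pair identically with the lattice $\{\mu \in P \mid -w_X \circ \t(\mu) = \mu\}$, using that the root $\alpha_r + \alpha_{r+1} + \dotsb + \alpha_{\t(r)}$ is orthogonal to that lattice.
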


    \begin{remark} \label{Rem:Why_Large}
        A desirable property of the algebra automorphism $\ct_r$ is that it is local, meaning $\ct_r(B_i) = B_i$ is satisfied for $i \in I \setminus X$ and $a_{ir} = a_{\t(i)r} = 0$. With this, it is not possible to omit the elements $K_{\varpi_i'}$ for $i \in I \setminus X$ from our constructions. In particular, $K_{\varpi_i'}$ appears so that the relation
        \[ B_iE_j - E_jB_i = 0 \]
        for $i \in I \setminus X$, $j \in X$ is preserved under $\ct_r$.   
    \end{remark}

The proof of \cref{Thm:ctr_Aut} requires non-trivial calculations which are postponed to \cref{Sec:Proof1}. Crucially, the algebra automorphisms $\ct_1, \dotsc, \ct_r$ satisfy type $\text{B}_r$ braid relations.

    \begin{theorem} \label{Thm:ctr_braid}
        Suppose $(X,\t)$ is a Satake diagram of type AIII with $X= \{r+1, \dotsc, \t(r+1)\}$ and $1 \leq r \leq \lceil \frac{n}{2} \rceil - 1$. Then the relation
            \begin{equation} \label{Eqn:ctr_Braid_Rel}
                \ct_r\ct_{r-1}\ct_r\ct_{r-1}= \ct_{r-1}\ct_r\ct_{r-1}\ct_r
            \end{equation}
        holds. Further, the relations $\ct_r\ct_i = \ct_i\ct_r$ hold for any $1 \leq i < r-1$.
    \end{theorem}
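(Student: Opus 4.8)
The plan is to establish both families of identities by checking them on algebra generators of $\Beps$. By \cref{Thm:cti,Thm:ctr_Aut} each $\ct_i$ is an algebra automorphism, so both sides of \cref{Eqn:ctr_Braid_Rel}, and of each commutation relation $\ct_r\ct_i = \ct_i\ct_r$, are again algebra automorphisms of $\Beps$. Two automorphisms agree once they agree on $\mathcal{M}_X$, $\Uow$ and the generators $B_j$ for $j \in I \setminus X$, so the verification splits into a computation on $\mathcal{M}_X\Uow$ and a computation on the $B_j$.

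First I would handle $\mathcal{M}_X\Uow$. Each $\ct_i$ preserves $\mathcal{M}_X\Uow$ and restricts there to the Lusztig automorphism $\widet{T}_i$, as already used in \cref{Thm:cti,Thm:ctr_Aut}. Consequently every word in the $\ct_i$ restricts on $\mathcal{M}_X\Uow$ to the same word in the $\widet{T}_i$. Since the generators $\widet{\varsigma}_i \in Br(\widet{W}) \subset Br(\lie{g})$ satisfy the type $\text{B}_r$ braid relations recorded in \cref{Sec:TypeAIII} and $T \colon Br(\lie{g}) \to \textup{Aut}(\Uqg)$ is a group homomorphism, the identities $\widet{T}_r\widet{T}_{r-1}\widet{T}_r\widet{T}_{r-1} = \widet{T}_{r-1}\widet{T}_r\widet{T}_{r-1}\widet{T}_r$ and $\widet{T}_r\widet{T}_i = \widet{T}_i\widet{T}_r$ (for $1 \le i < r-1$) already hold on all of $\Uqg$. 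Hence both relations hold on $\mathcal{M}_X\Uow$.

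It remains to treat the generators $B_j$, where I would use locality and the diagram symmetry $\t$. The map $\ct_r$ moves only $B_{r-1}, B_r, B_{\t(r)}, B_{\t(r-1)}$, whereas $\ct_i$ (for $i \le r-1$) moves only $B_j$ with $j \in \{i,\t(i)\}$, $a_{ij}=-1$, or $a_{\t(i)j}=-1$. For the commutation $\ct_r\ct_i=\ct_i\ct_r$ with $1 \le i < r-2$ the two sets of moved generators are disjoint, so the identity reduces to the observation that $\widet{T}_i$ fixes the torus factors $\K{\bullet}$, $K_{\varpi'_\bullet}$, $K_X^{-1}$ occurring in \cref{Eqn:ctr} (their weights are fixed by $\widet{\sigma}_i$). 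For $i = r-2$ the supports overlap only in $B_{r-1}$ and the analogous $B_{\t(r-1)}$, so it suffices to verify $\ct_r\ct_{r-2}(B_{r-1}) = \ct_{r-2}\ct_r(B_{r-1})$. For the braid relation \cref{Eqn:ctr_Braid_Rel}, locality confines the check to $B_{r-2}, B_{r-1}, B_r$ together with their $\t$-images $B_{\t(r-2)}, B_{\t(r-1)}, B_{\t(r)}$, and the latter three are handled by the parallel computation obtained by interchanging the roles of $r$ and $\t(r)$.

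The hard part will be the explicit evaluation of both sides of \cref{Eqn:ctr_Braid_Rel} on $B_r$ and $B_{r-1}$. Each side is a fourfold composition, and applied to these generators it produces deeply nested $q$-commutators of the $B_k$ with $F_X^{\pm}$, multiplied by torus elements $\K{\bullet}$, $K_{\varpi'_\bullet}$, $K_X^{-1}$ and by the scalars $C$ and $(q\cs_\bullet)^{-1/2}$. The terms carrying the maximal power of the $F_k$ match automatically: by construction $\ct_i(B_j)$ and $\widet{T}_i(B_j)$ share their top-degree part, and the $\widet{T}_i$ already obey the braid relation on $\Uqg$. The genuine work is to reconcile the lower-order correction terms, which requires repeated use of the defining relations of $\Beps$ — in particular the inhomogeneous commutators \cref{Eqn:BiRel3} producing the $\G$-terms and the quantum Serre relations \cref{Eqn:BiRel4} — together with the auxiliary identities collected in \cref{Appendix}. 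Tracking the accumulating powers of $q$ and of the parameters, and checking that the constraint $\cs_i = \cs_{\t(i)}$ and the precise exponents appearing in \cref{Eqn:ctr} make the residual terms cancel, is where essentially all the difficulty lies; I expect the normal-form reduction on $B_{r-1}$, whose image under $\ct_{r-1}$ feeds into the most complicated branch of $\ct_r$, to be the decisive step.
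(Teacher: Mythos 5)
Your reduction framework is sound and in fact coincides with the opening of the paper's own proof in \cref{Sec:Proof2}: both sides agree on $\mathcal{M}_X\Uow$ because each $\ct_i$ restricts there to $\widet{T}_i$, and locality confines the check to $B_{r-2},B_{r-1},B_r$ and their $\t$-images (respectively, to the single case $i=r-2$, $j=r-1$ for the commutations, the remaining cases being routine weight checks). But from that point on your text describes what remains to be done rather than doing it, and those remaining cases are the entire content of the theorem. Worse, the strategy you propose for them would fail. The matching of maximal-degree terms is not automatic under composition: in $\ct_{r-1}\ct_r\ct_{r-1}\ct_r(B_{r-1})$ the low-degree summand $\Delta = q\cs_{\t(r)}B_{r-1}\K{r}K_X$ of $\ct_r(B_{r-1})$ (see \eqref{Eqn:D}) is sent by $\ct_{r-1}$ to a multiple of $B_{\t(r-1)}$ times a torus element, and the next application of $\ct_r$ converts that back into a term of maximal $F$-degree through $[S^{\t},B_r]_q$ in \eqref{Eqn:ctr(Btr-1)_NEW}. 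So the top-degree part of either side is not governed by the Lusztig composition $\widet{T}_r\widet{T}_{r-1}\widet{T}_r\widet{T}_{r-1}$ alone; indeed both sides of \eqref{Eqn:ctr_Braid_Rel} applied to $B_{r-1}$ collapse, after massive internal cancellation, to the degree-one element $q^{-1}B_{r-1}\K{r-1} = \ct_{r-1}(B_{\t(r-1)})$, so a leading-term comparison carries no information at all.

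What the argument actually needs, and what the paper supplies, is the collapse identity $\ct_{r-1}\ct_r(B_{r-1}) = \ct_r^{-1}(B_{\t(r-1)})$, i.e.\ \eqref{Eqn:Rel}, established in \cref{AppLem:3} using \cref{AppLem:1,AppLem:2}, together with the $\ct_r$-invariance of $[B_{r-1},\ct_r^{-1}(B_{\t(r-1)})]_q$ from \cref{AppCor:5}. These two facts make each fourfold composition on $B_{r-2}$, $B_{r-1}$ and $B_r$ computable in a few lines, and the one nontrivial commutation case $\ct_r\ct_{r-2}(B_{r-1}) = \ct_{r-2}\ct_r(B_{r-1})$ is verified directly from \eqref{Eqn:ctr} using that $B_{r-2}$ commutes with $B_r$, $F_X^+$ and $B_{\t(r)}$. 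Your proposal identifies neither these identities nor any substitute for them, so the cases that constitute the whole difficulty of the theorem are left unproved; this is a genuine gap, not merely an omitted routine verification.
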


Similarly to \cref{Thm:ctr_Aut}, the proof of \cref{Thm:ctr_braid} requires a series of calculations which are given in \cref{Sec:Proof2}. As a result of \cref{Thm:cti,,Thm:ctr_Aut,Thm:ctr_braid} a braid group action of $Br(\widet{W})$ on $\Beps$ by algebra automorphisms is established.

    \begin{corollary} \label{Cor:Wtild_Act}
        Suppose $(X, \t)$ is a Satake diagram of type AIII/AIV with $X = \{r+1,\dotsc, \t(r+1)\}$ and $1 \leq r \leq \lceil \frac{n}{2} \rceil - 1$. Then there exists an action of $Br(\widet{W})$ on $\Beps$ by algebra automorphisms. Under this action the generator $\widet{\varsigma}_i \in Br(\widet{W})$ is mapped to the algebra automorphism $\ct_i$ for $1 \leq i \leq r$.
    \end{corollary}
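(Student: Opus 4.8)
The plan is to assemble \cref{Thm:cti,Thm:ctr_Aut,Thm:ctr_braid} via the universal property of the braid group $Br(\widet{W})$. Recall from \cref{Sec:TypeAIII} that $Br(\widet{W})$ is generated by $\widet{\varsigma}_1, \dotsc, \widet{\varsigma}_r$ subject to the commutation relations $\widet{\varsigma}_i\widet{\varsigma}_j = \widet{\varsigma}_j\widet{\varsigma}_i$ for $a_{ij}=0$, the length-three braid relations $\widet{\varsigma}_i\widet{\varsigma}_j\widet{\varsigma}_i = \widet{\varsigma}_j\widet{\varsigma}_i\widet{\varsigma}_j$ for $1 \leq i,j < r$ with $a_{ij}=-1$, and the single length-four braid relation between $\widet{\varsigma}_r$ and $\widet{\varsigma}_{r-1}$. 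Since $Br(\widet{W})$ is the annular braid group in $r$ strands, these relations constitute a complete presentation. Consequently, to produce a group homomorphism $Br(\widet{W}) \rightarrow \textup{Aut}(\Beps)$ sending $\widet{\varsigma}_i \mapsto \ct_i$, it suffices to verify that each $\ct_i$ is an automorphism of $\Beps$ and that the maps $\ct_i$ satisfy the images of these defining relations.

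First, each $\ct_i$ lies in $\textup{Aut}(\Beps)$: for $1 \leq i \leq r-1$ this is \cref{Thm:cti}(1), with inverse \cref{Eqn:cti_inv}, and for $i=r$ this is \cref{Thm:ctr_Aut}(1), with inverse \cref{Eqn:ctri}. Second, I would match the relations one family at a time. The relations not involving the index $r$, namely $\ct_i\ct_j = \ct_j\ct_i$ for $a_{ij}=0$ and $\ct_i\ct_{i+1}\ct_i = \ct_{i+1}\ct_i\ct_{i+1}$ with $1 \leq i,j \leq r-1$, are exactly \cref{Thm:cti}(3). The relations involving $r$, namely the commutations $\ct_r\ct_i = \ct_i\ct_r$ for $1 \leq i < r-1$ together with the length-four braid relation $\ct_r\ct_{r-1}\ct_r\ct_{r-1} = \ct_{r-1}\ct_r\ct_{r-1}\ct_r$, are \cref{Thm:ctr_braid}. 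These exhaust the defining relations, so the assignment extends uniquely to the desired homomorphism, and the claimed action follows.

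Since the hard content is entirely carried by the three cited theorems, the only step demanding care here is the verification that the finite list of relations above is genuinely complete for $Br(\widet{W})$, so that the universal property applies with no hidden relations left to check. I would justify this by appealing to the identification of $Br(\widet{W})$ with the annular braid group in $r$ strands, that is, the Artin group attached to the restricted Coxeter datum of type $\text{B}_r$ (equivalently $\text{C}_r$), for which the braid and commutation relations among adjacent generators are well known to be defining. Everything else is a direct citation.
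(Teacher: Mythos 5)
Your proposal is correct and matches the paper's argument: the paper derives this corollary directly by combining \cref{Thm:cti}, \cref{Thm:ctr_Aut} and \cref{Thm:ctr_braid}, exactly as you do. Your additional care in noting that the commutation, length-three and length-four relations form a complete presentation of $Br(\widet{W})$ (via its identification with the annular/type $\text{B}_r$ Artin group, which the paper asserts in its introduction) only makes explicit what the paper leaves implicit.
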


Since the subgroups $Br(W_X)$ and $Br(\widet{W})$ of $Br(\lie{g})$ commute, we now combine \cref{Thm:WX_Act,Cor:Wtild_Act} to give an action of $Br(W_X) \times Br(\widet{W})$ on $\Beps$ by algebra automorphisms.

    \begin{theorem} \label{Thm:MAIN}
        Let $(X, \t)$ be a Satake diagram of type AIII/AIV with $X = \{r+1,\dotsc, \t(r+1)\}$ and $1 \leq r \leq \lceil \frac{n}{2} \rceil -1$. Then there exists an action of $Br(W_X) \times Br(\widet{W})$ on $\Beps$ by algebra automorphisms. The action of $Br(W_X)$ on $\Beps$ is given by the Lusztig automorphisms $T_j$ for $j \in X$ and the action of $Br(\widet{W})$ on $\Beps$ is given by the algebra automorphisms $\ct_i$ for $1 \leq i \leq r$ given by \cref{Eqn:cti,Eqn:ctr}.
    \end{theorem}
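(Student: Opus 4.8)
The plan is to build the $Br(W_X)\times Br(\widet W)$-action out of the two actions already constructed. By \cref{Thm:WX_Act} the assignment $\varsigma_j\mapsto T_j$ $(j\in X)$ defines a homomorphism $\rho_X\colon Br(W_X)\to\mathrm{Aut}(\Beps)$, and by \cref{Cor:Wtild_Act} the assignment $\widet\varsigma_i\mapsto\ct_i$ $(1\le i\le r)$ defines a homomorphism $\rho_{\widet W}\colon Br(\widet W)\to\mathrm{Aut}(\Beps)$. By the universal property of the direct product these combine to a single homomorphism $Br(W_X)\times Br(\widet W)\to\mathrm{Aut}(\Beps)$ exactly when the two images commute elementwise. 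As the groups are generated by the $\varsigma_j$ and the $\widet\varsigma_i$, respectively, it suffices to prove
\[ T_j\ct_i=\ct_i T_j\qquad\text{for all } j\in X,\ 1\le i\le r, \]
and since both sides are algebra automorphisms of $\Beps$ I would check this identity only on a generating set: on $\mathcal{M}_X\Uow$ and on the generators $B_k$ for $k\in I\setminus X$.

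On $\mathcal{M}_X\Uow$ no new work is needed. There $\ct_i$ restricts to the Lusztig automorphism $\widet T_i=T_{\widet\sigma_i}$ by \cref{Thm:cti,Thm:ctr_Aut}, while $T_j$ is itself a Lusztig automorphism; moreover $T_j$ preserves $\mathcal{M}_X\Uow$. The generators $\varsigma_j$ $(j\in X)$ and $\widet\varsigma_i$ commute in $Br(\lie g)$, as recorded before \cref{Lemma:Braid_Act_k}, so $T_j$ and $\widet T_i$ commute as automorphisms of $\Uqg$; restricting to $\mathcal{M}_X\Uow$ yields $T_j\ct_i=\ct_i T_j$ there at once.

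The real content is the verification on the $B_k$, which I would carry out by locality. For $j\in X$ one has $T_j(B_k)=B_k$ unless $a_{jk}=-1$, and among $k\in I\setminus X$ this occurs only for $(j,k)=(r+1,r)$ and $(j,k)=(n-r,\t(r))$; likewise $\ct_i(B_k)=B_k$ for all $k$ outside the short list of indices appearing in \cref{Eqn:cti,Eqn:ctr}. When both maps fix $B_k$ the identity is immediate, so only the overlapping indices $k\in\{r-1,r,\t(r),\t(r-1)\}$ need attention. For $1\le i\le r-1$, where $\ct_i$ has the simple shape \cref{Eqn:cti}, each such check reduces, after expanding $T_j$ and $\ct_i$ into $q$-commutators, to a short $q$-Jacobi identity combined with the commutation relations \cref{Eqn:BiRel2,Eqn:BiRel3} collected in \cref{Appendix}.

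The hard part will be the case $i=r$. The elements $\ct_r(B_{r-1})$ and $\ct_r(B_{\t(r-1)})$ of \cref{Eqn:ctr} involve $F_X^{\pm}$ as well as $B_r$ and $B_{\t(r)}$, every one of which is moved by the reflections $T_j$ with $j\in X$; consequently the required identity $T_j\ct_r(B_{r-1})=\ct_r(B_{r-1})$ says precisely that the full expression $\ct_r(B_{r-1})$ is invariant under all of $Br(W_X)$, and analogously for $B_{\t(r-1)}$, while for $k=r,\t(r)$ one must instead match $T_{r+1}\ct_r(B_r)$ against $\ct_r([B_r,F_{r+1}]_q)$ and its $\t$-symmetric partner. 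To make these computations tractable I would rewrite the bracketed subexpressions in \cref{Eqn:ctr} using the identities $[F_X^{+},B_{\t(r)}]_q=T_{w_X}^{-1}(B_{\t(r)})$ and $[F_X^{-},B_r]_q=T_{w_X}^{-1}(B_r)$ from \cref{Eqn:TwxiBtr,Eqn:TwxiBr}, reducing everything to manipulations of $T_{w_X}^{\pm1}$ applied to generators together with the relations of \cref{Appendix}; substantial cancellation is expected, and the $\t$-symmetry of the Satake diagram lets the $T_{n-r}$ checks be deduced from the $T_{r+1}$ ones, halving the labour.
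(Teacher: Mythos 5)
Your proposal is correct and follows essentially the same route as the paper's proof: reduce to checking $T_j\ct_i=\ct_iT_j$ on generators, dispose of $\mathcal{M}_X\Uow$ via the restriction property $\ct_i|_{\mathcal{M}_X\Uow}=\widet{T}_i|_{\mathcal{M}_X\Uow}$ together with the commutativity of $Br(W_X)$ and $Br(\widet{W})$ inside $Br(\lie{g})$, and then run a case analysis on the $B_k$ whose only substantive cases are $i=r$ with $k\in\{r-1,r\}$ and their $\t$-partners, plus the single check $T_{r+1}\ct_{r-1}(B_r)=\ct_{r-1}T_{r+1}(B_r)$. The ``substantial cancellation'' you anticipate is exactly the content of the paper's \cref{AppLem:6,AppLem:7}: under $T_{r+1}$ the bracket term of $\ct_r(B_{r-1})$ acquires the correction $q\cs_{\t(r)}B_{r-1}\K{r}(K_{r+1}^{-1}-K_{r+1})K_{X\setminus\{r+1\}}^{-1}$, which cancels against the image of the second summand, so the whole-expression (rather than termwise) invariance you identified is indeed the right mechanism.
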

    
\begin{proof}
In order to prove \cref{Thm:MAIN} it suffices to show that for all $x \in \Beps$, the relation \[T_j\ct_i(x) = \ct_iT_j(x)\] holds for $j \in X$ and $1 \leq i \leq r$. Since $\ct_i|_{\mathcal{M}_X\Uow} = \widet{T}_i|_{\mathcal{M}_X\Uow}$ for all $1 \leq i \leq r$ the result follows if $x \in M_XU_0^{\Theta}$. As a result of this and the underlying symmetry, we hence only consider $x = B_k$ for $1 \leq k \leq r$. We proceed by casework. 
 
\phantom{m}

\noindent \textbf{Case 1.} $1 \leq i \leq r-1$ and $j \in X \setminus \{r+1\}$.
\newline 
By \cref{Eqn:TiBj,Eqn:cti} it follows that $\ct_i(B_k)$ is invariant under $T_j$ for all $j \in X \setminus \{r+1\}$ and $1 \leq k \leq r.$ The result follows from this.
        
\phantom{new line}
        
\noindent \textbf{Case 2.} $1 \leq i \leq r-1$ and $j = r+1$.
\newline         
Recall from \cref{Eqn:TiBj} that for $1 \leq k \leq r$ we have
	\begin{equation*} 
		T_{r+1}(B_k) 
			=   \begin{cases}
				B_k & \mbox{if $1 \leq k \leq r-1$,}\\
				[B_r,F_{r+1}]_q & \mbox{if $k = r$.}
				\end{cases}
	\end{equation*}
There are three cases to consider, depending on the value of $a_{ik}$. If $a_{ik} = 0$ then $\ct_i(B_k) = B_k$ and $\ct_i \circ T_{r+1}(B_k) = T_{r+1}(B_k)$. The claim follows from this. If $a_{ik} = -1$ then $\ct_i(B_k) = \big(q\cs_i \big)^{-1/2}[B_k, B_i]_q$. Since $1 \leq i \leq r-1$, \cref{Eqn:TiBj} implies that we need only check the claim when $k = r$ and $i = r-1$. We obtain
	\begin{align*}
		T_{r+1}\ct_{r-1}(B_r)
			&= \big( q\cs_i \big)^{-1/2} T_{r+1}([B_r,B_{r-1}]_q)\\
			&= \big( q\cs_i \big)^{-1/2} [[B_r,F_{r+1}]_q,B_{r-1}]_q\\
			&= \big( q\cs_i \big)^{-1/2}[[B_r,B_{r-1}]_q,F_{r+1}]_q = \ct_{r-1}T_{r+1}(B_r)
	\end{align*}
as required. Finally, if $a_{ik} = 2$ then the claim follows since $1 \leq i \leq r-1$ and hence $\ct_i(B_i) = q^{-1}B_{\t(i)}\K{\t(i)}$ is invariant under $T_{r+1}$.

\phantom{new line}

\noindent \textbf{Case 3.} $i = r$ and $k \neq r-1$.
\newline 
Suppose that $1 \leq k \leq r-2$. Then both $T_j$ and $\ct_r$ act as the identity on $B_k$ so the claim follows. Hence assume that $k = r$. Then recall from \cref{Eqn:ctr} that
	\begin{equation*}
		\ct_r(B_r) = q^{-1}B_r\K{r}K_{\varpi'_{r+1}}
	\end{equation*}
where $\varpi'_{r+1} = \varpi_{r+1} - \varpi_{\t(r+1)}$. Let $\lambda = \alpha_r - \alpha_{\t(r)} +\varpi'_{r+1}$. Since $\alpha_r = -\varpi_{r+1} + 2\varpi_r - \varpi_{r-1}$ it follows that $(\alpha_j, \lambda) = 0$ for all $j \in X$. This implies that $\sigma_j(\lambda) = \lambda$ for all $j \in X$ and hence $T_j(\K{r}K_{\varpi'_{r+1}})=\K{r}K_{\varpi'_{r+1}}$.

If $j \neq r+1$ then $T_j(B_r) = B_r$ and the result follows. Otherwise by Equation \eqref{Eqn:TiBj} we have
	\begin{align*}
		T_{r+1}\ct_r(B_r) &= q^{-1}T_{r+1}(B_r)K_{\lambda}\\
			&=q^{-1}[B_r,F_{r+1}]_qK_{\lambda} = \ct_rT_{r+1}(B_r)
	\end{align*}
where we use the fact that $K_{\lambda}$ commutes with $F_{r+1}$. The result follows from this.

\phantom{new line}

\noindent \textbf{Case 4.} $i =r$, $j \in X \setminus \{r+1\}$ and $k = r-1$.
\newline
By \cref{AppLem:6} the result is clear for $j \neq \t(r+1)$ since $T_j$ acts as the identity on the elements $F_X^+, \K{r}, K_X^{-1}$ and $B_k$ for $k \in I \setminus X$. On the other hand if $j = \t(r+1)$ then by Equation \eqref{Eqn:TiBj} we have
	\begin{align*}
		T_{\t(r+1)}([F_X^+,B_{\t(r)}]_q)
			&= T_{\t(r+1)}\big([F_{X\setminus \{\t(r+1)\}}, [F_{\t(r+1)},B_{\t(r)}]_q]_q\big)\\
			&= [F_X^+, B_{\t(r)}]_q.
	\end{align*}
Hence $\ct_r(B_{r-1})$ is invariant under $T_{\t(r+1)}$ and the result follows.

\phantom{new line}

\noindent \textbf{Case 5.} $i =r$, $j =r+1$ and $k = r-1$.
\newline
Recall from \cref{Eqn:ctr} that
	\begin{equation*}
		\ct_r(B_{r-1}) = C\big( \big[B_{r-1},[B_r, [F_X^+, B_{\t(r)}]_q]_q\big]_q + q\cs_{\t(r)}B_{r-1}\K{r}K_X^{-1}\big).
	\end{equation*}
We are done if we show that $\ct_r(B_{r-1})$ is invariant under $T_{r+1}$. Using Lemma \ref{AppLem:7} we obtain
	\begin{align*}
		T_{r+1}\big(\big[B_{r-1}, [B_r, [F_X^+,B_{\t(r)}]_q]_q\big]_q\big)
			&= \big[B_{r-1}, [B_r, [F_X^+,B_{\t(r)}]_q]_q\big]_q\\
			&\quad{}+ q\cs_{\t(r)}B_{r-1}\K{r}(K_{r+1}^{-1}- K_{r+1})K_{X \setminus \{r+1\}}^{-1}.
	\end{align*}
Further, we have
	\begin{equation*}
		T_{r+1}(B_{r-1}\K{r}K_{X}^{-1}) = B_{r-1}\K{r}K_{r+1}K_{X \setminus \{r+1\}}^{-1}.
	\end{equation*}
Combining these we obtain
	\begin{align*}
		T_{r+1}\ct_r(B_{r-1}) 
			&= C\big[B_{r-1}, [B_r, [F_{X},B_{\t(r)}]_q]_q\big]_q + Cq\cs_{\t(r)}B_{r-1}\K{r}K_{r+1}K_{X \setminus \{r+1\}}^{-1}\\
			&\quad{} +  Cq\cs_{\t(r)}B_{r-1}\K{r}(K_{r+1}^{-1}-K_{r+1})K_{X \setminus \{r+1\}}^{-1}\\
			&= \ct_r(B_{r-1})
	\end{align*}
as required. 
\end{proof}


\section{Proof of \texorpdfstring{\cref{Thm:ctr_Aut}}{Theorem1}} \label{Sec:Proof1}
We divide the proof of Theorem \ref{Thm:ctr_Aut} into three parts. In the first part, we show that $\ct_r$ is an algebra endomorphism of $\Beps$ by checking that all of the necessary relations are satisfied. Next, we show that $\ct_r^{-1}$ is also an algebra endomorphism of $\Beps$. Finally, we show that $\ct_r^{-1}$ really is the inverse of $\ct_r$.

\subsection{Proof that $\ct_r$ is an algebra endomorphism} \label{Subsec:ctr_endo}
Recall from \cref{Eqn:polynomial,Eqn:Gi} the poly\-nomial $p:\Uqg \times \Uqg \rightarrow \Uqg$ and the elements $\G_i = \cs_i\Z_i - \cs_{\t(i)}\Z_{\t(i)}$. In view of relations \eqref{Eqn:BiRel1} to \eqref{Eqn:BiRel4} and \cref{Eqn:ctr} we show that the relations
    \begin{align}
        B_{r-1}x - xB_{r-1} &= 0 \quad \mbox{for $x \in \mathcal{M}_X$,} \label{Eqn:ctrCheck1}\\
        B_{r-1}B_{\t(r-1)} - B_{\t(r-1)}B_{r-1} &= \frac{1}{q-q^{-1}}\G_{r-1}, \label{Eqn:ctrCheck2}\\
        p(B_r,B_{r-1}) &= 0, \label{Eqn:ctrCheck3}\\
        p(B_{r-1},B_r) &= 0 \label{Eqn:ctrCheck4}
    \end{align}
are preserved under the map $\ct_r$. The remaining relations either follow from the above by symmetry, or can be verified by short calculations. Such checks are not shown here. Using \cref{AppLem:Alt_ctr}, the elements $\ct_r(B_{r-1})$ and $\ct_r(B_{\t(r-1)})$ can be expressed in the following way. Let
    \begin{align}
        S &= \big[B_{r-1}, [B_r, F_X^+]_q \big]_q, \label{Eqn:S} \\ 
        S^{\t} &= \big[B_{\t(r-1)}, [B_{\t(r)}, F_X^-]_q \big]_q \label{Eqn:St} 
    \end{align}
and let
    \begin{align}
        \Delta &= q\cs_{\t(r)}B_{r-1}\K{r}K_X, \label{Eqn:D} \\
        \Delta^{\t} &= q\cs_{r}B_{\t(r-1)}\K{\t(r)}K_X. \label{Eqn:Dt}
    \end{align}
Then we have
    \begin{align}
        \ct_r(B_{r-1}) &= C\big([S,B_{\t(r)}]_q + \Delta \big), \label{Eqn:ctr(Br-1)_NEW}\\
        \ct_r(B_{\t(r-1)}) &= C\big([S^{\t},B_r]_q + \Delta^{\t} \big). \label{Eqn:ctr(Btr-1)_NEW}
    \end{align}
We use \cref{Eqn:ctr(Br-1)_NEW,Eqn:ctr(Btr-1)_NEW} to establish many of the results of this section.
We first show that \eqref{Eqn:ctrCheck1} is invariant under $\ct_r$. The following lemma establishes invariance for $x \in \{E_i, F_i \mid \mbox{$i \in X$, $i \neq r+1, \t(r+1)$} \}$.

    \begin{lemma} \label{Lemma:ctrCheck1-Part1}
        For any $i \in X\setminus \{r+1, \t(r+1) \}$ the relations
            \begin{align*}
                F_X^+E_i - E_iF_X^+ &= 0,\\
                F_X^+F_i - F_iF_X^+ &= 0
            \end{align*}
        hold in $\Uqg$. 
    \end{lemma}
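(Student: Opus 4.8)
The plan is to exploit the fact that $F_X^+$, defined in \eqref{Eqn:FJ+}, is the negative highest-root vector of the type-$A$ subalgebra $U_q(\lie{g}_X)$. Write $a=r+1$ and $b=\t(r+1)$, so that $X=\{a,a+1,\dotsc,b\}$ and the hypothesis $i\in X\setminus\{r+1,\t(r+1)\}$ becomes $a<i<b$; call such $i$ interior. Set $\mu_X=\alpha_a+\dotsb+\alpha_b$, the highest root of $\lie{g}_X$. For interior $i$, neither $\mu_X-\alpha_i$ (whose support $\{a,\dotsc,b\}\setminus\{i\}$ is disconnected) nor $\mu_X+\alpha_i$ (coefficient $2$ at $i$) is a root, so $-\mu_X$ is extremal for the adjoint action of the interior generators; this is the conceptual reason $E_i$ and $F_i$ should annihilate $F_X^+$. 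To make this rigorous I would induct on the interval length $b-a$ using the left-nested recursion $F_X^+=[F_a,F^+_{[a+1,b]}]_q$, where for $c\le d$ I write $F^+_{[c,d]}:=\big[F_c,[F_{c+1},\dotsc,[F_{d-1},F_d]_q\dotsc]_q\big]_q$ for the analogue of \eqref{Eqn:FJ+} on $\{c,\dotsc,d\}$. I will also use the mirror recursion $F^+_{[a,b]}=[F^+_{[a,b-1]},F_b]_q$ and, more generally, the splitting $F^+_{[a,b]}=[F^+_{[a,k]},F^+_{[k+1,b]}]_q$; all of these follow by a short induction from the $q$-Jacobi identity for the bracket \eqref{Eqn:Bracket}.

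For the relation $F_X^+E_i-E_iF_X^+=0$ the inductive step is immediate: if $a+2\le i\le b-1$ then $E_i$ commutes with $F_a$ by the relation $E_iF_j-F_jE_i=\delta_{ij}(K_i-K_i^{-1})/(q-q^{-1})$ (as $i\ne a$) and with $F^+_{[a+1,b]}$ by the induction hypothesis (as $i$ is interior to $\{a+1,\dotsc,b\}$), hence with the polynomial $[F_a,F^+_{[a+1,b]}]_q=F_X^+$. The only genuine computation is the base case $i=a+1$ (which forces $b\ge a+2$). Here I would first compute, using the $E$--$F$ relation together with the $K_\mu$-commutation rules and $(\alpha_{a+1},\alpha_{a+2})=-1$, that $[E_{a+1},F^+_{[a+1,b]}]=F^+_{[a+2,b]}K_{a+1}^{-1}$; substituting this into $[E_{a+1},[F_a,F^+_{[a+1,b]}]_q]$ and using $(\alpha_a,\alpha_{a+1})=-1$ to move $K_{a+1}^{-1}$ past $F_a$, the two resulting terms cancel exactly. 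The case $i=b-1$ follows identically from the mirror recursion. Notably this half uses no Serre relation, only Cartan entries.

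For the relation $F_X^+F_i-F_iF_X^+=0$ the inductive step is the same triviality: for $a+2\le i\le b-1$, $F_i$ commutes with $F_a$ by \eqref{Eqn:QSerre1} and with $F^+_{[a+1,b]}$ by induction, hence with $F_X^+$. The base case $i=a+1$ is the main obstacle, since it is the only point at which the quantum Serre relations \eqref{Eqn:QSerre2} are genuinely needed. I would reduce it to the three-node interval as follows: for $b\ge a+3$, split $F_X^+=[F^+_{[a,a+2]},F^+_{[a+3,b]}]_q$; since $F_{a+1}$ commutes with $F^+_{[a+3,b]}$ (all indices at distance $\ge2$), the identity $[F_{a+1},[y,z]_q]=[[F_{a+1},y],z]_q$, valid whenever $[F_{a+1},z]=0$, reduces the claim to $[F_{a+1},F^+_{[a,a+2]}]=0$ (which is already the statement when $b=a+2$).

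It therefore remains to verify the single identity $[F_{a+1},F^+_{[a,a+2]}]=0$, a finite calculation in the three generators $F_a,F_{a+1},F_{a+2}$ that invokes both Serre relations $p(F_a,F_{a+1})=p(F_{a+1},F_{a+2})=0$ of \eqref{Eqn:QSerre2}; the fact that $\mu_{\{a,a+1,a+2\}}+\alpha_{a+1}=\alpha_a+2\alpha_{a+1}+\alpha_{a+2}$ is not a root is what forces the cancellation. The case $i=b-1$ follows by the mirror symmetry, completing the proof. I expect this final three-generator Serre computation to be the only nonroutine step; everything else is a two-line commutation or an appeal to the induction hypothesis.
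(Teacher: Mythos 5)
Your proposal is correct, and it diverges from the paper's proof in one substantive respect, so a comparison is worthwhile. Both arguments share the same skeleton: everything reduces to local statements around the node $i$, and the crux in both cases is the three-node fact that $F_i$ commutes with $\big[F_{i-1},[F_i,F_{i+1}]_q\big]_q$. For the $E_i$ relation the two proofs are essentially the same computation packaged differently: the paper splits $F_X^+$ around the node $i$ as $\big[F_{W_i}^+,[F_i,F_{Y_i}^+]_q\big]_q$ and cancels the resulting $K_i^{\pm 1}$-terms using $[K_i,F_{i+1}]_q=[F_{i-1},K_i^{-1}]_q=0$, whereas you peel $F_a$ off the left by induction and perform the same cancellation in the base case $i=a+1$; your identity $[E_{a+1},F^+_{[a+1,b]}]=F^+_{[a+2,b]}K_{a+1}^{-1}$ and the ensuing cancellation both check out. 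The genuine difference is in the $F_i$ relation: the paper gets the three-node commutation in two lines by transporting $[F_{i-1},F_{i+1}]=0$ through the algebra automorphism $T_{i-1}^{-1}T_i^{-1}$, using \eqref{Eqn:TiFj} to identify $F_i=T_{i-1}^{-1}T_i^{-1}(F_{i-1})$ and $\big[F_{i-1},[F_i,F_{i+1}]_q\big]_q=T_{i-1}^{-1}T_i^{-1}(F_{i+1})$, while you prove $\big[F_{a+1},[F_a,[F_{a+1},F_{a+2}]_q]_q\big]=0$ by hand from the defining relations. I verified that your calculation goes through: writing $x=F_a$, $y=F_{a+1}$, $z=F_{a+2}$, expanding and using $p(y,x)=p(y,z)=0$ together with $xz=zx$ reduces the commutator to $yxyz-yzyx+zyxy-xyzy$, which vanishes after one further application of the same relations. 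Your route buys self-containedness — only the relations \eqref{Eqn:QSerre1}, \eqref{Eqn:QSerre2} and the Cartan commutations are needed, with no appeal to the braid group action — at the cost of one nonroutine finite computation and of making explicit the splitting identities $F^+_{[a,b]}=[F^+_{[a,k]},F^+_{[k+1,b]}]_q$ that the paper uses tacitly (your justification of these via \eqref{Eqn:qcomm_trick} and $[F^+_{[a,k]},F^+_{[k+2,b]}]=0$ is sound). One small slip in your commentary: the $E_i$ half does not avoid Serre relations entirely, since the final cancellation $[F_a,F^+_{[a+2,b]}]=0$ in your base case uses \eqref{Eqn:QSerre1}; what it avoids is only the degree-three relations \eqref{Eqn:QSerre2}.
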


    \begin{proof}
        For any $i \in X \setminus \{r+1, \t(r+1)\}$ let $W_i = \{r+1, r+2, \dotsc, i-1\}$ and $Y_i = \{i+1,i+2, \dotsc, \t(r+1)\}$. Since $E_iF_j - F_jE_i = \delta_{ij}(q-q^{-1})^{-1}(K_i-K_i^{-1})$ for $i, j \in I$, it follows that
            \begin{equation*}
                F_X^+E_i = E_iF_X^+ - \frac{1}{q-q^{-1}}\big[ F_{W_i}^+, [K_i - K_i^{-1}, F_{Y_i}^+]_q  \big]_q.
            \end{equation*}
        Since $[K_i, F_{i+1}]_q = 0$ and $[F_{i-1},K_i^{-1}]_q =0$, it follows that $F_X^+E_i = E_iF_X^+$. Now, using Equation \eqref{Eqn:TiFj} we have
            \begin{align*}
                F_i &= T_{i-1}^{-1}T_i^{-1}(F_{i-1}),\\
                \big[F_{i-1}, [F_i, F_{i+1}]_q\big]_q &= T_{i-1}^{-1}T_i^{-1}(F_{i+1}).
            \end{align*}
        This implies that
        \[ \big[F_{i-1}, [F_i,F_{i+1}]_q \big]_qF_i = F_i\big[F_{i-1}, [F_i,F_{i+1}]_q \big]_q \]
        and hence $F_X^+F_i =F_iF_X^+$ as required.
    \end{proof}

We now consider \eqref{Eqn:ctrCheck1} for $x \in \{E_{r+1}, E_{\t(r+1)}, F_{r+1}, F_{\t(r+1)}\}$.

    \begin{lemma} \label{Lemma:ctrCheck1-Part2}
        The relations 
            \begin{align*}        
                \ct_r(B_{r-1})E_{r+1}-E_{r+1}\ct_r(B_{r-1})&= 0,\\
                \ct_r(B_{r-1})E_{\t(r+1)} - E_{\t(r+1)}\ct_r(B_{r-1}) &= 0
            \end{align*}
        hold in $\Beps$.
    \end{lemma}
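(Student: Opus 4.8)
The plan is to start from the rewriting \cref{Eqn:ctr(Br-1)_NEW}, $\ct_r(B_{r-1}) = C\big([S,B_{\t(r)}]_q + \Delta\big)$, with $S$ and $\Delta$ as in \cref{Eqn:S,Eqn:D}, and to localise the only possible source of non\-commutativity. By relation \eqref{Eqn:BiRel2} each of $B_{r-1}$, $B_r$, $B_{\t(r)}$ commutes with $E_{r+1}$ and with $E_{\t(r+1)}$, since $r-1,r,\t(r)\in I\setminus X$ while $r+1,\t(r+1)\in X$. Hence in $[E_{r+1},\ct_r(B_{r-1})]$ the only factors that can contribute are the root vector $F_X^+$ inside $S$ and the Cartan factor $\K{r}K_X$ inside $\Delta$. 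Moreover, Lemma~\ref{Lemma:ctrCheck1-Part1} shows that $F_X^+$ already commutes with every interior generator $E_i$, $i\in X\setminus\{r+1,\t(r+1)\}$, so the entire obstruction is concentrated at the two endpoints of $X$.

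The next step is to compute the two boundary commutators explicitly. Peeling the nested $q$-bracket \cref{Eqn:FJ+} from the relevant end and using $E_iF_j - F_jE_i = \delta_{ij}\tfrac{K_i - K_i^{-1}}{q-q^{-1}}$ together with $K_\mu E_i = q^{(\alpha_i,\mu)}E_iK_\mu$, a direct computation gives
\begin{align*}
[E_{r+1}, F_X^+] &= F_{X\setminus\{r+1\}}^+\,K_{r+1}^{-1}, &
[E_{\t(r+1)}, F_X^+] &= -q\,F_{X\setminus\{\t(r+1)\}}^+\,K_{\t(r+1)}.
\end{align*}
I would then propagate each correction outwards through $S = [B_{r-1},[B_r,F_X^+]_q]_q$ and through the outer bracket $[\,\cdot\,,B_{\t(r)}]_q$, moving the emerging $K$-factors to the right past the $B_i$ with the scalars prescribed by \eqref{Eqn:BiRel1}. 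This collapses the commutator of the main term $[S,B_{\t(r)}]_q$ with each endpoint generator into a single nested commutator of the $B_i$ with the shorter root vector $F_{X\setminus\{r+1\}}^+$, respectively $F_{X\setminus\{\t(r+1)\}}^+$, multiplied by a power of the relevant $K$.

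In parallel I would evaluate the non\-commutation of $\Delta$ with each endpoint generator by a weight count through \eqref{Eqn:BiRel1}: one finds that for one of the two endpoints the scalars cancel exactly (so $\Delta$ plays no role there), while for the other they leave a genuine term proportional to $\Delta$. This asymmetry is exactly why the factor $K_X$ (equivalently the weights $K_{\varpi_i'}$ of Remark~\ref{Rem:Why_Large}) must be present. The heart of the proof, and the main obstacle, is then to show that the surviving nested $B$-commutator produced by the main term cancels this $\Delta$-mismatch: this requires extracting, from deep inside the nested commutator, the Cartan/$E$-type contribution generated by an $[E,F]$ collision and matching it, after commuting the $K$-factors, against the $K$-monomial coming from $\Delta$. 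This matching is precisely where the explicit commutation relations between the $B_i$ and the long root vectors $F_X^{\pm}$ collected in \cref{Appendix} are indispensable. Finally, the statement for $E_{\t(r+1)}$ is not merely the $\t$-image of the one for $E_{r+1}$, because the outer bracket in \cref{Eqn:ctr(Br-1)_NEW} is taken against $B_{\t(r)}$ rather than $B_r$; I would therefore carry out the analogous computation at the opposite end of $X$ independently, the overall structure being identical.
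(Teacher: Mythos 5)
Your structural analysis is sound, and your two boundary commutators $[E_{r+1},F_X^+]=F^+_{X\setminus\{r+1\}}K_{r+1}^{-1}$ and $[E_{\t(r+1)},F_X^+]=-q\,F^+_{X\setminus\{\t(r+1)\}}K_{\t(r+1)}$ are correct, but the proposal has two genuine gaps. First, it silently assumes $|X|\geq 2$: when $X=\{r+1\}$ (which happens precisely when $n=2r+1$, and is the case the paper treats first and in most detail) one has $r+1=\t(r+1)$, the set $X\setminus\{r+1\}$ is empty so your boundary formula degenerates to $[E_{r+1},F_{r+1}]=(K_{r+1}-K_{r+1}^{-1})/(q-q^{-1})$, and your dichotomy also fails: $\Delta$ does \emph{not} commute with $E_{r+1}$ there, since $E_{r+1}\Delta=q^{-2}\Delta E_{r+1}$. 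In that case the vanishing of $[\ct_r(B_{r-1}),E_{r+1}]$ genuinely rests on a cancellation that passes through the reflection relation $[B_r,B_{\t(r)}]=(q-q^{-1})^{-1}\G_r$ of \eqref{Eqn:BiRel3} and the explicit form of $\Z_{\t(r)}$; none of this is in your plan. Second, for $|X|>1$ you explicitly defer what you yourself call the heart of the proof: showing $\bigl[[E_{\t(r+1)},S],B_{\t(r)}\bigr]_q=(q^2-1)E_{\t(r+1)}\Delta$. Pointing at \cref{Appendix} does not discharge this---the appendix relations concern $S$, $\Z_i$, $\Omega^{\pm}$, etc., and do not contain the commutator $\bigl[[B_{r-1},[B_r,F^+_{X\setminus\{\t(r+1)\}}]_q]_q,\,B_{\t(r)}\bigr]$ you would actually need, which would have to be computed from scratch through the $E_X^-$-part of $B_{\t(r)}$. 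A smaller omission: at the endpoint where you correctly observe that $\Delta$ commutes with $E_{r+1}$, the propagated correction from the main term must vanish on its own, and it does only because $[B_r,F^+_{X\setminus\{r+1\}}K_{r+1}^{-1}]_q=F^+_{X\setminus\{r+1\}}[B_r,K_{r+1}^{-1}]_q=0$; you never record this.

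It is also worth noting that the paper avoids the cancellation you are proposing to fight through. For $E_{r+1}$ it uses the form \cref{Eqn:ctr(Br-1)_NEW} (Cartan part $\K{r}K_X$), in which $S$, $B_{\t(r)}$ and $\Delta$ each commute with $E_{r+1}$ separately; for $E_{\t(r+1)}$ it switches to the original expression \eqref{Eqn:ctr} (Cartan part $\K{r}K_X^{-1}$), in which $[F_X^+,B_{\t(r)}]_q$ and $B_{r-1}\K{r}K_X^{-1}$ each commute with $E_{\t(r+1)}$ separately, the key identities being $[B_r,K_{r+1}^{-1}]_q=0$ and $[K_{\t(r+1)},B_{\t(r)}]_q=0$. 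Choosing, for each endpoint, the expression of $\ct_r(B_{r-1})$ adapted to it reduces the whole lemma (for $|X|>1$) to termwise commutation and removes your ``matching'' step entirely; if you repair the proposal, this is the cheapest route, with $X=\{r+1\}$ then handled separately as a genuine cancellation.
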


    \begin{proof}
        Suppose first that $X = \{ r + 1 \}$. Since $[B_r, K_{r+1}^{-1}]_q = 0$ we have 
            \begin{align*}
                S E_{r+1} 
                    &= \big[ B_{r-1}, [ B_r, F_{r+1}E_{r+1} ]_q \big]_q \\
                    &= E_{r+1} S - \frac{1}{q-q^{-1}} \big[ B_{r-1}, [B_r, K_{r+1} ]_q \big]_q \\
                    &= E_{r+1} S + q[ B_{r-1}, B_r ]_q K_{r+1}.
            \end{align*}
        It follows that
            \begin{align*}
                [ S, B_{r+2} ]_q E_{r+1}
                    &= E_{r+1} [ S, B_{r+2} ]_q + q \big[ [ B_{r-1}, B_r ]_q K_{r+1}, B_{r+2} \big]_q\\
                    &= E_{r+1} [ S, B_{r+2} ]_q + q^2\big[ B_{r-1}, [ B_r, B_{r+2}] \big]_q K_{r+1} \\
                    &= E_{r+1} [ S, B_{r+2} ]_q - \frac{ q^2\cs_{r+2} }{q-q^{-1}} [ B_{r-1}, \Z_{r+2} ]_q K_{r+1}.
            \end{align*}
        Recalling that $\Z_{r+2} = -(1-q^{-2}) E_{r+1} \K{r}$ we obtain using \eqref{Eqn:BiRel1} and \eqref{Eqn:BiRel2}
            \[ [ S, B_{r+2} ]_q E_{r+1} = E_{r+1} [ S, B_{r+2} ]_q + q (1-q^2) \cs_{r+2} E_{r+1} B_{r-1} \K{r} K_{r+1}. \]
        This and \cref{Eqn:Dt} implies
            \begin{align*}
                \frac{1}{C} [ \ct_r(B_{r-1}), E_{r+1} ] 
                    &= [ S, B_{r+2} ]_qE_{r+1} + \Delta E_{r+1} - E_{r+1} [ S, B_{r+2} ]_q - E_{r+1} \Delta \\
                    &= (1 - q^2) E_{r+1} \Delta + \Delta E_{r+1} - E_{r+1} \Delta \\
                    &= 0
            \end{align*}
        where the last equality follows since $E_{r+1} \Delta = q^{-2} \Delta E_{r+1}$. This shows that $E_{r+1}$ commutes with $\ct_r(B_{r-1})$ when $|X| = 1$.
        
        Suppose now that $|X| > 1$. Let $Y = X \setminus \{r+1\}$. Then 
            \begin{align*}
                F_X^+E_{r+1} &= E_{r+1}F_X^+ - \frac{1}{q-q^{-1}}[ K_{r+1}-K_{r+1}^{-1}, F_Y^+]_q\\
                    &= E_{r+1}F_X^+ - F_Y^+K_{r+1}^{-1}.
            \end{align*}
        It follows from this and the relation $[B_r, K_{r+1}^{-1}]_q = 0$ that
            \begin{align*}
                SE_{r+1} &=\big[B_{r-1}, [B_r, F_X^+E_{r+1}]_q \big]_q\\
                    &= \big[B_{r-1},[B_r, E_{r+1}F_X^+ - F_Y^+K_{r+1}^{-1}]_q \big]_q\\
                    &=E_{r+1}S - F_Y^+\big[B_{r-1}, [B_r,K_{r+1}^{-1}]_q \big]_q\\
                    &=E_{r+1}S.
            \end{align*}
        Further, we have $\Delta E_{r+1} = E_{r+1}\Delta$ since $E_{r+1}$ commutes with $\K{r}K_X$. This implies that $\ct_r(B_{r-1})$ commutes with $E_{r+1}$. To show that $\ct_r(B_{r-1})$ commutes with $E_{\t(r+1)}$, one proceeds similarly but instead using the form of $\ct_r(B_{r-1})$ given in \cref{Eqn:ctr} and the relation $[K_{\t(r+1)}, B_{\t(r)}]_q = 0$.
    \end{proof}

    \begin{lemma} \label{Lemma:ctrCheck1-Part3}
        The relations
            \begin{align*}        
                \ct_r(B_{r-1})F_{r+1}-F_{r+1}\ct_r(B_{r-1})&= 0,\\
                \ct_r(B_{r-1})F_{\t(r+1)} - F_{\t(r+1)}\ct_r(B_{r-1}) &= 0
            \end{align*}
        hold in $\Beps$.
    \end{lemma}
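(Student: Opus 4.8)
The plan is to show directly that $\ct_r(B_{r-1})$ commutes with the two boundary generators $F_{r+1}$ and $F_{\t(r+1)}$ of $\mathcal{M}_X$. Combined with \cref{Lemma:ctrCheck1-Part1,Lemma:ctrCheck1-Part2}, which handle $E_i,F_i$ for interior $i \in X$ and $E_{r+1},E_{\t(r+1)}$, this yields commutation of $\ct_r(B_{r-1})$ with all of $\mathcal{M}_X$; since $\widet{T}_r$ maps $\mathcal{M}_X$ onto itself, commuting with $\widet{T}_r(\mathcal{M}_X)=\mathcal{M}_X$ is exactly what is needed for relation \eqref{Eqn:ctrCheck1} to be preserved under $\ct_r$. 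As in the proof of \cref{Lemma:ctrCheck1-Part2}, I would compute with the explicit form $\ct_r(B_{r-1}) = C\big([S,B_{\t(r)}]_q + \Delta\big)$ from \eqref{Eqn:ctr(Br-1)_NEW}, and split into the cases $|X|>1$ and $|X|=1$, the latter being the degenerate situation in which $F_X^+ = F_{r+1}$ and $\t(r)=r+2$ is adjacent to $r+1$.

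For the commutation with $F_{r+1}$ when $|X|>1$, note that both $B_{r-1}$ and $B_{\t(r)}$ commute with $F_{r+1}$, since $a_{r-1,r+1}=0$ and $\t(r)=n-r+1$ is then at distance $\geq 2$ from $r+1$. Pushing $F_{r+1}$ through the outer $q$-brackets reduces the problem to the single commutator $[[B_r,F_X^+]_q, F_{r+1}]$, where $B_r = F_r - \cs_r[E_X^+,E_{\t(r)}]_{q^{-1}}K_r^{-1}$. Here $F_{r+1}$ fails to commute with $F_X^+$ (its leftmost factor) and with the $E_X^+$-part of $B_r$, so I would evaluate these commutators using the quantum Serre relations together with $E_{r+1}F_{r+1}-F_{r+1}E_{r+1} = (K_{r+1}-K_{r+1}^{-1})/(q-q^{-1})$. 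I expect the leftover to be a scalar multiple of $B_{r-1}\K{r}K_X$ times a power of $K_{r+1}$, exactly matching $[\Delta,F_{r+1}]$, which is immediate from \eqref{Eqn:D} since $F_{r+1}$ merely $q$-commutes with the weight factor $\K{r}K_X$.

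The commutation with $F_{\t(r+1)}$ I would treat analogously, but starting from the original expression \eqref{Eqn:ctr}, in which $F_{\t(r+1)}$ occupies the innermost position of $F_X^+$ and is adjacent to $\t(r)$; the relevant non-vanishing commutators are then those of $F_{\t(r+1)}$ with $F_X^+$ and with $B_{\t(r)}$, and the correction term again absorbs the residue, mirroring the remark closing the proof of \cref{Lemma:ctrCheck1-Part2}. In the degenerate case $|X|=1$ one cannot discard the $B_{\t(r)}=B_{r+2}$ contribution, and the computation becomes the $F$-analogue of the explicit $E_{r+1}$ calculation carried out there, using $[S,B_{r+2}]_q$ directly.

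The main obstacle is precisely that, in contrast with the interior generators of \cref{Lemma:ctrCheck1-Part1}, the element $F_X^+$ does not commute with the boundary generators $F_{r+1}$ and $F_{\t(r+1)}$, so the computation does not collapse; the whole argument hinges on the precise coefficient $q\cs_{\t(r)}$ of the correction term $\Delta$ and on the $q$-commutation of the boundary $F$'s with $\K{r}K_X$ conspiring to cancel the surviving terms, and this must be verified separately in the degenerate case $|X|=1$. I would organise the bookkeeping using the rewriting in \cref{AppLem:Alt_ctr} together with the auxiliary relations collected in the appendix.
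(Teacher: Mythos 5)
Your overall skeleton — the split into $|X|=1$ and $|X|>1$, the reduction to commutation with the boundary generators, and the use of \eqref{Eqn:ctr(Br-1)_NEW} — matches the paper, and your auxiliary claims that $B_{r-1}$ and $B_{\t(r)}$ commute with $F_{r+1}$ are true (though the latter does not follow from "distance" alone: $B_{\t(r)}$ contains $E_X^-$, which contains $E_{r+1}$; the commutator $[E_X^-,F_{r+1}]=q^{-1}E_{X\setminus\{r+1\}}^-K_{r+1}$ is nonzero and only dies after the outer $[\,\cdot\,,E_r]_{q^{-1}}$ bracket is applied). The genuine problem is your predicted cancellation structure in the case $|X|>1$: it does not exist. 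The weight pairing is $(\alpha_{r+1},\,\alpha_r-\alpha_{\t(r)}+\textstyle\sum_{j\in X}\alpha_j) = -1+0+2-1 = 0$, so $F_{r+1}$ commutes \emph{exactly} with $\K{r}K_X$ and hence $[\Delta,F_{r+1}]=0$; your claim that $F_{r+1}$ "merely $q$-commutes" with this weight factor, leaving a residue for the bracket term to absorb, is false. Consequently you cannot close the argument by matching residues: you must prove the outright vanishing $\big[F_{r+1},[B_r,F_X^+]_q\big]=0$, which is precisely the step your proposal defers to "quantum Serre relations plus the $EF$ relation" while predicting the wrong outcome for it.

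The idea you are missing — and the engine of the paper's proof in both cases — is conjugation by the Lusztig automorphism $T_{w_X}$. Since $F_{r+1}=-T_{w_X}(E_{\t(r+1)}K_{\t(r+1)})$ and $[B_r,F_X^+]_q=T_{w_X}(B_r)$ by \eqref{Eqn:TwxBr}, one gets $\big[F_{r+1},[B_r,F_X^+]_q\big]=-T_{w_X}\big([E_{\t(r+1)}K_{\t(r+1)},B_r]\big)=0$, because $E_{\t(r+1)}$ commutes with $B_r$ by \eqref{Eqn:BiRel2} and $K_{\t(r+1)}$ commutes with $B_r$ for weight reasons; no Serre-relation computation is needed, and both summands of $\ct_r(B_{r-1})$ then commute with $F_{r+1}$ separately. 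The same trick with $T_{r+1}$ (writing $F_{r+1}=-T_{r+1}(E_{r+1}K_{r+1})$, $[B_r,F_{r+1}]_q=T_{r+1}(B_r)$, $B_{r+2}=T_{r+1}([F_{r+1},B_{r+2}]_q)$) is what makes the $|X|=1$ case tractable; there your residue picture is correct ($F_{r+1}\Delta=q^2\Delta F_{r+1}$ and a term $(1-q^{-2})F_{r+1}\Delta$ cancels against $[\Delta,F_{r+1}]$), but reaching that residue by a bare-hands "$F$-analogue" of the $E_{r+1}$ computation is substantially harder than you suggest, since $F_{r+1}$ fails to commute with $B_r$, $B_{r+2}$ and $F_{r+1}$-containing brackets simultaneously. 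As written, the proposal both asserts a false intermediate identity and leaves the crucial vanishing unproven, so it does not yet constitute a proof.
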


    \begin{proof}
        Suppose first that $X = \{ r + 1 \}$. Since $B_{r-1}$ commutes with $B_{r+2}$ we have
            \[ [ S, B_{r+2} ]_q = \big[ B_{r-1}, [ [ B_r, F_{r+1} ]_q, B_{r+2} ]_q \big]_q. \]
        We commute $F_{r+1}$ through $[ S, B_{r+2} ]_q$ using the algebra automorphism $T_{w_X} = T_{r+1}$. In particular, by \cref{Eqn:Ti(sl2),Eqn:TiBj} we have
            \begin{align*}
                F_{r+1} &= -T_{r+1}( E_{r+1} K_{r+1} ),\\
                [ B_r, F_{r+1} ]_q &= T_{r+1}( B_r ), \\
                B_{r+2} &= T_{r+1}( [ F_{r+1}, B_{r+2} ]_q ).
            \end{align*}
        It hence follows that
            \begin{equation} \label{Eqn:[SB]F}
                [ S, B_{r+2} ]_q F_{r+1} = -T_{r+1}\big( \big[ B_{r-1}, [ B_r, [ F_{r+1}, B_{r+2} ]_q ]_q \big]_q E_{r+1} K_{r+1} \big).
            \end{equation}
        We consider the right hand side of \cref{Eqn:[SB]F}. We have
            \begin{align*}
                [ B_r, [ F_{r+1}, B_{r+2} ]_q ]_q E_{r+1} K_{r+1} - E_{r+1} &K_{r+1} [ B_r, [ F_{r+1}, B_{r+2} ]_q ]_q \\
                    &= \frac{1}{q-q^{-1}} [ B_r, [ K_{r+1}^{-1}, B_{r+2}]_q ]_q K_{r+1}\\
                    &= -\frac{1}{q-q^{-1}} (\cs_r \Z_r - \cs_{r+2} \Z_{r+2} ).
            \end{align*}
        Substituting this into \cref{Eqn:[SB]F} we obtain
            \begin{align*}
                [ S, B_{r+2} ]_q F_{r+1} 
                    &= F_{r+1} [ S, B_{r+2} ]_q + \frac{1}{q-q^{-1}} T_{r+1}( [B_{r-1}, \cs_r\Z_r - \cs_{r+2}\Z_{r+2} )\\
                    &= F_{r+1} [ S, B_{r+2} ]_q + q\cs_{r+2} B_{r-1} T_{r+1}(\Z_{r+2}).
            \end{align*}
        Since $\Z_{r+2} = -(1-q^{-2})E_{r+1}\K{r}$ we have
            \[ T_{r+1}( \Z_{r+2} ) = (1-q^{-2}) F_{r+1}K_{r+1}\K{r}. \]
        This implies that
            \begin{align*}
                [ S, B_{r+2} ]_q F_{r+1}
                    &= F_{r+1} [ S, B_{r+2} ]_q + q(1-q^{-2}) \cs_{r+2} F_{r+1} B_{r-1} K_{r+1} \K{r} \\
                    &= F_{r+1} [ S, B_{r+2} ]_q + (1-q^{-2}) F_{r+1} \Delta.
            \end{align*}
        Since $F_{r+1} \Delta = q^2 \Delta F_{r+1}$ we have
            \begin{align*}
                \frac{1}{C} [ \ct_r(B_{r-1}), F_{r+1} ] 
                    &= [ S, B_{r+2} ]_q F_{r+1} + \Delta F_{r+1} - F_{r+1} [ S, B_{r+2} ]_q - F_{r+1} \Delta \\
                    &= (1-q^{-2}) F_{r+1} \Delta + \Delta F_{r+1} - F_{r+1} \Delta\\
                    &= 0.
            \end{align*}
        This shows that $F_{r+1}$ commutes with $\ct_r(B_{r-1})$ when $|X|=1$.
        
        Suppose now that $|X| > 1$.
        The relation $F_{r+1} = -T_{w_X}(E_{\t(r+1)}K_{\t(r+1)})$ and Equation \eqref{Eqn:TwxBr} imply 
            \begin{align*}
                \big[ F_{r+1}, [B_r,F_X^+]_q \big] &= -T_{w_X}\big([E_{\t(r+1)}K_{\t(r+1)}, B_r] \big)\\
                    &= 0
            \end{align*}
        and hence $S$ commutes with $F_{r+1}$. This, paired with the relation
            \[ F_{r+1}\K{r}K_X = \K{r}K_XF_{r+1}, \]
        shows that $F_{r+1}$ commutes with $\ct_r(B_{r-1})$. In order to verify that $\ct_r(B_{r-1})$ commutes with $F_{\t(r+1)}$, one shows that $F_{\t(r+1)}$ commutes with $[F_X^+, B_{\t(r)}]_q$ similarly to the above. The result then follows by considering \cref{Eqn:ctr}.
    \end{proof}

This completes the proof that \cref{Eqn:ctrCheck1} is preserved under $\ct_r$.
We now show that \eqref{Eqn:ctrCheck2} is invariant under $\ct_r$. 

    \begin{proposition} \label{Prop:ctrCheck2}
        The relation
            \begin{equation}
                [ \ct_r(B_{r-1}), \ct_r(B_{\t(r-1)})] = \frac{1}{q-q^{-1}} \G_i
            \end{equation}
        holds in $\Beps$.
    \end{proposition}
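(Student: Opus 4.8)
The plan is to compute the ordinary commutator directly from the closed forms \eqref{Eqn:ctr(Br-1)_NEW} and \eqref{Eqn:ctr(Btr-1)_NEW}. Writing $\ct_r(B_{r-1}) = C\big([S,B_{\t(r)}]_q + \Delta\big)$ and $\ct_r(B_{\t(r-1)}) = C\big([S^{\t},B_r]_q + \Delta^{\t}\big)$ with $S,S^{\t},\Delta,\Delta^{\t}$ as in \eqref{Eqn:S}, \eqref{Eqn:St}, \eqref{Eqn:D}, \eqref{Eqn:Dt}, bilinearity of the commutator splits $C^{-2}[\ct_r(B_{r-1}),\ct_r(B_{\t(r-1)})]$ into
\[
\big[[S,B_{\t(r)}]_q,[S^{\t},B_r]_q\big] + \big[[S,B_{\t(r)}]_q,\Delta^{\t}\big] + \big[\Delta,[S^{\t},B_r]_q\big] + [\Delta,\Delta^{\t}].
\]
Before grinding through these I would exploit the diagram symmetry $\t$: under $i \leftrightarrow \t(i)$ one has $S \leftrightarrow S^{\t}$, $\Delta \leftrightarrow \Delta^{\t}$, $B_r \leftrightarrow B_{\t(r)}$ and $\cs_r \leftrightarrow \cs_{\t(r)}$, so the identity to be proved is antisymmetric under $\t$ (recall $\G_{\t(r-1)} = -\G_{r-1}$). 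This recovers the third summand from the second and lets me organise the first summand symmetrically, roughly halving the bookkeeping.

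Next I would dispose of the easy pieces. For $[\Delta,\Delta^{\t}]$ both factors are $B$--$K$ monomials, so \eqref{Eqn:BiRel1} moves the Cartan parts past one another while \eqref{Eqn:BiRel3}, applied to the pair $(r-1,\t(r-1))$ (for which $a_{r-1,\t(r-1)}=0$ and $\delta_{r-1,\t(\t(r-1))}=1$), commutes $B_{r-1}$ past $B_{\t(r-1)}$; this already produces a $K$-dressed multiple of $\G_{r-1}$. In the two cross terms the generator $B_{r-1}$ commutes with $B_r$, $B_{\t(r)}$ and $F_X^{+}$ by the Serre-type relations \eqref{Eqn:BiRel4} and \cref{Lemma:ctrCheck1-Part1}, so the $S$-part and the $\Delta$-part interact only through Cartan commutation \eqref{Eqn:BiRel1} and the single $B_{r-1}$--$B_{\t(r-1)}$ commutator, and the resulting $q$-powers are routine to track. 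The main term $\big[[S,B_{\t(r)}]_q,[S^{\t},B_r]_q\big]$ is the crux: I would expand both $q$-brackets, bring the $F_X^{\pm}$ and the $B$-generators into a fixed normal order using the commutation relations recorded in the appendix, and check that the top-degree contributions in the $B$-generators cancel against one another.

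Finally I would collect the surviving $K$-monomials from all four pieces, after verifying that every contribution of positive total degree in the $B$-generators has cancelled, multiply by $C^{2} = (q\cs_r\cs_{\t(r)})^{-1}$, and confirm that the result assembles into $\tfrac{1}{q-q^{-1}}\G_{r-1}$. Here I would use that $\Z_{r-1} = -\K{\t(r-1)}$ and $\Z_{\t(r-1)} = -\K{r-1}$ are fixed by $\widet{T}_r$, since a one-line check shows that $\widet{\sigma}_r$ fixes the weight $\alpha_{\t(r-1)}-\alpha_{r-1}$; this guarantees the output is literally $\G_{r-1}$ and is consistent with $\ct_r|_{\mathcal{M}_X\Uow}=\widet{T}_r|_{\mathcal{M}_X\Uow}$. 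The main obstacle is exactly the first summand: it is a nested $q$-commutator of two degree-three expressions, so the difficulty is purely combinatorial---keeping the signs, the powers of $q$, and the $\cs$-parameters aligned while the many $B$-bilinear terms cancel---and the appendix relations are the tool that keeps this under control.
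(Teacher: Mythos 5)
Your opening move---splitting $C^{-2}[\ct_r(B_{r-1}),\ct_r(B_{\t(r-1)})]$ into the four summands via \cref{Eqn:ctr(Br-1)_NEW,Eqn:ctr(Btr-1)_NEW}---is exactly the paper's first step, and your treatment of $[\Delta,\Delta^{\t}]$ is correct. The gap is in how you propose to handle the other three summands. You assert that in the cross terms ``$B_{r-1}$ commutes with $B_r$, $B_{\t(r)}$ and $F_X^+$'', but $B_{r-1}$ does \emph{not} commute with $B_r$: since $a_{r-1,r}=-1$, the only relation between them is the cubic Serre relation $p(B_{r-1},B_r)=0$ of \eqref{Eqn:BiRel4}, which does not rewrite $B_{r-1}B_r$ as a scalar multiple of $B_rB_{r-1}$ plus lower-order terms (likewise $B_{\t(r-1)}$ does not commute with $B_{\t(r)}$ in the other cross term). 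Consequently $\big[\Delta,[S^{\t},B_r]_q\big]$ and $\big[[S,B_{\t(r)}]_q,\Delta^{\t}\big]$ cannot be reduced to ``Cartan commutation plus the single $B_{r-1}$--$B_{\t(r-1)}$ commutator'', and for the same reason there is no normal ordering of $B_{r-1},B_r,B_{\t(r)},B_{\t(r-1)},F_X^{\pm}$ available in $\Beps$ with which to expand the main term: adjacent pairs are constrained only by cubic relations, so ``bringing into a fixed normal order'' is not an operation the defining relations support.

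The paper's proof is engineered precisely to avoid computing any of these three terms individually. Its key ingredient is \cref{Lemma:Tech_Calc_1}, which proves the single identity
\begin{equation*}
\big[[S,B_{\t(r)}]_q,[S^{\t},B_r]_q\big] = -\big[\Delta,[S^{\t},B_r]_q\big] - \big[[S,B_{\t(r)}]_q,\Delta^{\t}\big] - \tfrac{q\cs_r\cs_{\t(r)}}{q-q^{-1}}(K_X-K_X^{-1})K_X\G_{r-1},
\end{equation*}
so that upon substitution the two cross terms cancel identically, uncalculated, and only $K$-dressed multiples of $\G_{r-1}$ survive; these combine with $[\Delta,\Delta^{\t}]=\tfrac{q\cs_r\cs_{\t(r)}}{q-q^{-1}}K_X^2\G_{r-1}$ and the factor $C^2=(q\cs_r\cs_{\t(r)})^{-1}$ to give the result. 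The proof of that lemma rests on two structural facts absent from your plan: the commutation $[B_r,S]=0$ of \cref{AppLem:ST_comms} (itself a nontrivial consequence of the Serre relations, not of any commutativity), and the identity $SS^{\t}-S^{\t}S=\Omega^--\Omega^+$ of \eqref{AppEqn:SSt}, which is obtained not by reordering at all but by recognising $S=(q\cs_{r-1})^{1/2}\,\ct_{r-1}^{-1}\circ T_{w_X}(B_r)$ and $S^{\t}=(q\cs_{\t(r-1)})^{1/2}\,\ct_{r-1}^{-1}\circ T_{w_X}(B_{\t(r)})$ and transporting the defining relation $[B_r,B_{\t(r)}]=\tfrac{1}{q-q^{-1}}\G_r$ through this algebra isomorphism. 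Without these ingredients, or a substitute for them, your outline does not close; with them, it becomes the paper's proof.
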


    \begin{proof}
        Using the expressions for $\ct_r(B_{r-1})$ and $\ct_r(B_{\t(r-1)})$ given in \cref{Eqn:ctr(Br-1)_NEW,Eqn:ctr(Btr-1)_NEW} we have
            \begin{align*}
                \dfrac{1}{C^2}[\ct_r(B_{r-1}), \ct_r(B_{\t(r-1)})] 
                    &= \big[ [S,B_{\t(r)}]_q, [S^{\t}, B_r]_q \big] + \big[ \Delta, [S^{\t},B_r]_q \big]\\
                        &\quad{}+ \big[[S,B_{\t(r)}]_q, \Delta^{\t} \big] + [\Delta, \Delta^{\t}].
            \end{align*}
        By \cref{Lemma:Tech_Calc_1} we have
            \begin{align*}
                \big[ [S,B_{\t(r)}]_q, [S^{\t},B_r]_q \big] 
                    &= -\big[ \Delta, [S^{\t},B_r]_q \big] - \big[ [S,B_{\t(r)}]_q, \Delta^{\t} \big]\\
                        &\quad{}- \frac{q\cs_r \cs_{\t(r)}}{q-q^{-1}}(K_X - K_X^{-1})K_X\G_{r-1}.
            \end{align*}
        The result follows by recalling from \eqref{Eqn:ctr} that $C^2 = (q\cs_r \cs_{\t(r)})^{-1}$ and noting that
            \[ [\Delta, \Delta^{\t}] = \frac{q\cs_r\cs_{\t(r)}}{q-q^{-1}}K_X^2\G_{r-1}. \]
    \end{proof}

It remains to show that \cref{Eqn:ctrCheck3,Eqn:ctrCheck4} are invariant under $\ct_r$.

    \begin{proposition} \label{Prop:ctrCheck3}
        The relation
            \begin{equation}
                p(\ct_r(B_r), \ct_r(B_{r-1})) = 0
            \end{equation}
        holds in $\Beps$.
    \end{proposition}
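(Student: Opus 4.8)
The plan is to strip off the group-like factor carried by $\ct_r(B_r)$, reducing the Serre relation to a \emph{twisted} Serre relation in $B_r$ and $\ct_r(B_{r-1})$, and then to attack the latter using the defining relations of $\Beps$ together with the technical identities of \cref{Appendix}. First I would record that $p(cx,y)=c^{2}p(x,y)$ for a scalar $c$, and write $\ct_r(B_r)=q^{-1}B_rM$ with $M=\K{r}K_{\varpi_{r+1}'}$. Both factors of $M$ lie in $\Uow$: indeed $K_{\varpi_{r+1}'}\in\Uow$ by construction, and $\K{r}=K_{\alpha_r-\alpha_{\t(r)}}\in\Uow$ because $-w_X\t(\alpha_r-\alpha_{\t(r)})=\alpha_r-\alpha_{\t(r)}$. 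Hence \eqref{Eqn:BiRel1} applies to $M$, and writing $\mu=(\alpha_r-\alpha_{\t(r)})+(\varpi_{r+1}-\varpi_{\t(r+1)})$ for its weight, the pairings $(\mu,\alpha_r)=2$ and $(\mu,\alpha_{r-1})=-1$ give $B_rM=q^{2}MB_r$. Moreover $\ct_r(B_{r-1})$ is homogeneous of $\Uow$-weight $-\alpha_{r-1}$: in the form \cref{Eqn:ctr(Br-1)_NEW} the two summands $[S,B_{\t(r)}]_q$ and $\Delta$ both have this weight, since the $\Uow$-weight is additive and equals $-\alpha_i$ on each $B_i$ by \eqref{Eqn:BiRel1}, while the extra root $\beta_0=\alpha_r+\sum_{i\in X}\alpha_i+\alpha_{\t(r)}$ appearing in $[S,B_{\t(r)}]_q$ satisfies $w_X\t(\beta_0)=\beta_0$ and hence pairs trivially with the $\Uow$-lattice. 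Consequently $\ct_r(B_{r-1})M=q^{-1}M\ct_r(B_{r-1})$.

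With these two commutation scalars I would move both copies of $M$ to the right of every monomial in $p(\ct_r(B_r),\ct_r(B_{r-1}))$. A short bookkeeping then gives
\[ p(\ct_r(B_r),\ct_r(B_{r-1}))=q^{-4}\bigl(q^{2}B_r^{2}Y-(q^{2}+1)B_rYB_r+YB_r^{2}\bigr)M^{2}, \]
where $Y=\ct_r(B_{r-1})$. Since $M$ is invertible, it remains to prove the twisted Serre identity $q^{2}B_r^{2}Y-(q^{2}+1)B_rYB_r+YB_r^{2}=0$, equivalently $\bigl[B_r,[B_r,Y]\bigr]_{q^{-2}}=0$. I emphasise that the twist by $q^{-2}$ is exactly the effect of the factor $K_{\varpi_{r+1}'}$ in $\ct_r(B_r)$, and that this identity does \emph{not} reduce to the untwisted relation $p(B_r,B_{r-1})=0$ of \eqref{Eqn:BiRel4}.

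To establish it I would drop the scalar $C$ and compute $[B_r,Y]$ from \cref{Eqn:ctr(Br-1)_NEW} summand by summand. For the $\Delta$-part, \eqref{Eqn:BiRel1} gives $B_r\K{r}K_X=q\,\K{r}K_XB_r$, whence $[B_r,\Delta]=q\cs_{\t(r)}[B_r,B_{r-1}]_{q^{-1}}\K{r}K_X$. For the part $[S,B_{\t(r)}]_q$ with $S=[B_{r-1},[B_r,F_X^+]_q]_q$ as in \eqref{Eqn:S}, I would use the commutation of $B_r$ and $B_{r-1}$ with $F_X^+$ (from \cref{Lemma:ctrCheck1-Part1} and \eqref{Eqn:TwxBr}), the relations $[B_r,B_{\t(r)}]=(q-q^{-1})^{-1}\G_r$ and $[B_{r-1},B_{\t(r)}]=0$ from \eqref{Eqn:BiRel3}, and the quantum Serre relations $p(B_r,B_{r-1})=p(B_{r-1},B_r)=0$ from \eqref{Eqn:BiRel4}; the rearrangements required here are precisely the technical identities collected in \cref{Appendix}. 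Applying $[B_r,\,\cdot\,]_{q^{-2}}$ to the result and collecting terms should then yield $0$.

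The main obstacle will be this final cancellation. When $B_r$ is commuted past $B_{\t(r)}$ inside $[S,B_{\t(r)}]_q$ it produces $\G_r=\cs_r\Z_r-\cs_{\t(r)}\Z_{\t(r)}$ via \eqref{Eqn:Gi}, with $\Z_r,\Z_{\t(r)}$ as in \eqref{Eqn:Zi}; these correction terms must cancel exactly against the contributions of the $\Delta$-summand, while the remaining ``leading'' iterated brackets collapse by $p(B_r,B_{r-1})=0$. Tracking the $q$-powers and the parameters $\cs_r,\cs_{\t(r)}$ through this cancellation is the delicate point, and it is where the explicit relations of \cref{Appendix} are indispensable.
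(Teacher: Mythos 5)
Your opening reduction is correct, and it is exactly the paper's first step: with $M=\K{r}\Kw{r+1}$ one indeed has $B_rM=q^2MB_r$ and $\ct_r(B_{r-1})M=q^{-1}M\ct_r(B_{r-1})$ (your weight argument for the second scalar is sound, since $\K{r},\Kw{r+1}\in\Uow$ and the root $\alpha_r+\sum_{i\in X}\alpha_i+\alpha_{\t(r)}$ pairs trivially with the $\Uow$-lattice), and your displayed identity is, up to an overall scalar, precisely \eqref{Eqn:QSerre1_Proof1}. The problem is everything after that. The twisted Serre identity $\big[B_r,[B_r,Y]\big]_{q^{-2}}=0$ with $Y=\ct_r(B_{r-1})$ is the entire mathematical content of the proposition, and your argument for it terminates in ``collecting terms should then yield $0$'': you describe the expected shape of the cancellation but never exhibit it. Moreover, the tools you cite do not control the computation. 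You invoke ``the commutation of $B_r$ and $B_{r-1}$ with $F_X^+$'', but $B_r$ does \emph{not} commute with $F_X^+$ (they satisfy a Serre-type relation; \cref{Lemma:ctrCheck1-Part1} concerns only $E_i,F_i$ for interior $i\in X$, and \eqref{Eqn:TwxBr} is not a commutation statement), so a summand-by-summand expansion of $[B_r,[S,B_{\t(r)}]_q]$ generates many terms involving $F_X^+$ with no visible reason to collapse.

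The two facts that make the cancellation actually happen --- and which are absent from your outline --- are: (i) $B_rS=SB_r$ (\cref{AppLem:ST_comms}, \eqref{AppEqn:BrS}), itself a nontrivial consequence of $p(B_r,B_{r-1})=p(B_{r-1},B_r)=0$ together with $p(B_r,F_X^+)=p(F_X^+,B_r)=0$, which yields $B_r[S,B_{\t(r)}]_q=[S,B_{\t(r)}]_qB_r+\tfrac{1}{q-q^{-1}}[S,\G_r]_q$ as in \eqref{Eqn:Br[SBtr]} and hence collapses the $[S,B_{\t(r)}]_q$-part of the twisted Serre expression to the single correction $\tfrac{1}{q-q^{-1}}\big[S,[B_r,\G_r]_{q^{-2}}\big]_q$; and (ii) the identity $\cs_{\t(r)}[S,\Z_{\t(r)}]_q=-(q-q^{-1})[\Delta,B_r]$ of \eqref{Eqn:[S,Ztr]} (a reformulation of \cref{AppLem:Ztr_S_comm}), which is precisely the mechanism converting that $\G_r$-correction into $\Delta$-terms --- the cancellation you assert ``must'' happen. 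Indeed, $[B_r,\cs_r\Z_r]_{q^{-2}}=0$, so $[B_r,\G_r]_{q^{-2}}=-(q^2-q^{-2})\cs_{\t(r)}\Z_{\t(r)}B_r$, and then (i) and (ii) turn the correction into $(q^2-q^{-2})[\Delta,B_r]B_r$, which cancels the remaining $\Delta$-terms of \eqref{Eqn:QSerre1_Proof1} via $[\Delta,B_r]B_r=q^{-2}B_r[\Delta,B_r]$ (a consequence of $p(B_r,B_{r-1})=0$). Until you state and prove (i) and (ii), or equivalents, your proposal is a plausible roadmap rather than a proof.
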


    \begin{proof}
        Using the expression for $\ct_r(B_{r-1})$ from \cref{Eqn:ctr(Br-1)_NEW} and recalling that
            \[ \ct_r(B_r) = q^{-1}B_r\K{r}\Kw{r+1} \]
        we have
            \begin{align*}
                \frac{q^2}{C}p\big( \ct_r(B_r), &\ct_r(B_{r-1}) \big)\\
                    &= \big(B_r\K{r}\Kw{r+1} \big)^2\big( [S,B_{\t(r)}]_q + \Delta \big) + \big( [S,B_{\t(r)}]_q + \Delta \big)\big( B_r\K{r}\Kw{r+1} \big)^2\\
                    &\quad{}- (q+q^{-1})(B_r\K{r}\Kw{r+1})\big( [S,B_{\t(r)}]_q + \Delta \big)(B_r\K{r}\Kw{r+1}).
            \end{align*}
        By taking $(\K{r}\Kw{r+1})^2$ out as a factor, we obtain
            \begin{align} 
                \frac{q^2}{C}p\big( \ct_r&(B_r), \ct_r(B_{r-1}) \big)\big(\K{r}\Kw{r+1}\big)^{-2}\nonumber \\
                    &= B_r^2[S,B_{\t(r)}]_q - q^{-1}(q+q^{-1})B_r[S,B_{\t(r)}]_qB_r + q^{-2}[S,B_{\t(r)}]_qB_r^2 \label{Eqn:QSerre1_Proof1}\\
                        &\quad{} + B_r^2\Delta - q^{-1}(q+q^{-1})B_r\Delta B_r + q^{-2}\Delta B_r^2. \nonumber
            \end{align}
        By \cref{AppEqn:BrS} the element $B_r$ commutes with $S$ which implies
            \begin{equation} \label{Eqn:Br[SBtr]}
                B_r[S,B_{\t(r)}]_q = [S,B_{\t(r)}]_qB_r + \frac{1}{q-q^{-1}}[S,\G_r]_q.
            \end{equation}
        It follows from that
            \begin{align*}
                B_r^2[S,B_{\t(r)}]_q 
                    &= B_r[S,B_{\t(r)}]_qB_r + \frac{1}{q-q^{-1}}B_r[S,\G_r]_q, \\
                [S,B_{\t(r)}]_qB_r^2
                    &= B_r[S,B_{\t(r)}]_qB_r + \frac{1}{q-q^{-1}}[S,\G_r]_qB_r.
            \end{align*}
        Substituting these two expressions into \cref{Eqn:QSerre1_Proof1} we obtain
            \begin{align}
                \frac{q^2}{C}p\big( \ct_r(B_r), \ct_r(B_{r-1}) \big) \big(\K{r}\Kw{r+1}\big)^{-2}
                    &= \frac{1}{q-q^{-1}}\big[S, [B_r,\G_r]_{q^{-2}}\big]_q + B_r^2\Delta \nonumber \\ 
                        &\quad{}- q^{-1}(q+q^{-1})B_r\Delta B_r + q^{-2}\Delta B_r^2. \label{Eqn:QSerre1_Proof2}
            \end{align}
        Recall from \cref{AppLem:Ztr_S_comm} that
            \begin{equation} \label{Eqn:SZtr}
            S\Z_{\t(r)}= q\Z_{\t(r)}S - q(q-q^{-1})[B_{r-1}\K{r}K_X, B_r].
            \end{equation}
        Since 
            \[ [B_r, \G_r]_{q^{-2}} = -(q^2-q^{-2})\cs_{\t(r)}\Z_{\t(r)}B_r \]
        one calculates that
        \begin{align*}
            \frac{1}{q-q^{-1}}\big[S, [B_r,\G_r]_{q^{-2}}\big]_q
                &\overset{\phantom{\eqref{Eqn:SZtr}}}{=} -(q+q^{-1})\cs_{\t(r)}[S,\Z_{\t(r)}]_qB_r\\
                &\overset{\eqref{Eqn:SZtr}}{=}  (q^2-q^{-2})[\Delta,B_r]B_r.
        \end{align*}
        The result follows by substituting this expression into \cref{Eqn:QSerre1_Proof2} and observing that
            \[ [\Delta, B_r]B_r = q^{-2}B_r[\Delta,B_r] \]
        holds since $p(B_r, B_{r-1}) = 0$.
    \end{proof}

    \begin{proposition} \label{Prop:ctrCheck4}
        The relation
        \begin{equation*}
            p(\ct_r(B_{r-1}), \ct_r(B_r))=0
        \end{equation*}
        holds in $\Beps$.
    \end{proposition}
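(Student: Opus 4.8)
The plan is to prove this relation by a direct computation modelled on the proof of \cref{Prop:ctrCheck3}, using the compact expression \eqref{Eqn:ctr(Br-1)_NEW}, namely $\ct_r(B_{r-1}) = C\big([S,B_{\t(r)}]_q + \Delta\big)$ with $S$ and $\Delta$ as in \cref{Eqn:S,Eqn:D}, together with the simple formula $\ct_r(B_r) = q^{-1}B_r\K{r}\Kw{r+1}$. I would first record that there is no shortcut deducing this from \cref{Prop:ctrCheck3} by symmetry: for any algebra anti-automorphism $\Omega$ one has $\Omega(p(x,y)) = p(\Omega(x),\Omega(y))$, so no (anti-)automorphism fixing the generators up to torus factors interchanges the two orderings of the arguments of $p$, while the diagram symmetry $\t$ only yields the $\t$-image $p(\ct_r(B_{\t(r-1)}),\ct_r(B_{\t(r)}))$. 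Hence \eqref{Eqn:BiRel4} for the pair $(B_{r-1},B_r)$ must be verified in its own right.

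I would then expand $p(\ct_r(B_{r-1}),\ct_r(B_r)) = \ct_r(B_{r-1})^2\ct_r(B_r) - (q+q^{-1})\ct_r(B_{r-1})\ct_r(B_r)\ct_r(B_{r-1}) + \ct_r(B_r)\ct_r(B_{r-1})^2$ and substitute $\ct_r(B_r)=q^{-1}B_r\K{r}\Kw{r+1}$. The essential new feature compared with \cref{Prop:ctrCheck3} is that here the \emph{complicated} element $\ct_r(B_{r-1})$ is the one that gets squared, and its two summands $[S,B_{\t(r)}]_q$ and $\Delta$ carry different weights; consequently the torus factor $\K{r}\Kw{r+1}$ does not commute with $\ct_r(B_{r-1})$ by a single power of $q$ and cannot be pulled out wholesale as it was in \cref{Prop:ctrCheck3}. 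I would therefore commute $B_r\K{r}\Kw{r+1}$ through each of the two summands of $\ct_r(B_{r-1})$ separately, carefully recording the resulting $q$-powers.

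The heart of the calculation is the evaluation of the commutators of $B_r$ with the constituents of $\ct_r(B_{r-1})$. For this I would use that $B_r$ commutes with $S$ (\cref{AppEqn:BrS}); the relation $B_rB_{\t(r)} - B_{\t(r)}B_r = (q-q^{-1})^{-1}\G_r$ coming from \eqref{Eqn:BiRel3}, valid because $a_{r,\t(r)}=0$; the commutation rule between $S$ and $\Z_{\t(r)}$ recorded in \cref{AppLem:Ztr_S_comm}; and a commutation identity between $\Delta$ and $B_r$ governed by the quantum Serre relation \eqref{Eqn:BiRel4} for $(B_{r-1},B_r)$, exactly of the type employed at the end of \cref{Prop:ctrCheck3}. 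Feeding these into the expansion should, after factoring out the common torus element, express $p(\ct_r(B_{r-1}),\ct_r(B_r))$ as a scalar multiple of the bare quantum Serre element $p(B_{r-1},B_r)$ --- the mirror of the way $p(B_r,B_{r-1})=0$ entered \cref{Prop:ctrCheck3} --- which vanishes by \eqref{Eqn:BiRel4}.

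I expect the main obstacle to be the bookkeeping created by squaring $\ct_r(B_{r-1})$: this produces the four cross terms $[S,B_{\t(r)}]_q^2$, $[S,B_{\t(r)}]_q\Delta$, $\Delta[S,B_{\t(r)}]_q$ and $\Delta^2$, each of which must be moved past $B_r$ with its own $q$-power, and the many $\G_r$- and $\Z_{\t(r)}$-correction terms that then arise must be shown to cancel among themselves before the surviving part collapses onto $p(B_{r-1},B_r)$. Controlling these corrections, rather than any single algebraic step, is where the difficulty lies; the underlying mechanism is identical to that of \cref{Prop:ctrCheck3}.
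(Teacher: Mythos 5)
Your overall route is the same as the paper's: expand $p(\ct_r(B_{r-1}),\ct_r(B_r))$ using \cref{Eqn:ctr(Br-1)_NEW}, strip off the torus factor coming from $\ct_r(B_r)=q^{-1}B_r\K{r}\Kw{r+1}$, and reduce to commutation identities among $S$, $B_{\t(r)}$, $\Delta$, $B_r$. However, your claimed ``essential new feature'' is false. Both summands of $\ct_r(B_{r-1})$ commute with $\K{r}\Kw{r+1}$ by the \emph{same} power of $q$: writing $\mu = \alpha_r - \alpha_{\t(r)} + \varpi'_{r+1}$, one has $(\mu,\alpha_j)=0$ for all $j \in X$ (so $K_\mu \in \Uow$ and \eqref{Eqn:BiRel1} applies), and since $(\mu,\alpha_r+\alpha_{\t(r)})=0$ one gets $(\mu,\alpha_{r-1}) = (\mu,\alpha_{r-1}+\alpha_r+\alpha_{\t(r)}) = -1$, so both $[S,B_{\t(r)}]_q$ and $\Delta$ satisfy $xK_\mu = q^{-1}K_\mu x$. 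This uniformity is precisely one of the reasons $K_{\varpi'_{r+1}}$ is built into $\ct_r(B_r)$ (cf.\ \cref{Rem:Why_Large}), and it is why the paper's proof pulls $\K{r}\Kw{r+1}$ out wholesale in \eqref{Eqn:QSerre2_Proof1}, exactly as in \cref{Prop:ctrCheck3}. Your fallback of commuting through each summand separately would not fail --- you would simply discover the powers coincide --- but the premise reveals a slip in the weight bookkeeping.

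The more serious problem is that the toolkit you list cannot make the ``correction terms cancel among themselves''. Expanding via \cref{Eqn:Br[SBtr]}, the $\Delta$-free part reduces (as in \eqref{Eqn:QSerre2_2}) to $\tfrac{q^2}{q-q^{-1}}[S,\G_r]_q[S,B_{\t(r)}]_q - \tfrac{1}{q-q^{-1}}[S,B_{\t(r)}]_q[S,\G_r]_q$. The $\cs_{\t(r)}\Z_{\t(r)}$-halves of $\G_r$ are indeed handled by \cref{AppLem:Ztr_S_comm}, but the $\cs_r\Z_r$-halves force you to commute $[S,\Z_r]_q$ past $[S,B_{\t(r)}]_q$, and this requires the two further relations $\Z_r[S,B_{\t(r)}]_q = q^{-1}[S,B_{\t(r)}]_q\Z_r + q^{-2}(q-q^{-1})\Delta\Z_r$ and $S[S,B_{\t(r)}]_q = q^{-1}[S,B_{\t(r)}]_qS - q^{-2}(q^2-q^{-2})S\Delta$, i.e.\ \cref{Lem:QSerre2_Rel1,Lemma:QSerre2_Rel2}. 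These do not follow by bookkeeping from \cref{AppEqn:BrS}, \eqref{Eqn:BiRel3} and \cref{AppLem:Ztr_S_comm}: since $\Z_r$ contains $E_X^+$ and $S$ contains $F_X^+$, there is no direct way to commute them, and the paper proves these lemmas by transporting the identities through $T_{w_X}$, respectively $T_{w_X}\circ\ct_{r-1}^{-1}$. You also omit the small but essential observation that $\Delta$ commutes with $[S,B_{\t(r)}]_q$ (a consequence of $p(B_{r-1},B_r)=0$), which is what tames the square $\big([S,B_{\t(r)}]_q+\Delta\big)^2$. Finally, the endgame is not a collapse onto a multiple of $p(B_{r-1},B_r)$: that Serre relation is consumed in the intermediate commutations, and what survives, namely $-\big[[\Delta,B_r],\Delta\big]_{q^2} + (q^2+1)\big[[\Delta,B_r],\Delta\big] + \Delta^2B_r - q(q+q^{-1})\Delta B_r\Delta + q^2B_r\Delta^2$, vanishes identically.
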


    \begin{proof}
        Using \cref{Eqn:ctr,Eqn:ctr(Br-1)_NEW} we have
        \begin{align}
            \frac{q}{C^2}p( \ct_r(B_{r-1}), &\ct_r(B_r) )\big(\K{r}\Kw{r+1}\big)^{-1}\nonumber\\
                &= \big( [S,B_{\t(r)}]_q + \Delta \big)^2B_r + q^2B_r\big( [S,B_{\t(r)}]_q + \Delta \big)^2 \nonumber\\
                &\quad{}- q(q+q^{-1})\big( [S,B_{\t(r)}]_q + \Delta \big)B_r\big( [S,B_{\t(r)}]_q + \Delta \big). \label{Eqn:QSerre2_Proof1}
        \end{align}
        Since $B_{r-1}[B_{r-1}, B_r]_q = q^{-1}[B_{r-1},B_r]_qB_{r-1}$ and $B_{r-1}$ commutes with $F_X^+$ and $B_{\t(r)}$ it follows that
            \begin{equation*}
                [S,B_{\t(r)}]_q\Delta = \Delta[S,B_{\t(r)}]_q.       
            \end{equation*}
        This implies
            \begin{equation*}
                \big([S,B_{\t(r)}]_q +\Delta \big)^2=[S,B_{\t(r)}]_q^2 + 2[S,B_{\t(r)}]_q\Delta + \Delta^2.
            \end{equation*}
        We consider terms involving different powers of $\Delta$ in \eqref{Eqn:QSerre2_Proof1} separately. First, we consider the expression
            \begin{equation} \label{Eqn:QSerre2_Proof2}
                [S,B_{\t(r)}]_q^2B_r - q(q+q^{-1})[S,B_{\t(r)}]_qB_r[S,B_{\t(r)}]_q + q^2B_r[S,B_{\t(r)}]_q^2.
            \end{equation}
        Using the relation 
            \begin{equation*}
                [S,B_{\t(r)}]_qB_r = B_r[S,B_{\t(r)}]_q - \frac{1}{q-q^{-1}}[S,\G_r]_q
            \end{equation*}
        from \cref{Eqn:Br[SBtr]} it follows that
            \begin{align} \label{Eqn:QSerre2_2}
                \begin{split}
                    [S,B_{\t(r)}]_q^2B_r - q(q+q^{-1})&[S,B_{\t(r)}]_qB_r[S,B_{\t(r)}]_q + q^2B_r[S,B_{\t(r)}]_q^2\\
                        &= \frac{q^2}{q-q^{-1}}[S,\G_r]_q[S,B_{\t(r)}]_q - \frac{1}{q-q^{-1}}[S,B_{\t(r)}]_q[S,\G_r]_q.
                \end{split}
            \end{align}
        By \cref{AppLem:Ztr_S_comm} we have
            \[ \cs_{\t(r)}[S,\Z_{\t(r)}]_q = -(q-q^{-1})[\Delta, B_r] \]
        which implies 
            \begin{align}
                [S,\G_r]_q[S,B_{\t(r)}]_q
                    &= \cs_r[S,\Z_r]_q[S,B_{\t(r)}]_q + (q-q^{-1})[\Delta,B_r][S,B_{\t(r)}]_q, \label{Eqn:QSerre2_3}\\
                [S,B_{\t(r)}]_q[S,\G_r]_q 
                    &= \cs_r[S,B_{\t(r)}]_q[S,\Z_r]_q + (q-q^{-1})[S,B_{\t(r)}]_q[\Delta,B_r]. \label{Eqn:QSerre2_4}
            \end{align}
        Using \cref{Lem:QSerre2_Rel1,Lemma:QSerre2_Rel2} we commute $[S,\Z_r]_q$ through $[S,B_{\t(r)}]_q$. In particular we have
            \begin{align*}
                \Z_r[S,B_{\t(r)}]_q 
                    &= q^{-1}[S,B_{\t(r)}]_q\Z_r + q^{-2}(q-q^{-1})\Delta\Z_r,\\
                S[S,B_{\t(r)}]_q 
                    &= q^{-1}[S,B_{\t(r)}]_qS -q^{-2}(q^2-q^{-2})S\Delta.
            \end{align*}
        Combining this with the relations 
            \begin{align*}
                S\Delta &= q^3\Delta S,\\
                \Z_r\Delta &= q^{-3}\Delta\Z_r
            \end{align*}
        we obtain 
             \begin{align}
                [S,\Z_r]_q[S,B_{\t(r)}]_q
                    &= q^{-2}[S,B_{\t(r)}]_q[S,\Z_r]_q - (1-q^{-2})\Delta[S,\Z_r]_q \label{Eqn:QSerre2_5}
            \end{align}
        Again by \cref{Eqn:Br[SBtr]} we have
         \begin{align}
                [\Delta, B_r][S,B_{\t(r)}]_q 
                    &= \Delta[S,B_{\t(r)}]_qB_r - B_r[S,B_{\t(r)}]_q\Delta + \frac{1}{q-q^{-1}}\Delta[S,\G_r]_q, \label{Eqn:QSerre2_6} \\ 
                [S,B_{\t(r)}]_q[\Delta, B_r]
                    &= -B_r[S,B_{\t(r)}]_q\Delta + \Delta[S,B_{\t(r)}]_qB_r + \frac{1}{q-q^{-1}}[S,\G_r]_q\Delta. \label{Eqn:QSerre2_7}
            \end{align}
        By Equation \eqref{Eqn:SZtr} we have
            \begin{equation} \label{Eqn:[S,Ztr]}
                \cs_{\t(r)}[S, \Z_{\t(r)}]_q = -(q-q^{-1})[\Delta, B_r].
            \end{equation}
        Substituting \cref{Eqn:QSerre2_5,Eqn:QSerre2_6} into \eqref{Eqn:QSerre2_3}, \cref{Eqn:QSerre2_7} into \eqref{Eqn:QSerre2_4} and using \eqref{Eqn:[S,Ztr]} we obtain
            \begin{align*}
                [S,\G_r]_q[S,B_{\t(r)}]_q
                    &= q^{-2}\cs_r[S,B_{\t(r)}]_q[S,\Z_r]_q + q^{-2}\cs_r\Delta[S,\Z_r]_q+ (q-q^{-1})\Delta[\Delta,B_r]\\
                        &\quad{}+ (q-q^{-1})\Delta[S,B_{\t(r)}]_qB_r - (q-q^{-1})B_r[S,B_{\t(r)}]_q\Delta,\\
                [S,B_{\t(r)}]_q[S,\G_r]_q
                    &= \cs_r[S,B_{\t(r)}]_q[S,\Z_r]_q + \cs_r[S,\Z_r]_q\Delta + (q-q^{-1})\Delta[S,B_{\t(r)}]_qB_r\\
                        &\quad{}-(q-q^{-1})B_r[S,B_{\t(r)}]_q\Delta + (q-q^{-1})[\Delta,B_r]\Delta.
            \end{align*}
         Hence \cref{Eqn:QSerre2_2} implies that
            \begin{align} \label{Eqn:QSerre2_8}
                \begin{split}
                [S,B_{\t(r)}]_q^2B_r - &q(q+q^{-1})[S,B_{\t(r)}]_qB_r[S,B_{\t(r)}]_q + q^2B_r[S,B_{\t(r)}]_q^2\\
                    &= (q^2-1)\Delta[S,B_{\t(r)}]_qB_r - (q^2-1)B_r[S,B_{\t(r)}]_q\Delta - \big[[\Delta,B_r],\Delta\big]_{q^2}.
                \end{split}
            \end{align}
        We next consider the expression 
            \[ 2\Delta[S,B_{\t(r)}]_qB_r - (1+q^2)\big([S,B_{\t(r)}]_qB_r\Delta + \Delta B_r[S,B_{\t(r)}]_q\big) + 2q^2B_r[S,B_{\t(r)}]_q\Delta. \]
        By \cref{Eqn:Br[SBtr],AppEqn:ZtrS} we have
            \begin{align*}
                [S,B_{\t(r)}]_qB_r\Delta
                    &= B_r[S,B_{\t(r)}]_q\Delta - \frac{1}{q-q^{-1}}\cs_r[S,\Z_r]_q\Delta - [\Delta,B_r]\Delta,\\
                \Delta B_r[S,B_{\t(r)}]_q 
                    &= \Delta[S,B_{\t(r)}]_qB_r + \frac{1}{q-q^{-1}}\cs_r\Delta[S,\Z_r]_q + \Delta[\Delta,B_r]
            \end{align*}
        from which it follows that
            \begin{align} \label{Eqn:QSerre2_9}
                \begin{split}
                    2\Delta[S,&B_{\t(r)}]_qB_r - (1+q^2)\big([S,B_{\t(r)}]_qB_r\Delta + \Delta B_r[S,B_{\t(r)}]_q\big) + 2q^2B_r[S,B_{\t(r)}]_q\Delta\\
                        &= (q^2-1)B_r[S,B_{\t(r)}]_q\Delta - (q^2-1)\Delta[S,B_{\t(r)}]_qB_r + (q^2+1)\big[[\Delta,B_r],\Delta\big].
                \end{split}
            \end{align}
        Combining \cref{Eqn:QSerre2_8,Eqn:QSerre2_9} and substituting into \cref{Eqn:QSerre2_Proof1} gives
            \begin{align*}
                \frac{q}{C^2}p&\big(\ct_r(B_{r-1}), \ct_r(B_r) \big) \big(\K{r}\Kw{r+1}^{-1}\big)\\
                    &= -\big[[\Delta,B_r],\Delta\big]_{q^2} + (q^2+1)\big[[\Delta,B_r],\Delta\big] + \Delta^2B_r - q(q+q^{-1})\Delta B_r\Delta\\
                        &\quad{}+ q^2B_r\Delta^2\\
                    &= 0
            \end{align*}
        as required.
    \end{proof}

\Cref{Lemma:ctrCheck1-Part1,Lemma:ctrCheck1-Part2,Lemma:ctrCheck1-Part3,Prop:ctrCheck2,Prop:ctrCheck3,Prop:ctrCheck4} together show that $\ct_r$ is an algebra endomorphism of $\Beps$. 


\subsection{Proof that $\ct_r^{-1}$ is an algebra endomorphism}
We now show that $\ct_r^{-1}$ given in Equation \eqref{Eqn:ctri} also defines an algebra endomorphism of $\Beps$. First, with an additional constraint on the parameters $\be$, we construct an anti-involution $\phi:\Beps \rightarrow \Beps$ such that
\begin{equation}
	\ct_r^{-1} = \phi \circ \ct_r \circ \phi
\end{equation}
holds, compare with \cite[Section~37.2.4]{b-Lus94}. In particular, set
\begin{align} \label{Eqn:anti-inv}
	\begin{split}
		\phi(B_i) = B_i, \quad \phi(L_i) = L_{\t(i)} \quad \mbox{for $i \in I \setminus X$,}\\
		\phi(E_j) = E_j, \quad \phi(F_j) = F_j, \quad \phi(K_j) = K_j^{-1} \quad \mbox{for $i \in X$.}
	\end{split}
\end{align}
Note that the restriction of $\phi$ on $\mathcal{M}_X$ coincides with Lusztig's algebra anti-automorphism $\t$ on $\Uqg$, see \cite[Section~3.1.3]{b-Lus94}.

\begin{lemma} \label{lemma:phi_anti-inv}
The map $\phi$ given by Equation \eqref{Eqn:anti-inv} is an involutive algebra anti-automorphism of $\Beps$ if and only if $\cs_r = \cs_{\t(r)}$.
\end{lemma}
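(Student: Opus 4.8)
The plan is to treat $\phi$ as a prospective anti-homomorphism defined on generators and to verify that it respects the \emph{reversal} of each defining relation of $\Beps$, isolating at the end the one relation that is sensitive to the parameters. First I would record that on the torus $\phi$ acts by $K_\mu \mapsto K_{-\mu}$: this is consistent with the prescribed values $\phi(L_i) = L_{\t(i)} = L_i^{-1}$ and $\phi(K_j) = K_j^{-1}$, and it maps $U_\Theta^0$ to itself. Since $\phi|_{\mathcal{M}_X}$ is Lusztig's algebra anti-automorphism, the internal relations of $\mathcal{M}_X$, those of the commutative algebra $U_\Theta^0$, and their mutual commutation relations are preserved automatically. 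Involutivity is immediate on generators, as $\phi^2(B_i) = B_i$, $\phi^2(L_i) = L_{\t(\t(i))} = L_i$, and $\phi^2$ fixes $E_j, F_j, K_j$; hence once $\phi$ is shown to be a well-defined anti-homomorphism it is its own inverse and thus bijective.

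The heart of the matter is relations \eqref{Eqn:BiRel1}--\eqref{Eqn:BiRel4}. Relation \eqref{Eqn:BiRel2} and the Serre relation \eqref{Eqn:BiRel4} are preserved for free: $\phi$ fixes every $B_i$ and $E_j$ while reversing order, and the polynomial $p(x,y)$ is palindromic, so $\phi(p(B_i,B_j)) = p(B_i,B_j)$ and $\phi(B_iE_j - E_jB_i) = E_jB_i - B_iE_j$. Relation \eqref{Eqn:BiRel1} survives because $\phi(K_\mu) = K_{-\mu}$ reverses the sign of $(\mu,\alpha_i)$ in the exponent, exactly compensating the order reversal of the two sides; the commuting instances of \eqref{Eqn:BiRel3} (where $a_{ij}=0$ and $i \neq \t(j)$) are symmetric. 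Everything therefore reduces to the corrected relation \eqref{Eqn:BiRel3} for the pairs $(i,\t(i))$: applying $\phi$ to $B_iB_{\t(i)} - B_{\t(i)}B_i = (q-q^{-1})^{-1}\G_i$ reverses the left-hand side, so the relation is preserved \emph{precisely} when $\phi(\G_i) = -\G_i$.

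It then remains to compute $\phi(\G_i)$ from \eqref{Eqn:Gi}. The key inputs are $\phi(E_X^{\pm}) = E_X^{\mp}$ (the anti-automorphism reverses the nested $q$-commutator, carrying the left-nested $E_X^+$ to the right-nested $E_X^-$), together with $\phi(L_j) = L_{\t(j)}$ and the commutation of $L_r$ with $E_X^-$, which holds because $(\alpha_r - \alpha_{\t(r)}, \sum_{j\in X}\alpha_j) = 0$. These give $\phi(\Z_r) = \Z_{\t(r)}$, $\phi(\Z_{\t(r)}) = \Z_r$, and likewise $\phi(\Z_i) = \Z_{\t(i)}$ for $i \neq r,\t(r)$, whence $\phi(\G_i) = \cs_i\Z_{\t(i)} - \cs_{\t(i)}\Z_i$ and the condition $\phi(\G_i) = -\G_i$ becomes $(\cs_i - \cs_{\t(i)})(\Z_i + \Z_{\t(i)}) = 0$. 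For $i \neq r,\t(r)$ this is automatic from \eqref{Eqn:cs_cond}. For $i = r$ the elements $\Z_r$ and $\Z_{\t(r)}$ have distinct torus parts $L_{\t(r)}$ and $L_r$, so they are linearly independent and $\phi(\G_r) = -\G_r$ holds if and only if $\cs_r = \cs_{\t(r)}$. This delivers both implications at once: under $\cs_r = \cs_{\t(r)}$ all relations are $\phi$-invariant, so $\phi$ extends to an involutive anti-automorphism, while conversely any such $\phi$ must preserve \eqref{Eqn:BiRel3} at $(r,\t(r))$ and hence force $\cs_r = \cs_{\t(r)}$.

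The main obstacle is this final evaluation of $\phi(\G_r)$: one must track carefully how the anti-automorphism unwinds the nested brackets $E_X^{\pm}$ and confirm the weight computation making $L_r$ commute with $E_X^-$. The decisive point is that $\phi$ \emph{interchanges} $\Z_r$ and $\Z_{\t(r)}$ rather than fixing them, and it is exactly this interchange that converts the symmetry of $\G_r$ into the asymmetric constraint $\cs_r = \cs_{\t(r)}$; all remaining verifications are routine bookkeeping.
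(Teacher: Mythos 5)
Your proposal is correct and follows essentially the same route as the paper's proof: reduce everything to the relation $B_iB_{\t(i)} - B_{\t(i)}B_i = (q-q^{-1})^{-1}\G_i$, use $\phi(E_X^{\pm}) = E_X^{\mp}$ to obtain $\phi(\Z_i) = \Z_{\t(i)}$, and observe that preservation of this relation is equivalent to $\cs_i = \cs_{\t(i)}$, which by \eqref{Eqn:cs_cond} reduces to the single condition $\cs_r = \cs_{\t(r)}$. Your extra verifications (palindromicity of $p$, the sign reversal $K_\mu \mapsto K_{-\mu}$ in \eqref{Eqn:BiRel1}, linear independence of $\Z_r$ and $\Z_{\t(r)}$, and the weight computation making $L_r$ commute with $E_X^-$) are details the paper leaves implicit, but the core argument is identical.
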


\begin{proof}
By considering the defining relations of $\Beps$ given in Equations \eqref{Eqn:BiRel1} -- \eqref{Eqn:BiRel4}, it suffices to show that $\phi$ preserves the relation 
\begin{equation} \label{Eqn:anti-inv_check}
	B_iB_{\t(i)} - B_{\t(i)}B_i = \frac{1}{q-q^{-1}}(\cs_i\Z_i - \cs_{\t(i)}\Z_{\t(i)})
\end{equation}
for $i \in I \setminus X$. Since $\phi(E_X^+) = E_X^-$ and $\phi(E_X^-) = E_X^+$, it follows that
\begin{equation*}
	\phi(\Z_i) = \Z_{\t(i)} \quad \mbox{for all $i \in I \setminus X$.}
\end{equation*}
Since
\begin{equation*}
	\phi(B_iB_{\t(i)} - B_{\t(i)}B_i) = B_{\t(i)}B_i - B_iB_{\t(i)} = \frac{1}{q-q^{-1}}(\cs_{\t(i)}\Z_{\t(i)} - \cs_i\Z_i)
\end{equation*}
holds for all $i \in I \setminus X$, Equation \eqref{Eqn:anti-inv_check} is preserved by $\phi$ if and only if
\begin{equation*}
	\cs_{\t(i)}\Z_{\t(i)} - \cs_i\Z_i = \cs_i\Z_{\t(i)} - \cs_{\t(i)}\Z_i
\end{equation*} 
for all $i \in I \setminus X$. This holds if and only if $\cs_i = \cs_{\t(i)}$ for all $i \in I \setminus X$. The result follows from this and Equation \eqref{Eqn:cs_cond}.
\end{proof}

By the above lemma and the results of the previous section, the map $\phi \circ \ct_r \circ \phi$ is an algebra endomorphism of $\Beps$ if $\cs_r = \cs_{\t(r)}$. 

\begin{lemma} \label{lemma:ctri=pctip}
	Suppose $\cs_r = \cs_{\t(r)}$. Then for $i \in I \setminus X$ the relation 
	\begin{equation*}
	\ct_r^{-1}(B_i) = \phi \circ \ct_r \circ \phi(B_i)
	\end{equation*}
	holds.
\end{lemma}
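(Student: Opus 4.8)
The plan is to verify the asserted identity directly on each generator $B_i$, $i \in I \setminus X$, by computing $\phi \circ \ct_r \circ \phi(B_i)$ from \cref{Eqn:ctr} and comparing with the formula \cref{Eqn:ctri} for $\ct_r^{-1}(B_i)$. Since $\phi(B_i) = B_i$, this reduces to applying $\ct_r$ once and then $\phi$ once. Before starting I would record the action of $\phi$ on the elements occurring in \cref{Eqn:ctr}: from \cref{Eqn:anti-inv} and \cref{lemma:phi_anti-inv} we have $\phi(\K{r}) = \phi(L_r) = L_{\t(r)} = \K{\t(r)}$ and $\phi(K_X) = K_X^{-1}$; since $\phi$ inverts the torus generators, its restriction to $U_\Theta^0$ is the algebra map $K_\mu \mapsto K_{-\mu}$, so that $\phi(K_{\varpi'_{r+1}}) = K_{\varpi'_{\t(r+1)}}$ because $\varpi'_{\t(r+1)} = -\varpi'_{r+1}$; and arguing exactly as for $E_X^{\pm}$ in the proof of \cref{lemma:phi_anti-inv} gives $\phi(F_X^+) = F_X^-$ and $\phi(F_X^-) = F_X^+$. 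Finally, as an algebra anti-automorphism $\phi$ obeys $\phi([a,b]_c) = [\phi(b),\phi(a)]_c$, which I will use to reverse the nested $q$-commutators.

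For $j \in I \setminus X$ not adjacent to $r$ or $\t(r)$ the identity is immediate, since $\ct_r(B_j) = B_j = \ct_r^{-1}(B_j)$ and $\phi(B_j) = B_j$. For $j = r$ I would compute $\phi\ct_r\phi(B_r) = q^{-1}\phi(K_{\varpi'_{r+1}})\phi(\K{r})B_r = q^{-1}K_{\varpi'_{\t(r+1)}}\K{\t(r)}B_r$ and then move $B_r$ through the Cartan part using \cref{Eqn:BiRel1}. Writing $\nu = (\alpha_{\t(r)} - \alpha_r) + (\varpi_{\t(r+1)} - \varpi_{r+1})$ for the weight of $\K{\t(r)}K_{\varpi'_{\t(r+1)}}$, a short pairing computation gives $(\nu,\alpha_r) = -2$, using that $\t(r)$ is not adjacent to $r$ and $n \neq 2r$. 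Hence the prefactor transforms as $q^{-1}q^{-(\nu,\alpha_r)} = q$, yielding $\phi\ct_r\phi(B_r) = qB_r\K{\t(r)}K_{\varpi'_{\t(r+1)}} = \ct_r^{-1}(B_r)$, in agreement with \cref{Eqn:ctri}. The case $j = \t(r)$ follows by applying $\t$.

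The substantial case is $j = r-1$, with $j = \t(r-1)$ then following by $\t$-symmetry. Applying $\phi$ to \cref{Eqn:ctr} and using the bracket-reversal rule with the $\phi$-values above gives
\begin{equation*}
\phi\ct_r\phi(B_{r-1}) = C\big( \big[ [[B_{\t(r)}, F_X^-]_q, B_r]_q, B_{r-1} \big]_q + q\cs_{\t(r)} K_X \K{\t(r)} B_{r-1} \big),
\end{equation*}
whereas \cref{Eqn:ctri} reads $\ct_r^{-1}(B_{r-1}) = C\big( \big[B_{\t(r)}, [F_X^-,[B_r, B_{r-1}]_q]_q\big]_q + \cs_{r}B_{r-1}\K{\t(r)}K_X^{-1}\big)$. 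Since $r-1$ has distance at least two from $X \cup \{\t(r)\}$, the element $B_{r-1}$ commutes with both $F_X^-$ and $B_{\t(r)}$ (the commutation facts used in \cref{Prop:ctrCheck4}), so $[B_{\t(r)}, F_X^-]_q$ commutes with $B_{r-1}$ and the $q$-Jacobi identity rewrites the leading triple commutator as $\big[[B_{\t(r)}, F_X^-]_q, [B_r, B_{r-1}]_q\big]_q$. Comparing this with the bracket in \cref{Eqn:ctri} produces a correction proportional to $[F_X^-, [\G_r, B_{r-1}]_q]$, the $\G_r = \cs_r\Z_r - \cs_{\t(r)}\Z_{\t(r)}$ arising from the relation $[B_r, B_{\t(r)}] = (q-q^{-1})^{-1}\G_r$ of \cref{Eqn:BiRel3}.

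The crux, and the step I expect to be the main obstacle, is to show that this correction is exactly the difference $\cs_{r}B_{r-1}\K{\t(r)}K_X^{-1} - q\cs_{\t(r)} K_X \K{\t(r)} B_{r-1}$ of the two inhomogeneous terms. Here I would first evaluate $[\G_r, B_{r-1}]_q$ using \cref{Eqn:BiRel1,Eqn:BiRel2}, finding that $[\Z_{\t(r)}, B_{r-1}]_q = 0$ while $[\Z_r, B_{r-1}]_q = -(q-q^{-1})B_{r-1}\Z_r$, so that only the $\cs_r\Z_r = -\cs_r(1-q^{-2})E_X^+\K{\t(r)}$ part survives. The hidden $E_X^+$ is then absorbed by the relation expressing $[E_X^+, F_X^-]$ as a Cartan element (of the form $(q-q^{-1})^{-1}(K_X - K_X^{-1})$), collected in \cref{Appendix}, together with $[F_X^-, \K{\t(r)}] = 0$. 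After reordering the resulting pure Cartan factors past $B_{r-1}$ via \cref{Eqn:BiRel1} and invoking the hypothesis $\cs_r = \cs_{\t(r)}$, the two inhomogeneous terms match, establishing $\phi\ct_r\phi(B_{r-1}) = \ct_r^{-1}(B_{r-1})$. The case $j = \t(r-1)$ is the $\t$-image, and the remaining generators are covered by the earlier cases, which completes the verification that $\ct_r^{-1} = \phi \circ \ct_r \circ \phi$ on $\Beps$.
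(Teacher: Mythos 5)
Your proposal is correct, and its skeleton---reduce to the generators, use that the anti-automorphism $\phi$ reverses nested $q$-brackets and inverts the Cartan part, then invoke $\cs_r = \cs_{\t(r)}$---is the same as the paper's; the cases $j=r$, $j=\t(r)$ and the non-adjacent generators are handled identically. Where you diverge is the execution of the crucial case $j=r-1$. The paper first rewrites $\ct_r(B_{r-1})$ in the re-associated form of \cref{Eqn:ctr(Br-1)_NEW}, namely $C\big(\big[[[B_{r-1},B_r]_q,F_X^+]_q,B_{\t(r)}\big]_q + q\cs_{\t(r)}B_{r-1}\K{r}K_X\big)$, whose validity is the content of the appendix \cref{AppLem:Alt_ctr}; applying $\phi$ to this form reproduces the bracket of \cref{Eqn:ctri} verbatim, so no correction terms arise and the case closes in three lines. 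You instead apply $\phi$ to the raw formula \cref{Eqn:ctr} and re-associate on the image side, generating the correction via \cref{Eqn:qcomm_trick}, the relation $[B_{\t(r)},B_r] = -(q-q^{-1})^{-1}\G_r$, the identities $[\Z_{\t(r)},B_{r-1}]_q = 0$ and $[\Z_r,B_{r-1}]_q = -(q-q^{-1})B_{r-1}\Z_r$, and \cref{Eqn:EF-FE}. This is exactly the computation behind \cref{AppLem:Alt_ctr} transported through $\phi$: your route is self-contained in that it never needs the alternative expression for $\ct_r(B_{r-1})$, at the price of redoing that appendix calculation, whereas the paper's route is shorter because the re-association has been done once and for all. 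Your identification of the crux is also accurate: the correction equals $-\cs_rB_{r-1}(K_X-K_X^{-1})\K{\t(r)}$, which matches the difference of the two inhomogeneous terms after rewriting $q\cs_{\t(r)}K_X\K{\t(r)}B_{r-1} = \cs_{\t(r)}B_{r-1}K_X\K{\t(r)}$ via \cref{Eqn:BiRel1} and using $\cs_r=\cs_{\t(r)}$, so the verification goes through.
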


\begin{proof}
	By Equation \eqref{Eqn:ctri} it suffices to only consider $i = r-1$ and $i = r$. 
	The result for $i = r$ follows since
		\begin{equation*}
			\phi(B_rL_r\Kw{r+1}) = q^2B_rL_{\t(r)}\Kw{\t(r+1)}
		\end{equation*}
	and hence
	\begin{equation*}
		\phi \circ \ct_r \circ \phi(B_r) = qB_rL_{\t(r)}\Kw{\t(r+1)} = \ct_r^{-1}(B_{r-1}).
	\end{equation*}
	Recall from Equation \eqref{Eqn:ctr(Br-1)_NEW} that we have
	\begin{equation*}
		\ct_r(B_{r-1}) = C\big(\big[[[B_{r-1}, B_r]_q, F_X^+]_q, B_{\t(r)} \big]_q + q\cs_{\t(r)}B_{r-1}L_rK_X \big).
	\end{equation*}
	Since $\phi$ is an anti-involution of $\Beps$ we have
	\begin{align*}
		\phi\big( \big[[[B_{r-1}, B_r]_q, F_X^+]_q, B_{\t(r)} \big]_q \big) &= \big[B_{\t(r)}, [F_X^-, [B_r, B_{r-1}]_q]_q \big]_q,\\
		\phi(B_{r-1}L_rK_X) &= q^{-1}B_{r-1}L_{\t(r)}K_X^{-1}.
	\end{align*}
	Combining both relations and comparing with Equation \eqref{Eqn:ctri} gives
	\begin{align*}
		\phi \circ \ct_r \circ \phi(B_{r-1}) 
			&= C\big(\big[B_{\t(r)}, [F_X^-, [B_r, B_{r-1}]_q]_q \big]_q + \cs_rB_{r-1}L_{\t(r)}K_X^{-1} \big)\\
			&= \ct_r^{-1}(B_{r-1})
	\end{align*}
	as required, where we have additionally used the condition $\cs_r = \cs_{\t(r)}$. 
\end{proof}

As a consequence of Lemma \ref{lemma:ctri=pctip} and Section \ref{Subsec:ctr_endo} the following corollary is immediate.

\begin{corollary} \label{Cor:ctri_endo_withcond}
	If $\cs_r = \cs_{\t(r)}$ then $\ct_r^{-1}$ given by Equation \eqref{Eqn:ctri} defines an algebra endomorphism of $\Beps$.
\end{corollary}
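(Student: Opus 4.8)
The plan is to exhibit the prescribed map $\ct_r^{-1}$ as the conjugate $\phi \circ \ct_r \circ \phi$ of $\ct_r$ by the anti-involution $\phi$, and then to observe that such a conjugate is automatically an algebra endomorphism. Under the standing hypothesis $\cs_r = \cs_{\t(r)}$, \cref{lemma:phi_anti-inv} makes $\phi$ a well-defined involutive algebra anti-automorphism of $\Beps$, while \cref{Subsec:ctr_endo} shows that $\ct_r$ is an algebra endomorphism of $\Beps$. A one-line check on a product then shows that $\phi \circ \ct_r \circ \phi$ is an algebra endomorphism: since $\phi$ reverses the order of products and $\ct_r$ preserves it, one has $(\phi \circ \ct_r \circ \phi)(xy) = (\phi \circ \ct_r \circ \phi)(x)\,(\phi \circ \ct_r \circ \phi)(y)$ for all $x, y \in \Beps$, so no relation-by-relation verification is needed for this composite.

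It then remains to identify $\phi \circ \ct_r \circ \phi$ with the map $\ct_r^{-1}$ defined by \eqref{Eqn:ctri}. As both are algebra homomorphisms, it is enough to compare them on the algebra generators of $\Beps$, namely on $\mathcal{M}_X\Uow$ and on the elements $B_j$ for $j \in I \setminus X$. On the generators $B_j$ this comparison is precisely \cref{lemma:ctri=pctip}, which yields $(\phi \circ \ct_r \circ \phi)(B_j) = \ct_r^{-1}(B_j)$ for every $j \in I \setminus X$. For the generators lying in $\mathcal{M}_X\Uow$ I would use that $\ct_r|_{\mathcal{M}_X\Uow} = \widet{T}_r|_{\mathcal{M}_X\Uow}$ and that $\phi$ restricts on $\mathcal{M}_X$ to Lusztig's algebra anti-automorphism, thereby reducing the claim to the identity $\phi \circ \widet{T}_r \circ \phi = \widet{T}_r^{-1}$ on $\mathcal{M}_X\Uow$. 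This is the standard compatibility of the Lusztig braid operators with that anti-automorphism (cf.\ \cite[Section~37.2.4]{b-Lus94}), and its right-hand side coincides with $\ct_r^{-1}|_{\mathcal{M}_X\Uow}$ by the normalisation fixed in \cref{Thm:ctr_Aut}.

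Combining the two comparisons, $\phi \circ \ct_r \circ \phi$ and $\ct_r^{-1}$ agree on every generator of $\Beps$ and hence coincide; as the former is an algebra endomorphism, so is the latter. I expect the only non-formal point to be the comparison on $\mathcal{M}_X\Uow$, which rests on the well-known behaviour of the Lusztig automorphisms under the anti-automorphism $\phi$; everything concerning the generators $B_j$ and the endomorphism property of $\ct_r$ is already in place from \cref{Subsec:ctr_endo,lemma:ctri=pctip}, so the argument is otherwise purely formal.
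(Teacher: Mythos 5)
Your proposal is correct and follows exactly the paper's own route: conjugate $\ct_r$ (an endomorphism by \cref{Subsec:ctr_endo}) by the anti-involution $\phi$ of \cref{lemma:phi_anti-inv}, and identify $\phi \circ \ct_r \circ \phi$ with $\ct_r^{-1}$ on generators via \cref{lemma:ctri=pctip}. The only difference is that you spell out the comparison on $\mathcal{M}_X\Uow$ (via $\phi \circ \widet{T}_r \circ \phi = \widet{T}_r^{-1}$, i.e.\ \cite[Section~37.2.4]{b-Lus94}), a step the paper leaves implicit when it declares the corollary immediate.
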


We now show that $\ct_r^{-1}$ defines an algebra endomorphism of $B_{\be}$ also for general parameters. When necessary, we specify the dependence on the parameters $\be$ explicitly by writing $\ct_{r,\be}$ for $\ct_r$ and $B_i^{\be}$ for $B_i$, where $i \in I \setminus X$. Additionally, we also write $C^{\be}$ for the constant $C$ appearing in $\ct_r(B_{r-1})$ and $\ct_r(B_{\t(r-1)})$. 
By \cite[Lemma~2.5.1]{a-Wat19}, there exists a field extension $\field_1$ of $\field(q^{1/2})$ (\cite[Section~2.4]{a-Wat19}) and additional parameters $\bet, \bz \in (\field_1^\times)^I$ satisfying
\begin{align}
	\zeta_i\zeta_{\t(i)} &= 1 \quad \mbox{for all $i \in I$,} \label{Eqn:Wat_cond1} \\
	\zeta_i &= 1 \quad \mbox{unless $i = r, \t(r)$,} \label{Eqn:Wat_cond2} \\
	\cs_i\eta_i\eta_{\t(i)}\zeta_i &= \cs'_i \quad \mbox{for all $i \in I$} \label{Eqn:Wat_cond3} 
\end{align}
such that the map $\mathcal{A}_{\bet,\bz}:B_{\be}\rightarrow B_{\be'}$ with $\mathcal{A}_{\bet,\bz}|_{\mathcal{M}_X} = \text{id}|_{\mathcal{M}_X}$ and
\begin{align}
	\mathcal{A}_{\bet,\bz}(B_i^{\be}) &= \eta_i^{-1}B_i^{\be'}, \label{Eqn:A(B)} \\
	\mathcal{A}_{\bet,\bz}(L_i) &= \zeta_i^{-1}L_i, \label{Eqn:A(L)} \\
	\mathcal{A}_{\bet,\bz}(K_j) &= K_j
\end{align}
for all $i \in I \setminus X$ and $j \in X$ is an algebra isomorphism. Here, we consider the algebras $B_{\be}$ and $B_{\be'}$ over $\field_1$. We note that $\Aez$ restricted to the semisimple part is the identity. In particular, we have $\Aez(K_{\varpi_j}) = K_{\varpi_j}$ for $j \in X$.

 Additionally, by \eqref{Eqn:BiRel3} and \eqref{Eqn:Wat_cond1} the map $f:B_{\be'} \rightarrow B_{\be'}$ given by
\begin{equation} \label{Eqn:f_map}
	f(B_i) = 	\begin{cases}
					\zeta_i^{-1}B_i & \mbox{if $i = r, \t(r)$,}\\
					B_i & \mbox{otherwise}
				\end{cases}
\end{equation}
and $f|_{\mathcal{M}_XU_0^{\Theta}} = \text{id}|_{\mathcal{M}_XU_0^{\Theta}}$ is an algebra automorphism of $B_{\be}$.

\begin{lemma} \label{Lemma:ctri_alg_end}
	For any $\be$ and $\be'$ satisfying \eqref{Eqn:cs_cond} and for any $\bet, \bz \in (\field_1^{\times})^I$ satisfying \eqref{Eqn:Wat_cond1}, \eqref{Eqn:Wat_cond2} and \eqref{Eqn:Wat_cond3}, the relation
	\begin{equation} \label{Eqn:ctri_gen}
		\ct_{r,\be'}^{-1}(B_i^{\be'}) = \mathcal{A}_{\bet,\bz} \circ \ct_{r,\be}^{-1} \circ \mathcal{A}_{\bet,\bz}^{-1} \circ f(B_i^{\be'})
	\end{equation}
	holds for any $i \in I \setminus X$.
\end{lemma}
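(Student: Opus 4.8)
The plan is to verify the identity \eqref{Eqn:ctri_gen} directly on each generator $B_i^{\be'}$ with $i \in I \setminus X$, exploiting the fact that the three maps $\mathcal{A}_{\bet,\bz}$, $\mathcal{A}_{\bet,\bz}^{-1}$ and $f$ all act by rescaling the relevant generators by explicit scalars while fixing $\mathcal{M}_X$. Concretely, for a fixed $i$ I would first apply $f$ using \eqref{Eqn:f_map}, then pull back to $B_{\be}$ via $\mathcal{A}_{\bet,\bz}^{-1}(B_i^{\be'}) = \eta_i B_i^{\be}$ (the inverse of \eqref{Eqn:A(B)}), apply the explicit formula \eqref{Eqn:ctri} for $\ct_{r,\be}^{-1}$, and finally push forward by $\mathcal{A}_{\bet,\bz}$ using \eqref{Eqn:A(B)} and \eqref{Eqn:A(L)}. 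Since $\mathcal{A}_{\bet,\bz}$ is an algebra homomorphism fixing each $F_j$ for $j \in X$ and each $K_{\varpi_j}$ for $j \in X$, it distributes over the iterated $q$-commutators appearing in \eqref{Eqn:ctri} and merely extracts a product of $\eta$- and $\zeta$-scalars. The whole verification therefore reduces to matching these scalars against \eqref{Eqn:ctri}, organised by casework on $i$.

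For $i \notin \{r-1, r, \t(r-1), \t(r)\}$ the map $\ct_{r,\be}^{-1}$ fixes $B_i^{\be}$, the map $f$ fixes $B_i^{\be'}$, and the scalar $\eta_i$ produced by $\mathcal{A}_{\bet,\bz}^{-1}$ is cancelled by $\eta_i^{-1}$ when $\mathcal{A}_{\bet,\bz}$ is reapplied, so both sides equal $B_i^{\be'}$. For $i = r$ (and symmetrically $i = \t(r)$) the composite produces the scalar $\zeta_r^{-1}$ from $f$, $\eta_r$ from $\mathcal{A}_{\bet,\bz}^{-1}$, and $\eta_r^{-1}\zeta_{\t(r)}^{-1}$ from $\mathcal{A}_{\bet,\bz}$ acting on $B_r\K{\t(r)}$, while $K_{\varpi'_{\t(r+1)}}$ is fixed because $r+1, \t(r+1) \in X$. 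The product of all extracted scalars reduces to $\zeta_r^{-1}\zeta_{\t(r)}^{-1} = 1$ by \eqref{Eqn:Wat_cond1}, so the composite reproduces $qB_r\K{\t(r)}K_{\varpi'_{\t(r+1)}}$, that is, \eqref{Eqn:ctri} for $\be'$.

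The substantive case is $i = r-1$, with $i = \t(r-1)$ following by the $\t$-symmetry of \eqref{Eqn:ctri}. Here $f$ fixes $B_{r-1}^{\be'}$, and the $\eta_{r-1}$ from $\mathcal{A}_{\bet,\bz}^{-1}$ cancels the $\eta_{r-1}^{-1}$ produced by $\mathcal{A}_{\bet,\bz}$ on the factor $B_{r-1}$ present in both summands of \eqref{Eqn:ctri}. For the commutator summand $\big[B_{\t(r)}, [F_X^-, [B_r, B_{r-1}]_q]_q\big]_q$ the remaining generators $B_{\t(r)}$ and $B_r$ contribute a further factor $\eta_{\t(r)}^{-1}\eta_r^{-1}$ under $\mathcal{A}_{\bet,\bz}$ (with $F_X^-$ fixed), so the resulting coefficient is $C^{\be}\eta_r^{-1}\eta_{\t(r)}^{-1}$, and I would check that this equals $C^{\be'}$: combining \eqref{Eqn:Wat_cond1} and \eqref{Eqn:Wat_cond3} yields $\cs_r'\cs_{\t(r)}' = \cs_r\cs_{\t(r)}(\eta_r\eta_{\t(r)})^2$, whence $C^{\be'} = (q\cs_r'\cs_{\t(r)}')^{-1/2} = C^{\be}\eta_r^{-1}\eta_{\t(r)}^{-1}$. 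For the tail summand $\cs_r B_{r-1}\K{\t(r)}K_X^{-1}$ the map $\mathcal{A}_{\bet,\bz}$ contributes $\zeta_{\t(r)}^{-1}$ on $\K{\t(r)}$ and fixes $K_X^{-1}$, giving coefficient $C^{\be}\cs_r\zeta_{\t(r)}^{-1}$; using $C^{\be'}\cs_r' = C^{\be}\cs_r\zeta_r$ (from the same two conditions) together with $\zeta_{\t(r)}^{-1} = \zeta_r$ from \eqref{Eqn:Wat_cond1}, this equals $C^{\be'}\cs_r'$, so both summands reproduce exactly \eqref{Eqn:ctri} for $\be'$.

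I expect the only genuine obstacle to be the bookkeeping of the normalisation constant $C = (q\cs_r\cs_{\t(r)})^{-1/2}$ under the parameter change: matching $C^{\be}\eta_r^{-1}\eta_{\t(r)}^{-1}$ with $C^{\be'}$ rests on the quadratic relation $\cs_r'\cs_{\t(r)}' = \cs_r\cs_{\t(r)}(\eta_r\eta_{\t(r)})^2$ and hence on a consistent choice of the square roots $\sqrt{\cs_i}$ in the field extension $\field_1$. The conditions \eqref{Eqn:Wat_cond1} and \eqref{Eqn:Wat_cond3} are precisely what make both the constant and the tail scalar align, so once the square-root conventions are fixed everything else is routine rescaling, and the $\t$-symmetry of \eqref{Eqn:ctri} reduces the $i = \t(r), \t(r-1)$ cases to those already treated.
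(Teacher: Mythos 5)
Your proposal is correct and follows essentially the same route as the paper: reduce to checking the generators, trace the $\eta$- and $\zeta$-scalars through $\mathcal{A}_{\bet,\bz} \circ \ct_{r,\be}^{-1} \circ \mathcal{A}_{\bet,\bz}^{-1} \circ f$, and match the normalisation via the identities $C^{\be}\eta_r^{-1}\eta_{\t(r)}^{-1} = C^{\be'}$ and $\cs_r\eta_r\eta_{\t(r)}\zeta_{\t(r)}^{-1} = \cs_r'$, both consequences of \eqref{Eqn:Wat_cond1} and \eqref{Eqn:Wat_cond3}. The paper performs exactly these two computations for $B_r$ and $B_{r-1}$ and disposes of the remaining generators just as you do, so there is nothing to add.
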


\begin{proof}
By Equations \eqref{Eqn:ctri}, \eqref{Eqn:A(B)} and \eqref{Eqn:f_map}, it suffices to verify \eqref{Eqn:ctri_gen} on the elements $B_r$ and $B_{r-1}$. We have
\begin{align*}
	\Aez \circ \ct_{r,\be}^{-1} \circ \Aez^{-1} \circ f(B_r^{\be'})
		&\overset{\phantom{\eqref{Eqn:ctri}}}{=}	\zeta_r^{-1}\eta_r \Aez \circ \ct_{r,\be}^{-1}(B_r^{\be}) \\
		&\overset{\eqref{Eqn:ctri}}{=}	q\zeta_r^{-1}\eta_r \Aez(B_r^{\be}L_{\t(r)}\Kw{\t(r+1)}) \\
		&\overset{\phantom{\eqref{Eqn:ctri}}}{=} q\zeta_r^{-1}\zeta_{\t(r)}^{-1}B_r^{\be'}L_{\t(r)}\Kw{\t(r+1)} \\
		&\overset{\eqref{Eqn:Wat_cond1}}{=} \ct_{r,\be'}^{-1}(B_r^{\be'})
\end{align*}
and hence \eqref{Eqn:ctri_gen} holds on $B_r^{\be'}$.

Using relations \eqref{Eqn:Wat_cond1} and \eqref{Eqn:Wat_cond3} we obtain
\begin{align*}
C^{\be}\eta_r^{-1}\eta_{\t(r)}^{-1} 
	&=	(q\cs_r\cs_{\t(r)})^{-1/2} \eta_r^{-1}\eta_{\t(r)}^{-1} \\
	&=	(q\eta_r^{-2}\eta_{\t(r)}^{-2}\zeta_r^{-1}\zeta_{\t(r)}^{-1}\cs_r'\cs_{\t(r)}')^{-1/2} \eta_r^{-1}\eta_{\t(r)}^{-1} \\
	&= C^{\be'}.
\end{align*}
Hence
\begin{align*}
	\Aez \circ \ct_{r,\be}^{-1} &\circ \Aez^{-1} \circ f(B_{r-1}^{\be'})\\
		&=	\eta_{r-1}\Aez \circ \ct_{r,\be}^{-1}(B_{r-1}^{\be})\\
		&=	\eta_{r-1}C^{\be}\Aez\big(\big[B_{\t(r)}^{\be}, [F_X^{-}, [B_r^{\be}, B_{r-1}^{\be}]_q ]_q \big]_q + \cs_rB_{r-1}^{\be}L_{\t(r)}K_X^{-1} \big)\\
		&=  C^{\be'}\big[B_{\t(r)}^{\be'}, [F_X^{-}, [B_r^{\be'}, B_{r-1}^{\be'}]_q ]_q \big]_q + C^{\be'}\cs_r\eta_{r}\eta_{\t(r)}\zeta_{\t(r)}^{-1}B_{r-1}^{\be'}L_{\t(r)}K_X^{-1}\\
		&=	\ct_{r,\be'}^{-1}(B_{r-1}^{\be'})
\end{align*}
as required, where the last equality follows again from relations \eqref{Eqn:Wat_cond1} and \eqref{Eqn:Wat_cond3}. 
\end{proof}

Recall by Corollary \ref{Cor:ctri_endo_withcond} that $\ct_{r,\be}^{-1}$ is an algebra endomorphism of $B_{\be}$ if $\cs_r = \cs_{\t(r)}$. Since $\Aez$ and $f$ are algebra homomorphisms, Lemma \ref{Lemma:ctri_alg_end} implies the following corollary.

\begin{corollary} \label{Cor:ctri_endo_general}
	For any $\be$ satisfying \eqref{Eqn:cs_cond}, the map $\ct_{r,\be}^{-1}$ is an algebra endomorphism of $B_{\be}$.
\end{corollary}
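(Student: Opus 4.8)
The plan is to reduce the statement for an arbitrary parameter tuple $\be$ to the special case $\cs_r = \cs_{\t(r)}$, which is already settled by \cref{Cor:ctri_endo_withcond}, by transporting along the parameter-rescaling isomorphism $\mathcal{A}_{\bet,\bz}$ of \cref{Lemma:ctri_alg_end}. Concretely, given $\be$ satisfying \eqref{Eqn:cs_cond}, I would first exhibit a choice of $\bet,\bz$ for which the target parameters $\be'$ lie in the locus $\cs_r' = \cs_{\t(r)}'$. Taking $\eta_i = 1$ for all $i$, $\zeta_r$ a square root of $\cs_{\t(r)}/\cs_r$ (which already lies in $\field'$, since $\field'$ contains $\sqrt{\cs_i}$ for all $i\in I\setminus X$), $\zeta_{\t(r)} = \zeta_r^{-1}$, and $\zeta_i = 1$ otherwise, one checks immediately that \eqref{Eqn:Wat_cond1}, \eqref{Eqn:Wat_cond2} and \eqref{Eqn:Wat_cond3} hold. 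Since only the components at $r,\t(r)$ are altered, the resulting $\be'$ still satisfies \eqref{Eqn:cs_cond}, and moreover $\cs_r'/\cs_{\t(r)}' = (\cs_r/\cs_{\t(r)})\zeta_r^2 = 1$, so that $\cs_r' = \cs_{\t(r)}'$ as desired.

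With this choice, $\mathcal{A}_{\bet,\bz}\colon B_{\be} \to B_{\be'}$ is an algebra isomorphism and $f\colon B_{\be'} \to B_{\be'}$ is an algebra automorphism, by the constructions preceding \cref{Lemma:ctri_alg_end}. Because $\cs_r' = \cs_{\t(r)}'$, \cref{Cor:ctri_endo_withcond} applies to $B_{\be'}$ and shows that $\ct_{r,\be'}^{-1}$ is an algebra endomorphism of $B_{\be'}$. I would then form the composite
\[
\Phi := \mathcal{A}_{\bet,\bz}^{-1} \circ \ct_{r,\be'}^{-1} \circ f^{-1} \circ \mathcal{A}_{\bet,\bz},
\]
which is an algebra endomorphism of $B_{\be}$, being a composition of algebra homomorphisms (the inverses $\mathcal{A}_{\bet,\bz}^{-1}$ and $f^{-1}$ are again homomorphisms since $\mathcal{A}_{\bet,\bz}$ and $f$ are bijective).

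It then remains to identify $\Phi$ with $\ct_{r,\be}^{-1}$. This is exactly \cref{Lemma:ctri_alg_end}, rearranged: relation \eqref{Eqn:ctri_gen} reads $\ct_{r,\be'}^{-1} = \mathcal{A}_{\bet,\bz} \circ \ct_{r,\be}^{-1} \circ \mathcal{A}_{\bet,\bz}^{-1} \circ f$ on the generators $B_i^{\be'}$ for $i \in I \setminus X$. Applying $\mathcal{A}_{\bet,\bz}^{-1}$ and using that $\mathcal{A}_{\bet,\bz}$ and $f$ act diagonally, by scalars, on the generators via \eqref{Eqn:A(B)} and \eqref{Eqn:f_map}, one recovers $\Phi(B_i^{\be}) = \ct_{r,\be}^{-1}(B_i^{\be})$ for every $i \in I \setminus X$. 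On $\mathcal{M}_X\Uow$ both $\Phi$ and $\ct_{r,\be}^{-1}$ restrict to $\widet{T}_r^{-1}$, using that $\mathcal{A}_{\bet,\bz}$ and $f$ fix $\mathcal{M}_X$ and act on the torus $\Uow$ as recorded before \cref{Lemma:ctri_alg_end}. Since $\Phi$ and $\ct_{r,\be}^{-1}$ agree on all generators and $\Phi$ is an algebra endomorphism, we conclude $\ct_{r,\be}^{-1} = \Phi$ is an algebra endomorphism of $B_{\be}$.

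The only genuinely delicate point I anticipate is this last identification: one must ensure that the scalar factors introduced by $\mathcal{A}_{\bet,\bz}$ and $f$ cancel correctly not merely on the generators $B_i$ but also on the torus part $\Uow$, in particular on the elements $K_{\varpi_i'}$ entering \eqref{Eqn:ctri}, so that $\Phi$ reproduces $\ct_{r,\be}^{-1}$ exactly rather than up to a twist. The constraint \eqref{Eqn:cs_cond} together with the normalisations \eqref{Eqn:Wat_cond1}--\eqref{Eqn:Wat_cond3} are precisely what force these factors to cancel. A minor further point is that the algebras carry the scalars $\sqrt{\cs_i}$ and Watanabe's extension $\field_1$; one works over a common field extension and notes that \emph{being} an algebra endomorphism descends from $\field_1$ to the smaller field, since $\ct_{r,\be}^{-1}$ is linear over it and the defining relations have coefficients there. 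Granting \cref{Lemma:ctri_alg_end} and \cref{Cor:ctri_endo_withcond}, this is bookkeeping, which is why the corollary is essentially immediate.
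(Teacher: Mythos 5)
Your proposal is correct and follows essentially the same route as the paper: reduce to the special case $\cs_r = \cs_{\t(r)}$ settled by \cref{Cor:ctri_endo_withcond}, then transport the endomorphism property along the rescaling isomorphism $\mathcal{A}_{\bet,\bz}$ (and the automorphism $f$) using the intertwining relation of \cref{Lemma:ctri_alg_end}. The only difference is cosmetic --- you place the general parameters at the source of $\mathcal{A}_{\bet,\bz}$ and invert the relation \eqref{Eqn:ctri_gen}, whereas the paper reads it directly with the general parameters as the target $\be'$ --- and your explicit choice of $\bet,\bz$ and attention to the torus part merely spell out details the paper leaves implicit.
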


\begin{remark}
	It is possible to complete similar calculations as in Section \ref{Subsec:ctr_endo} in order to show that $\ct_r^{-1}$ is an algebra endomorphism of $B_{\be}$. This way, it is not necessary to consider the field extension $\field_1^{\times}$ of $\field^{\times}$, which is required in the construction of the algebra automorphism $\Aez:B_{\be} \rightarrow B_{\be'}$.
\end{remark}

\subsection{Proof that $\ct_r^{-1}$ is the inverse of $\ct_r$}
 In order to complete the proof of \cref{Thm:ctr_Aut}, we now show that $\ct_r \circ \ct_r^{-1} = \ct_r^{-1} \circ \ct_r = \text{id}$. It suffices to check this on the generators $B_i$ for $i \in I \setminus X$. The relation
    \[ \ct_r \circ \ct_r^{-1}(B_i) = B_i = \ct_r^{-1} \circ \ct_r(B_i) \]
is straightforward for all $i \in I \setminus X$ except $i = r-1$ and $i = \t(r-1)$. Here we only consider the case $i = r-1$ since the case $i = \t(r-1)$ is analogous.

    \begin{proposition} \label{Prop:ctrinv_Part1}
        The relation
            \begin{equation}
                \ct_r \circ \ct_r^{-1}(B_{r-1}) = B_{r-1}
            \end{equation}
        holds in $\Beps$.
    \end{proposition}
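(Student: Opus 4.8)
By the results of \cref{Subsec:ctr_endo} the map $\ct_r$ is an algebra endomorphism of $\Beps$, and by \cref{Cor:ctri_endo_general} so is $\ct_r^{-1}$. Hence the composite $\ct_r \circ \ct_r^{-1}$ is an algebra endomorphism, and it suffices to evaluate it on the single generator $B_{r-1}$. The plan is to apply the homomorphism $\ct_r$ directly to the formula \cref{Eqn:ctri} for $\ct_r^{-1}(B_{r-1})$ and to simplify the result to $B_{r-1}$. Since $C = (q\cs_r\cs_{\t(r)})^{-1/2}$ is a scalar it passes through $\ct_r$, so I reduce to applying $\ct_r$ termwise to the two summands $[B_{\t(r)},[F_X^-,[B_r,B_{r-1}]_q]_q]_q$ and $\cs_r B_{r-1}\K{\t(r)}K_X^{-1}$ of \cref{Eqn:ctri}.

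On the generators $B_r$, $B_{\t(r)}$ and $B_{r-1}$ I substitute the explicit images from \cref{Eqn:ctr}, each of which carries a tail built from $\K{r}$, $\K{\t(r)}$ and a fundamental-weight factor. On the remaining factors $F_X^-$, $\K{\t(r)}$ and $K_X^{-1}$, which lie in $\mathcal{M}_X\Uow$, I use $\ct_r|_{\mathcal{M}_X\Uow} = \widet{T}_r|_{\mathcal{M}_X\Uow}$ and compute the Lusztig images: the torus factors via $\widet{T}_r(K_\mu) = K_{\widet{\sigma}_r(\mu)}$ together with the explicit action of $\widet{\sigma}_r = w_{\{r,\t(r)\}\cup X}w_X^{-1}$ from \cref{Eqn:widet_sigma} on the root lattice, and $\widet{T}_r(F_X^-)$ from the braid formulas \eqref{Eqn:TiFj} and \eqref{Eqn:TwXFX}. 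A short weight computation, using \eqref{Eqn:TiKj}, gives $\widet{\sigma}_r\big(\sum_{i\in X}\alpha_i\big) = \sum_{i\in X}\alpha_i$ and $(\alpha_r-\alpha_{\t(r)}) + \widet{\sigma}_r(\alpha_{\t(r)}-\alpha_r) = 0$. Combined with the key scalar identity $C^2 q\cs_r\cs_{\t(r)} = 1$, the product of the two monomial pieces (the second summand of \cref{Eqn:ctri} against the monomial part of $\ct_r(B_{r-1})$ in \cref{Eqn:ctr}) collapses to exactly $B_{r-1}K_X^{-2}$.

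It remains to treat the bracket contributions, namely the image of the first summand of \cref{Eqn:ctri}, that is $C\big[\ct_r(B_{\t(r)}),[\widet{T}_r(F_X^-),[\ct_r(B_r),\ct_r(B_{r-1})]_q]_q\big]_q$, together with the mixed term $C^2\cs_r[B_{r-1},[B_r,[F_X^+,B_{\t(r)}]_q]_q]_q\,\widet{T}_r(\K{\t(r)}K_X^{-1})$ arising from the bracket part of $\ct_r(B_{r-1})$. These must sum to $B_{r-1} - B_{r-1}K_X^{-2}$, so that together with the monomial collapse of the previous step the total equals $B_{r-1}$. This is the crux of the argument: I expand the triple $q$-commutator, commute the $\K{r}$-, $\K{\t(r)}$- and $K_X$-tails past the $B$-generators using \eqref{Eqn:BiRel1} and \eqref{Eqn:BiRel2}, and reduce the nested commutators with the defining relations of $\Beps$ collected in the Appendix, namely the quantum Serre relations \eqref{Eqn:BiRel4} and the commutator relation \eqref{Eqn:BiRel3} (the latter introducing the elements $\Z_i$, and hence precisely the $K_X$-twists needed to produce the difference $B_{r-1}(1-K_X^{-2})$), along with the bracket identities used throughout \cref{Sec:Proof1}. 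The main obstacle is organisational rather than conceptual: controlling the proliferation of terms generated by the triple commutator and confirming that, after all commutations are carried out, every term other than $B_{r-1}$ cancels.
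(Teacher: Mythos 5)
Your setup is sound and follows the same route as the paper's own proof: apply the algebra endomorphism $\ct_r$ termwise to the formula \eqref{Eqn:ctri} for $\ct_r^{-1}(B_{r-1})$, use $\ct_r|_{\mathcal{M}_X\Uow} = \widet{T}_r|_{\mathcal{M}_X\Uow}$ together with the weight facts $\widet{\sigma}_r\big(\textstyle\sum_{i\in X}\alpha_i\big) = \sum_{i\in X}\alpha_i$ and $\widet{\sigma}_r(\alpha_{\t(r)}-\alpha_r) = \alpha_{\t(r)}-\alpha_r$ (so that $F_X^-$, $K_X$, $\K{\t(r)}$ are fixed and the $K_{\varpi'}$ tails cancel), and note that the scalar identity $C^2q\cs_r\cs_{\t(r)}=1$ collapses the monomial-by-monomial product. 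Your bookkeeping is also correct: working from the form \eqref{Eqn:ctr} of $\ct_r(B_{r-1})$, the monomial part gives $B_{r-1}K_X^{-2}$, so the remaining bracket contributions must indeed sum to $B_{r-1}(1-K_X^{-2})$ (equivalently, with the paper's rewritten form \eqref{Eqn:ctr(Br-1)_NEW}, where the monomial is $\Delta = q\cs_{\t(r)}B_{r-1}\K{r}K_X$, they must sum to zero).

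The gap is that this last cancellation --- which is the entire content of the proposition --- is asserted rather than proved, and it is not a routine consequence of the defining relations \eqref{Eqn:BiRel1}--\eqref{Eqn:BiRel4}. The obstruction is that $\Z_r$ and $\Z_{\t(r)}$, which enter through \eqref{Eqn:BiRel3}, contain $E_X^{\pm}$ as factors, and there is no elementary way to commute $E_X^{\pm}$ past the $F_X^{\mp}$ sitting inside the nested brackets; the paper flags exactly this difficulty in the proof of \cref{Lem:QSerre2_Rel1}. The paper's argument runs on three structural identities from \cref{Appendix}: the relation $B_rS = SB_r$ (\cref{AppEqn:BrS}), which turns the innermost bracket into $\tfrac{1}{q-q^{-1}}[S,\G_r]_q\K{r} + [B_r,\Delta]\K{r}$; the commutation \eqref{AppEqn:ZtrS} of $\Z_{\t(r)}$ with $S$; and $\big[F_X^-,[S,\Z_r]_q\big]_q = (q^2-1)S\K{\t(r)}K_X^{-1}$ (\cref{AppEqn:ctrinv1}), whose proof requires twisting by $T_{w_X}$ precisely because of the $E_X^+$ obstruction. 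With these inputs the triple bracket collapses to $-\cs_r[S,B_{\t(r)}]_q\K{\t(r)}K_X^{-1}$, which cancels the bracket part of the second summand and leaves $\cs_r\Delta\K{\t(r)}K_X^{-1} = q\cs_r\cs_{\t(r)}B_{r-1} = C^{-2}B_{r-1}$. Without identifying these lemmas (or proving equivalents of them), ``expand the triple commutator and check that everything cancels'' is not a proof; moreover, expanding the raw form \eqref{Eqn:ctr} instead of the $S$, $\Delta$ form \eqref{Eqn:ctr(Br-1)_NEW} makes the term proliferation you acknowledge strictly harder to control, since the key cancellations are only visible after the rewriting of \cref{AppLem:Alt_ctr}.
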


    \begin{proof}
    By \cref{Eqn:ctr,Eqn:ctri} we have
        \begin{align}
            \begin{split}
                \frac{1}{C^2} \ct_r \circ \ct_r^{-1}(B_{r-1})
                    &= \big[q^{-1}B_{\t(r)}\K{\t(r)}, [F_X^-, [q^{-1}B_r\K{r}, [S,B_{\t(r)}]_q + \Delta]_q]_q \big]_q\\
                    &\quad{}+ \cs_r([S,B_{\t(r)}]_q + \Delta)\K{\t(r)}K_X^{-1}. \label{Proof:ctrinv_Part1}
            \end{split}
        \end{align}
    We consider the first summand of the above expression and simplify it. By \cref{AppEqn:BrS}, the element $B_r$ commutes with $S$. It follows that
        \begin{align*}
            \big[ q^{-1}B_r\K{r}, [S,B_{\t(r)}]_q + \Delta\big]_q
                &= \big[B_r, [S,B_{\t(r)}]_q\big]\K{r} + [B_r,\Delta]\K{r}\\
                &= \big[S, [B_r,B_{\t(r)}] \big]_q\K{r} + [B_r,\Delta]\K{r}\\
                &= \frac{1}{q-q^{-1}}[S,\G_r]_q\K{r} + [B_r,\Delta]\K{r}.
        \end{align*}
    We now commute this with $F_X^-$. Using \cref{AppEqn:ZtrS} and \cref{AppEqn:ctrinv1} we obtain
        \begin{align*}
            \big[F_X^-, [S,\G_r]_q\big]_q 
                &= \cs_r\big[F_X^-, [S,\Z_r]_q\big]_q - \cs_{\t(r)}\big[F_X^-, [S,\Z_{\t(r)}]_q\big]_q\\
                &= q(q-q^{-1})\cs_rS\K{\t(r)}K_X^{-1} + (q-q^{-1})\big[F_X^-,[\Delta,B_r]\big]_q.
        \end{align*}
    It follows from that
        \begin{align*}
            \big[ F_X^-, [q^{-1}B_r\K{r}, [S,B_{\t(r)}]_q+ \Delta]_q\big]_q 
                &= \frac{1}{q-q^{-1}}\big[F_X^-, [S,\G_r]_q\big]_q\K{r} + \big[F_X^-, [B_r,\Delta]\big]\K{r}\\
                &= q\cs_rSK_X^{-1}
        \end{align*}
    We now $q$-commute $q^{-1}B_{\t(r)}\K{\t(r)}$ and $q\cs_rSK_X^{-1}$ which gives
        \begin{align*}
            \big[q^{-1}B_{\t(r)}\K{\t(r)}, q\cs_rSK_X^{-1}\big]_q
                &= q\cs_r[B_{\t(r)},S]_{q^{-1}}\K{\t(r)}K_X^{-1}\\
                &= -\cs_r[S,B_{\t(r)}]_q\K{\t(r)}K_X^{-1}.
        \end{align*}
    Substituting the above into \cref{Proof:ctrinv_Part1} we obtain
        \begin{align*}
            \frac{1}{C^2} \ct_r \circ \ct_r^{-1}(B_{r-1})
                &= \cs_r\Delta \K{\t(r)}K_X^{-1}\\
                &= q\cs_r\cs_{\t(r)}B_{r-1}\\
                &= \frac{1}{C^2} B_{r-1}
        \end{align*}
    and hence we have $\ct_r \circ \ct_r^{-1}(B_{r-1}) = B_{r-1}$ as required.
    \end{proof}

In the proof of the following proposition, we write
\begin{equation}
\ct_r^{-1}(B_{r-1}) = C\big([B_{\t(r)}, T]_q + \Lambda\big)
\end{equation}
where
\begin{align}
T &= \big[F_X^-, [B_r, B_{r-1}]_q \big]_q, \\
\Lambda &= \cs_rB_{r-1}\K{\t(r)}K_X^{-1}.
\end{align}
    \begin{proposition} \label{Prop:ctrinv_Part2}
        The relation
            \begin{equation}
                \ct_r^{-1} \circ \ct_r(B_{r-1}) = B_{r-1}
            \end{equation}
        holds in $\Beps$.
    \end{proposition}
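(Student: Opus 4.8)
The plan is to compute $\ct_r^{-1}\circ\ct_r(B_{r-1})$ directly, mirroring the computation of \cref{Prop:ctrinv_Part1} under the symmetry that interchanges $r\leftrightarrow\t(r)$, the signs $+\leftrightarrow-$, and the maps $\ct_r\leftrightarrow\ct_r^{-1}$. Since both $\ct_r$ and $\ct_r^{-1}$ are algebra endomorphisms of $\Beps$ (established in \cref{Subsec:ctr_endo} and \cref{Cor:ctri_endo_general}), it suffices to evaluate the composite on the single generator $B_{r-1}$; the case $j=\t(r-1)$ is then analogous, and the relation holds trivially on the other generators.

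First I would apply $\ct_r$ to obtain $\ct_r(B_{r-1})=C\big([S,B_{\t(r)}]_q+\Delta\big)$ from \cref{Eqn:ctr(Br-1)_NEW}, and then apply the algebra homomorphism $\ct_r^{-1}$ term by term. By \cref{Eqn:ctri} this replaces $B_r$ by $qB_r\K{\t(r)}\Kw{\t(r+1)}$, $B_{\t(r)}$ by $qB_{\t(r)}\K{r}\Kw{r+1}$, and $B_{r-1}$ by $C\big([B_{\t(r)},T]_q+\Lambda\big)$, while $F_X^+$ together with the factors $\K{r}$ and $K_X$, which lie in $\mathcal{M}_X\Uow$, are sent to their images under $\widet{T}_r^{-1}$. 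Substituting these into
\[
\ct_r^{-1}\circ\ct_r(B_{r-1})=C\big([\ct_r^{-1}(S),\ct_r^{-1}(B_{\t(r)})]_q+\ct_r^{-1}(\Delta)\big)
\]
produces a nested $q$-commutator expression in $T$, $\Lambda$, $B_r$, $B_{\t(r)}$, $F_X^-$ and various products of $K$-elements, formally dual to the expression that opens the proof of \cref{Prop:ctrinv_Part1}.

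The simplification then proceeds exactly as there, with $S,\Delta$ replaced by $T,\Lambda$. The key inputs are the mirror of \cref{AppEqn:BrS}, namely that $B_r$ commutes with $T$, which collapses the innermost bracket to a term proportional to $[T,\G_r]_q$; the mirror of the $\Z_{\t(r)}$--$S$ commutation relation (\cref{AppLem:Ztr_S_comm}, \cref{AppEqn:ZtrS}), used to move the surviving $\Z$-elements through $T$; and the defining relation $p(B_{r-1},B_r)=0$ to reorder the remaining factors of $B_r$ and $B_{r-1}$. After collecting terms, all contributions involving $\Z_r,\Z_{\t(r)}$ and the auxiliary weight factors $\Kw{r+1},\Kw{\t(r+1)}$ are expected to cancel, and the surviving $\Lambda$-contribution reduces via $\K{r}\K{\t(r)}=1$ to a scalar multiple of $B_{r-1}$; matching this scalar against $C^{-2}$, as in \cref{Prop:ctrinv_Part1}, yields $\ct_r^{-1}\circ\ct_r(B_{r-1})=B_{r-1}$.

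The main obstacle is the bookkeeping: correctly computing the images $\widet{T}_r^{-1}(F_X^+)$, $\widet{T}_r^{-1}(\K{r})$ and $\widet{T}_r^{-1}(K_X)$, then tracking every $q$-power and every auxiliary factor $\Kw{r+1}$, $\Kw{\t(r+1)}$ through the nested commutators so as to confirm that these weight factors cancel at the end. A cleaner but less self-contained alternative bypasses this entirely: since \cref{Prop:ctrinv_Part1} gives $\ct_r\circ\ct_r^{-1}=\mathrm{id}$ and \cref{lemma:ctri=pctip} gives $\ct_r^{-1}=\phi\circ\ct_r\circ\phi$ whenever $\cs_r=\cs_{\t(r)}$, conjugating the identity $\ct_r\circ\ct_r^{-1}=\mathrm{id}$ by the involutive anti-automorphism $\phi$ produces $\ct_r^{-1}\circ\ct_r=\mathrm{id}$ in that case, and the general case follows by transporting along the isomorphism $\Aez$ as in \cref{Lemma:ctri_alg_end}.
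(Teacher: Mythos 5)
Your primary route is essentially the paper's own proof: the paper establishes \cref{Prop:ctrinv_Part2} by exactly this mirrored direct computation, applying $\ct_r^{-1}$ to $\ct_r(B_{r-1})=C([S,B_{\t(r)}]_q+\Delta)$ and simplifying via the commutativity of $B_r$ with $T$, the mirror $\Z_rT=qT\Z_r-(q-q^{-1})[B_r,B_{r-1}]_q\K{\t(r)}K_X^{-1}$ of \cref{AppEqn:ZtrS}, and the mirror relation $\big[[\Z_{\t(r)},T]_q,F_X^+\big]_q=(1-q^{-2})T\K{r}K_X$ of \cref{AppLem:ctrinv}. After the cancellations you anticipate, the surviving term is $q\cs_{\t(r)}\Lambda\K{r}K_X=q\cs_r\cs_{\t(r)}B_{r-1}=C^{-2}B_{r-1}$, exactly as you predict.

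Your proposed alternative, by contrast, is a genuinely different route and needs a caveat. The $\phi$-conjugation step is sound when $\cs_r=\cs_{\t(r)}$, provided \cref{lemma:ctri=pctip} is first promoted from an identity on the generators $B_i$ to an identity of algebra maps on all of $\Beps$; this also requires $\phi\circ\widet{T}_r\circ\phi=\widet{T}_r^{-1}$ on $\mathcal{M}_X\Uow$, i.e.\ Lusztig's fact that his anti-automorphism conjugates $T_w$ to $T_w^{-1}$. The weak point is the passage to general parameters: \cref{Lemma:ctri_alg_end} only expresses $\ct_{r,\be'}^{-1}$ through $\ct_{r,\be}^{-1}$, whereas to transport the composite $\ct_r^{-1}\circ\ct_r$ you also need the companion identity $f\circ\ct_{r,\be'}=\Aez\circ\ct_{r,\be}\circ\Aez^{-1}$ for $\ct_r$ itself, which the paper never states and which you would have to prove by a separate (routine, but not free) computation. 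Note also that you cannot sidestep this by appealing to \cref{Prop:ctrinv_Part1} alone for general parameters: that proposition only makes $\ct_r^{-1}$ a right inverse of $\ct_r$, and for algebra endomorphisms a right inverse need not be a left inverse, so the left-inverse statement carries genuine content and some version of this extra work is unavoidable.
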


    \begin{proof}
    By \cref{Eqn:ctri,Eqn:ctr(Br-1)_NEW} we have
        \begin{align}
            \begin{split}
                \frac{1}{C^2} \ct_r^{-1} \circ \ct_r(B_{r-1})
                    &= \big[ [ [ [B_{\t(r)}, T]_q + \Lambda, qB_r\K{\t(r)}]_q, F_X^+]_q, qB_{\t(r)}\K{r} \big]_q\\
                    &\quad{} q\cs_{\t(r)}([B_{\t(r)},T]_q + \Lambda)\K{r}K_X. \label{Proof:ctrinv_Part2}
            \end{split}
        \end{align}
    By a similar proof to \cref{AppLem:ST_comms} the elements $B_r$ and $T$ commute. It follows that
        \begin{align}
            \big[[B_{\t(r)},T]_q + \Lambda, qB_r\K{\t(r)} \big]_q
                &= q\big[[B_{\t(r)},T]_q,B_r\big]\K{\t(r)} + q[\Lambda,B_r]\K{\t(r)} \nonumber \\
                &= q\big[[B_{\t(r)},B_r]_q, T \big]\K{\t(r)} + q[\Lambda,B_r]\K{\t(r)} \nonumber \\
                &= -\frac{q}{q-q^{-1}}[\G_r,T]_q\K{\t(r)} + q[\Lambda,B_r]\K{\t(r)}. \label{Proof:ctrinv_Part2-2}
        \end{align}
    Similarly to Equation \eqref{AppEqn:ZtrS} we have
    \begin{equation}
    	\Z_r T 
    		= q T \Z_r - (q-q^{-1}) [B_r,B_{r-1}]_q \K{\t(r)}K_X^{-1}. \label{AppEqn:ZrT}
    \end{equation}
    It follows that
        \begin{align*}
            -\frac{q}{q-q^{-1}}\cs_r\big[[\Z_r,T]_q,F_X^+\big]_q \K{\t(r)}
                &= q\cs_r\big[[B_r,B_{r-1}]_q\K{\t(r)}K_X^{-1}, F_X^+ \big]_q\K{\t(r)}\\
                &= q\big[ [B_r, \cs_rB_{r-1}\K{\t(r)}K_X^{-1}], F_X^+ \big]_q\K{\t(r)}\\
                &= -q\big[ [\Lambda,B_r],F_X^+\big]_q\K{\t(r)}.
        \end{align*}
    As in the proof of Lemma \ref{AppLem:ctrinv} the relation
    \begin{equation}
    \big[ [\Z_{\t(r)}, T]_q, F_X^+ \big]_q = (1-q^{-2})T\K{r}K_X \label{AppEqn:ctrinv2}
    \end{equation} 
    holds in $\Beps$. This and \cref{Proof:ctrinv_Part2-2} implies that
        \begin{align*}
            \big[[ [B_{\t(r)}&, T]_q + \Lambda, qB_r\K{\t(r)}]_q, F_X^+ \big]_q\\
                &= \frac{q}{q-q^{-1}}\cs_{\t(r)}\big[[\Z_{\t(r)}, T]_q,F_X^+ \big]_q\K{\t(r)} - \frac{q}{q-q^{-1}}\cs_r\big[[\Z_r,T]_q, F_X^+\big]_q\K{\t(r)}\\
                &\quad{}+ q\big[ [\Lambda, B_r], F_X^+\big]_q\K{\t(r)}\\
                &= \cs_{\t(r)}T K_X.
        \end{align*}
    We substitute this into \cref{Proof:ctrinv_Part2} to obtain
        \begin{align*}
            \frac{1}{C^2} \ct_r^{-1} \circ \ct_r(B_{r-1}) 
                &= [ \cs_{\t(r)}TK_X, qB_{\t(r)}\K{r}]_q + q\cs_{\t(r)}([B_{\t(r)},T]_q + \Lambda)\K{r}K_X\\
                &= q^2\cs_{\t(r)}[T,B_{\t(r)}]_{q^{-1}}\K{r}K_X + q\cs_{\t(r)}([B_{\t(r)},T]_q + \Lambda)\K{r}K_X\\
                &= q\cs_{\t(r)}\Lambda \K{r}K_X\\
                &= \frac{1}{C^2}B_{r-1}
        \end{align*}
    as required.
    \end{proof}


\section{Proof of \texorpdfstring{\cref{Thm:ctr_braid}}{Theorem2}} \label{Sec:Proof2}
Restricted to $\mathcal{M}_X\Uow$ the automorphisms $\ct_i$ act as the Lusztig automorphism $\widet{T}_i = T_{\widet{\sigma}_i}$ for $i \in I \setminus X$. As a result, the braid relations of \cref{Thm:ctr_braid} hold on elements of $\mathcal{M}_X\Uow$. Hence it suffices to verify \cref{Thm:ctr_braid} on the elements $B_i$ for $i \in I \setminus X$.

\subsection{Braid relations I}    
We first check that the relation $\ct_r\ct_i = \ct_i\ct_r$ holds for all $1 \leq i \leq r-2$.

    \begin{proposition}
        For $1 \leq i \leq r-2$ and $j \in I \setminus X$ the relation
            \begin{equation} \label{Eqn:ctrBraid1}
                \ct_r\ct_i(B_j) = \ct_i\ct_r(B_j)
            \end{equation}
        holds.
    \end{proposition}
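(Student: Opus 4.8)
The plan is to reduce \eqref{Eqn:ctrBraid1} to a finite check on the generators and then isolate the single genuinely interacting case. Since $\ct_r$ and $\ct_i$ both restrict to the Lusztig automorphisms $\widet{T}_r$ and $\widet{T}_i$ on $\mathcal{M}_X\Uow$, and $\widet{\sigma}_r,\widet{\sigma}_i$ commute in $\widet{W}$ for $i\leq r-2$, the relation holds automatically on $\mathcal{M}_X\Uow$; hence it suffices to verify \eqref{Eqn:ctrBraid1} on $B_j$ for $j\in I\setminus X$. The guiding observation is one of locality: $\ct_r$ moves only $B_{r-1},B_r,B_{\t(r)},B_{\t(r-1)}$, while for $i\leq r-2$ the map $\ct_i$ moves only those $B_j$ with $j\in\{i-1,i,i+1\}\cup\{\t(i)-1,\t(i),\t(i)+1\}$. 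A direct index comparison shows these two sets of active nodes are disjoint except for the single coincidence $i+1=r-1$ (that is, $i=r-2$, $j=r-1$) and its mirror $i=r-2$, $j=\t(r-1)$.

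This dichotomy disposes of all but the interacting case. If $j\notin\{r-1,r,\t(r),\t(r-1)\}$ then $\ct_r(B_j)=B_j$, so the left-hand side equals $\ct_i(B_j)$; and because $\ct_i(B_j)$ is built from $B_j,B_i,B_{\t(i)}$ together with the factor $\K{\t(i)}$, all of whose indices and weights lie outside the range of $\ct_r$, the map $\ct_r$ fixes $\ct_i(B_j)$ and the right-hand side also equals $\ct_i(B_j)$. Symmetrically, for $j\in\{r,\t(r)\}$ one has $\ct_i(B_j)=B_j$ (as $a_{ij}=a_{\t(i)j}=0$), so both sides collapse to $\ct_r(B_j)$; here I would check that $\ct_i$ fixes $\ct_r(B_r)=q^{-1}B_r\K{r}\Kw{r+1}$, which reduces to the fact that $\widet{\sigma}_i=\sigma_i\sigma_{\t(i)}$ fixes the weight $\alpha_r-\alpha_{\t(r)}+\varpi'_{r+1}$ and fixes $B_r$. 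The sub-case $j=r-1$ with $i<r-2$ is identical in spirit: $\ct_i(B_{r-1})=B_{r-1}$ and $\ct_i$ fixes every constituent $B_{r-1},B_r,B_{\t(r)},F_X^+,\K{r},K_X$ of $\ct_r(B_{r-1})$.

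For the remaining case $i=r-2$, $j=r-1$, write $M=[B_r,[F_X^+,B_{\t(r)}]_q]_q$ and $P=\K{r}K_X^{-1}$, so that \eqref{Eqn:ctr} gives $\ct_r(B_{r-1})=C([B_{r-1},M]_q+q\cs_{\t(r)}B_{r-1}P)$, while \eqref{Eqn:cti} gives $\ct_{r-2}(B_{r-1})=(q\cs_{r-2})^{-1/2}[B_{r-1},B_{r-2}]_q$. Since $\ct_{r-2}$ fixes each of $B_r,B_{\t(r)},F_X^+,\K{r},K_X$ (the same locality and weight checks as above) and $\ct_r(B_{r-2})=B_{r-2}$, applying $\ct_{r-2}$ to $\ct_r(B_{r-1})$ and $\ct_r$ to $\ct_{r-2}(B_{r-1})$ reduces \eqref{Eqn:ctrBraid1} to the two identities $[[B_{r-1},B_{r-2}]_q,M]_q=[[B_{r-1},M]_q,B_{r-2}]_q$ and $[B_{r-1},B_{r-2}]_q\,P=[B_{r-1}P,B_{r-2}]_q$. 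The crux is to establish that $B_{r-2}$ commutes (ordinary commutator) with both $M$ and $P$: from \eqref{Eqn:BiRel2} and \eqref{Eqn:BiRel3} one gets $[B_{r-2},B_r]=[B_{r-2},B_{\t(r)}]=0$ and $[B_{r-2},F_X^+]=0$, whence $[B_{r-2},M]=0$; and from \eqref{Eqn:BiRel1} with $(\alpha_r-\alpha_{\t(r)},\alpha_{r-2})=(\sum_{k\in X}\alpha_k,\alpha_{r-2})=0$ one gets $[B_{r-2},P]=0$. Granting this, expanding the $q$-brackets turns both identities into purely formal consequences of $[B_{r-2},M]=[B_{r-2},P]=0$. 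The case $j=\t(r-1)$ follows by an entirely analogous computation with the roles of $r$ and $\t(r)$ interchanged, using the $\t$-symmetric shape of \eqref{Eqn:cti} and \eqref{Eqn:ctr}.

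The main obstacle I anticipate is precisely the commutations $[B_{r-2},M]=[B_{r-2},P]=0$: one must verify that the inhomogeneous corrections $\delta_{i,\t(j)}(q-q^{-1})^{-1}\G_i$ on the right of \eqref{Eqn:BiRel3} genuinely vanish for every pairing arising inside $M$ (they do, because $r-2\neq\t(k)$ for the indices $k$ occurring there), and that $B_{r-2}$ commutes with $\K{r}$ even though $\K{r}$ need not lie in $\Uow$, which is exactly where the orthogonality computation $(\alpha_r-\alpha_{\t(r)},\alpha_{r-2})=0$ is essential. Once these commutations are secured, the remainder is a short and formal manipulation of $q$-brackets.
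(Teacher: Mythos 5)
Your proposal is correct and follows essentially the same route as the paper: the paper's proof is likewise a case analysis (organized by the values of $a_{ij}$ and $a_{jr}$) in which every case except $i=r-2$, $j=r-1$ and its $\t$-mirror is a locality/weight check, and the interacting case is settled exactly as you do, by applying $\ct_{r-2}$ inside $\ct_r(B_{r-1})$ and re-associating the $q$-brackets using that $B_{r-2}$ commutes with $[B_r,[F_X^+,B_{\t(r)}]_q]_q$ and with $\K{r}K_X^{-1}$. The only difference is presentational: you make the commutations $[B_{r-2},M]=[B_{r-2},P]=0$ and the weight computations explicit, whereas the paper leaves them implicit in its Case 5 manipulation.
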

    
    \begin{proof}
        By symmetry, we only check \cref{Eqn:ctrBraid1} for $1 \leq j \leq r$. This is done by a case-by-case analysis.
       \begin{case}$a_{ij}=0, a_{jr}=2$.
       \newline
       In this case we have $j = r$ and hence $\ct_i(B_j) = B_j$. This implies 
        \[ \ct_r\ct_i(B_j) = \ct_r(B_j) = q^{-1}B_rK_rK_{\t(r)}^{-1}K_{\varpi'_{r+1}} =\ct_i\ct_r(B_j) \]
        as required.
       \end{case}
       
       \begin{case}$a_{ij}=0, a_{jr}= -1$.
       	\newline
            Then $j = r-1$ and $\ct_i(B_j) = B_j$ hence
                \[\ct_r\ct_i(B_j) = \ct_r(B_j) = \ct_i\ct_r(B_j) \]
            as required.
       \end{case}
       
       \begin{case} $a_{ij} = 0, a_{jr} = 0$.
       \newline
        In this case, we have $\ct_r(B_j) = B_j$ and $\ct_i(B_j) = B_j$ so the statement of the proposition holds.
       \end{case}
       
       \begin{case} $a_{ij}= -1, a_{jr} = 0$.
       \newline
        Here, we have $\ct_r(B_j) = B_j$ and $\ct_i(B_j) = (q\cs_i)^{-1/2}[B_j,B_i]_q$. Hence
            \begin{align*}
                \ct_r\ct_i(B_j) 
                    &=(q\cs_i)^{-1/2}[\ct_r(B_j), \ct_r(B_i)]_q\\
                    &=(q\cs_i)^{-1/2}[B_j,B_i]_q = \ct_i\ct_r(B_j).
            \end{align*}
       \end{case}
       
       \begin{case} $a_{ij} =-1, a_{jr} =-1$. 
       \newline
        This case can only occur if $i = r-2$ and $j = r-1$. Then by \cref{Eqn:ctr} we have
        \begin{align*}
            \ct_{r-2}\ct_r(B_{r-1})
                &= C\ct_{r-2}\Big( \big[B_{r-1},[B_r, [F_X^+,B_{\t(r)}]_q]_Q\big]_q + q\cs_{\t(r)}B_{r-1}K_rK_{\t(r)}^{-1}K_X^{-1}\Big)\\
                &= q^{-1/2}C \Big(\big[[B_{r-1},B_{r-2}]_q,[B_r, [F_X^+,B_{\t(r)}]_q]_q\big]_q\\ 
                    &\quad{}+ q\cs_{\t(r)}[B_{r-1},B_{r-2}]_qK_rK_{\t(r)}^{-1}K_X^{-1}\ \Big)\\
                &=q^{-1/2}C \Big(\big[ [B_{r-1},[B_r, [F_X^+,B_{\t(r)}]_q]_q]_q. B_{r-2}\big]_q \\
                    &\quad{}+ q\cs_{\t(r)}[B_{r-1}K_rK_{\t(r)}^{-1}K_X^{-1},B_{r-2}]_q \Big)\\
                &=q^{-1/2}[\ct_r(B_{r-1}),\ct_r(B_{r-2})]_q\\
                &=\ct_r\ct_{r-2}(B_{r-1})
        \end{align*}
        as required.
       \end{case}
       
       \begin{case} $a_{ij}= 2$
       \newline
       Then we have $\ct_r(B_i) = B_i$ and $\ct_r(\ct_i(B_i)) = \ct_i(B_i)$ which implies the result in this case. This completes the proof.
       \end{case}
    \end{proof}


\subsection{Braid relations II}    
We now check that the relation 
    \begin{equation} \label{Eqn:BtypeBraid}
     \ct_r\ct_{r-1}\ct_r\ct_{r-1}(B_j) = \ct_{r-1}\ct_r\ct_{r-1}\ct_r(B_j)
     \end{equation}
holds for all $j \in I \setminus X$. Again for symmetry reasons it is enough to only consider $1 \leq j \leq r$. Many of the remaining proofs in this section require the use of relations that are proven in \cref{Appendix}. Since $\ct_r(B_j)= B_j$ and $\ct_{r-1}(B_j)= B_j$ for $1 \leq j < r-2$ the following lemma is immediate.

\begin{lemma}
    For $1 \leq j < r-2$ the relation \eqref{Eqn:BtypeBraid} holds.
\end{lemma}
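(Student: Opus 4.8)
The plan is to observe that for indices $j$ in this range the generator $B_j$ is a common fixed point of both $\ct_{r-1}$ and $\ct_r$. Once this is established, every word in $\ct_{r-1}$ and $\ct_r$ fixes $B_j$, so both sides of \eqref{Eqn:BtypeBraid} collapse to $B_j$ and the relation holds trivially. Thus the entire content of the lemma reduces to checking two single-generator computations, with no reordering of the automorphisms ever being needed.

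To verify the fixed-point property I would compare the index $j$ against those appearing in the definitions \eqref{Eqn:cti} and \eqref{Eqn:ctr}. For $\ct_{r-1}$ I must rule out the non-trivial branches of \eqref{Eqn:cti}: since $j < r-2$ we have $|r-1-j| \geq 2$, so $a_{r-1,j} = 0$ by \eqref{Eqn:Cartan}; moreover $\t(r-1) = n-r+2$ is separated from $j$ by at least two (using the bound $r \leq \lceil \tfrac{n}{2} \rceil - 1$, which forces $n-r+2 \geq r+3$), so $a_{\t(r-1),j} = 0$ as well. As also $j \neq r-1, \t(r-1)$, only the last branch of \eqref{Eqn:cti} applies and $\ct_{r-1}(B_j) = B_j$. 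For $\ct_r$, the index $j$ is none of $r, \t(r), r-1, \t(r-1)$, so $B_j$ falls into the \emph{otherwise} branch of \eqref{Eqn:ctr} and $\ct_r(B_j) = B_j$.

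I do not anticipate any genuine obstacle: this is merely the base case of the verification of \eqref{Eqn:BtypeBraid}, isolated precisely because these far-away generators are untouched by both maps. The substance of the type $\text{B}_r$ braid relation lies entirely in the remaining indices $j \in \{r-2, r-1, r\}$ (together with their $\t$-images, which are reduced to these by the symmetry already invoked in restricting to $1 \leq j \leq r$), and those are where the involved identities from \cref{Appendix} will be required in the subsequent lemmas.
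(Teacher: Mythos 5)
Your proof is correct and is essentially the paper's own argument: the paper likewise disposes of this case by noting that $\ct_r(B_j) = B_j$ and $\ct_{r-1}(B_j) = B_j$ for $1 \leq j < r-2$, so the relation \eqref{Eqn:BtypeBraid} holds trivially. Your explicit verification of the Cartan-matrix conditions ($a_{r-1,j} = a_{\t(r-1),j} = 0$, using $n-r+2 \geq r+3$) merely spells out what the paper treats as immediate from \eqref{Eqn:cti} and \eqref{Eqn:ctr}.
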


As a result of the above lemma, it remains to verify \cref{Eqn:BtypeBraid} for $j \in \{ r-2,r-1,r\}$. For the next result, we use the relation
    \begin{equation} \label{Eqn:Rel}
     \ct_{r-1}\ct_r(B_{r-1}) = \ct_r^{-1}(B_{\t(r-1)})
     \end{equation}
which appears in the proof of \cref{AppLem:3}.

\begin{proposition} \label{Prop:Braid-j=r-1}
   For $j = r-1$ the relation \eqref{Eqn:BtypeBraid} holds.
\end{proposition}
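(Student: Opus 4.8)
The plan is to verify the type $\mathrm{B}_2$ braid relation $\ct_r\ct_{r-1}\ct_r\ct_{r-1}(B_{r-1}) = \ct_{r-1}\ct_r\ct_{r-1}\ct_r(B_{r-1})$ by evaluating both sides explicitly and reducing each to a common normal form. Since each $\ct_i$ is a genuine algebra automorphism (by \cref{Thm:cti,Thm:ctr_Aut}), I may compute iteratively, applying the generating formulae \cref{Eqn:cti,Eqn:ctr} one map at a time and using that $\ct_i$ respects products and $q$-commutators. The key computational simplification is the identity \eqref{Eqn:Rel}, namely $\ct_{r-1}\ct_r(B_{r-1}) = \ct_r^{-1}(B_{\t(r-1)})$, which lets me replace a two-fold composite by the single explicit element $\ct_r^{-1}(B_{\t(r-1)})$ given in \cref{Eqn:ctri}.

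First I would compute the right-hand side $\ct_{r-1}\ct_r\ct_{r-1}\ct_r(B_{r-1})$. Reading from the inside, $\ct_r(B_{r-1})$ is given by \cref{Eqn:ctr(Br-1)_NEW}; then I apply $\ct_{r-1}$, using that $\ct_{r-1}$ sends $B_{r-1}\mapsto q^{-1}B_{\t(r-1)}\K{\t(r-1)}$ and acts by the $q$-bracket rule on the neighbouring generators $B_r, B_{\t(r)}$, while commuting through $F_X^+$ and the Cartan factors via \cref{Eqn:cti,Eqn:BiRel1}. The outer $\ct_r$ then acts, fixing $B_{\t(r-1)}$-adjacent data where $a_{j r}=a_{\t(j)r}=0$ and otherwise following \cref{Eqn:ctr}. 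The goal is to massage this into the same expression obtained for the left-hand side $\ct_r\ct_{r-1}\ct_r\ct_{r-1}(B_{r-1})$, where now the innermost $\ct_{r-1}(B_{r-1}) = q^{-1}B_{\t(r-1)}\K{\t(r-1)}$ kicks the computation toward the $\t$-twisted generators; applying \eqref{Eqn:Rel} (in its $\t$-symmetric form) collapses the subsequent $\ct_r\ct_{r-1}$ into a single $\ct_r^{-1}$ applied to $B_{r-1}$, leaving a final $\ct_r$ whose effect I can read off.

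The main obstacle will be controlling the Cartan-type correction factors $\K{i}$ and $K_{\varpi_i'}$ that accumulate at each stage: these do not interact with the $B_j$ by simple scalars but via the $q$-power commutation relation \eqref{Eqn:BiRel1}, so tracking how they re-order past the $q$-bracketed products of $B$'s is where sign- and $q$-bookkeeping errors are likely. I would handle this by first normalising every intermediate expression to put all Cartan factors on the right, recording the resulting $q$-powers, and by exploiting the anti-involution $\phi$ of \cref{lemma:phi_anti-inv} together with the relation $\ct_r^{-1} = \phi\circ\ct_r\circ\phi$ to convert $\t$-twisted subexpressions into their untwisted counterparts. With both sides reduced to a $q$-bracket expression of the form $C'\bigl([\,\cdots,\,\cdots]_q + (\text{Cartan correction})\bigr)$ in the same generators, matching coefficients and invoking the relations of \cref{Appendix} (in particular the commutation of $B_r$ with $S$ and the $\Z$-bracket identities) yields equality, completing the case $j=r-1$.
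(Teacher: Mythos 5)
Your proposal follows essentially the same route as the paper: everything rests on \eqref{Eqn:Rel} and its $\t$-symmetric form $\ct_{r-1}\ct_r(B_{\t(r-1)}) = \ct_r^{-1}(B_{r-1})$, which collapse both sides to the common value $\ct_{r-1}(B_{\t(r-1)}) = q^{-1}B_{r-1}K_{r-1}K_{\t(r-1)}^{-1}$ --- exactly the paper's argument, which simply quotes \cref{AppLem:3} for the right-hand side instead of recomputing it. One small caution: the auxiliary device $\ct_r^{-1} = \phi \circ \ct_r \circ \phi$ you invoke for bookkeeping is only available when $\cs_r = \cs_{\t(r)}$ (\cref{lemma:phi_anti-inv}), but since it plays no essential role in your reduction this does not affect correctness.
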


\begin{proof}
    Using \cref{Eqn:Rel} we have
        \begin{align*}
            \ct_r\ct_{r-1}\ct_r\ct_{r-1}(B_{r-1})
                &= \ct_r\ct_{r-1}\ct_r\big(q^{-1}B_{\t(r-1)}K_{\t(r-1)}K_{r-1}^{-1} \big)\\
                &= q^{-1}B_{r-1}K_{r-1}K_{\t(r-1)}^{-1}\\
                &= \ct_{r-1}(B_{\t(r-1)}).
        \end{align*}
    The result follows from Lemma \ref{AppLem:3}.
\end{proof}

\begin{proposition}
    For $j = r-2$ the relation \eqref{Eqn:BtypeBraid} holds. 
\end{proposition}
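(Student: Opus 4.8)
The plan is to strip off the spectator generator $B_{r-2}$ and reduce \eqref{Eqn:BtypeBraid} to a commutator identity involving only $B_{r-1}$ and $B_{\t(r-1)}$. First I would note that $\ct_r(B_{r-2}) = B_{r-2}$ by \cref{Eqn:ctr}, while $a_{r-1,r-2} = -1$ and $a_{\t(r-1),r-2} = 0$ give $\ct_{r-1}(B_{r-2}) = (q\cs_{r-1})^{-1/2}[B_{r-2},B_{r-1}]_q$ by \cref{Eqn:cti}. Applying the four maps on each side of \eqref{Eqn:BtypeBraid} in turn, using that $\ct_r,\ct_{r-1},\ct_r^{-1}$ are algebra automorphisms and invoking the relation $\ct_{r-1}\ct_r(B_{r-1}) = \ct_r^{-1}(B_{\t(r-1)})$ from \cref{Eqn:Rel}, both sides collapse to $(q\cs_{r-1})^{-1}$ times
\begin{align*}
L &= \big[[B_{r-2},\ct_r(B_{r-1})]_q, B_{\t(r-1)}\big]_q, & R &= \big[[B_{r-2},B_{r-1}]_q, \ct_r^{-1}(B_{\t(r-1)})\big]_q,
\end{align*}
respectively. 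Since the scalar $(q\cs_{r-1})^{-1}$ is common to both, it suffices to prove $L = R$.

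Next I would remove $B_{r-2}$ from the outer bracket. The defining relations \eqref{Eqn:BiRel2} and \eqref{Eqn:BiRel3}, together with a weight computation, show that $B_{r-2}$ commutes with each of $B_r$, $B_{\t(r)}$, $B_{\t(r-1)}$, $F_X^+$, $\K{r}$ and $K_X^{\pm 1}$; in particular $B_{r-2}$ commutes with $B_{\t(r-1)}$ and with the whole element $\ct_r^{-1}(B_{\t(r-1)})$. I would then apply the elementary identity $[a,[b,c]_q]_q = [[a,b]_q,c]_q$, which holds whenever $a$ commutes with $c$, taking $a = B_{r-2}$. This rewrites
\begin{align*}
L &= \big[B_{r-2}, [\ct_r(B_{r-1}), B_{\t(r-1)}]_q\big]_q, & R &= \big[B_{r-2}, [B_{r-1}, \ct_r^{-1}(B_{\t(r-1)})]_q\big]_q,
\end{align*}
so that $L = R$ follows from the single base identity
\begin{equation*}
[\ct_r(B_{r-1}), B_{\t(r-1)}]_q = [B_{r-1}, \ct_r^{-1}(B_{\t(r-1)})]_q, \tag{$\star$}
\end{equation*}
which no longer involves $B_{r-2}$.

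Finally I would prove $(\star)$ by direct expansion, substituting the explicit form \cref{Eqn:ctr(Br-1)_NEW} for $\ct_r(B_{r-1})$ and \cref{Eqn:ctri} for $\ct_r^{-1}(B_{\t(r-1)})$, and simplifying both sides with the $q$-commutation relations among $S$, $\Delta$, $\Z_r$, $\Z_{\t(r)}$, $B_r$ and $B_{\t(r)}$ collected in \cref{Appendix}. Observe that applying $\ct_r^{-1}$ to the left hand side of $(\star)$ returns the right hand side automatically, so $(\star)$ is equivalent to the $\ct_r$-invariance of $[\ct_r(B_{r-1}), B_{\t(r-1)}]_q$; the computation therefore runs closely parallel to the verifications that $\ct_r$ preserves the relations between $B_{r-1}$ and $B_{\t(r-1)}$ in \cref{Prop:ctrCheck2,Prop:ctrCheck3,Prop:ctrCheck4}, and to \cref{AppLem:3}. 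I expect the main obstacle to be organisational rather than conceptual: correctly tracking the powers of $q$ and the parameters $\cs_r,\cs_{\t(r)}$ through the nested $q$-commutators so that the two sides of $(\star)$ match term by term.
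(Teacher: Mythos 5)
Your reduction is exactly the paper's proof: both iterated compositions are collapsed using the automorphism property, $\ct_r(B_{r-2})=B_{r-2}$ and \cref{Eqn:Rel}, the spectator $B_{r-2}$ is then stripped off via its commutation with $B_{\t(r-1)}$ and with $\ct_r^{-1}(B_{\t(r-1)})$, and everything rests on your identity $(\star)$, i.e.\ the $\ct_r$-invariance of $[B_{r-1},\ct_r^{-1}(B_{\t(r-1)})]_q$. That identity is precisely \cref{AppCor:5} of the appendix (stated there exactly as the invariance you observe), so you can cite it instead of proving it by direct expansion; the paper obtains it not by a computation parallel to \cref{Prop:ctrCheck2,Prop:ctrCheck3,Prop:ctrCheck4} but from the much lighter re-association identity \cref{AppLem:4}, which exploits that $[B_r,[F_X^+,B_{\t(r)}]_q]_q$ commutes with $\G_{r-1}$.
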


\begin{proof}
    On one hand we have
    \begin{align*}
        \ct_r\ct_{r-1}\ct_r\ct_{r-1}(B_{r-2})
            &= q^{-1/2}\ct_r\ct_{r-1}\ct_r([B_{r-2},B_{r-1}]_q)\\
            &= q^{-1/2}\ct_r\ct_{r-1}([B_{r-2}, \ct_r(B_{r-1})]_q)\\
            &= q^{-1}\ct_r\big( \big[ [B_{r-2}, B_{r-1}]_q, \ct_{r-1}\ct_r(B_{r-1}) \big]_q \big)
    \end{align*}
    Again by \cref{Eqn:Rel} it follows that
    \begin{align*}
        \ct_r\ct_{r-1}\ct_r\ct_{r-1}(B_{r-2})
            &= q^{-1}\big[[B_{r-2}, \ct_r(B_{r-1})]_q, B_{\t(r-1)} \big]_q\\
            &= q^{-1}\big[ B_{r-2}, [\ct_r(B_{r-1}), B_{\t(r-1)}]_q\big]_q
    \end{align*}
    where the last equality follows since $B_{r-2}$ commutes with $B_{\t(r-1)}$. 
    On the other hand we have
    \begin{align*}
        \ct_{r-1}\ct_r\ct_{r-1}\ct_r(B_{r-2})
            &= \ct_{r-1}\ct_r\ct_{r-1}(B_{r-2})\\
            &= q^{-1}\big[ [B_{r-2}, B_{r-1}]_q, \ct_r^{-1}(B_{\t(r-1)})\big]_q.
    \end{align*}
    Since $B_{\t(r-1)}$ commutes with $\ct_r(B_{r-2})$ it follows that
     \begin{equation} \label{Eqn:B(r-2)comm}
     B_{r-2}\ct_r^{-1}(B_{\t(r-1)}) = \ct_r^{-1}(B_{\t(r-1)})B_{r-2}.
    \end{equation}
    By \cref{AppCor:5} the element $[B_{r-1},\ct_r^{-1}(B_{\t(r-1)})]_q$ is invariant under $\ct_r$. This and \cref{Eqn:B(r-2)comm} imply
    \begin{align*}
        \ct_{r-1}\ct_r\ct_{r-1}\ct_r(B_{r-2})
            &= q^{-1}\big[ B_{r-2}, [B_{r-1},\ct_r^{-1}(B_{\t(r-1)})]_q \big]_q\\
            &= q^{-1}\big[ B_{r-2},[\ct_r(B_{r-1}), B_{\t(r-1)}]_q \big]_q\\
            &= \ct_r\ct_{r-1}\ct_r\ct_{r-1}(B_{r-2})
    \end{align*}
    as required.
\end{proof}

\begin{proposition}
   For $j = r$ the relation \eqref{Eqn:BtypeBraid} holds.
\end{proposition}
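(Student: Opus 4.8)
The plan is to recast the braid identity as a commutation. Writing $g := \ct_r\ct_{r-1}\ct_r$, the relation \eqref{Eqn:BtypeBraid} for $j = r$ reads $g\,\ct_{r-1}(B_r) = \ct_{r-1}\,g(B_r)$, since $\ct_r\ct_{r-1}\ct_r\ct_{r-1} = g\ct_{r-1}$ and $\ct_{r-1}\ct_r\ct_{r-1}\ct_r = \ct_{r-1}g$. Applying $\ct_r$ to \cref{Eqn:Rel} yields the key simplification $g(B_{r-1}) = \ct_r\ct_{r-1}\ct_r(B_{r-1}) = B_{\t(r-1)}$. Since $\ct_{r-1}(B_r) = (q\cs_{r-1})^{-1/2}[B_r, B_{r-1}]_q$ and $g$ is an algebra automorphism, the left-hand side becomes $(q\cs_{r-1})^{-1/2}[g(B_r), B_{\t(r-1)}]_q$. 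Thus, setting $Y := g(B_r)$, it remains to establish the single identity $\ct_{r-1}(Y) = (q\cs_{r-1})^{-1/2}[Y, B_{\t(r-1)}]_q$.

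Second, I would compute $Y = g(B_r)$ in three steps. From \cref{Eqn:ctr} we have $\ct_r(B_r) = q^{-1}B_r M$ with $M := \K{r}K_{\varpi'_{r+1}}$ group-like. Applying $\ct_{r-1}$, and using $\ct_{r-1}|_{\mathcal{M}_X\Uow} = \widet{T}_{r-1}$ together with \eqref{Eqn:TiKj}, one records the weight computation $\widet{T}_{r-1}(M) = M\K{r-1}$; this follows from $\sigma_{r-1}\sigma_{\t(r-1)}(\alpha_r - \alpha_{\t(r)} + \varpi'_{r+1}) = \alpha_r - \alpha_{\t(r)} + \varpi'_{r+1} + \alpha_{r-1} - \alpha_{\t(r-1)}$, which is valid because, since $X \neq \emptyset$, the nodes $r-1$ and $\t(r-1)$ are non-adjacent to $r$ and $\t(r)$ and distinct from $r+1$ and $\t(r+1)$. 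Hence $\ct_{r-1}\ct_r(B_r) = q^{-1}(q\cs_{r-1})^{-1/2}[B_r, B_{r-1}]_q\,M\K{r-1}$. Applying $\ct_r$ once more, using $\ct_r(B_r) = q^{-1}B_r M$ and the form of $\ct_r(B_{r-1})$ from \cref{Eqn:ctr(Br-1)_NEW}, and evaluating $\widet{T}_r(M\K{r-1})$ from the action of $\widet{\sigma}_r$ on weights, produces $Y$ as an explicit group-like multiple of a nested $q$-bracket in $B_{r-1}$, $B_r$, $F_X^+$ and $B_{\t(r)}$.

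Third, I would verify the remaining identity $\ct_{r-1}(Y) = (q\cs_{r-1})^{-1/2}[Y, B_{\t(r-1)}]_q$. On one side, $\ct_{r-1}$ is applied to the explicit $Y$ term by term via \cref{Eqn:cti} and $\ct_{r-1}|_{\mathcal{M}_X\Uow} = \widet{T}_{r-1}$; on the other, $[Y, B_{\t(r-1)}]_q$ is expanded by commuting $B_{\t(r-1)}$ through $Y$, using that $B_{\t(r-1)}$ commutes with $B_r$, $B_{r-1}$ and $F_X^+$ and interacts with $B_{\t(r)}$ only through the Serre and commutator relations collected in \cref{Appendix}, together with the group-like rule \eqref{Eqn:BiRel1}. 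Matching the two resulting expressions completes the proof; the symmetric case $j = \t(r)$ then follows by applying the diagram automorphism $\t$.

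The main obstacle is the explicit determination of $Y = g(B_r)$: the second application of $\ct_r$ turns a short bracket into a long nested $q$-commutator, and one must both evaluate the long-word weight action $\widet{T}_r(M\K{r-1})$ correctly and keep precise track of all $\cs$- and $q$-prefactors and of the ordering of the group-like factors. Once $Y$ is in hand, the concluding comparison is a bracket manipulation of the same flavour as \cref{Prop:ctrCheck3,Prop:ctrCheck4}, relying on the commutation lemmas of \cref{Appendix}.
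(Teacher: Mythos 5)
Your opening reduction is correct, and it is in fact the same pivot the paper uses: the identity $g(B_{r-1}) = \ct_r\ct_{r-1}\ct_r(B_{r-1}) = B_{\t(r-1)}$ that you obtain by applying $\ct_r$ to \cref{Eqn:Rel} is exactly \cref{AppLem:3}, and with it the braid relation for $j=r$ becomes the single identity $\ct_{r-1}(Y) = (q\cs_{r-1})^{-1/2}[Y,B_{\t(r-1)}]_q$ with $Y = \ct_r\ct_{r-1}\ct_r(B_r)$. The genuine gap is that everything after this reduction --- which is the entire content of the proposition --- is only announced, not carried out, and the plan you give for it contains a step that fails. You propose to expand $[Y,B_{\t(r-1)}]_q$ ``using that $B_{\t(r-1)}$ commutes with $B_r$, $B_{r-1}$ and $F_X^+$''; but $B_{\t(r-1)}$ does \emph{not} commute with $B_{r-1}$: by \cref{Eqn:BiRel3} one has $B_{r-1}B_{\t(r-1)} - B_{\t(r-1)}B_{r-1} = (q-q^{-1})^{-1}\G_{r-1}$, and controlling precisely this $\G_{r-1}$-term is what the paper's \cref{AppLem:4} and \cref{AppCor:5} are designed to do. The paper's proof never computes $Y$ at all: it keeps $\ct_r(B_{r-1})$ and $\ct_r^{-1}(B_{\t(r-1)})$ as unexpanded symbols and closes the argument with two structural inputs, namely the $\ct_r$-invariance $[B_{r-1},\ct_r^{-1}(B_{\t(r-1)})]_q = [\ct_r(B_{r-1}),B_{\t(r-1)}]_q$ of \cref{AppCor:5}, and the relation $[B_r,\ct_r^{-1}(B_{\t(r-1)})]_{q^{-1}} = 0$ obtained by applying $\ct_r^{-1}$ to $[\ct_r(B_r),B_{\t(r-1)}]_{q^{-1}} = 0$. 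Your plan supplies neither these facts nor any substitute, so the concluding ``matching'' is not a routine manipulation that can be waved through.

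Two further inaccuracies in the plan. First, $Y$ is not ``an explicit group-like multiple of a nested $q$-bracket'': by \cref{Eqn:ctr(Br-1)_NEW}, $\ct_r(B_{r-1}) = C\big([S,B_{\t(r)}]_q + \Delta\big)$ is a \emph{sum} of a four-fold bracket and a group-like term, so $Y$ is a sum of several terms, each of which your third step would have to process separately. Second, your justification of $\widet{T}_{r-1}(\K{r}K_{\varpi'_{r+1}}) = \K{r}K_{\varpi'_{r+1}}\K{r-1}$ is self-contradictory: you assert that $r-1$ and $\t(r-1)$ are non-adjacent to $r$ and $\t(r)$, whereas $a_{r-1,r} = a_{\t(r-1),\t(r)} = -1$, and it is exactly these adjacencies that produce the factor $\K{r-1}$; genuine non-adjacency would give $\widet{T}_{r-1}(\K{r}K_{\varpi'_{r+1}}) = \K{r}K_{\varpi'_{r+1}}$ and contradict your own formula. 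The formula itself is correct, the correct reason being $\sigma_{r-1}(\alpha_r) = \alpha_r + \alpha_{r-1}$ and $\sigma_{\t(r-1)}(\alpha_{\t(r)}) = \alpha_{\t(r)} + \alpha_{\t(r-1)}$, combined with non-adjacency of $r-1$ to $\t(r)$ and the fact that $r-1, \t(r-1) \notin \{r+1,\t(r+1)\}$.
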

    
\begin{proof}
Consider first the term $\ct_{r-1}\ct_r\ct_{r-1}\ct_r(B_r)$. By \cref{Eqn:cti,Eqn:ctr}, \cref{Eqn:Rel} and $\ct_i|_{\mathcal{M}_X\Uow} = \widet{T}_i|_{\mathcal{M}_X\Uow}$ for $1 \leq i \leq r$ we obtain
    \begin{align*}
        \ct_{r-1}&\ct_r\ct_{r-1}\ct_r(B_r)\\
            &= q^{-3}\big[ [B_r,B_{r-1}]_q\K{r}\K{r-1}K_{\varpi'_{r+1}}, \ct_r^{-1}(B_{\t(r-1)}) \big]_q \K{r}K_{\varpi'_{r+1}}\\
            &= q^{-2}\big[[B_r,B_{r-1}]_q,\ct_r^{-1}(B_{\t(r-1)}) \big]_q \K{r}^2\K{r-1}K_{\varpi'_{r+1}}^2 
    \end{align*}        
    where the second equality follows from \cref{Eqn:BiRel1} and noting that $K_{\varpi'_{r+1}}$ commutes with $\ct_r^{-1}(B_{\t(r-1)})$. Since $[\ct_r(B_r), B_{\t(r-1)}]_{q^{-1}} = 0$, it follows that $[B_r, \ct_r^{-1}(B_{\t(r-1)})]_{q^{-1}} = 0$. Using this and \cref{AppCor:5} it follows that
    \begin{align*}
        \big[ [B_r,B_{r-1}]_q, \ct_r^{-1}(B_{\t(r-1)}) \big]_q
            &= \big[ B_r, [B_{r-1}, \ct_r^{-1}(B_{\t(r-1)})]_q \big]\\
            &= \big[ B_r, [\ct_r(B_{r-1}), B_{\t(r-1)}]_q \big]
    \end{align*}
    and hence we obtain
    \begin{align*}
        \ct_{r-1}\ct_r\ct_{r-1}\ct_r(B_r)
            &= q^{-2}\big[ B_r, [\ct_r(B_{r-1}), B_{\t(r-1)}]_q \big]\K{r}^2\K{r-1}K_{\varpi'_{r+1}}^2.
    \end{align*}
    Considering now the term $\ct_r\ct_{r-1}\ct_r\ct_{r-1}(B_r)$ we obtain
    \begin{align*}
        \ct_r\ct_{r-1}&\ct_r\ct_{r-1}(B_r)\\
            &= q^{-1/2}\ct_r\ct_{r-1}\ct_r([B_r,B_{r-1}]_q)\\
            &= q^{-3/2}\ct_r\ct_{r-1}( [B_r\K{r}K_{\varpi'_{r+1}},\ct_r(B_{r-1})]_q)\\
            &= q^{-2}\ct_r\big( \big[ [B_r,B_{r-1}]_q \K{r}\K{r-1}K_{\varpi'_{r+1}}, \ct_r^{-1}(B_{\t(r-1)}) \big]_q \big)\\
            &= q^{-2} \big[ [B_r\K{r}K_{\varpi'_{r+1}}, \ct_r(B_{r-1})]_q, B_{\t(r-1)} \big] \K{r}\K{r-1}K_{\varpi'_{r+1}}\\
            &= q^{-2}\big[ B_r, [\ct_r(B_{r-1}), B_{\t(r-1)}]_q \big]\K{r}^2\K{r-1}K_{\varpi'_{r+1}}^2\\
            &= \ct_{r-1}\ct_r\ct_{r-1}\ct_r(B_r)
    \end{align*}
    as required.
\end{proof}

\appendix
\section{Relations in \texorpdfstring{$\Beps$}{Bc}} \label{Appendix}
Many of the results in \cref{Sect:Braid_Action,Sec:Proof1,Sec:Proof2} require the use of additional relations which we provide here. We first give two useful relations that are used throughout this appendix.

Recall from \cref{Eqn:EJ+,Eqn:EJ-,Eqn:FJ+,Eqn:FJ-,Eqn:KJ,Eqn:EJFJ-oneelt} the elements $E_J^+, E_J^-, F_J^+, F_J^-$ and $K_J$ where $J \subset I$ is a subset of the form $J = \{a, a+1, \dotsc , b-1, b \}$ with $a \leq b$. Rewriting these elements using the Lusztig automorphisms, one sees that 
\begin{equation} \label{Eqn:EF-FE}
	E_J^+F_J^- - F_J^-E_J^+ = \frac{K_J - K_J^{-1}}{q-q^{-1}} = E_J^-F_J^+ - F_J^+E_J^-
\end{equation}
holds in $\Uqg$. Additionally, the $q$-commutator satisfies
\begin{equation} \label{Eqn:qcomm_trick}
	\big[ [x,y]_q, z\big]_q - \big[ x, [y,z]_q \big]_q = q\big[[x,z],y \big] 
\end{equation}
for all $x, y ,z \in \Uqg$.  

\subsection{Relations needed for the proof of Theorem \ref{Thm:ctr_Aut}}

   \begin{lemma} \label{Lem:simplecomms}
	The relations 
		\begin{align}
			\big[B_{r-1}, [F_X^+, \Z_r] \big]_q
				&= 0, \label{AppEqn:simplecomm1}\\
			\big[B_{r-1}, [F_X^+, \Z_{\t(r)}] \big]_q
				&= -(q-q^{-1})B_{r-1}\K{r}(K_X-K_X^{-1}) \label{AppEqn:simplecomm2}
		\end{align}
	hold in $\Beps$.
\end{lemma}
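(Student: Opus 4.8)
The plan is to reduce both identities to $q$-commutators of $B_{r-1}$ with the single Cartan elements $\K{\t(r)}$ and $\K{r}$, exploiting that the nodes $r-1$ and $\t(r-1)=n-r+2$ lie at distance at least two from every node of $X=\{r+1,\dots,n-r\}$. First I would record three elementary commutation facts. (a) Since each Chevalley generator occurring in $B_{r-1}=F_{r-1}-\cs_{r-1}E_{\t(r-1)}K_{r-1}^{-1}$ is supported on nodes at distance $\ge 2$ from $X$, the element $B_{r-1}$ commutes with each of $F_X^+$, $E_X^+$ and $E_X^-$, hence also with the weight-zero elements $[F_X^+,E_X^+]$ and $[F_X^+,E_X^-]$. (b) Writing $\beta=\alpha_{r+1}+\dots+\alpha_{n-r}$ for the weight of $E_X^\pm$, one has $(-\beta,\alpha_r-\alpha_{\t(r)})=0$, so $F_X^+$ commutes with both $\K{r}=K_{\alpha_r-\alpha_{\t(r)}}$ and $\K{\t(r)}$. (c) Both $\K{r}$ and $\K{\t(r)}$ lie in $\Uow$ (one checks $w_X(\alpha_r-\alpha_{\t(r)})=\alpha_r-\alpha_{\t(r)}$), so \eqref{Eqn:BiRel1} with $i=r-1$ yields
\[ [B_{r-1},\K{\t(r)}]_q=0, \qquad [B_{r-1},\K{r}]_q=-(q-q^{-1})\K{r}B_{r-1}, \]
the first because $(\alpha_{\t(r)}-\alpha_r,\alpha_{r-1})=1$ and the second because $(\alpha_r-\alpha_{\t(r)},\alpha_{r-1})=-1$.

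For \eqref{AppEqn:simplecomm1} I would substitute $\Z_r=-(1-q^{-2})E_X^+\K{\t(r)}$ and use (b) to obtain $[F_X^+,\Z_r]=-(1-q^{-2})\,[F_X^+,E_X^+]\,\K{\t(r)}$. Since $Q:=[F_X^+,E_X^+]$ commutes with $B_{r-1}$ by (a), it factors out of the outer $q$-commutator, giving $[B_{r-1},[F_X^+,\Z_r]]_q=-(1-q^{-2})\,Q\,[B_{r-1},\K{\t(r)}]_q=0$ by (c). The point worth emphasising is that this argument never uses the precise value of $[F_X^+,E_X^+]$: the vanishing is forced entirely by the Cartan $q$-commutator.

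For \eqref{AppEqn:simplecomm2} the same steps applied to $\Z_{\t(r)}=-(1-q^{-2})E_X^-\K{r}$ give $[B_{r-1},[F_X^+,\Z_{\t(r)}]]_q=-(1-q^{-2})\,Q'\,[B_{r-1},\K{r}]_q$ with $Q':=[F_X^+,E_X^-]$. Here lies the crucial asymmetry with the previous case: the orientation of the $E$-factor now matches the second identity of \eqref{Eqn:EF-FE}, so $Q'=[F_X^+,E_X^-]=-(K_X-K_X^{-1})/(q-q^{-1})$ is a genuine Cartan term, and $[B_{r-1},\K{r}]_q=-(q-q^{-1})\K{r}B_{r-1}$ is nonzero. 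Substituting both, using $(1-q^{-2})=q^{-1}(q-q^{-1})$, and finally moving $B_{r-1}$ back to the left through $(K_X-K_X^{-1})\K{r}$ via \eqref{Eqn:BiRel1} (noting $K_X\K{r}\in\Uow$ with $(\beta+\alpha_r-\alpha_{\t(r)},\alpha_{r-1})=-1$), I arrive at $-(q-q^{-1})B_{r-1}\K{r}(K_X-K_X^{-1})$, as required.

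The only genuine subtlety is precisely this orientation bookkeeping: it is whether the $E$-factor in $\Z$ is $E_X^+$ or $E_X^-$ that decides whether \eqref{Eqn:EF-FE} produces a Cartan contribution, and hence whether the right-hand side vanishes. Everything else is the routine weight-pairing verification of facts (a)--(c). I would double-check the final $q$-power and sign in \eqref{AppEqn:simplecomm2} by expanding $(K_X-K_X^{-1})\K{r}=K_{\beta+\alpha_r-\alpha_{\t(r)}}-K_{-\beta+\alpha_r-\alpha_{\t(r)}}$ and confirming $(\nu,\alpha_{r-1})=-1$ for both weights $\nu$, which is what pins down the factor $-(q-q^{-1})$.
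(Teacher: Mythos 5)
Your proof is correct and follows essentially the same route as the paper: both identities are reduced to the Cartan $q$-commutators $[B_{r-1},\K{\t(r)}]_q=0$ and $[B_{r-1},\K{r}]_q=-(q-q^{-1})\K{r}B_{r-1}$ via the commutation of $B_{r-1}$ with $\mathcal{M}_X$, with \eqref{Eqn:EF-FE} supplying the Cartan term $[F_X^+,\Z_{\t(r)}]=q^{-1}(K_X-K_X^{-1})\K{r}$ in the second identity. The only cosmetic difference is that the paper treats $\Z_r$ as a single unit (quoting $[B_{r-1},\Z_r]_q=0$) rather than expanding $E_X^+\K{\t(r)}$ as you do, and it leaves the weight-pairing verifications implicit.
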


\begin{proof}
	Since $[B_{r-1}, F_X^+] = 0$ and $[B_{r-1}, \Z_r]_q = 0$ it follows that \cref{AppEqn:simplecomm1} holds. On the other hand, making use of the relation
		\begin{equation} \label{AppEqn:FX_Ztr_comm}
			[F_X^+, \Z_{\t(r)}] = q^{-1}(K_X-K_X^{-1})\K{r},
		\end{equation}
	which follows from \eqref{Eqn:EF-FE}, we obtain
		\begin{align*}
			\big[B_{r-1}, [F_X^+, \Z_{\t(r)}]\big]_q 
				&= q^{-1}[ B_{r-1}, (K_X-K_X^{-1})\K{r}]_q\\
				&= -(q-q^{-1})B_{r-1}(K_X-K_X^{-1})\K{r}
		\end{align*}
	as required.
\end{proof}
   
\begin{lemma} \label{AppLem:Alt_ctr}
	The relations
		\begin{align}
			\big[B_{r-1}, [B_r, [F_X^+, B_{\t(r)}]_q]_q \big]_q
				&= \big[[B_{r-1}, [B_r,F_X^+]_q]_q,B_{\t(r)}\big]_q \nonumber \\
				&\quad{}+q\cs_{\t(r)}B_{r-1}\K{r}(K_X - K_X^{-1}), \label{AppEqn:Newctr1}\\
			\big[B_{\t(r-1)}, [B_{\t(r)}, [F_X^-, B_r]_q]_q\big]_q
				&= \big[ [B_{\t(r-1)}, [B_{\t(r)},F_X^-]_q]_q, B_r \big]_q \nonumber \\
				&\quad{}+q\cs_rB_{\t(r-1)}\K{\t(r)}(K_X - K_X^{-1})\label{AppEqn:Newctr2}
		\end{align}
	hold in $\Beps$.
\end{lemma}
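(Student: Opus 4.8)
The plan is to prove \eqref{AppEqn:Newctr1} by two applications of the $q$-commutator identity \eqref{Eqn:qcomm_trick} and to deduce \eqref{AppEqn:Newctr2} by symmetry. The two structural facts I would use at the outset are consequences of \eqref{Eqn:BiRel3}: since $X \neq \emptyset$ forces $\t(r) - r = n - 2r + 1 \geq 2$, we have $a_{r\t(r)} = 0$ and $a_{r-1,\t(r)} = 0$, and combined with $\t(\t(r)) = r$ relation \eqref{Eqn:BiRel3} gives
\[
[B_r, B_{\t(r)}] = \tfrac{1}{q-q^{-1}}\G_r, \qquad [B_{r-1}, B_{\t(r)}] = 0.
\]

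First I would rewrite the innermost double bracket. Applying \eqref{Eqn:qcomm_trick} with $x = B_r$, $y = F_X^+$, $z = B_{\t(r)}$ and substituting the value of $[B_r, B_{\t(r)}]$ above yields
\[
[B_r, [F_X^+, B_{\t(r)}]_q]_q = \big[[B_r, F_X^+]_q, B_{\t(r)}\big]_q + \frac{q}{q-q^{-1}}[F_X^+, \G_r].
\]
Next I would apply $[B_{r-1}, -]_q$ to both sides. The first term on the right is treated by a second use of \eqref{Eqn:qcomm_trick}, this time with $x = B_{r-1}$, $y = [B_r, F_X^+]_q$, $z = B_{\t(r)}$; because $[B_{r-1}, B_{\t(r)}] = 0$ the correction term $q\big[[B_{r-1},B_{\t(r)}], [B_r,F_X^+]_q\big]$ vanishes, and one obtains exactly $\big[[B_{r-1}, [B_r, F_X^+]_q]_q, B_{\t(r)}\big]_q$, the first term on the right-hand side of \eqref{AppEqn:Newctr1}.

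It then remains to evaluate the leftover term $\frac{q}{q-q^{-1}}\big[B_{r-1}, [F_X^+, \G_r]\big]_q$. Expanding $\G_r = \cs_r\Z_r - \cs_{\t(r)}\Z_{\t(r)}$ from \eqref{Eqn:Gi} and invoking \cref{Lem:simplecomms}, the $\Z_r$-contribution vanishes by \eqref{AppEqn:simplecomm1}, while \eqref{AppEqn:simplecomm2} evaluates the $\Z_{\t(r)}$-contribution; the two factors of $(q-q^{-1})$ cancel and this term collapses to precisely $q\cs_{\t(r)}B_{r-1}\K{r}(K_X - K_X^{-1})$, completing the proof of \eqref{AppEqn:Newctr1}. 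Finally, \eqref{AppEqn:Newctr2} is the image of \eqref{AppEqn:Newctr1} under the interchange $r \leftrightarrow \t(r)$, $F_X^+ \leftrightarrow F_X^-$, $\Z_r \leftrightarrow \Z_{\t(r)}$, $\cs_r \leftrightarrow \cs_{\t(r)}$, which is a symmetry of the defining relations \eqref{Eqn:BiRel1}--\eqref{Eqn:BiRel4}; I would simply repeat the same two-step computation using the $\t$-analogues of \cref{Lem:simplecomms}. I expect the only delicate point to be the sign and $q$-power bookkeeping across the two applications of \eqref{Eqn:qcomm_trick}, in particular keeping straight the distinction between the outer $q$-commutators and the plain commutators $[F_X^+, \G_r]$ and $[B_{r-1}, B_{\t(r)}]$, rather than any genuine conceptual obstacle.
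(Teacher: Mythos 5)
Your proposal is correct and follows essentially the same route as the paper's proof: one application of \eqref{Eqn:qcomm_trick} to the inner bracket with the substitution $[B_r,B_{\t(r)}]=\tfrac{1}{q-q^{-1}}\G_r$, then applying $[B_{r-1},-]_q$, using $[B_{r-1},B_{\t(r)}]=0$ to exchange the outer brackets and \cref{Lem:simplecomms} to evaluate the $\G_r$-correction, with \eqref{AppEqn:Newctr2} obtained by symmetry. The only cosmetic difference is that you make the second use of \eqref{Eqn:qcomm_trick} explicit where the paper simply invokes the commutation of $B_{r-1}$ with $B_{\t(r)}$.
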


\begin{proof}
	By symmetry we only verify \cref{AppEqn:Newctr1}. By Equation \eqref{Eqn:qcomm_trick} we have
		\begin{align*}
			\big[B_r, [F_X^+, B_{\t(r)}]_q\big]_q
				&= \big[ [B_r,F_X^+]_q, B_{\t(r)}\big]_q + q\big[ F_X^+, [B_r,B_{\t(r)}]\big]\\
				&= \big[ [B_r, F_X^+]_q, B_{\t(r)}\big]_q - \frac{q\cs_{\t(r)}}{q-q^{-1}}[F_X^+,\Z_{\t(r)}] + \frac{q\cs_r}{q-q^{-1}}[F_X^+,\Z_r].
		\end{align*}
	Since $B_{r-1}$ commutes with $B_{\t(r)}$, Lemma \ref{Lem:simplecomms} implies
		\begin{align*}
			\big[B_{r-1}, [B_r,[F_X^+,&B_{\t(r)}]_q]_q \big]_q\\
				&= \big[ B_{r-1}, [[B_r, F_X]_q,B_{\t(r)}]_q\big]_q  -\frac{q\cs_{\t(r)}}{q-q^{-1}}\big[B_{r-1}, [F_X^+, \Z_{\t(r)}]\big]_q\\
				&= \big[ [B_{r-1}, [B_r,F_X^+]_q]_q, B_{\t(r)}\big]_q + q\cs_{\t(r)}B_{r-1}\K{r}(K_X-K_X^{-1})
		\end{align*}
	as required.
\end{proof}
Recall from \cref{Subsec:ctr_endo} the elements
\begin{align*}
	S 
		&= \big[ B_{r-1}, [B_r, F_X^+]_q\big]_q,\\
	S^{\t} 
		&= \big[ B_{\t(r-1)}, [B_{\t(r)}, F_X^-]_q\big]_q,\\        
	\Delta
		&= q\cs_{\t(r)}B_{r-1}\K{r}K_X,\\
	\Delta^{\t}
		&= q\cs_rB_{\t(r-1)}\K{\t(r)}K_X.
\end{align*}

For the remainder of this section, we provide relations that include the terms $S, S^{\t}, T$ and $T^{\t}$.

    \begin{lemma} \label{AppLem:Ztr_S_comm}
	The relations
		\begin{align} 
			\Z_{\t(r)} S 
				&= q^{-1}S \Z_{\t(r)} + (1-q^{-2}) [B_{r-1},B_r]_q \K{r}K_X, \label{AppEqn:ZtrS} \\
			\Z_r S^{\t}
				&= q^{-1} S^{\t} \Z_r + (1-q^{-2})[B_{\t(r-1)},B_{\t(r)}]_q \K{\t(r)}K_X, \label{AppEqn:ZrSt} 
		\end{align}
	hold in $\Beps$. 
\end{lemma}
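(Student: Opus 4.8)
The plan is to establish the first relation \eqref{AppEqn:ZtrS} by a direct computation and to deduce the second relation \eqref{AppEqn:ZrSt} by the analogous argument with the roles of $r$ and $\t(r)$ interchanged (so that $E_X^+ \leftrightarrow E_X^-$, $F_X^+ \leftrightarrow F_X^-$ and $\K{r} \leftrightarrow \K{\t(r)}$); every structure constant entering the calculation is invariant under this exchange, so no separate work is required. To prove \eqref{AppEqn:ZtrS}, I would insert the expression $\Z_{\t(r)} = -(1-q^{-2})E_X^-\K{r}$ from \eqref{Eqn:Zi}, reducing the claim to the single identity
\[
    E_X^-\K{r}S = q^{-1}SE_X^-\K{r} - [B_{r-1},B_r]_q\K{r}K_X ,
\]
where $S = \big[B_{r-1},[B_r,F_X^+]_q\big]_q$ as in \eqref{Eqn:S}.

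I would assemble this from two pieces. The first is a diagonal commutation relation $\K{r}S = q^{-1}S\K{r}$. Using \eqref{Eqn:BiRel1} and the Cartan data $(\alpha_r-\alpha_{\t(r)},\alpha_{r-1}) = -1$, $(\alpha_r-\alpha_{\t(r)},\alpha_r) = 2$ and $(\alpha_r-\alpha_{\t(r)},\alpha_X)=0$ (the last because $(\alpha_X,\alpha_r)=-1$ and $(\alpha_X,\alpha_{\t(r)})=-1$, the node $n-r$ of $X$ being adjacent to $\t(r)=n-r+1$), one finds $\K{r}B_{r-1}=q B_{r-1}\K{r}$, $\K{r}B_r = q^{-2}B_r\K{r}$ and $\K{r}F_X^+ = F_X^+\K{r}$; since each monomial of $S$ contains exactly one factor $B_{r-1}$, one $B_r$ and one $F_X^+$, these combine to $\K{r}S = q^{-1}S\K{r}$. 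The second, and essential, piece is to commute $E_X^-$ past $S$. By \eqref{Eqn:BiRel2} the elements $B_{r-1}$ and $B_r$ commute with every $E_j$, $j\in X$, hence with $E_X^-$, so the only nontrivial interaction is with the factor $F_X^+$, controlled by $E_X^-F_X^+ - F_X^+E_X^- = (K_X-K_X^{-1})/(q-q^{-1})$ from \eqref{Eqn:EF-FE}. Expanding
\[
    S = B_{r-1}B_rF_X^+ - qB_{r-1}F_X^+B_r - qB_rF_X^+B_{r-1} + q^2F_X^+B_rB_{r-1}
\]
and pushing $E_X^-$ to the right through each monomial yields $E_X^- S = SE_X^- + G$, where $G$ collects the four residual terms each carrying a factor $\kappa := (K_X-K_X^{-1})/(q-q^{-1})$.

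The crux is the identification $G = -q[B_{r-1},B_r]_q K_X$. This is where the bookkeeping must be done carefully: $\kappa$ commutes with $B_{r-1}$ (because $(\alpha_X,\alpha_{r-1})=0$) but satisfies $K_X B_r = qB_r K_X$ and $K_X^{-1}B_r = q^{-1}B_r K_X^{-1}$ (because $(\alpha_X,\alpha_r)=-1$), and it is precisely this asymmetry between $K_X$ and $K_X^{-1}$ acting on $B_r$ that collapses the four residual monomials into the single $q$-commutator $-q[B_{r-1},B_r]_q K_X$. Granting this, the proof finishes by combining the two pieces: using $\K{r}S = q^{-1}S\K{r}$, then $E_X^- S = SE_X^- - q[B_{r-1},B_r]_q K_X$, and finally $K_X\K{r}=\K{r}K_X$, one computes
\[
    E_X^-\K{r}S = q^{-1}E_X^- S\,\K{r} = q^{-1}SE_X^-\K{r} - [B_{r-1},B_r]_q\K{r}K_X ,
\]
which is the reduced identity above; multiplying by $-(1-q^{-2})$ recovers \eqref{AppEqn:ZtrS}. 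I expect this last reassembly of the $\kappa$-terms to be the only genuinely delicate point, everything else being forced by the $K$-commutation relations \eqref{Eqn:BiRel1} and \eqref{Eqn:BiRel2}.
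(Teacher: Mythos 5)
Your proof is correct --- in particular the crux identification does hold: the four residual terms carrying $\kappa = (K_X-K_X^{-1})/(q-q^{-1})$ collapse to $-q[B_{r-1},B_r]_q K_X$ (the $K_X^{-1}$ contributions cancel outright and the $K_X$ contributions form the $q$-commutator) --- and it is essentially the paper's argument: both proofs are direct computations hinging on $B_{r-1},B_r$ commuting with $E_X^-$ via \eqref{Eqn:BiRel2}, the commutation identity \eqref{Eqn:EF-FE}, and torus weight bookkeeping, with the second relation obtained by the evident $\t$-symmetry. The only difference is organizational: the paper keeps $\Z_{\t(r)}$ intact, passing it through the nested $q$-commutators (turning $[\cdot,\cdot]_q$ into $[\cdot,\cdot]_{q^3}$) and invoking $[F_X^+,\Z_{\t(r)}]=q^{-1}(K_X-K_X^{-1})\K{r}$, whereas you factor $\Z_{\t(r)}=-(1-q^{-2})E_X^-\K{r}$ and expand $S$ into monomials before collecting terms.
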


    \begin{proof}
	We only prove that \cref{AppEqn:ZtrS} holds since the remaining checks are similar. By \cref{Eqn:EF-FE} and the relation $\Z_{\t(r)}B_i= q^{(\alpha_i,\alpha_{\t(r)}-\alpha_r)}B_i\Z_{\t(r)}$ for any $i \in I \setminus X$ we have
		\begin{align*}
			\Z_{\t(r)}S
				&= \Z_{\t(r)}\big[B_{r-1}, [B_r,F_X^+]_q\big]_q\\
				&= q^{-1}\big[ B_{r-1}, [B_r, \Z_{\t(r)}F_X^+]_{q^3}\big]\\
				&= q^{-1}\big[B_{r-1}, [B_r, F_X^+\Z_{\t(r)}- q^{-1}(K_X-K_X^{-1})\K{r}]_{q^3}\big]\\
				&= q^{-1}S\Z_{\t(r)}- q^{-2}\big[B_{r-1}, [B_r, (K_X-K_X^{-1})\K{r}]_{q^3}\big]\\
				&= q^{-1}S\Z_{\t(r)} + q^{-2}(q^2-1)[B_{r-1}, B_r\K{r}K_X]\\
				&= q^{-1}S\Z_{\t(r)} + (1-q^{-2})[B_{r-1},B_r]_q\K{r}K_X
		\end{align*}
	as required.    
	
\end{proof}

\begin{lemma} \label{AppLem:ST_comms}
	The relations
		\begin{align}
			B_rS &= SB_r, \label{AppEqn:BrS}\\
			B_{\t(r)}S^{\t} &= S^{\t}B_{\t(r)}, \label{AppEqn:BtrSt}
		\end{align}
	hold in $\Beps$.
\end{lemma}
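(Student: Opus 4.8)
We prove the first relation \eqref{AppEqn:BrS}; the second, \eqref{AppEqn:BtrSt}, follows from the identical argument with the roles of $r$ and $\t(r)$ interchanged and $F_X^+$ replaced by $F_X^-$. First I would use \eqref{Eqn:TwxBr} to record that $A := [B_r, F_X^+]_q = T_{w_X}(B_r)$, and note that $T_i(B_{r-1}) = B_{r-1}$ for every $i \in X$ by \eqref{Eqn:TiBj}, since $a_{i,r-1} = 0$; as $T_{w_X}$ is an algebra automorphism preserving $q$-commutators this gives the convenient reformulation $S = [B_{r-1}, A]_q = T_{w_X}([B_{r-1}, B_r]_q)$. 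Thus $B_rS = SB_r$ is equivalent, after applying $T_{w_X}^{-1}$ and using \eqref{Eqn:TwxiBr}, to $\big[[F_X^-, B_r]_q, [B_{r-1}, B_r]_q\big] = 0$; I would however carry out the computation directly with $A$.

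The crucial ingredient is the Serre-type relation
\[ p(B_r, F_X^+) = 0, \]
with $p$ as in \eqref{Eqn:polynomial}; a one-line expansion shows this is equivalent to $[B_r, A]_{q^{-1}} = 0$, i.e. $B_rA = q^{-1}AB_r$. I expect establishing this to be the main obstacle. For $|X| = 1$ it is immediate, since then $F_X^+ = F_{r+1} = B_{r+1}$ and $p(B_r, B_{r+1}) = 0$ is the defining relation \eqref{Eqn:BiRel4}. In general I would write $B_r = F_r + Z$ with $Z = -\cs_r[E_X^+, E_{\t(r)}]_{q^{-1}}K_r^{-1}$ and treat the contributions separately: the lowering part yields $p(F_r, F_X^+) = 0$, which already holds in $\Uqg$ because $(\alpha_r, \sum_{i \in X}\alpha_i) = -1$ and $F_X^+$ is the associated root vector, so that $F_r$ and $F_X^+$ generate a copy of $U_q(\lie{sl}_3)^-$; the cross-terms and the $Z$-terms must then be shown to cancel, using that $B_r$ commutes with $E_j$ for $j \in X$ \eqref{Eqn:BiRel2} together with the $E$--$F$ relation \eqref{Eqn:EF-FE}.

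Granting $B_rA = q^{-1}AB_r$, the remaining computation is short. Writing $S = B_{r-1}A - qAB_{r-1}$ and moving each occurrence of $B_r$ past $A$, one finds
\[ [B_r, S] = \big[[B_r, B_{r-1}]_q, A\big]. \]
It then remains to prove that $R := [B_r, B_{r-1}]_q$ commutes with $A = [B_r, F_X^+]_q$. Here I would use that $B_{r-1}$, $B_r$, $F_X^+$ satisfy exactly the relations of the chain $B_{r-1} - B_r - F_X^+$: the commutation $[B_{r-1}, F_X^+] = 0$ (the lowering part of $B_{r-1}$ is $F_{r-1}$ and its $E$-part involves only $E_{\t(r-1)}$, both far from $X$), the defining Serre relation $p(B_r, B_{r-1}) = 0$ from \eqref{Eqn:BiRel4}, and the key relation $p(B_r, F_X^+) = 0$. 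Expanding $[R, A]$ and reducing the length-four monomials by these Serre relations (the reductions of the patterns $B_rB_{r-1}B_r$ and $B_rF_X^+B_r$) collapses it to $q^2$ times the identity $E_2E_3E_2E_1 - E_2E_1E_2E_3 + E_1E_2E_3E_2 - E_3E_2E_1E_2 = 0$ valid in $U_q(\lie{sl}_4)$ (with $E_1, E_2, E_3 \leftrightarrow B_{r-1}, B_r, F_X^+$), whence $[R, A] = 0$ and $B_rS = SB_r$.
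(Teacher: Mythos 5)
Your proposal is correct, and at its core it is the same argument as the paper's: both rest on exactly the three facts $[B_{r-1},F_X^+]=0$, $p(B_r,B_{r-1})=0$ and $p(B_r,F_X^+)=0$, and both are direct $q$-commutator manipulations. The paper rewrites $S = \big[[B_{r-1},B_r]_q,F_X^+\big]_q$, expands $B_rS$ into four monomials and applies the two Serre-type reductions $B_rB_{r-1}B_r = \tfrac{1}{q+q^{-1}}\big(B_r^2B_{r-1}+B_{r-1}B_r^2\big)$ and $B_rF_X^+B_r = \tfrac{1}{q+q^{-1}}\big(B_r^2F_X^++F_X^+B_r^2\big)$ in a single pass, arriving at $SB_r$; you instead factor the computation through $B_rA = q^{-1}AB_r$ to get $[B_r,S] = \big[[B_r,B_{r-1}]_q,A\big]$ and then kill this bracket via the $\lie{sl}_4$-type identity. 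I checked both of your steps and they are sound (the four surviving monomials after your reductions are precisely $q^2\big(B_{r-1}B_rF_X^+B_r - B_rB_{r-1}B_rF_X^+ + B_rF_X^+B_rB_{r-1} - F_X^+B_rB_{r-1}B_r\big)$, which vanishes given the three relations), so this is tidier bookkeeping of the same computation rather than a genuinely different route. The one point to address is your treatment of $p(B_r,F_X^+)=0$ for $|X|>1$: the paper simply invokes this relation without proof, and your proposed verification (splitting $B_r = F_r + Z$ and arguing that the cross-terms and $Z$-terms cancel) is left unexecuted and would be laborious, since the $E_X^+$ occurring in $Z$ does not commute with $F_X^+$. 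There is a one-line argument that closes this: by \eqref{Eqn:TiFj} the composite $T_{n-r}T_{n-r-1}\dotsm T_{r+2}$ maps $F_{r+1}$ to $F_X^+$, while each $T_j$ with $j \in X\setminus\{r+1\}$ fixes $B_r$ by \eqref{Eqn:TiBj} (as $a_{jr}=0$); applying this automorphism of $\Uqg$ to the defining relation $p(B_r,B_{r+1})=0$ from \eqref{Eqn:BiRel4} yields $p(B_r,F_X^+)=0$ at once. With that substitution your proof is complete, and in fact slightly more self-contained than the paper's, which takes the relation for granted.
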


\begin{proof}
	By symmetry we only verify \cref{AppEqn:BrS}. Using the relations
		\begin{equation*}
			p(B_{r},B_{r-1}) = p(B_r,F_X^+) = 0
		\end{equation*}
	we obtain
		\begin{align*}
			B_rS
				&= B_r\big[[B_{r-1},B_r]_q, F_X^+\big]_q\\
				&= B_rB_{r-1}B_rF_X^+ -qB_r^2B_{r-1}F_X^+ - qB_rF_X^+B_{r-1}B_r + q^2B_rF_X^+B_rB_{r-1}\\
				&= \tfrac{1}{q+q^{-1}}\big(B_r^2B_{r-1} + B_{r-1}B_r^2\big)F_X^+ - qB_r^2B_{r-1}F_X^+ - qB_rF_X^+B_{r-1}B_r\\
				&\quad{}+ \tfrac{q^2}{q+q^{-1}}\big(B_r^2F_X^+ + F_X^+B_r^2\big)B_{r-1}\\
				&= \tfrac{1}{q+q^{-1}}B_{r-1}B_r^2F_X^+ - qB_rF_X^+B_{r-1}B_r + \tfrac{q^2}{q+q^{-1}}F_X^+B_r^2B_{r-1}\\
				&= \tfrac{1}{q+q^{-1}}B_{r-1}\big( (q+q^{-1})B_rF_X^+B_r - F_X^+B_r^2 \big) -qB_rF_X^+B_{r-1}B_r\\
				&\quad{}+\tfrac{q^2}{q+q^{-1}}F_X^+\big((q+q^{-1})B_rB_{r-1}B_r - B_{r-1}B_r^2 \big)\\
				&= \big[[B_{r-1},B_r]_q, F_X^+ \big]_qB_r\\
				&= SB_r
		\end{align*}
	as required.
\end{proof}

\begin{lemma}
	The relations
		\begin{align}
			S B_{\t(r-1)}
				&= B_{\t(r-1)} S - q\cs_{r-1} \Z_{r-1} [B_r,F_X^+]_q, \label{AppEqn:SBtr-1}\\
				S^{\t} B_{r-1}
				&= B_{r-1} S^{\t} - q\cs_{\t(r-1)} \Z_{\t(r-1)} [B_{\t(r)},F_X^-]_q, \label{AppEqn:StBr-1}
		\end{align}
	hold in $\Beps$.
\end{lemma}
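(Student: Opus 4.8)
By the symmetry interchanging $r-1 \leftrightarrow \t(r-1)$, $r \leftrightarrow \t(r)$ and $F_X^+ \leftrightarrow F_X^-$, it suffices to prove \eqref{AppEqn:SBtr-1}. The plan is to commute $B_{\t(r-1)}$ through $S = [B_{r-1}, [B_r, F_X^+]_q]_q$ and collect the single correction term. I would abbreviate $A = [B_r, F_X^+]_q$, so that $S = B_{r-1}A - qAB_{r-1}$, and first record which generators commute with $B_{\t(r-1)}$. Since $\t(r-1) = n-r+2$ sits at distance at least $2$ from $r$ and from every node of $X = \{r+1, \dotsc, n-r\}$, all the Cartan entries $a_{\t(r-1), r}$ and $a_{\t(r-1), j}$ (for $j \in X$) vanish, and in each case the Kronecker delta $\delta_{\t(r-1), \t(\cdot)}$ occurring in \eqref{Eqn:BiRel3} is zero. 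Hence $B_{\t(r-1)}$ commutes with $B_r$ and with each $F_j = B_j$ for $j \in X$, and therefore with $F_X^+$ and with $A$. The only nontrivial bracket is the one with $B_{r-1}$: here $a_{r-1,\t(r-1)} = 0$ and $\t(\t(r-1)) = r-1$, so \eqref{Eqn:BiRel3} gives $B_{r-1}B_{\t(r-1)} - B_{\t(r-1)}B_{r-1} = (q-q^{-1})^{-1}\G_{r-1}$.

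Using that $A$ commutes with $B_{\t(r-1)}$ and substituting this relation, a direct expansion of $S B_{\t(r-1)} = B_{r-1}A B_{\t(r-1)} - qAB_{r-1}B_{\t(r-1)}$ collapses to $S B_{\t(r-1)} = B_{\t(r-1)}S + (q-q^{-1})^{-1}[\G_{r-1}, A]_q$, where $[\G_{r-1},A]_q = \G_{r-1}A - qA\G_{r-1}$. It then remains to evaluate this $q$-bracket. Both $\Z_{r-1} = -\K{\t(r-1)}$ and $\Z_{\t(r-1)} = -\K{r-1}$ are group-like, equal to $-K_{\mu}$ and $-K_{-\mu}$ respectively with $\mu = \alpha_{\t(r-1)} - \alpha_{r-1}$, and $A$ has weight $-\alpha_r - \alpha_X$ where $\alpha_X = \sum_{j \in X}\alpha_j$. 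Computing the pairings $(\mu, \alpha_r) = 1$ and $(\mu, \alpha_X) = 0$ and applying \eqref{Eqn:BiRel1} yields $\Z_{r-1}A = q^{-1}A\Z_{r-1}$ and $\Z_{\t(r-1)}A = qA\Z_{\t(r-1)}$. The second identity makes the $\Z_{\t(r-1)}$ contribution to $[\G_{r-1},A]_q$ cancel exactly, since $\Z_{\t(r-1)}A - qA\Z_{\t(r-1)} = 0$, while the first gives $[\G_{r-1},A]_q = \cs_{r-1}(q^{-1}-q)A\Z_{r-1} = -q(q-q^{-1})\cs_{r-1}\Z_{r-1}A$. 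Dividing by $q-q^{-1}$ and recalling $A = [B_r,F_X^+]_q$ produces exactly the right-hand side of \eqref{AppEqn:SBtr-1}.

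The only genuine work is the bookkeeping of $q$-powers in the two weight computations and verifying the distance claims in the Dynkin diagram; the pleasant cancellation of the $\Z_{\t(r-1)}$ term (it $q$-commutes with $A$ with precisely the scalar $q$) is what yields the clean single-term answer, so the one thing that must be handled carefully is the pairing $(\mu, \alpha_r) = 1$, whose value fixes both the surviving scalar and the ordering $\Z_{r-1}[B_r,F_X^+]_q$ in the final expression.
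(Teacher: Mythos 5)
Your proposal is correct and takes essentially the same route as the paper: commute $B_{\t(r-1)}$ past $[B_r,F_X^+]_q$, use relation \eqref{Eqn:BiRel3} to swap $B_{r-1}$ and $B_{\t(r-1)}$, and evaluate the resulting $q$-bracket with $\G_{r-1}$ via the $q$-commutation of the group-like elements $\Z_{r-1}$ and $\Z_{\t(r-1)}$. The only cosmetic difference is that the paper records the correction term in the nested form $\big[[\G_{r-1},B_r]_q,F_X^+\big]_q$, which coincides with your $\big[\G_{r-1},[B_r,F_X^+]_q\big]_q$ because $\G_{r-1}$ commutes with $F_X^+$.
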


\begin{proof}
	We have
		\begin{align*}
			SB_{\t(r-1)}
				&= \big[B_{r-1}, [B_r,F_X^+]_q\big]_qB_{\t(r-1)}\\
				&= \big[B_{r-1}B_{\t(r-1)}, [B_r,F_X^+]_q\big]_q\\
				&= B_{\t(r-1)}S + \tfrac{1}{q-q^{-1}}\big[[\G_{r-1},B_r]_q, F_X^+\big]_q\\
				&= B_{\t(r-1)}S - q\cs_{r-1}\Z_{r-1}[B_r,F_X^+]_q
		\end{align*}
	as required. \Cref{AppEqn:StBr-1} is verified similarly.
\end{proof}

In the following lemma, we introduce the terms
\begin{align}
	\Omega^-
		&= \cs_{\t(r)}\cs_{\t(r-1)} F_X^- K_X\K{r}\Z_{\t(r-1)}, \label{AppEqn:O-}\\
	\Omega^+
		&= \cs_r\cs_{r-1} F_X^+ K_X\K{\t(r)}\Z_{r-1}, \label{AppEqn:O+}
\end{align}

\begin{lemma}
	The relations
		\begin{equation} 
			SS^{\t}-S^{\t}S = \Omega^- - \Omega^+, \label{AppEqn:SSt}
		\end{equation}
	hold in $\Beps$.
\end{lemma}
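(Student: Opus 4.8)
The plan is to evaluate the commutator $SS^{\t}-S^{\t}S$ directly and to show that, after repeated use of the relations collected above, only the two monomials $\Omega^-$ and $\Omega^+$ survive. I would first exploit the $\t$-symmetry: the diagram automorphism $\t$ interchanges $S\leftrightarrow S^{\t}$ and $\Omega^+\leftrightarrow\Omega^-$ while sending $[S,S^{\t}]\mapsto[S^{\t},S]=-[S,S^{\t}]$, so it is enough to isolate the contribution giving $-\Omega^+$ and to deduce the $\Omega^-$ term by applying $\t$. Throughout I would use the rewritten forms $S=\big[[B_{r-1},B_r]_q,F_X^+\big]_q$ and $S^{\t}=\big[[B_{\t(r-1)},B_{\t(r)}]_q,F_X^-\big]_q$, which follow from \eqref{Eqn:qcomm_trick} together with $[B_{r-1},F_X^+]=0=[B_{\t(r-1)},F_X^-]$.

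The first reduction is to peel the outer generator off $S^{\t}$. Writing $S^{\t}=[B_{\t(r-1)},Y]_q$ with $Y=[B_{\t(r)},F_X^-]_q$ and moving $S$ past $B_{\t(r-1)}$ by means of \eqref{AppEqn:SBtr-1}, a short rearrangement yields
\[
SS^{\t}-S^{\t}S=\big[B_{\t(r-1)},[S,Y]\big]_q+q\cs_{r-1}\big(qY\,\Z_{r-1}P-\Z_{r-1}P\,Y\big),\qquad P=[B_r,F_X^+]_q.
\]
The term carrying $\cs_{r-1}\Z_{r-1}$ originates from the commutator $[B_{r-1},B_{\t(r-1)}]=\tfrac{1}{q-q^{-1}}\G_{r-1}$ of the outer generators (cf. \eqref{Eqn:BiRel3}, \eqref{Eqn:Gi}), and after transporting $\Z_{r-1}=-\K{\t(r-1)}$ to standard position it contributes the $\Omega^+$-monomial; the matching $\Z_{\t(r-1)}$-piece of $\G_{r-1}$ feeding $\Omega^-$ appears symmetrically.

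The heart of the argument is the inner commutator $[S,Y]$. Expanding $Y$, the only non-commuting pairs are the inner generators $B_r$ and $B_{\t(r)}$, which produce $[B_r,B_{\t(r)}]=\tfrac{1}{q-q^{-1}}\G_r=\tfrac{1}{q-q^{-1}}(\cs_r\Z_r-\cs_{\t(r)}\Z_{\t(r)})$, and the lowering factors $F_X^+,F_X^-$ paired against the hidden raising factors of the $B$'s, which are resolved by the $E$--$F$ relation \eqref{Eqn:EF-FE}. The elements $\Z_r$ and $\Z_{\t(r)}$ so generated are then carried through $S$ and $S^{\t}$ using \eqref{AppEqn:ZtrS} and \eqref{AppEqn:ZrSt}, while \eqref{AppEqn:BrS} and \eqref{AppEqn:BtrSt} keep the inner generators in place; inserting the explicit forms \eqref{Eqn:Zi} of $\Z_r,\Z_{\t(r)},\Z_{r-1},\Z_{\t(r-1)}$ then identifies the survivors as $F_X^-K_X\K{r}\Z_{\t(r-1)}$ and $F_X^+K_X\K{\t(r)}\Z_{r-1}$ with the correct scalar prefactors.

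The step I expect to be the main obstacle is controlling the cancellations in $[S,Y]$. The expansion produces many terms of weights other than $-(\alpha_{r+1}+\dotsb+\alpha_{n-r})$, and all of these must disappear. In particular the purely lowering terms (those with no Cartan or raising factor) form the lowest-weight part of $SS^{\t}-S^{\t}S$ and hence have nothing to cancel against, so one checks directly, using the quantum Serre relations, that they already vanish. Organising the remaining mixed terms so that they collapse to the single pair $\Omega^--\Omega^+$ is the bulk of the work, and here the $\t$-symmetry noted at the outset roughly halves the bookkeeping.
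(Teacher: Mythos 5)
Your outer reduction is sound: with $Y=[B_{\t(r)},F_X^-]_q$ and $P=[B_r,F_X^+]_q$, relation \eqref{AppEqn:SBtr-1} indeed yields
\begin{equation*}
SS^{\t}-S^{\t}S=\big[B_{\t(r-1)},[S,Y]\big]_q+q\cs_{r-1}\big(qY\Z_{r-1}P-\Z_{r-1}PY\big),
\end{equation*}
and this decomposition can be pushed through to $\Omega^--\Omega^+$. The genuine gap sits exactly where you place ``the bulk of the work'': the whole content of the lemma is a closed formula for $[S,Y]$, equivalently for $[P,Y]$, and you never establish one. Your plan for it is a program, not an argument: expanding $[P,Y]$ produces cross commutators of $F_X^{\pm}$ against the factors $E_X^{\mp}$, $E_r$, $E_{\t(r)}$, $K_r^{-1}$, $K_{\t(r)}^{-1}$ hidden inside $B_r$ and $B_{\t(r)}$, as well as $F_X^+$ against $F_X^-$, and none of the required cancellations is verified anywhere in the proposal; asserting that ``the survivors are identified'' does not prove it. The relations \eqref{AppEqn:ZtrS}, \eqref{AppEqn:ZrSt}, \eqref{AppEqn:BrS}, \eqref{AppEqn:BtrSt} you invoke for this step concern the products $\Z_{\t(r)}S$, $\Z_rS^{\t}$, $B_rS$, $B_{\t(r)}S^{\t}$ and do not address the commutator with $Y$ that is actually needed. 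Finally, the $\t$-symmetry you use to halve the bookkeeping is unavailable for general parameters: an index swap $i\leftrightarrow\t(i)$ preserves relation \eqref{Eqn:BiRel3} only if $\cs_r=\cs_{\t(r)}$ (compare \cref{lemma:phi_anti-inv}), and this equality is not assumed.

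The gap can be closed, and this is essentially what the paper does, by evaluating the key commutator via automorphisms instead of expansion. By \eqref{Eqn:TwxBr} and \eqref{Eqn:TwxBtr} one has $P=T_{w_X}(B_r)$ and $Y=T_{w_X}(B_{\t(r)})$, and $T_{w_X}$ restricts to an algebra automorphism of $\Beps$ by \cref{Thm:WX_Act}; hence $[P,Y]=\tfrac{1}{q-q^{-1}}T_{w_X}(\G_r)=q^{-1}\big(\cs_rF_X^+K_X\K{\t(r)}-\cs_{\t(r)}F_X^-K_X\K{r}\big)$, computed from \eqref{TwXEX}. With this single input your scheme closes: since $B_{r-1}$ commutes with $B_{\t(r)}$ and $F_X^-$, one has $[S,Y]=[B_{r-1},[P,Y]]_q$, which evaluates to $q^{-1}(q-q^{-1})\cs_{\t(r)}F_X^-K_X\K{r}B_{r-1}$, so that $\big[B_{\t(r-1)},[S,Y]\big]_q$ gives $\Omega^-$ plus the cross term $-\cs_{r-1}\cs_{\t(r)}F_X^-K_X\K{r}\Z_{r-1}$; meanwhile $qY\Z_{r-1}P-\Z_{r-1}PY=-[P,Y]\Z_{r-1}$ gives $-\Omega^+$ plus the opposite cross term. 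The paper applies the same idea one level up and avoids all bookkeeping: by \eqref{Eqn:cti_inv} one has $S=(q\cs_{r-1})^{1/2}\,\ct_{r-1}^{-1}\circ T_{w_X}(B_r)$ and $S^{\t}=(q\cs_{\t(r-1)})^{1/2}\,\ct_{r-1}^{-1}\circ T_{w_X}(B_{\t(r)})$, so $SS^{\t}-S^{\t}S$ is the image of $\tfrac{q\cs_{r-1}}{q-q^{-1}}\G_r$ under a single algebra homomorphism, and the result follows by computing that image.
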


\begin{proof}
	Recall from \cref{Thm:cti} that $\ct_{r-1}$ is an algebra automorphism of $\Beps$ with inverse $\ct_{r-1}^{-1}$ given by \cref{Eqn:cti_inv}. We express $S$ and $S^{\t}$ using the algebra automorphisms $T_{w_X}$ and $\ct_{r-1}^{-1}$. In particular using Equations \eqref{Eqn:TwxBr}, \eqref{Eqn:TwxBtr} and \eqref{Eqn:cti_inv} we have
		\begin{align*}
			S &= (q\cs_{r-1})^{1/2}\ct_{r-1}^{-1} \circ T_{w_X}(B_r),\\
			S^{\t} &= (q\cs_{\t(r-1)})^{1/2}\ct_{r-1}^{-1} \circ T_{w_X}(B_{\t(r)}).
		\end{align*}
	Since $\cs_{r-1} = \cs_{\t(r-1)}$ we obtain
		\begin{align*}
			SS^{\t} - S^{\t}S 
				&= q\cs_{r-1}\ct_{r-1}^{-1} \circ T_{w_X}( B_r B_{\t(r)} - B_{\t(r)} B_r )\\
				&= \frac{q}{q-q^{-1}}\ct_{r-1}^{-1} \circ T_{w_X}( \cs_{r-1}\cs_r \Z_r - \cs_{\t(r)}\cs_{\t(r-1)} \Z_{\t(r)}).
		\end{align*}
	Equation \eqref{TwXEX} implies that
		\begin{align*}
			T_{w_X}(\Z_r) &= (1-q^{-2}) F_X^+ K_X\K{\t(r)},\\
			T_{w_X}(\Z_{\t(r)}) &= (1-q^{-2}) F_X^- K_X\K{r}.
		\end{align*}
	Hence we obtain
		\begin{align*}
			SS^{\t} - S^{\t}S 
				&= \ct_{r-1}^{-1} \big( \cs_{r-1}\cs_r F_X^+ K_X\K{\t(r)} - \cs_{\t(r)}\cs_{\t(r-1)} F_X^- K_X\K{r} \big)\\
				&= \cs_{\t(r)}\cs_{\t(r-1)} F_X^- K_X\K{r} \Z_{\t(r-1)} - \cs_{r-1}\cs_r F_X^+ K_X\K{\t(r)} \Z_{r-1} \\
				&= \Omega^- - \Omega^+
		\end{align*}
	as required. 
\end{proof}

\begin{lemma} \label{Lemma:Tech_Calc_1}
	The relation
		\begin{align} \label{Eqn:Tech_Calc_1}
			\begin{split}
				\big[ [S,B_{\t(r)}]_q, [S^{\t},B_r]_q  \big] &= - \big[ \Delta, [S^{\t},B_r]_q \big] - \big[ [S,B_{\t(r)}]_q, \Delta^{\t} \big] \\
				&\quad{}- \frac{q \cs_r \cs_{\t(r)} }{ q-q^{-1} } \big( K_X - K_X^{-1} ) K_X\G_{r-1}
			\end{split}
		\end{align}
	holds in $\Beps$.
\end{lemma}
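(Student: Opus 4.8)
The plan is to set $A = [S, B_{\t(r)}]_q$ and $B = [S^{\t}, B_r]_q$, so that the claim reads $[A, B] = -[\Delta, B] - [A, \Delta^{\t}] - \tfrac{q\cs_r\cs_{\t(r)}}{q-q^{-1}}(K_X - K_X^{-1})K_X\G_{r-1}$. Recalling from \cref{Eqn:ctr(Br-1)_NEW,Eqn:ctr(Btr-1)_NEW} that $\ct_r(B_{r-1}) = C(A+\Delta)$ and $\ct_r(B_{\t(r-1)}) = C(B+\Delta^{\t})$, the identity is equivalent to $[A+\Delta, B+\Delta^{\t}] = [\Delta,\Delta^{\t}] - \tfrac{q\cs_r\cs_{\t(r)}}{q-q^{-1}}(K_X-K_X^{-1})K_X\G_{r-1}$; keeping this reformulation in mind is a useful consistency check, since it is precisely what \cref{Prop:ctrCheck2} needs after combining with $[\Delta,\Delta^{\t}]$. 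First I would assemble the elementary commutation relations among the four building blocks: $[B_r,S]=0$ and $[B_{\t(r)},S^{\t}]=0$ from \cref{AppEqn:BrS,AppEqn:BtrSt}; the relation $[B_r,B_{\t(r)}] = \tfrac{1}{q-q^{-1}}\G_r$ from \cref{Eqn:BiRel3} with $i=r$, $j=\t(r)$; and the key relation $[S,S^{\t}] = \Omega^- - \Omega^+$ from \cref{AppEqn:SSt}. I would also record via \cref{Eqn:Zi} that $\Z_{r-1} = -\K{\t(r-1)}$ and $\Z_{\t(r-1)} = -\K{r-1}$, so that $\Omega^{\pm}$ and $\G_{r-1} = \cs_{r-1}(\K{r-1}-\K{\t(r-1)})$ are built from grouplike factors that commute with the remaining constituents up to explicit powers of $q$.

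For the left-hand side I would expand $A = SB_{\t(r)} - qB_{\t(r)}S$ and $B = S^{\t}B_r - qB_rS^{\t}$ and multiply out $AB-BA$ into eight monomials. The relations $[B_r,S]=0$ and $[B_{\t(r)},S^{\t}]=0$ let the matched factors be aligned in each monomial, after which all eight terms can be brought to a common normal order (for instance with $S^{\t},S$ on the left and $B_r,B_{\t(r)}$ on the right). Commuting $S$ past $S^{\t}$ produces the $\Omega^- - \Omega^+$ correction via \cref{AppEqn:SSt}, while reordering $B_r$ and $B_{\t(r)}$ produces the $\tfrac{1}{q-q^{-1}}\G_r$ correction. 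The bulk of the correction-free monomials cancel in the commutator, leaving a sum in which every surviving term carries exactly one factor of $\Omega^{\pm}$ or of $\G_r$.

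Separately I would compute the two $\Delta$-commutators on the right. Using $\Delta = q\cs_{\t(r)}B_{r-1}\K{r}K_X$ and $\Delta^{\t} = q\cs_rB_{\t(r-1)}\K{\t(r)}K_X$ together with \cref{AppEqn:StBr-1,AppEqn:SBtr-1} to move the $B_{r-1}$, $B_{\t(r-1)}$ factors through $S^{\t}$, $S$, and \cref{AppEqn:ZrSt,AppEqn:ZtrS} to handle the resulting $\Z$-factors, both $[\Delta,B]$ and $[A,\Delta^{\t}]$ reduce to expressions again carrying single factors of $\Omega^{\pm}$ or $\G_r$ once $\Z_r,\Z_{\t(r)}$ are replaced by their explicit forms from \cref{Eqn:Zi}. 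I would then match the two sides term by term.

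The main obstacle is the bookkeeping: showing that, after all cancellations, the residual $\Omega^{\pm}$-contributions from the left-hand side and the $\Z_r$-correction terms from the two $\Delta$-commutators combine to exactly $-\tfrac{q\cs_r\cs_{\t(r)}}{q-q^{-1}}(K_X-K_X^{-1})K_X\G_{r-1}$ and nothing else. This demands careful tracking of the $q$-powers generated when the grouplike factors $K_X$, $\K{r}$, $\K{\t(r)}$ are moved past $B_r$, $B_{\t(r)}$, $F_X^{\pm}$, and essential use of $\cs_{r-1}=\cs_{\t(r-1)}$. As an alternative that front-loads part of this work, one can invoke the identities $S = (q\cs_{r-1})^{1/2}\ct_{r-1}^{-1}\circ T_{w_X}(B_r)$ and $S^{\t} = (q\cs_{r-1})^{1/2}\ct_{r-1}^{-1}\circ T_{w_X}(B_{\t(r)})$ from the proof of \cref{AppEqn:SSt}: writing $\Psi = \ct_{r-1}^{-1}\circ T_{w_X}$, which is an algebra automorphism by \cref{Thm:cti} and \cref{Thm:WX_Act}, one has $[A,B] = \Psi\big([\Psi^{-1}(A),\Psi^{-1}(B)]\big)$, so the computation reduces to a commutator of bracket expressions in the raw generators $B_r,B_{\t(r)},B_{r-1},B_{\t(r-1)},F_X^{\pm}$, at the cost of evaluating $\Psi$ on the resulting $\Z$- and $K$-terms via \cref{Eqn:TwXFX,TwXEX,Eqn:cti,Eqn:cti_inv}.
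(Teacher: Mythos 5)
Your proposal follows essentially the same route as the paper's proof: the paper likewise expands $\big[[S,B_{\t(r)}]_q,[S^{\t},B_r]_q\big]$ using \cref{AppEqn:BrS,AppEqn:BtrSt,AppEqn:SSt} and the relation $[B_r,B_{\t(r)}]=\tfrac{1}{q-q^{-1}}\G_r$ to normal-order the monomials with $\Omega^{\pm}$- and $\G_r$-corrections, then rewrites the resulting $\big[S^{\t},[S,\G_r]_q\big]_q$ via \cref{AppEqn:ZtrS,AppEqn:ZrSt,AppEqn:SBtr-1,AppEqn:StBr-1} as the two $\Delta$-commutators plus $[\Omega^{\pm},\Z]$-type terms, which are evaluated through \cref{Eqn:EF-FE} to produce the $(K_X-K_X^{-1})K_X\G_{r-1}$ contribution. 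The key lemmas you cite and the term-matching strategy coincide with the paper's; your automorphism alternative via $\ct_{r-1}^{-1}\circ T_{w_X}$ is only an aside and is not needed.
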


\begin{proof}
	We first use \cref{AppEqn:BrS,AppEqn:BtrSt,AppEqn:SSt} to rewrite each term of $[S, B_{\t(r)}]_q[S^{\t}, B_r]_q$. In particular we have
		\begin{align*}
			SB_{\t(r)}S^{\t}B_r 
				&= SS^{\t}B_{\t(r)}B_r\\
				&= \big( S^{\t}S + \O \big) \big( B_rB_{\t(r)} - \tfrac{1}{q-q^{-1}}\G_r \big)\\
				&= S^{\t}SB_rB_{\t(r)} - \tfrac{1}{q-q^{-1}}S^{\t}S\G_r + ( \O )  B_rB_{\t(r)}\\ 
				&\quad{}- \tfrac{1}{q-q^{-1}} (\O )\G_r.
		\end{align*}
	Similarly, one finds that
		\begin{align*}
			S B_{\t(r)} B_r S^{\t}
				&= B_r S^{\t} S B_{\t(r)} + B_r ( \O ) B_{\t(r)} - \tfrac{1}{q-q^{-1}} S \G_r S^{\t},\\
				B_{\t(r)} S S^{\t} B_r 
				&= S^{\t} B_r B_{\t(r)} S + B_{\t(r)} ( \O )B_r - \tfrac{1}{q-q^{-1}}S^{\t} \G_r S,\\
				B_{\t(r)} S B_r S^{\t}
				&= B_r S^{\t} B_{\t(r)} S + B_{\t(r)} B_r ( \O ) - \tfrac{1}{q-q^{-1}}\G_r SS^{\t}\\
				&\quad{}+ \tfrac{1}{q-q^{-1}}\G_r( \O ).
		\end{align*}
	Combining these four expressions we obtain 
		\begin{align} \label{Eqn:StepOne}
			\begin{split}
			\big[[S, B_{\t(r)}]_q, [S^{\t},B_r]_q\big]
				&= -\tfrac{1}{q-q^{-1}} \big[ S^{\t}, [S,\G_r]_q \big]_q + \big[ [\O, B_r]_q, B_{\t(r)}\big]_q\\
				&\qquad{}- \tfrac{1}{q-q^{-1}}\big[ \O, \G_r \big]_{q^2}.
			\end{split}
		\end{align}
	We now consider the term $\big[S^{\t}, [S,\G_r]_q \big]_q$ in more detail. Using \cref{AppEqn:ZtrS,AppEqn:ZrSt} we have
		\begin{align*}
			S^{\t} \G_r S
				&= \cs_r S^{\t} \Z_r S - \cs_{\t(r)} S^{\t} \Z_{\t(r)} S\\
				&= \cs_r \big( q\Z_rS^{\t} - (q-q^{-1})[B_{\t(r-1)}, B_{\t(r)}]_q K_X\K{\t(r)} \big) S\\
				&\quad{} - \cs_{\t(r)} S^{\t} \big( q^{-1} S \Z_{\t(r)} + (1-q^{-2}) [B_{r-1},B_r]_q K_X\K{r} \big)\\
				&= q\cs_r \Z_r S^{\t} S - q^{-1}\cs_{\t(r)} S^{\t} S \Z_{\t(r)} - (q-q^{-1}) [\Delta^{\t}, B_{\t(r)}] S\\
				&\quad{}- (1-q^{-2}) S^{\t} [\Delta,B_r].
		\end{align*}
	We similarly obtain
		\begin{align*}
			S \G_r S^{\t}
				&= q^{-1}\cs_r S S^{\t} \Z_r - q\cs_{\t(r)} \Z_{\t(r)} S S^{\t} + (1-q^{-2}) S [\Delta^{\t}, B_{\t(r)}]\\
				&\quad{}+ (q-q^{-1}) [\Delta, B_r] S^{\t}.
		\end{align*}
	It hence follows from this and \cref{AppEqn:SSt} that
	\begin{align} \label{Eqn:StepTwo}
		\begin{split}
		\big[S^{\t}, [S,\G_r]_q\big]_q
			&= (q-q^{-1})\big[S^{\t}, [\Delta, B_r]\big]_q - (q-q^{-1})\big[ S, [\Delta^{\t}, B_{\t(r)}]\big]_q\\
			&\quad{}- c_r(\O)\Z_r + q^2c_{r}\Z_{r}(\O). 
		\end{split}
	\end{align}
	We now consider the elements $\big[ S^{\t}, [\Delta,B_r] \big]_q$ and $\big[ S, [\Delta^{\t}, B_{\t(r)}] \big]_q$ and write them in the form that appears in \cref{Eqn:Tech_Calc_1}. By \cref{AppEqn:StBr-1} it follows that
		\begin{align*}
			\big[ S^{\t}, [\Delta, B_r] \big]_q 
				&= S^{\t}\Delta B_r - S^{\t}B_r\Delta - q\Delta B_rS^{\t} + q B_r\Delta S^{\t}\\
				&= \big( \Delta S^{\t} + [ \Omega^-, B_{\t(r)}]_q\big) B_r - S^{\t}B_r\Delta - q\Delta B_rS^{\t}\\
				&\quad{}+ qB_r\big(S^{\t}\Delta - [\Omega^-, B_{\t(r)}]_q \big)\\
				&= \big[ \Delta, [S^{\t}, B_r]_q \big] + \big[ [\Omega^-, B_{\t(r)}]_q, B_r \big]_q.
		\end{align*}
	Similarly, by \cref{AppEqn:SBtr-1} we have
		\begin{align*}
			\big[S, [\Delta^{\t}, B_{\t(r)}] \big]_q
				&= -\big[ [S,B_{\t(r)}], \Delta^{\t} \big] + \big[ [\Omega^+, B_r]_q, B_{\t(r)}\big]_q.
		\end{align*}
	Substituting these two expressions into \cref{Eqn:StepTwo} gives
		\begin{align} \label{Eqn:StepThree}
			\begin{split}
			\big[S^{\t}, [S,\G_r]_q \big]_q
				&= (q-q^{-1})\big[ \Delta, [S^{\t},B_r]_q \big] + (q-q^{-1})\big[[S,B_{\t(r)}], \Delta^{\tau}\big]\\
				&\quad{}+ (q-q^{-1})\big[[\Omega^-, B_{\t(r)}]_q, B_r\big]_q - (q-q^{-1})\big[ [\Omega^+, B_r]_q, B_{\t(r)}\big]_q\\
				&\qquad{} -\cs_r( \O )\Z_r + q^2\cs_{r}\Z_{r} (\O).
			\end{split}
		\end{align}
	We substitute \cref{Eqn:StepThree} into \cref{Eqn:StepOne}. Noting that
		\begin{equation*}
			\big[ [\Omega^-, B_r]_q, B_{\t(r)}\big]_q - \big[ [\Omega^-, B_{\t(r)}]_q, B_r\big]_q = \tfrac{1}{q-q^{-1}}[\Omega^-, \G_r]_{q^2},
		\end{equation*}
	we obtain 
		\begin{align*}
			\big[ [S,B_{\t(r)}]_q, [S^{\t},B_r]_q \big]
				&= -\big[\Delta, [S^{\t},B_r]_q \big] - \big[ [S,B_{\t(r)}]_q, \Delta^{\t} \big]\\
				&\quad{}+ \tfrac{1}{q-q^{-1}} \big( c_r[\Omega^-, \Z_r]_{q^2} - c_{\t(r)}[\Omega^+, \Z_{\t(r)}]_{q^2} \big).
		\end{align*}    
	Using \cref{Eqn:EF-FE} we compute $[\Omega^-, \Z_r]_{q^2}$ and $[\Omega^+, \Z_{\t(r)}]_{q^2}$. This gives
		\begin{align*}
			[\Omega^-, \Z_r]_{q^2} &= q\cs_{\t(r)}\cs_{\t(r-1)}\big(K_X - K_X^{-1}\big)K_X\Z_{\t(r-1)},\\
			[\Omega^+, \Z_{\t(r)}]_{q^2} &= q\cs_{r}\cs_{r-1}\big(K_X - K_X^{-1}\big)K_X\Z_{r-1}.
		\end{align*}
	It hence follows that
		\begin{align*}
			\big[ [S,B_{\t(r)}]_q, [S^{\t},B_r]_q \big]
				&= -\big[\Delta, [S^{\t},B_r]_q \big] - \big[ [S,B_{\t(r)}]_q, \Delta^{\t} \big]\\
				&\quad{}-\frac{q\cs_r\cs_{\t(r)}}{q-q^{-1}}\big(K_X-K_X^{-1})K_X\G_{r-1}
		\end{align*} 
	as required.
\end{proof}

    \begin{lemma} \label{Lem:QSerre2_Rel1}
	The relation 
		\begin{equation} \label{Eqn:QSerre2_Rel1}
			\Z_r[S, B_{\t(r)}]_q = q^{-1}[S,B_{\t(r)}]_q\Z_r + q^{-2}(q-q^{-1})\Delta\Z_r
		\end{equation}
	holds in $\Beps$.
\end{lemma}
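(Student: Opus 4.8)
The plan is to commute $\Z_r$ rightward through $[S,B_{\t(r)}]_q = SB_{\t(r)} - qB_{\t(r)}S$, in the same spirit as the proof of \eqref{AppEqn:ZtrS}. Writing $\Z_r = -(1-q^{-2})E_X^+\K{\t(r)}$ as in \eqref{Eqn:Zi}, the group-like factor $\K{\t(r)}$ passes through every generator by the torus relations \eqref{Eqn:BiRel1}, while $E_X^+$ commutes with each constituent of $B_{r-1}$ (as $r-1,\t(r-1)$ are non-adjacent to $X$), so that $\Z_r B_{r-1} = q^{-1}B_{r-1}\Z_r$. Consequently the whole calculation reduces to the two commutations $\Z_r S$ and $\Z_r B_{\t(r)}$, after which the bracket is reassembled and the scalar $q^{-1}$ on the leading term is tracked exactly as in \eqref{AppEqn:ZtrS}. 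First I would push $\Z_r$ inward to sit beside the $F_X^+$ in $S = [B_{r-1},[B_r,F_X^+]_q]_q$ and beside the $E$-part of $B_r$ from \eqref{Eqn:Bi_AIII}, collecting the $q$-powers from \eqref{Eqn:BiRel1}; the analogue for $\Z_r B_{\t(r)}$ is read off directly from \eqref{Eqn:Bi_AIII}.

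Rather than fight the constituent commutators head-on, I would exploit the identity $\cs_r\Z_r = \G_r + \cs_{\t(r)}\Z_{\t(r)}$ from \eqref{Eqn:Gi} together with $\G_r = (q-q^{-1})[B_r,B_{\t(r)}]$ from \eqref{Eqn:BiRel3}. This rewrites $\cs_r\Z_r\,[S,B_{\t(r)}]_q$ as $(q-q^{-1})[B_r,B_{\t(r)}]\,[S,B_{\t(r)}]_q + \cs_{\t(r)}\Z_{\t(r)}[S,B_{\t(r)}]_q$, so the $\Z_{\t(r)}S$ contribution collapses to the already-established relation \eqref{AppEqn:ZtrS}, and the relation $B_rS = SB_r$ from \eqref{AppEqn:BrS} simplifies the $[B_r,B_{\t(r)}]$ terms. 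The target correction $q^{-2}(q-q^{-1})\Delta\Z_r$, with $\Delta = q\cs_{\t(r)}B_{r-1}\K{r}K_X$ from \eqref{Eqn:D}, should then emerge after collecting precisely the terms carrying a single factor $\K{r}K_X$.

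The hard part will be the ``unmatched'' meetings of the positive root vector $E_X^+$ inside $\Z_r$ with the $E$-components of $B_r$ and $B_{\t(r)}$: whereas the clean proof of \eqref{AppEqn:ZtrS} relied on $E_X^-$ pairing with $F_X^+$ through the Cartan-type relation \eqref{Eqn:EF-FE}, here the relevant commutators are the quantum Serre double brackets $[E_X^+,[E_X^+,E_{\t(r)}]_{q^{-1}}]$ and $[E_X^-,[E_X^-,E_r]_{q^{-1}}]$, which are not governed by \eqref{Eqn:EF-FE}. I expect these to require the quantum Serre relations \eqref{Eqn:QSerre2} and, as in \cref{Lemma:ctrCheck1-Part2,Lemma:ctrCheck1-Part3}, a split into the cases $|X|=1$ (where $F_X^+ = F_X^-$, so $E_X^+$ does meet its partner and \eqref{Eqn:EF-FE} applies) and $|X|>1$ (where it does not, forcing the Lusztig automorphism $T_{w_X}$ via \eqref{Eqn:TwxBr} to relocate the troublesome factors). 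As a final consistency check I would confirm that the resulting relation is stable under the anti-involution $\phi$ of \eqref{Eqn:anti-inv}, which interchanges $r \leftrightarrow \t(r)$ and thereby exchanges \cref{Lem:QSerre2_Rel1} with its mirror image.
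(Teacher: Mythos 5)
There is a genuine gap here: neither of the two concrete strategies you propose can be completed with the relations available, and the tool you mention only as a fallback ($T_{w_X}$) is in fact the paper's entire proof. The direct commutation of your first paragraph requires a closed formula for $\Z_rS$ versus $S\Z_r$, i.e.\ for moving $E_X^+$ past the $F_X^+$ sitting inside $S$; as the paper stresses at the start of its proof, there is no simple way to do this (\eqref{Eqn:EF-FE} governs only the matched pair $E_X^-$, $F_X^+$). Indeed, granting the weight-type commutation $\Z_rB_{\t(r)}=q^{-2}B_{\t(r)}\Z_r$, relation \eqref{Eqn:QSerre2_Rel1} is equivalent to the assertion that the intractable quantity $\Z_rS-qS\Z_r$ becomes tractable only after it is $q$-bracketed against $B_{\t(r)}$; no formula for $\Z_rS$ alone is ever needed, or established, anywhere in the paper, so a proof that begins by computing it cannot get off the ground. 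Your second paragraph's substitution $\cs_r\Z_r=\G_r+\cs_{\t(r)}\Z_{\t(r)}$ with $\G_r=(q-q^{-1})[B_r,B_{\t(r)}]$ handles its $\Z_{\t(r)}$ half correctly (that half does reduce to \eqref{AppEqn:ZtrS} plus standard commutations), but the $\G_r$ half is circular: to move $[B_r,B_{\t(r)}]$ across $[S,B_{\t(r)}]_q$ you must move $B_r$ across it, and by \eqref{Eqn:Br[SBtr]} this produces the correction $\tfrac{1}{q-q^{-1}}[S,\G_r]_q$, whose summand $\cs_r[S,\Z_r]_q$ is an unknown of exactly the type you set out to determine (only the other summand is known, via \eqref{Eqn:[S,Ztr]}); you must also move $B_{\t(r)}$ across $[S,B_{\t(r)}]_q$, which would need a Serre-type relation between $S$ and $B_{\t(r)}$ that appears nowhere in the paper and is of the same order of difficulty as the lemma itself. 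The relation $B_rS=SB_r$ of \eqref{AppEqn:BrS} breaks neither circle.

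The paper's proof avoids all of this with one global move: apply $T_{w_X}^{-1}$ to both sides of \eqref{Eqn:QSerre2_Rel1}. Since $[S,B_{\t(r)}]_q=T_{w_X}\big(\big[B_{r-1},[B_r,[F_X^+,B_{\t(r)}]_q]_q\big]_q\big)$, $\Z_r=q^2(1-q^{-2})\,T_{w_X}\big(F_X^+K_X^{-1}\K{\t(r)}\big)$ and $\Delta=q\cs_{\t(r)}T_{w_X}\big(B_{r-1}\K{r}K_X^{-1}\big)$, the lemma is equivalent to \eqref{Eqn:Zr[S,Btr]check}, in which the troublesome $E_X^+$ has been converted into an $F_X^+$; that identity is then verified using the quantum Serre relations $p(F_X^+,B_r)=p(F_X^+,B_{\t(r)})=0$ together with the matched pairing $F_X^+\Z_{\t(r)}=\Z_{\t(r)}F_X^+ + q^{-1}(K_X-K_X^{-1})\K{r}$ and $[B_{r-1},\G_r]_q=(q^2-1)\cs_{\t(r)}B_{r-1}\Z_{\t(r)}$. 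Your final paragraph correctly names the unmatched-pairing obstruction and even the automorphism $T_{w_X}$, but only as a case-specific fallback for $|X|>1$; in the paper the conjugation is the uniform first step for every $X$, no split into $|X|=1$ and $|X|>1$ occurs, and the quantum Serre double brackets of $E$'s you anticipate never have to be touched. Note also that your proposed consistency check via $\phi$ is only available under the extra hypothesis $\cs_r=\cs_{\t(r)}$ of \cref{lemma:phi_anti-inv}, so it cannot serve as an unconditional test.
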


\begin{proof}
	The difficulty in the proof comes from that fact that the element $\Z_r$ contains $E_X^+$ as a factor, and there is generally no simple way to commute $E_X^+$ with $F_X^+$. The idea is to verify that \cref{Eqn:QSerre2_Rel1} holds if the algebra automorphism $T_{w_X}^{-1}$ is applied to both sides. More precisely, using \eqref{Eqn:TwXFX}, \eqref{TwXEX}, \eqref{Eqn:TwxBr} and \eqref{Eqn:TwxiBtr} we have
		\begin{align*}
			[S,B_{\t(r)}]_q &= T_{w_X}\big( \big[ B_{r-1}, [B_r, [F_X^+, B_{\t(r)}]_q]_q \big]_q \big),\\
			\Z_r &= q^2(1-q^{-2})T_{w_X} \big( F_X^+K_X^{-1}\K{\t(r)} \big),\\
			\Delta &= q\cs_{\t(r)}T_{w_X} \big( B_{r-1}\K{r}K_X^{-1} \big)
		\end{align*}
	and hence verifying \cref{Eqn:QSerre2_Rel1} is equivalent to showing that
		\begin{align}
			\begin{split}
				F_X^+\big[ B_{r-1}, [B_r, [F_X^+, B_{\t(r)}]_q]_q \big]_q
					&= \big[ B_{r-1}, [B_r, [F_X^+, B_{\t(r)}]_q]_q \big]_qF_X^+\\
					&\quad{}+ (q-q^{-1})\cs_{\t(r)}B_{r-1}\K{r}K_X^{-1}F_X^+ \label{Eqn:Zr[S,Btr]check}
			\end{split}
		\end{align}
	holds in $\Beps$.
	Using the relations 
	\[p(F_X^+,B_r) = p(F_X^+, B_{\t(r)}) = 0\] 
	we can commute $F_X^+$ through $[B_r, [F_X^+, B_{\t(r)}]_q]_q$. This gives
		\begin{align*}
			F_X^+ [B_r, [&F_X^+, B_{\t(r)}]_q]_q\\
				&= F_X^+B_rF_X^+B_{\t(r)} - qF_X^+B_rB_{\t(r)}F_X^+ - qF_X^{+2}B_{\t(r)}B_r+ q^2F_X^+B_{\t(r)}F_X^+B_r\\
				&= \tfrac{1}{q+q^{-1}}\big(F_X^{+2}B_r + B_rF_X^{+2} \big)B_{\t(r)} - qF_X^+B_rB_{\t(r)}F_X^+ -qF_X^{+2}B_{\t(r)}B_r \\
				&\quad{}+ \tfrac{q^2}{q+q^{-1}}\big( F_X^{+2}B_{\t(r)} + B_{\t(r)}F_X^{+2}\big)B_r\\
				&= \tfrac{1}{q^2-q^{-2}}F_X^{+2}\G_r + \tfrac{1}{q+q^{-1}}B_r\big( (q+q^{-1})F_X^+B_{\t(r)}F_X^+ - B_{\t(r)}F_X^{+2}\big)\\
				&\quad{}-qF_X^+B_rB_{\t(r)}F_X^+ + \tfrac{q^2}{q+q^{-1}}B_{\t(r)}\big( (q+q^{-1})F_X^+B_rF_X^+ - B_rF_X^{+2}\big)\\
				&= [B_r, [F_X^+, B_{\t(r)}]_q]_qF_X^+ + \tfrac{1}{q^2-q^{-2}}\big( F_X^{+2}\G_r - (1 + q^2)F_X^+\G_rF_X^+ + q^2\G_rF_X^{+2}\big)
		\end{align*}
	where the third equality follows from observing that terms beginning with $F_X^{+2}$ simplify. Since $B_{r-1}$ commutes with $F_X^+$, and 
	\[ [B_{r-1}, \G_r]_q = (q^2-1)\cs_{\t(r)}B_{r-1}\Z_{\t(r)} \]
	it follows that
		\begin{align*}
			F_X^+\big[B_{r-1}, [&B_r,[F_X^+,B_{\t(r)}]_q]_q \big]_q - \big[B_{r-1}, [B_r,[F_X^+,B_{\t(r)}]_q]_q \big]_qF_X^+\\
				&=\frac{q}{q+q^{-1}}\cs_{\t(r)}B_{r-1}\big( F_X^{+2}\Z_{\t(r)} - (1+q^2)F_X^+\Z_{\t(r)}F_X^+ + q^2\Z_{\t(r)}F_X^{+2}\big).
		\end{align*}
	The relation
	\[ F_X^+\Z_{\t(r)} = \Z_{\t(r)}F_X^+ + q^{-1}(K_X - K_X^{-1})\K{r} \]
	implies that
		\begin{align*}
			F_X^{+2}\Z_{\t(r)} &- (1+q^2)F_X^+\Z_{\t(r)}F_X^+ + q^2\Z_{\t(r)}F_X^{+2}\\
				&= q^{-1}F_X^+(K_X-K_X^{-1})\K{r} - q(K_X-K_X^{-1})\K{r}F_X^+\\
				&= q^{-1}(q^2-q^{-2})K_X^{-1}\K{r}F_X^+.
		\end{align*}
	Hence \eqref{Eqn:Zr[S,Btr]check} holds as required.
\end{proof}

    \begin{lemma} \label{Lemma:QSerre2_Rel2}
	The relation
	\begin{equation} \label{Eqn:QSerre2_Rel2}
		S[S, B_{\t(r)}]_q = q^{-1}[S,B_{\t(r)}]_qS - q^{-2}(q^2-q^{-2})S\Delta
	\end{equation}
	holds in $\Beps$.
\end{lemma}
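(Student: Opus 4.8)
The plan is to reduce \cref{Lemma:QSerre2_Rel2} to a single quantum Serre identity and then verify that identity by the same $T_{w_X}^{-1}$-transport used in the proof of \cref{Lem:QSerre2_Rel1}. First I observe that \eqref{Eqn:QSerre2_Rel2} is equivalent to the statement $p(S,B_{\t(r)}) = -q^{-2}(q^2-q^{-2})S\Delta$, with $p$ the polynomial of \eqref{Eqn:polynomial}. Indeed, writing $Y = [S,B_{\t(r)}]_q = SB_{\t(r)} - qB_{\t(r)}S$ and expanding gives $SY - q^{-1}YS = S^2B_{\t(r)} - (q+q^{-1})SB_{\t(r)}S + B_{\t(r)}S^2 = p(S,B_{\t(r)})$, so the asserted relation says exactly that $p(S,B_{\t(r)})$ equals $-q^{-2}(q^2-q^{-2})S\Delta$.

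Next I apply the algebra automorphism $T_{w_X}^{-1}$, exactly as in \cref{Lem:QSerre2_Rel1}. Since $T_{w_X}$ fixes $B_{r-1}$, Equations \eqref{Eqn:TwxBr} and \eqref{Eqn:TwxiBtr} give $T_{w_X}^{-1}(S) = [B_{r-1},B_r]_q$ and $T_{w_X}^{-1}(B_{\t(r)}) = [F_X^+,B_{\t(r)}]_q$, while $T_{w_X}^{-1}(\Delta) = q\cs_{\t(r)}B_{r-1}\K{r}K_X^{-1}$ as recorded in that proof. As $p$ is a noncommutative polynomial and $T_{w_X}^{-1}$ is an algebra homomorphism, the reduction above is equivalent to
\[
p\big([B_{r-1},B_r]_q,\,[F_X^+,B_{\t(r)}]_q\big) = -q^{-1}(q^2-q^{-2})\cs_{\t(r)}[B_{r-1},B_r]_q\,B_{r-1}\K{r}K_X^{-1},
\]
which is now a relation among $B_{r-1},B_r,F_X^+$ and $B_{\t(r)}$ only.

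To establish this reduced identity I would expand the cubic expression $p([B_{r-1},B_r]_q,[F_X^+,B_{\t(r)}]_q)$. Because $B_{r-1}$ commutes with $F_X^+$ and with $B_{\t(r)}$, hence with $[F_X^+,B_{\t(r)}]_q$, repeated use of the $q$-Jacobi identity \eqref{Eqn:qcomm_trick} reduces the inner brackets to expressions built from $[B_r,[F_X^+,B_{\t(r)}]_q]_q$, which is rewritten via \eqref{Eqn:qcomm_trick} and the defining relation $[B_r,B_{\t(r)}] = (q-q^{-1})^{-1}\G_r$ from \eqref{Eqn:BiRel3}. The only potentially intractable term is the $\cs_r\Z_r$-contribution of $\G_r$, since $\Z_r$ contains $E_X^+$ and there is no usable commutation of $E_X^+$ with $F_X^+$; but this term always occurs inside a bracket with $B_{r-1}$, and the relations $[B_{r-1},\Z_r]_q = 0$ and $[B_{r-1},\G_r]_q = (q^2-1)\cs_{\t(r)}B_{r-1}\Z_{\t(r)}$ (from the proofs of \cref{Lem:simplecomms,Lem:QSerre2_Rel1}) eliminate it, leaving only the $\Z_{\t(r)}$-term. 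That term passes through $F_X^+$ benignly via \eqref{Eqn:EF-FE} (equivalently \eqref{AppEqn:FX_Ztr_comm}), after which the quantum Serre relations $p(B_r,B_{r-1}) = p(F_X^+,B_r) = p(F_X^+,B_{\t(r)}) = 0$ collect everything into the claimed right-hand side.

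The hard part is purely the bookkeeping in this final expansion: $p(\cdot,\cdot)$ is cubic, each of its two arguments is itself a $q$-commutator, and the $q$-powers must be tracked carefully through several applications of \eqref{Eqn:qcomm_trick}. The conceptual point that makes the calculation terminate is that passing to $T_{w_X}^{-1}$ replaces the awkward $E_X^+$ hidden in $\Z_r$ by a manageable $F_X^+$ and guarantees that all residual $E_X^{\pm}$-versus-$F_X^+$ interactions are of the benign type controlled by \eqref{Eqn:EF-FE}; once this is arranged, no relation beyond those already invoked for \cref{Lem:QSerre2_Rel1} is needed.
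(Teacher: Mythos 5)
Your proposal is correct, but it takes a genuinely different route from the paper's proof. The paper does not transport by $T_{w_X}^{-1}$ alone: it conjugates by the composite $T_{w_X}\circ\ct_{r-1}^{-1}$, writing $S = (q\cs_{r-1})^{1/2}\,T_{w_X}\circ\ct_{r-1}^{-1}(B_r)$, and pulling $B_{\t(r)}$ and $\Delta$ back to $(q\cs_{\t(r-1)})^{-1/2}\big[F_X^+,[B_{\t(r)},B_{\t(r-1)}]_q\big]_q$ and $\cs_{\t(r)}B_{\t(r-1)}\K{r}K_X^{-1}$. The extra twist by $\ct_{r-1}^{-1}$ (available by \cref{Thm:cti} and \cref{Eqn:cti_inv}) replaces the composite element $S$ by a scalar multiple of the single generator $B_r$, so the paper's reduced identity is verified by moving one generator through a nested bracket, the dangerous $\Z_r$-contributions dying against $B_{\t(r-1)}$. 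Your reduction keeps $S$ as the $q$-commutator $[B_{r-1},B_r]_q$, so your reduced identity is a composite-against-composite computation --- precisely the heavier calculation the paper chose to sidestep. What your route buys: it uses only the Lusztig automorphism $T_{w_X}$, so this lemma becomes independent of \cref{Thm:cti}, whose proof rests on the computer calculations of Kolb--Pellegrini. What the paper's route buys: a substantially shorter verification.

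Concerning the part you defer as ``bookkeeping'': I checked that it does close with the tools you name, but the cancellation mechanism at the quadratic stage is strictly stronger than the relation you cite, and this is where your sketch is thinnest. Set $A = [B_{r-1},B_r]_q$ and $B = [F_X^+,B_{\t(r)}]_q$. Then $p(A,B) = \big[A,[A,B]_q\big]_{q^{-1}}$, and $[A,B]_q = [S,B_{\t(r)}]_q + q\cs_{\t(r)}B_{r-1}\K{r}(K_X - K_X^{-1})$ is exactly \cref{AppLem:Alt_ctr}; at this first stage the relation $\big[B_{r-1},[F_X^+,\Z_r]\big]_q = 0$ of \cref{Lem:simplecomms} indeed suffices. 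At the second stage, the relations $B_{r-1}[S,B_{\t(r)}]_q = q^{-1}[S,B_{\t(r)}]_qB_{r-1}$ (from $p(B_{r-1},B_r)=0$ and $[B_{r-1},F_X^+] = [B_{r-1},B_{\t(r)}] = 0$) and $B_r[S,B_{\t(r)}]_q = [S,B_{\t(r)}]_qB_r + (q-q^{-1})^{-1}[S,\G_r]_q$ (from \cref{AppLem:ST_comms}) give
\[ \big[A,[S,B_{\t(r)}]_q\big]_{q^{-1}} = \tfrac{1}{q-q^{-1}}\big[B_{r-1},[S,\G_r]_q\big], \]
and here the intractable term $\cs_r[S,\Z_r]_q$ is annihilated not by $[B_{r-1},\Z_r]_q = 0$ itself but by the plain commutator identity $\big[B_{r-1},[S,\Z_r]_q\big] = 0$, which holds because the $q$-factors in $B_{r-1}S = q^{-1}SB_{r-1}$ and $B_{r-1}\Z_r = q\Z_rB_{r-1}$ cancel; this is the precise sense in which your claim that the $\Z_r$-term ``always occurs inside a bracket with $B_{r-1}$'' is true, and it needs to be stated. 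The surviving piece $-\cs_{\t(r)}[S,\Z_{\t(r)}]_q = (q-q^{-1})\cs_{\t(r)}A\K{r}K_X$ (from \cref{AppLem:Ztr_S_comm}) and the elementary commutations of $A$ with $B_{r-1}\K{r}K_X^{\pm 1}$ then produce exactly your right-hand side. So your approach is sound and self-contained, but as written it is a plan rather than a proof; expanded along the lines above it becomes a complete, if longer, alternative to the paper's argument.
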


\begin{proof}
	As in the proof of \cref{Lem:QSerre2_Rel1} we verify a relation that is equivalent to \cref{Eqn:QSerre2_Rel2}. The difference here is that we additionally use the algebra automorphism $\ct_{r-1}$ from \cref{Eqn:cti}. In particular using \eqref{Eqn:TwxBr}, \eqref{Eqn:TwxBtr} and \eqref{Eqn:cti_inv} we have
		\begin{align*}
			S &= (q\cs_{r-1})^{1/2} T_{w_X} \circ \ct_{r-1}^{-1}(B_r),\\
			B_{\t(r)} &= (q\cs_{\t(r-1)})^{-1/2} T_{w_X} \circ \ct_{r-1}^{-1}\big( [F_X^+, [B_{\t(r)}, B_{\t(r-1)}]_q]_q \big),\\
			\Delta &= \cs_{\t(r)} T_{w_X} \circ \ct_{r-1}^{-1} \big( B_{\t(r-1)}\K{r}K_X^{-1}  \big)
		\end{align*}
	and hence we are done if we show that
		\begin{align*}
			B_r\big[ B_r, [F_X^+, [B_{\t(r)}, B_{\t(r-1)}]_q]_q\big]_q 
				&= q^{-1}\big[ B_r, [F_X^+, [B_{\t(r)}, B_{\t(r-1)}]_q]_q\big]_q B_r \\
				&\quad{}-q^{-2}(q^2-q^{-2})\cs_{\t(r)}B_rB_{\t(r-1)}\K{r}K_X^{-1}.
		\end{align*}
	Noting that
		\begin{align}
			B_r[B_{\t(r)}, B_{\t(r-1)}]_q
				&= [B_{\t(r)},B_{\t(r-1)}]_qB_r + \tfrac{1}{q-q^{-1}}[\G_r, B_{\t(r-1)}]_q \nonumber\\
				&= [B_{\t(r)},B_{\t(r-1)}]_qB_r + q\cs_{\t(r)}\Z_{\t(r)}B_{\t(r-1)} \label{Eqn:QSerre2_Part1}
		\end{align}
	one calculates
		\begin{align*}
			\big[ B_r, [F_X^+, [B_{\t(r)}, B_{\t(r-1)}]_q]_q\big]_q
				&= \big[ [B_r,F_X^+]_q, [B_{\t(r)}, B_{\t(r-1)}]_q \big]_q\\
				&\quad{}+ q^2c_{\t(r)}[F_X^+, \Z_{\t(r)}]B_{\t(r-1)}.
		\end{align*}
	Using this, the relation $B_r[B_r,F_X^+]_q = q^{-1}[B_r,F_X^+]_qB_r$ and \cref{Eqn:QSerre2_Part1} we have
		\begin{align*}
			B_r \big[&B_r, [F_X^+, [B_{\t(r)}, B_{\t(r-1)}]_q]_q \big]_q\\
				&= B_r\big[ [B_r,F_X^+]_q, [B_{\t(r)},B_{\t(r-1)}]_q \big]_q + q^2\cs_{\t(r)}B_r[F_X^+,\Z_{\t(r)}]B_{\t(r-1)}\\
				&= q^{-1}\big[ [B_r,F_X^+]_q, [B_{\t(r)},B_{\t(r-1)}]_q \big]_qB_r + \cs_{\t(r)}B_r[F_X^+, \Z_{\t(r)}]B_{\t(r-1)}\\ 
				&\quad{}- q^3\cs_{\t(r)}[F_X^+, \Z_{\t(r)}]B_rB_{\t(r-1)} + q^2\cs_{\t(r)}B_r[F_X^+, \Z_{\t(r)}]B_{\t(r-1)}\\
				&= q^{-1}\big[B_r, [F_X^+, [B_{\t(r)}, B_{\t(r-1)}]_q]_q \big]_qB_r+ (1 \!+ \!q^2)\cs_{\t(r)}\big[ B_r, [F_X^+, \Z_{\t(r)}] \big]_qB_{\t(r-1)}.
		\end{align*}
	Since $[F_X^+, \Z_{\t(r)}] = q^{-1}(K_X-K_X^{-1})\K{r}$ we have
		\begin{align*}
			\big[ B_r, [F_X^+, \Z_{\t(r)}]\big]_q
				&= q^{-1}[ B_r, (K_X-K_X^{-1})\K{r}]_q\\
				&= -q^{-1}(1-q^{-2})B_rK_X^{-1}\K{r}
		\end{align*}
	and hence we obtain
		\begin{align*}
			B_r \big[B_r, [F_X^+, [B_{\t(r)}, B_{\t(r-1)}]_q]_q \big]_q
				&= q^{-1}\big[B_r, [F_X^+, [B_{\t(r)}, B_{\t(r-1)}]_q]_q \big]_qB_r\\
				&\quad{}-q^{-2}(q^2-q^{-2})\cs_{\t(r)}B_rB_{\t(r-1)}K_X^{-1}\K{r}
		\end{align*}
	as required.
\end{proof}

    \begin{lemma} \label{AppLem:ctrinv}
	The relation
	\begin{align} 
		\big[F_X^-, [S, \Z_r]_q \big]_q &= (q^2-1)S\K{\t(r)}K_X^{-1}, \label{AppEqn:ctrinv1}
	\end{align}
	holds in $\Beps$.
\end{lemma}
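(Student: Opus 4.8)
The plan is to reduce \eqref{AppEqn:ctrinv1} to a more tractable identity by conjugating with the Lusztig automorphism $T_{w_X}$, exactly as in the proof of \cref{Lem:QSerre2_Rel1}. By \cref{Thm:WX_Act} the automorphism $T_{w_X}$ preserves $\Beps$, so \eqref{AppEqn:ctrinv1} is equivalent to the identity obtained by applying $T_{w_X}^{-1}$ to both sides. The gain is that $T_{w_X}^{-1}$ turns the obstructive pairing of $E_X^+$ (which sits inside $\Z_r$) with $F_X^+$ (which sits inside $S$) into a pairing of $E_X^-$ with $F_X^+$, whose commutator is prescribed by \eqref{Eqn:EF-FE}.

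First I would compute the images of the relevant elements under $T_{w_X}^{-1}$. Since $a_{j,r-1}=0$ for all $j\in X$, \eqref{Eqn:TiBj} gives $T_{w_X}(B_{r-1})=B_{r-1}$, and then \eqref{Eqn:TwxBr} together with the definition \eqref{Eqn:S} yields $T_{w_X}\big([B_{r-1},B_r]_q\big)=S$, so that $T_{w_X}^{-1}(S)=[B_{r-1},B_r]_q$. From \eqref{Eqn:TwXFX} and \eqref{TwXEX} one obtains $T_{w_X}^{-1}(E_X^+)=-K_X^{-1}F_X^+$, $T_{w_X}^{-1}(F_X^-)=-E_X^-K_X$ and $T_{w_X}^{-1}(K_X)=K_X^{-1}$. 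The one delicate toral point is the factor $\K{\t(r)}=K_{\alpha_{\t(r)}-\alpha_r}$: because $r$ and $\t(r)$ attach to opposite ends of the type-$A$ chain $X$, one has $w_X(\alpha_r)=\alpha_r+\gamma$ and $w_X(\alpha_{\t(r)})=\alpha_{\t(r)}+\gamma$ with $\gamma=\sum_{j\in X}\alpha_j$, so the contributions of $\gamma$ cancel and $w_X$ fixes $\alpha_{\t(r)}-\alpha_r$; hence $T_{w_X}^{-1}(\K{\t(r)})=\K{\t(r)}$. Substituting into \eqref{Eqn:Zi} gives $T_{w_X}^{-1}(\Z_r)=(1-q^{-2})K_X^{-1}F_X^+\K{\t(r)}$, so that \eqref{AppEqn:ctrinv1} becomes the equivalent identity
\[ \Big[-E_X^-K_X,\ \big[[B_{r-1},B_r]_q,\ (1-q^{-2})K_X^{-1}F_X^+\K{\t(r)}\big]_q\Big]_q=(q^2-1)[B_{r-1},B_r]_q\K{\t(r)}K_X. \]

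Next I would verify this reduced identity by a direct expansion. The key simplification is that $B_{r-1}$ commutes with both $F_X^+$ and $E_X^-$, while by \eqref{Eqn:BiRel2} the generator $B_r$ commutes with every $E_j$ for $j\in X$ and hence with $E_X^-$; thus $[B_{r-1},B_r]_q$ commutes with $E_X^-$, and the only non-trivial interaction of opposite-sign $\mathcal{M}_X$-elements is the single application of $E_X^-F_X^+-F_X^+E_X^-=(K_X-K_X^{-1})/(q-q^{-1})$ from \eqref{Eqn:EF-FE}. Pulling the toral factors $K_X^{-1}$ and $\K{\t(r)}$ out of the inner bracket with the appropriate $q$-powers (via \eqref{Eqn:BiRel1}) and using \eqref{Eqn:qcomm_trick} to recognise $\big[[B_{r-1},B_r]_q,F_X^+\big]_q=S$, the inner bracket becomes a scalar multiple of $S$ times a product of $K$'s; commuting $E_X^-K_X$ through this expression, the terms in which $E_X^-$ survives cancel between the two halves of the outer $q$-commutator, and the residual term produced by \eqref{Eqn:EF-FE} is exactly the right-hand side. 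Applying $T_{w_X}$ to the verified reduced identity then gives \eqref{AppEqn:ctrinv1}; the companion relation \eqref{AppEqn:ctrinv2} follows by the same argument with $r$ and $\t(r)$, and the signs $\pm$, interchanged.

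The step I expect to be the main obstacle is this final expansion: keeping track of the $q$-powers generated when $K_X^{\pm1}$ and $\K{\t(r)}$ are moved past the weight vectors $[B_{r-1},B_r]_q$ and $F_X^+$, and checking that together with the single use of \eqref{Eqn:EF-FE} they assemble into precisely the scalar $q^2-1$ with no leftover terms. This is routine but error-prone, and parallels the corresponding cancellation carried out in \cref{Lem:QSerre2_Rel1}.
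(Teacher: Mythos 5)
Your proposal is correct, and it takes a genuinely different route from the paper's own proof. The paper never transforms the full identity: it first applies the $q$-Jacobi identity \eqref{Eqn:qcomm_trick} to write
\[
\big[F_X^-,[S,\Z_r]_q\big]_q \;=\; \big[[F_X^-,S]_q,\Z_r\big]_q \;+\; q\big[S,[F_X^-,\Z_r]\big],
\]
then kills the first summand by noting $[F_X^-,S]_q = -T_{w_X}\big([E_X^-K_X,[B_{r-1},B_r]_q]_q\big) = 0$ (so $T_{w_X}$ enters only through this one sub-bracket), and evaluates the second summand directly: in $[F_X^-,\Z_r]$ the element $F_X^-$ meets $E_X^+$, a pairing which \eqref{Eqn:EF-FE} resolves immediately to $q^{-1}(K_X-K_X^{-1})\K{\t(r)}$, after which a short toral commutation produces the scalar $(q^2-1)$. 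In this way the problematic $E_X^+$ versus $F_X^+$ pairing inside $[S,\Z_r]_q$ is never expanded at all. You instead conjugate the entire identity by $T_{w_X}^{-1}$, on the model of \cref{Lem:QSerre2_Rel1}, and verify the transformed statement by one direct expansion. Your preparatory computations are all correct: $T_{w_X}^{-1}(S)=[B_{r-1},B_r]_q$, $T_{w_X}^{-1}(F_X^-)=-E_X^-K_X$, $T_{w_X}^{-1}(\Z_r)=(1-q^{-2})K_X^{-1}F_X^+\K{\t(r)}$, $T_{w_X}^{-1}(K_X)=K_X^{-1}$, and $w_X$ indeed fixes $\alpha_{\t(r)}-\alpha_r$. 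The expansion you defer does close as you predict: using \eqref{Eqn:BiRel1} the inner bracket collapses to $(1-q^{-2})\,q\,K_X^{-1}\big[[B_{r-1},B_r]_q,F_X^+\big]_q\K{\t(r)}$, and in the outer bracket the terms in which $E_X^-$ survives cancel between the two halves, leaving only the \eqref{Eqn:EF-FE} residue $-q[B_{r-1},B_r]_qK_X\K{\t(r)}$, which together with the prefactor $-(1-q^{-2})q$ yields exactly $(q^2-1)[B_{r-1},B_r]_q\K{\t(r)}K_X$, the transform of the right-hand side. As for what each approach buys: the paper's split keeps the argument in three short independent steps and avoids computing any toral images under $T_{w_X}^{-1}$, while yours is uniform with the technique of \cref{Lem:QSerre2_Rel1} and entirely systematic, at the cost of concentrating the work in a single longer piece of $q$-power bookkeeping.
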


\begin{proof}
	Observe that
	\[ \big[F_X^-, [S,\Z_r]_q \big]_q = \big[ [F_X^-,S]_q,\Z_r \big]_q + q\big[S, [F_X^-, \Z_r] \big] \]
	holds by \cref{Eqn:qcomm_trick}. Since $S = T_{w_X}([B_{r-1},B_r]_q)$ and $F_X^- = -T_{w_X}(E_X^-K_X)$ it follows that
	\begin{align*}
		[F_X^-,S]_q &=-T_{w_X}\big(\big[E_X^-K_X, [B_{r-1},B_r]_q \big]_q \big)\\
			&= 0
	\end{align*}
	since $[E_X^-K_X,B_r]_q= 0$. Further we have 
	\begin{align*}
		[F_X^-,\Z_r] &= -(1-q^{-2})[F_X^-, E_X^+\K{\t(r)}]\\
			&= -(1-q^{-2})[F_X^-,E_X^+]\K{\t(r)}\\
			&=q^{-1}(K_X - K_X^{-1})\K{\t(r)}.
	\end{align*}
	Hence we obtain 
	\begin{align*}
		\big[F_X^-,[S,\Z_r]_q \big]_q
			&= q\big[ S, [F_X^-, \Z_r] \big]\\
			&= \big[S, K_X- K_X^{-1}]_q\K{\t(r)}\\
			&= (q^2-1)S\K{\t(r)}K_X^{-1}
	\end{align*}
	as required.
\end{proof}

\subsection{Relations needed for the proof of Theorem \ref{Thm:ctr_braid}}
    \begin{lemma} \label{AppLem:1}
	The relation
	\begin{equation} \label{AppEqn:1}
		\big[\ct_{r-1}(B_{r-1}), [F_X^+, [B_{\t(r)}, B_{\t(r-1)}]_q]_q \big] = 0
	\end{equation}
	holds in $\Beps$.
\end{lemma}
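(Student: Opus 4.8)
The plan is to first make the outer automorphism explicit and reduce the claim to a single $q$-commutator identity, which can then be settled by one quantum Serre relation together with a commutativity.

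By \cref{Eqn:cti} applied with $i=j=r-1$ we have $\ct_{r-1}(B_{r-1}) = q^{-1}B_{\t(r-1)}\K{\t(r-1)}$, where $\K{\t(r-1)} = K_{\t(r-1)}K_{r-1}^{-1} = K_{\alpha_{\t(r-1)}-\alpha_{r-1}}$ lies in $\Uow$. Write $Y = [F_X^+, [B_{\t(r)}, B_{\t(r-1)}]_q]_q$ for the second argument. Since $Y$ is a product of the generators $B_k$ for $k\in X$ together with $B_{\t(r)}$ and $B_{\t(r-1)}$, relation \eqref{Eqn:BiRel1} gives $\K{\t(r-1)}Y = q^{-(\mu,\gamma)}Y\K{\t(r-1)}$, where $\mu = \alpha_{\t(r-1)}-\alpha_{r-1}$ and $\gamma = \alpha_X + \alpha_{\t(r)} + \alpha_{\t(r-1)}$ is the total root weight of $Y$. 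A short computation from the adjacency relations of the diagram (in particular $(\alpha_k,\mu)=0$ for $k\in X$, $(\alpha_{\t(r)},\mu)=-1$ and $(\alpha_{\t(r-1)},\mu)=2$) gives $(\mu,\gamma)=1$, hence $\K{\t(r-1)}Y = q^{-1}Y\K{\t(r-1)}$. Substituting this into $\ct_{r-1}(B_{r-1})Y - Y\ct_{r-1}(B_{r-1})$ yields $q^{-2}[B_{\t(r-1)},Y]_q\K{\t(r-1)}$; since $\K{\t(r-1)}$ is invertible, \eqref{AppEqn:1} is equivalent to the identity $[B_{\t(r-1)}, Y]_q = 0$.

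To prove this, I would first observe that $\t(r-1)$ is not adjacent to any node of $X$, so by \eqref{Eqn:BiRel3} the generator $B_{\t(r-1)}$ commutes with $B_k = F_k$ for each $k\in X$, and therefore with $F_X^+$. Abbreviating $b = B_{\t(r-1)}$, $c = B_{\t(r)}$ and $f = F_X^+$, I would expand $[b, [f, [c,b]_q]_q]_q$ and use only the relation $bf = fb$ to push every occurrence of $f$ to the extreme left or right of each monomial. Collecting terms then splits the expression into a group of the form $f\cdot\big(b^2c + cb^2 - (q+q^{-1})bcb\big)$ and a group of the form $\big(b^2c + cb^2 - (q+q^{-1})bcb\big)\cdot f$, each up to a scalar. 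Both inner factors equal $p(B_{\t(r-1)},B_{\t(r)})$, which vanishes by the quantum Serre relation \eqref{Eqn:BiRel4} since $a_{\t(r-1),\t(r)}=-1$. Hence $[b,[f,[c,b]_q]_q]_q = 0$, which gives \eqref{AppEqn:1}.

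The computation in the last step is routine once the reduction is in place, so the only mild obstacle is the bookkeeping of the weight factor in the first step: it is precisely the value $(\mu,\gamma)=1$ that conspires with the $q$-brackets to convert the plain commutator in \eqref{AppEqn:1} into the $q$-commutator $[B_{\t(r-1)}, Y]_q$. The structural point worth emphasising is that the identity is driven entirely by the single relation $p(B_{\t(r-1)},B_{\t(r)})=0$ together with $[B_{\t(r-1)}, F_X^+]=0$; no relation between $F_X^+$ and $B_{\t(r)}$ is needed.
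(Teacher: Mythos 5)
Your proposal is correct, and it is organised differently from the paper's proof even though it consumes exactly the same relations. The paper argues factor-wise and never expands anything: since $\ct_{r-1}(B_{r-1}) = q^{-1}B_{\t(r-1)}\K{\t(r-1)}$ commutes with $F_j$ for $j \in X$, and since the Serre relation $p(B_{\t(r-1)},B_{\t(r)})=0$ yields $B_{\t(r-1)}[B_{\t(r)},B_{\t(r-1)}]_q = q[B_{\t(r)},B_{\t(r-1)}]_q B_{\t(r-1)}$ --- with that factor of $q$ cancelling exactly against the $q^{-1}$ produced by moving $\K{\t(r-1)}$ past the inner bracket --- the element $\ct_{r-1}(B_{r-1})$ commutes on the nose with each of the two arguments $F_X^+$ and $[B_{\t(r)},B_{\t(r-1)}]_q$ of the outer bracket, hence with all of $Y$. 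You instead aggregate all Cartan factors at once via the weight count $(\mu,\gamma)=1$, which converts the plain commutator into the single $q$-commutator identity $[B_{\t(r-1)},Y]_q = 0$, and then verify that identity by expanding the triple bracket into $-qF_X^+\,p(B_{\t(r-1)},B_{\t(r)}) + q^2\,p(B_{\t(r-1)},B_{\t(r)})F_X^+$ using only $B_{\t(r-1)}F_X^+ = F_X^+B_{\t(r-1)}$; both your reduction and your expansion check out. The paper's route buys brevity (no expansion, no global bookkeeping); yours buys an explicit equivalent reformulation of the lemma and the reusable identity $[b,[f,[c,b]_q]_q]_q = -qf\,p(b,c)+q^2\,p(b,c)f$, valid whenever $bf=fb$. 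Note that your closing observation --- that no relation between $F_X^+$ and $B_{\t(r)}$ is needed --- applies equally to the paper's argument, so it is not a distinguishing feature. One pedantic point: for the factors $F_k$ with $k \in X$ inside $Y$, the commutation with $\K{\t(r-1)}$ follows from the torus relations of $\Uqg$ rather than from \eqref{Eqn:BiRel1}, which is stated only for $i \in I \setminus X$; this changes nothing since $(\alpha_k,\mu)=0$ for all $k \in X$.
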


\begin{proof}
	Since $\ct_{r-1}(B_{r-1}) = q^{-1}B_{\t(r-1)}\K{\t(r-1)}$ it follows that $\ct_{r-1}(B_{r-1})$ commutes with $F_j$ for $j \in X$. Further, \cref{Eqn:BiRel4} implies that
	\[ B_{\t(r-1)}[B_{\t(r)}, B_{\t(r-1)}]_q = q[B_{\t(r)}, B_{\t(r-1)}]_qB_{\t(r-1)} \]
	and hence $\ct_{r-1}(B_{r-1})$ commutes with $[B_{\t(r)}, B_{\t(r-1)}]_q$. The result follows from this.
\end{proof}

\begin{lemma} \label{AppLem:2}
	For any $i \in I \setminus (X \cup \{r,\t(r)\})$ the relations
	\begin{align}
		\big[ B_{\t(i)}\K{\t(i)}, [B_{i \pm 1},B_i]_q \big]_q &= q^2\cs_iB_{i \pm 1}, \label{AppEqn:2}\\
		\big[ [B_i,B_{i \pm 1}]_q, B_{\t(i)}\K{i}\big]_q &= \cs_iB_{i \pm 1}\label{AppEqn:3}
	\end{align}
	hold in $\Beps$.
\end{lemma}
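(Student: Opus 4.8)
The plan is to reduce both identities \eqref{AppEqn:2} and \eqref{AppEqn:3} to a single ``inner'' computation of $[B_{\t(i)}, [B_{i\pm1}, B_i]_q]$ and then strip off the Cartan factors by bookkeeping $q$-powers. Throughout I would use that, for $i \in I\setminus(X\cup\{r,\t(r)\})$, the node $\t(i) = n-i+1$ is separated from both $i$ and $i\pm1$ in the Dynkin diagram (this rests on $2r \le n-1$, which follows from $r \le \lceil n/2\rceil - 1$), so that $a_{\t(i),i} = a_{\t(i), i\pm1} = 0$. Hence by \eqref{Eqn:BiRel3} one has $[B_{\t(i)}, B_{i\pm1}] = 0$ and $[B_{\t(i)}, B_i] = (q-q^{-1})^{-1}\G_{\t(i)} = -(q-q^{-1})^{-1}\G_i$. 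I would also record at the outset that, since $i \neq r,\t(r)$ forces $\Z_i = -\K{\t(i)}$ and $\Z_{\t(i)} = -\K{i}$, and since $\cs_i = \cs_{\t(i)}$ by \eqref{Eqn:cs_cond}, the element $\G_i$ simplifies to $\G_i = \cs_i(\K{i} - \K{\t(i)})$.

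First I would fix how the Cartan parts commute with the bracket $W = [B_{i\pm1}, B_i]_q$ (resp. $W' = [B_i, B_{i\pm1}]_q$). Using \eqref{Eqn:BiRel1} together with $(\alpha_{\t(i)}-\alpha_i,\, \alpha_{i\pm1}+\alpha_i) = -1$ one gets $\K{\t(i)}W = q\,W\K{\t(i)}$, and similarly $\K{i}W' = q^{-1}W'\K{i}$. With $A = B_{\t(i)}\K{\t(i)}$ and $A' = B_{\t(i)}\K{i}$ this collapses the outer $q$-brackets to
\[ [A, W]_q = q\,[B_{\t(i)}, W]\,\K{\t(i)}, \qquad [W', A']_q = -\,[B_{\t(i)}, W']\,\K{i}. \]

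The heart of the argument is the inner commutator. Expanding $[B_{\t(i)}, [B_{i\pm1}, B_i]_q]$ and pushing $B_{\t(i)}$ to the right using $[B_{\t(i)}, B_{i\pm1}]=0$ and $[B_{\t(i)}, B_i] = -(q-q^{-1})^{-1}\G_i$, all triple-product terms cancel and one is left with $(q-q^{-1})^{-1}\big(q\G_i B_{i\pm1} - B_{i\pm1}\G_i\big)$. Substituting $\G_i = \cs_i(\K{i}-\K{\t(i)})$ and using $\K{i}B_{i\pm1} = qB_{i\pm1}\K{i}$ and $\K{\t(i)}B_{i\pm1} = q^{-1}B_{i\pm1}\K{\t(i)}$ (again from \eqref{Eqn:BiRel1}), the $\K{\t(i)}$ contribution cancels and one obtains $[B_{\t(i)}, W] = q\cs_i B_{i\pm1}\K{i}$; the parallel expansion gives $[B_{\t(i)}, W'] = -\cs_i B_{i\pm1}\K{\t(i)}$. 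Feeding these back into the two reductions and using $\K{i}\K{\t(i)} = K_0 = 1$ yields $[A,W]_q = q^2\cs_i B_{i\pm1}$ and $[W',A']_q = \cs_i B_{i\pm1}$, which are precisely \eqref{AppEqn:2} and \eqref{AppEqn:3}.

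There is no serious obstacle: the quantum Serre relations are not needed, and the whole computation rests only on the two structural identities for $[B_{\t(i)}, B_{i\pm1}]$ and $[B_{\t(i)}, B_i]$ together with the Cartan commutations from \eqref{Eqn:BiRel1}. The only point demanding genuine care is the $q$-power bookkeeping, and specifically verifying the two cancellations that make the right-hand sides clean — the vanishing of the triple-product terms in the inner commutator and the disappearance of the $\K{\t(i)}$ (resp. $\K{i}$) contribution once $\G_i$ is inserted — since a single misplaced power of $q$ would spoil the monomial answer. I would carry out the case $i+1$ in full detail and observe that $i-1$ is handled identically, both brackets being treated by the same recipe.
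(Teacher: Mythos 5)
Your proof is correct, but it takes a genuinely different route from the paper's. The paper's proof is a two-line argument: it applies the algebra automorphisms $\ct_i$ and $\ct_i^{-1}$ of Theorem \ref{Thm:cti} to the identities $\ct_i^{-1}(B_{i\pm1}) = (q\cs_i)^{-1/2}[B_i,B_{i\pm1}]_q$ and $\ct_i(B_{i\pm1}) = (q\cs_i)^{-1/2}[B_{i\pm1},B_i]_q$ from \eqref{Eqn:cti_inv} and \eqref{Eqn:cti}; since $\ct_i(B_i) = q^{-1}B_{\t(i)}\K{\t(i)}$ and $\ct_i^{-1}(B_i) = qB_{\t(i)}\K{i}$, relations \eqref{AppEqn:2} and \eqref{AppEqn:3} drop out immediately by comparing coefficients. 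You instead verify both relations by hand from the defining relations of $\Beps$: the commutators $[B_{\t(i)},B_{i\pm1}]=0$ and $[B_{\t(i)},B_i]=-(q-q^{-1})^{-1}\G_i$ from \eqref{Eqn:BiRel3} (justified by $a_{\t(i),i}=a_{\t(i),i\pm1}=0$, which indeed follows from $2r\le n-1$), the Cartan commutations from \eqref{Eqn:BiRel1}, and the simplification $\G_i = \cs_i(\K{i}-\K{\t(i)})$ valid for $i \neq r,\t(r)$ via \eqref{Eqn:cs_cond}. I checked your $q$-power bookkeeping --- the weight computation $(\alpha_{\t(i)}-\alpha_i,\alpha_{i\pm1}+\alpha_i)=-1$, the reductions $[B_{\t(i)}\K{\t(i)},W]_q = q[B_{\t(i)},W]\K{\t(i)}$ and $[W',B_{\t(i)}\K{i}]_q = -[B_{\t(i)},W']\K{i}$, the inner commutators $[B_{\t(i)},[B_{i\pm1},B_i]_q] = q\cs_iB_{i\pm1}\K{i}$ and $[B_{\t(i)},[B_i,B_{i\pm1}]_q] = -\cs_iB_{i\pm1}\K{\t(i)}$, and the final cancellation $\K{i}\K{\t(i)}=1$ --- and it is all consistent. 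The trade-off: the paper's argument is shorter but leans on Theorem \ref{Thm:cti}, which is imported from \cite{a-KP11}, where it was established with computer assistance; your computation is longer but entirely elementary and self-contained, using neither the automorphisms $\ct_i$ nor the quantum Serre relations, which is in the spirit of the paper's stated preference for arguments that avoid computer-verified input.
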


\begin{proof}
	The relations follow immediately by applying the automorphisms $\ct_i$ and $\ct_i^{-1}$ to 
	\begin{align*}
		\ct_i^{-1}(B_{i \pm 1}) &= (q\cs_i)^{-1/2}[B_i,B_{i \pm 1}]_q,\\
		\ct_i(B_{i \pm 1}) &= (q\cs_i)^{-1/2}[B_{i \pm 1}, B_i]_q,
	\end{align*}
	respectively.
\end{proof}
In the following Lemma, which is used in the proof of Proposition \ref{Prop:Braid-j=r-1}, we make use of the fact that $\ct_r$ is an algebra automorphism of $\Beps$, see Theorem \ref{Thm:ctr_Aut} and Section \ref{Sec:Proof1}. 
\begin{lemma} \label{AppLem:3}
	The relation 
	\begin{equation} \label{AppEqn:4}
		\ct_{r-1}\ct_r\ct_{r-1}\ct_r(B_{r-1}) = \ct_{r-1}(B_{\t(r-1)})
	\end{equation}            
	holds in $\Beps$.
\end{lemma}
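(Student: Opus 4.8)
The goal is to prove the identity
\[
\ct_{r-1}\ct_r\ct_{r-1}\ct_r(B_{r-1}) = \ct_{r-1}(B_{\t(r-1)}),
\]
where by the right-hand side and \cref{Eqn:cti} we have $\ct_{r-1}(B_{\t(r-1)}) = q^{-1}B_{r-1}\K{r-1}K_{\t(r-1)}^{-1}$.

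The reader can see from Proposition~\ref{Prop:Braid-j=r-1} that this lemma is exactly what is invoked to close the case $j = r-1$ of the type-$B_r$ braid relation. So the plan is to compute the left-hand side directly, working from the inside out, and the crucial simplification I expect to exploit is the relation $\ct_{r-1}\ct_r(B_{r-1}) = \ct_r^{-1}(B_{\t(r-1)})$ from \cref{Eqn:Rel}, whose proof (the excerpt tells us) lives inside this very lemma.

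**Proof Plan**

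The plan is as follows. First I would establish the key intermediate identity \cref{Eqn:Rel}, namely $\ct_{r-1}\ct_r(B_{r-1}) = \ct_r^{-1}(B_{\t(r-1)})$. To do this I apply $\ct_{r-1}$ to the expression for $\ct_r(B_{r-1})$ given by \cref{Eqn:ctr(Br-1)_NEW}. Here $\ct_{r-1}$ acts as $\widet{T}_{r-1} = T_{r-1}T_{\t(r-1)}$ on $\mathcal{M}_X\Uow$ and by the rules of \cref{Eqn:cti} on the $B_j$; in particular $\ct_{r-1}(B_{r-1}) = q^{-1}B_{\t(r-1)}\K{\t(r-1)}$, while $\ct_{r-1}(B_r) = (q\cs_{r-1})^{-1/2}[B_r, B_{r-1}]_q$ (since $a_{r-1,r} = -1$) and $\ct_{r-1}$ fixes $B_{\t(r)}$ and $F_X^+$ (no adjacency). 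Substituting these into $S = \big[ B_{r-1}, [B_r, F_X^+]_q\big]_q$ and $\Delta = q\cs_{\t(r)}B_{r-1}\K{r}K_X$, then comparing with the explicit form of $\ct_r^{-1}(B_{\t(r-1)})$ from \cref{Eqn:ctri}, should yield \cref{Eqn:Rel} after using $\cs_{r-1} = \cs_{\t(r-1)}$ and the constant bookkeeping for $C$.

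Second, I would use \cref{Eqn:Rel} to telescope the left-hand side of \cref{AppEqn:4}. Reading the inner two operators, $\ct_{r-1}\ct_r(B_{r-1}) = \ct_r^{-1}(B_{\t(r-1)})$, so applying $\ct_r$ gives $\ct_r\ct_{r-1}\ct_r(B_{r-1}) = \ct_r\ct_r^{-1}(B_{\t(r-1)}) = B_{\t(r-1)}$, where I invoke that $\ct_r$ is a genuine automorphism with inverse $\ct_r^{-1}$ (Theorem~\ref{Thm:ctr_Aut}, proved in Section~\ref{Sec:Proof1}). Finally applying $\ct_{r-1}$ gives
\[
\ct_{r-1}\ct_r\ct_{r-1}\ct_r(B_{r-1}) = \ct_{r-1}(B_{\t(r-1)}),
\]
which is precisely the claim.

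**Main Obstacle**

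The substantive work is entirely concentrated in verifying \cref{Eqn:Rel}; once that is in hand the remaining two applications are a clean telescope using $\ct_r\ct_r^{-1} = \mathrm{id}$. The delicate point in establishing \cref{Eqn:Rel} is tracking how $\ct_{r-1}$ reshuffles the nested $q$-commutators and confirming that the coefficient $C = (q\cs_r\cs_{\t(r)})^{-1/2}$ together with the factor $(q\cs_{r-1})^{-1/2}$ coming from $\ct_{r-1}(B_r)$ reproduces exactly the constant $\cs_r$ appearing in \cref{Eqn:ctri}; the symmetry hypothesis $\cs_r = \cs_{\t(r)}$ and $\cs_{r-1} = \cs_{\t(r-1)}$ is what makes this match. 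I expect the rewriting of $\ct_{r-1}\big([S, B_{\t(r)}]_q + \Delta\big)$ into the form $\big[B_{\t(r)}, [F_X^-, [B_r, B_{r-1}]_q]_q\big]_q + \cs_r B_{r-1}\K{\t(r)}K_X^{-1}$ to be the one genuinely calculation-heavy step, relying on the $q$-Jacobi identity \cref{Eqn:qcomm_trick} to move $B_{\t(r)}$ to the outside.
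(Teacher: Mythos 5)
Your overall strategy is the same as the paper's: establish the key identity \cref{Eqn:Rel}, namely $\ct_{r-1}\ct_r(B_{r-1}) = \ct_r^{-1}(B_{\t(r-1)})$, by applying $\ct_{r-1}$ to the formula for $\ct_r(B_{r-1})$, and then telescope using $\ct_r\ct_r^{-1} = \mathrm{id}$ and finally apply $\ct_{r-1}$. The telescoping step in your proposal is correct and is exactly how the paper concludes. However, there is a genuine error in your plan for the computational heart of the argument: you claim that ``$\ct_{r-1}$ fixes $B_{\t(r)}$ and $F_X^+$ (no adjacency).'' While $F_X^+$ is indeed fixed, $B_{\t(r)}$ is \emph{not}: since $\t(r-1) = n-r+2$ and $\t(r) = n-r+1$ are adjacent nodes, $a_{\t(r-1)\t(r)} = -1$, so the third case of \cref{Eqn:cti} gives $\ct_{r-1}(B_{\t(r)}) = (q\cs_{\t(r-1)})^{-1/2}[B_{\t(r)}, B_{\t(r-1)}]_q$. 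This is not a cosmetic point; it is precisely this action that produces the factor $[B_{\t(r)}, B_{\t(r-1)}]_q$ appearing inside the target $\ct_r^{-1}(B_{\t(r-1)}) = C\big(\big[B_{r}, [F_X^+,[B_{\t(r)}, B_{\t(r-1)}]_q]_q\big]_q + \cs_{\t(r)}B_{\t(r-1)}\K{r}K_X^{-1}\big)$. If $\ct_{r-1}$ fixed $B_{\t(r)}$, the left-hand side of \cref{Eqn:Rel} would contain no copy of $B_{\t(r-1)}$ inside the nested commutator and could never equal the right-hand side, so your verification of \cref{Eqn:Rel} would fail.

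Two further points. First, after correctly applying $\ct_{r-1}$ one is left with the nested expression $\big[B_{\t(r-1)}\K{\t(r-1)}, [[B_r,B_{r-1}]_q,[F_X^+, [B_{\t(r)},B_{\t(r-1)}]_q]_q]_q \big]_q$ (up to constants), and the paper collapses it using two specific auxiliary relations: \cref{AppEqn:1} (that $\ct_{r-1}(B_{r-1})$ commutes with $[F_X^+, [B_{\t(r)}, B_{\t(r-1)}]_q]_q$) and \cref{AppEqn:2} (that $\big[ B_{\t(r-1)}\K{\t(r-1)}, [B_{r},B_{r-1}]_q \big]_q = q^2\cs_{r-1}B_{r}$). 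Your proposal gestures at \cref{Eqn:qcomm_trick} but does not identify these collapsing relations, which are the actual mechanism. Second, you invoke $\cs_r = \cs_{\t(r)}$ for the constant bookkeeping, but this is not available here: condition \eqref{Eqn:cs_cond} only guarantees $\cs_i = \cs_{\t(i)}$ for $i \neq r,\t(r)$, and in fact the constants match without it (the factor $C$ is common to both sides, the $\Delta$-term matches because $\K{\t(r-1)}\K{r-1}=1$, and only $\cs_{r-1} = \cs_{\t(r-1)}$ is used).
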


\begin{proof}
	Calculating directly we have
	\begin{align*}
		\ct_{r-1}\ct_r(B_{r-1})
			&= C\big( \big[\ct_{r-1}(B_{r-1}), [\ct_{r-1}(B_{r}), [F_X^+, \ct_{r-1}(B_{\t(r)})]_q]_q \big]_q\\ 
			&\quad{}+ q\cs_r\ct_{r-1}(B_{r-1}\K{r})K_X^{-1} \big)\\
			&= C\big(q^{-2}\cs_r^{-1}\big[B_{\t(r-1)}\K{\t(r-1)}, [[B_r,B_{r-1}]_q,[F_X^+, [B_{\t(r)},B_{\t(r-1)}]_q]_q]_q \big]_q\\
			&\quad{} +\cs_rB_{\t(r-1)}\K{r}K_X^{-1} \big)\\
			&= \ct_{r}^{-1}(B_{\t(r-1)})
	\end{align*}
	where the last equality is obtained using \cref{AppEqn:1,AppEqn:2}. This implies that
	\[ \ct_{r-1}\ct_r\ct_{r-1}\ct_r(B_{r-1}) = \ct_{r-1}(B_{\t(r-1)})\]
	as required.
\end{proof}

\begin{lemma} \label{AppLem:4}
	The relation
	\begin{equation} \label{AppEqn:5}
		\big[ B_{r-1}, [B_r, [F_X^+, [B_{\t(r)}, B_{\t(r-1)}]_q]_q]_q \big]_q
			= \big[[B_{r-1}, [B_r, [F_X^+, B_{\t(r)}]_q]_q]_q, B_{\t(r-1)}\big]_q
	\end{equation}
	holds in $\Beps$. 
\end{lemma}

\begin{proof}
	First observe that since $B_{\t(r-1)}$ commutes with $B_r$ and $F_X^+$ we have
	\[\big[B_r, [F_X^+, [B_{\t(r)}, B_{\t(r-1)}]_q]_q\big]_q
	= \big[ [B_r, [F_X^+, B_{\t(r)}]_q]_q, B_{\t(r-1)}\big]_q.\]
	To shorten notation, let $Y = [B_r,[F_X^+, B_{\t(r)}]_q]_q$. Recall from \cref{Eqn:BiRel3} that
	\[ B_{r-1}B_{\t(r-1)} - B_{\t(r-1)}B_{r-1} = (q-q^{-1})^{-1}(\cs_{r-1}\Z_{r-1} - \cs_{\t(r-1)}\Z_{\t(r-1)}) \]
	where $\Z_{r-1} = -\K{\t(r-1)}$ and $\Z_{\t(r-1)} = -\K{r-1}$. Then $Y$ commutes with both $\Z_{r-1}$ and $\Z_{\t(r-1)}$. Recalling the notation $\G_i$ from \cref{Eqn:Gi} we hence have
		\begin{align*}
			\big[B_{r-1}, &[Y, B_{\t(r-1)}]_q \big]_q\\
				&= B_{r-1}YB_{\t(r-1)} - qB_{r-1}B_{\t(r-1)}Y - qYB_{\t(r-1)}B_{r-1} + q^2B_{\t(r-1)}YB_{r-1}\\
				&= B_{r-1}YB_{\t(r-1)} - q(B_{\t(r-1)}B_{r-1} + (q-q^{-1})^{-1}\G_{r-1}Y)\\
				&\quad{}- qY(B_{r-1}B_{\t(r-1)} - (q-q^{-1})^{-1}\G_{r-1}) +q^2B_{\t(r-1)}YB_{r-1} \\
				&= \big[[B_{r-1},Y]_q, B_{\t(r-1)}\big]_q
		\end{align*}
	as required.
\end{proof}

\begin{corollary} \label{AppCor:5}
	The element $[B_{r-1}, \ct_r^{-1}(B_{\t(r-1)})]_q$ is $\ct_r$-invariant i.e.
	\begin{equation} \label{AppEqn:6}
		[B_{r-1}, \ct_r^{-1}(B_{\t(r-1)})]_q = [\ct_r(B_{r-1}),B_{\t(r-1)}]_q.
	\end{equation}
\end{corollary}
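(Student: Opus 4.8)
The plan is to expand both sides of \eqref{AppEqn:6} using the explicit formulae \eqref{Eqn:ctr} and \eqref{Eqn:ctri}, and to reduce the claim to a single elementary identity among the $K$-type factors. I would first observe that, since $\ct_r$ is an algebra automorphism of $\Beps$ by \cref{Thm:ctr_Aut}, one has $\ct_r\big([B_{r-1}, \ct_r^{-1}(B_{\t(r-1)})]_q\big) = [\ct_r(B_{r-1}), B_{\t(r-1)}]_q$; thus the asserted $\ct_r$-invariance is \emph{exactly} the stated equality, and the automorphism property alone gives no shortcut. Hence I would compute both sides directly. Substituting \eqref{Eqn:ctri} into the left-hand side gives
\begin{align*}
[B_{r-1}, \ct_r^{-1}(B_{\t(r-1)})]_q
&= C\big[B_{r-1}, [B_r, [F_X^+, [B_{\t(r)}, B_{\t(r-1)}]_q]_q]_q\big]_q\\
&\quad{}+ C\cs_{\t(r)}[B_{r-1}, B_{\t(r-1)}\K{r}K_X^{-1}]_q.
\end{align*}
The key input is \cref{AppLem:4}, which rewrites the first summand as $C[Q, B_{\t(r-1)}]_q$, where $Q = \big[B_{r-1}, [B_r, [F_X^+, B_{\t(r)}]_q]_q\big]_q$ is the leading part of $\ct_r(B_{r-1})$ in \eqref{Eqn:ctr}.

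Next I would expand the right-hand side using $\ct_r(B_{r-1}) = C\big(Q + q\cs_{\t(r)}B_{r-1}\K{r}K_X^{-1}\big)$ from \eqref{Eqn:ctr}, obtaining
\[ [\ct_r(B_{r-1}), B_{\t(r-1)}]_q = C[Q, B_{\t(r-1)}]_q + Cq\cs_{\t(r)}[B_{r-1}\K{r}K_X^{-1}, B_{\t(r-1)}]_q. \]
Comparing the two displays, the leading terms $C[Q, B_{\t(r-1)}]_q$ coincide, so the corollary reduces to the single identity
\[ [B_{r-1}, B_{\t(r-1)}\K{r}K_X^{-1}]_q = q[B_{r-1}\K{r}K_X^{-1}, B_{\t(r-1)}]_q. \]

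Finally I would verify this identity from \eqref{Eqn:BiRel1}. Writing $\K{r}K_X^{-1} = K_\nu$ with $\nu = \alpha_r - \alpha_{\t(r)} - \sum_{k \in X}\alpha_k$, the constraint $1 \leq r \leq \lceil \tfrac{n}{2}\rceil - 1$ forces $(\nu, \alpha_{r-1}) = -1$ and $(\nu, \alpha_{\t(r-1)}) = 1$: the only nonzero contributions come from the adjacencies $r \sim r-1$ and $\t(r) \sim \t(r-1)$, while all cross terms vanish since $\t(r), \t(r-1)$ are far from $r-1$ and the nodes of $X$ are nonadjacent to both $r-1$ and $\t(r-1)$. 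Then \eqref{Eqn:BiRel1} yields $K_\nu B_{r-1} = q B_{r-1} K_\nu$ and $K_\nu B_{\t(r-1)} = q^{-1} B_{\t(r-1)} K_\nu$, and moving $K_\nu$ to the right in each bracket shows that both sides equal $\big(B_{r-1}B_{\t(r-1)} - q^2 B_{\t(r-1)} B_{r-1}\big)K_\nu$, establishing the identity.

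I expect the main obstacle to be \cref{AppLem:4} itself, which carries the genuine algebraic content (commuting $F_X^+$ and $B_{\t(r-1)}$ past the nested $q$-brackets via the quantum Serre relations); once that lemma is in hand, the rest is the bookkeeping above together with the elementary weight computation, both of which are routine.
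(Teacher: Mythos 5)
Your proposal is correct and is essentially the paper's own proof: the paper likewise deduces \cref{AppCor:5} from \cref{AppLem:4} together with the identity $[B_{r-1}, B_{\t(r-1)}\K{r}K_X^{-1}]_q = q[B_{r-1}\K{r}K_X^{-1}, B_{\t(r-1)}]_q$, which is exactly the reduction you obtain after expanding both sides via \eqref{Eqn:ctr} and \eqref{Eqn:ctri}. The only difference is that the paper states this $K$-identity as a bare fact, whereas you verify it explicitly from \eqref{Eqn:BiRel1} via the weight computation $(\nu,\alpha_{r-1})=-1$, $(\nu,\alpha_{\t(r-1)})=1$ — a correct and welcome addition, as is your observation that the automorphism property of $\ct_r$ alone yields only a tautology here.
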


\begin{proof}
	The result follows immediately from \cref{AppLem:4} and the fact that
	\[ [B_{r-1}, B_{\t(r-1)}\K{r}K_X^{-1}]_q = q[B_{r-1}\K{r}K_X^{-1}, B_{\t(r-1)}]_q. \]
\end{proof}

\subsection{Relations needed for the proof of Theorem \ref{Thm:MAIN}}

    \begin{lemma} \label{AppLem:6}
        For any $j \in X \setminus \{r+1,\t(r+1)\}$ the relation
            \begin{equation} \label{AppEqn:7}
                T_j(F_X^+) = F_X^+
            \end{equation}
        holds.
    \end{lemma}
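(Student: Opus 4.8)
The plan is to rewrite $F_X^+$ in terms of Lusztig's automorphisms and thereby reduce the statement to a short computation in the Weyl group $W$. Writing $X = \{r+1,\dots,n-r\}$, I would first establish the formula
\[
F_X^+ = T_{n-r}T_{n-r-1}\cdots T_{r+2}(F_{r+1}) = T_w(F_{r+1}), \qquad w = \sigma_{n-r}\sigma_{n-r-1}\cdots\sigma_{r+2}.
\]
This follows by a straightforward induction on $|X|$: since $T_b$ fixes $F_k$ for $|b-k|\ge 2$ and sends $F_{b-1}$ to $[F_{b-1},F_b]_q$ by \eqref{Eqn:TiFj}, applying $T_{n-r}$ to $F^+_{\{r+1,\dots,n-r-1\}}$ replaces its innermost bracket $[F_{n-r-2},F_{n-r-1}]_q$ by $[F_{n-r-2},[F_{n-r-1},F_{n-r}]_q]_q$ and so reproduces the nested $q$-commutator \eqref{Eqn:FJ+} defining $F_X^+$. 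The word $w$ is reduced of length $n-2r-1$.

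With this in hand, fix an interior node $j$, i.e.\ $r+2 \le j \le n-r-1$ (if $X$ has no interior node the statement is vacuous). The key point is the Weyl group identity $w(\alpha_{j+1}) = \alpha_j$, which I would verify by applying the reflections $\sigma_{r+2},\dots,\sigma_{n-r}$ to $\alpha_{j+1}$ one at a time. Since $\alpha_j$ is a (simple, hence positive) root and $\ell(w\sigma_{j+1}) = \ell(w)+1$, the standard commutation relation for Lusztig's automorphisms, namely $T_wT_i = T_kT_w$ whenever $w(\alpha_i)=\alpha_k$ is simple and $\ell(w\sigma_i)>\ell(w)$ (see \cite[\S 8.14]{b-Jantzen96} and \cite[\S 39.4]{b-Lus94}), yields $T_w T_{j+1} = T_j T_w$. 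Therefore
\[
T_j(F_X^+) = T_j T_w(F_{r+1}) = T_w T_{j+1}(F_{r+1}).
\]
Finally $a_{r+1,\,j+1}=0$ because $j+1\ge r+3$, so $T_{j+1}(F_{r+1}) = F_{r+1}$ by \eqref{Eqn:TiFj}, and the right-hand side collapses to $T_w(F_{r+1}) = F_X^+$, as required.

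Conceptually, $F_X^+$ spans the root space for the highest root $\sum_{k\in X}\alpha_k$ of the subsystem attached to $X$, and this root is fixed by $\sigma_j$ for every interior node $j$; the argument above is the precise quantum incarnation of that invariance. The only genuinely delicate step is the identity $w(\alpha_{j+1})=\alpha_j$ together with the length condition that makes the single-generator relation $T_wT_{j+1}=T_jT_w$ applicable — once these are in place the action on $F_{r+1}$ is immediate. I would emphasise that this route avoids manipulating nested $q$-commutators directly, and in particular avoids the $E_j$, $K_j$ terms that $T_j(F_j)=-K_j^{-1}E_j$ would introduce in a naive termwise computation, so no explicit cancellations need to be carried out.
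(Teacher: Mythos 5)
Your proof is correct, but it takes a genuinely different route from the paper's. The paper works locally inside the nested bracket: it writes the inner bracket as $[F_j,F_{j+1}]_q = T_j^{-1}(F_{j+1})$, applies $T_j$ using \eqref{Eqn:TiFj}, and then rebrackets (via \eqref{Eqn:qcomm_trick} together with $[F_{j-1},F_{j+1}]=0$) to conclude that the window $[F_{j-1},[F_j,F_{j+1}]_q]_q$ is $T_j$-invariant, all remaining factors $F_k$ being fixed by $T_j$ outright. You instead argue globally: you identify $F_X^+=T_w(F_{r+1})$ for the reduced word $w=\sigma_{n-r}\cdots\sigma_{r+2}$, verify the Weyl-group identity $w(\alpha_{j+1})=\alpha_j$, lift the identity $w\sigma_{j+1}=\sigma_j w$ (lengths adding on both sides, so Matsumoto's theorem applies) to the braid-group relation $T_wT_{j+1}=T_jT_w$, and finish with $T_{j+1}(F_{r+1})=F_{r+1}$. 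Each step checks out: the induction giving $F_X^+=T_w(F_{r+1})$ is the standard one, only $\sigma_j$ and $\sigma_{j+1}$ act nontrivially in the computation of $w(\alpha_{j+1})$, and $a_{r+1,\,j+1}=0$ since $j+1\ge r+3$. What your route buys: it avoids $q$-commutator manipulations altogether and in particular sidesteps a small imprecision in the paper's argument (the displayed window $[F_{j-1},[F_j,F_{j+1}]_q]_q$ is literally a subexpression of $F_X^+$ only for $j=n-r-1$; for other interior $j$ the term $F_{j+1}$ is itself wrapped around deeper brackets, and one needs an additional rebracketing, again justified by \eqref{Eqn:qcomm_trick}). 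What it costs: reliance on the standard but deeper fact that $T_wT_iT_w^{-1}=T_k$ whenever $w(\alpha_i)=\alpha_k$ is simple, whereas the paper's computation is entirely self-contained, using only \eqref{Eqn:TiFj} and elementary commutation. Your device of rewriting $F_X^+$ through Lusztig automorphisms is in any case very much in the spirit of the paper, which uses the same trick in Lemma~\ref{Lemma:ctrCheck1-Part1} and in \eqref{Eqn:TwXFX}.
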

    
    \begin{proof}
        Since $j \in X \setminus \{r+1, \t(r+1)\}$ we assume that $|X| \geq 3$. Recalling that 
            \[ F_X^+ = \big[F_{r+1}, [F_{r+2}, \dotsc,  [F_{\t(r+2)}, F_{\t(r+1)}]_q \dotsc ]_q \big]_q \]
        the result follows since for any $j \in X \setminus \{r+1, \t(r+1)\}$ we have
            \begin{align*}
                T_j([F_{j-1}, [F_j,F_{j+1}]_q]_q) 
                    &= T_j([F_{j-1}, T_j^{-1}(F_{j+1})]_q)\\
                    &= [ [F_{j-1}, F_j]_q,F_{j+1}]_q\\
                    &= [F_{j-1}, [F_j, F_{j+1}]_q]_q.
            \end{align*}
    \end{proof}

    \begin{lemma} \label{AppLem:7}
        The relation
            \begin{align}
                \begin{split}
                T_{r+1}\big( \big[B_{r-1}, [B_r, [F_X^+, B_{\t(r)}]_q]_q\big]_q\big)
                    &=  [B_{r-1}, [B_r, [F_X^+, B_{\t(r)}]_q]_q]_q\\
                    &\quad{} + q\cs_{\t(r)}B_{r-1}\K{r}(K_{r+1}^{-1}-K_{r+1})K_{X\setminus \{r+1\}}^{-1}\label{AppEqn:8}
                \end{split}
            \end{align}
        holds in $\Beps$.
    \end{lemma}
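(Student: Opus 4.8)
The plan is to avoid computing $T_{r+1}(F_X^+)$ directly, which is unpleasant because $r+1$ sits at the boundary of $X$ and $T_{r+1}(F_X^+)$ is not one of the clean ``interval'' elements handled in \cref{AppLem:6}. Instead I would first rewrite the left-hand element using \cref{AppEqn:Newctr1}, namely
\[ \big[B_{r-1}, [B_r, [F_X^+, B_{\t(r)}]_q]_q\big]_q = [S,B_{\t(r)}]_q + q\cs_{\t(r)}B_{r-1}\K{r}(K_X-K_X^{-1}), \]
with $S$ as in \cref{Eqn:S}. Applying $T_{r+1}$ to the two summands separately splits the task into (i) showing that $[S,B_{\t(r)}]_q$ is $T_{r+1}$-invariant, and (ii) a direct computation of $T_{r+1}$ on the $K$-term. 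The organising tool for (i) is the conjugation relation $T_{r+1}T_{w_X} = T_{w_X}T_{\t(r+1)}$, which follows from the Weyl-group identity $w_X^{-1}\sigma_{r+1}w_X = \sigma_{\t(r+1)}$ (the longest element of the type $\mathrm A$ parabolic $W_X$ reverses its sub-diagram, swapping the nodes $r+1$ and $\t(r+1)$) together with the length-additivity property of the Lusztig operators.

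Assume first that $|X|>1$. By \cref{Eqn:TwxBr} we have $[B_r,F_X^+]_q = T_{w_X}(B_r)$, so
\[ T_{r+1}([B_r,F_X^+]_q) = T_{r+1}T_{w_X}(B_r) = T_{w_X}T_{\t(r+1)}(B_r) = T_{w_X}(B_r) = [B_r,F_X^+]_q, \]
where $T_{\t(r+1)}(B_r) = B_r$ since $a_{\t(r+1),r}=0$ when $|X|>1$. As $T_{r+1}(B_{r-1}) = B_{r-1}$ and $T_{r+1}(B_{\t(r)}) = B_{\t(r)}$ (again using $|X|>1$), it follows that $S$, and hence $[S,B_{\t(r)}]_q$, is $T_{r+1}$-invariant. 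For the $K$-term one computes $T_{r+1}(\K{r}) = \K{r}K_{r+1}$ and $T_{r+1}(K_X^{\pm1}) = K_{X\setminus\{r+1\}}^{\pm1}$, and a short rearrangement shows that $T_{r+1}$ applied to the $K$-term, minus the $K$-term, equals exactly $q\cs_{\t(r)}B_{r-1}\K{r}(K_{r+1}^{-1}-K_{r+1})K_{X\setminus\{r+1\}}^{-1}$. Adding the two contributions and reading the displayed rewriting backwards then yields \cref{AppEqn:8}.

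The main obstacle is the boundary case $|X|=1$, where $r+1=\t(r+1)$: now $B_{\t(r)}$ is adjacent to $r+1$, the invariance of $[S,B_{\t(r)}]_q$ fails, and in fact $T_{r+1}(\K{r}) = \K{r}$, so the mechanism above collapses. Here I would argue directly: writing $[F_X^+,B_{\t(r)}]_q = T_{w_X}^{-1}(B_{\t(r)})$ via \cref{Eqn:TwxiBtr} and applying $T_{r+1}=T_{w_X}$, the left-hand side becomes $\big[B_{r-1},[[B_r,F_{r+1}]_q,B_{\t(r)}]_q\big]_q$, whose difference from the original element is, by \cref{Eqn:qcomm_trick}, equal to $q\big[B_{r-1},[[B_r,B_{\t(r)}],F_{r+1}]\big]_q$. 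Since $[B_r,B_{\t(r)}] = (q-q^{-1})^{-1}\G_r$ by \cref{Eqn:BiRel3}, this reduces to the $q$-commutator of $B_{r-1}$ with an explicit element of $\Uow$ (the factor $F_{r+1}$ disappearing through $[E_{r+1},F_{r+1}] = (q-q^{-1})^{-1}(K_{r+1}-K_{r+1}^{-1})$). The $\cs_r\Z_r$ part of $\G_r$ then drops out because the relevant weight pairs to $+1$ with $\alpha_{r-1}$, forcing the $q$-commutator with $B_{r-1}$ to vanish by \cref{Eqn:BiRel1}, while the $\cs_{\t(r)}\Z_{\t(r)}$ part produces precisely $q\cs_{\t(r)}B_{r-1}\K{r}(K_{r+1}^{-1}-K_{r+1})$, which is the $|X|=1$ specialisation of \cref{AppEqn:8}. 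I expect this $|X|=1$ computation, although elementary, to be the most delicate point, as it is exactly where the clean conjugation argument no longer applies.
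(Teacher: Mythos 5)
Your proof is correct, and for the main case it takes a genuinely different route from the paper's. In the case $|X|=1$ your argument coincides with the paper's own: the paper likewise writes $[F_{r+1},B_{r+2}]_q = T_{r+1}^{-1}(B_{r+2})$, applies $T_{r+1}$, uses \eqref{Eqn:qcomm_trick} to produce $q\big[B_{r-1},[[B_r,B_{\t(r)}],F_{r+1}]\big]_q$, kills the $\cs_r\Z_r$ contribution against $B_{r-1}$ and extracts the $K$-term from the $\cs_{\t(r)}\Z_{\t(r)}$ contribution exactly as you do. For $|X|>1$, however, the paper stays at the level of commutator calculus: setting $Y = X\setminus\{r+1\}$ it writes $[F_X^+,B_{\t(r)}]_q = T_{r+1}^{-1}([F_Y^+,B_{\t(r)}]_q)$, applies \eqref{Eqn:qcomm_trick}, and then computes $[F_Y^+,E_X^-] = q^{-1}K_Y^{-1}E_{r+1}$ and $\big[[F_Y^+,\Z_{\t(r)}]_q,F_{r+1}\big]$ explicitly. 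You instead split the element via \eqref{AppEqn:Newctr1} into $[S,B_{\t(r)}]_q$ plus the $K$-term, prove $T_{r+1}$-invariance of $[S,B_{\t(r)}]_q$ structurally from the conjugation identity $T_{r+1}T_{w_X} = T_{w_X}T_{\t(r+1)}$ together with \eqref{Eqn:TwxBr} and \eqref{Eqn:TiBj}, and finish with a one-line computation on the $K$-term; I checked the bookkeeping ($T_{r+1}(\K{r}) = \K{r}K_{r+1}$, $T_{r+1}(K_X^{\pm 1}) = K_{X\setminus\{r+1\}}^{\pm 1}$, and $K_X^{\pm 1} = K_{r+1}^{\pm 1}K_{X\setminus\{r+1\}}^{\pm 1}$) and it yields precisely the claimed difference term. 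Two remarks on what each approach buys. Your argument is more conceptual — the invariance of the leading term is explained by the fact that it is a $T_{w_X}$-conjugate of elements fixed by $T_{\t(r+1)}$ — but it imports two ingredients: \cref{AppLem:Alt_ctr}, and the braid-operator conjugation identity, which is not stated in the paper and does require the justification you give (namely $w_X\sigma_{\t(r+1)}w_X^{-1} = \sigma_{r+1}$ plus length-additivity of the operators $T_w$, see \cite{b-Lus94}); your derivation of it is correct. The paper's proof, by contrast, is self-contained commutator calculus and treats both summands of $\G_r$ uniformly, at the price of the explicit $[F_Y^+,E_X^-]$ computations that your method avoids. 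You are also right that $|X|=1$ is the genuinely delicate boundary case: there $T_{r+1}(\K{r}) = \K{r}$, $T_{\t(r+1)} = T_{r+1}$, and the conjugation mechanism gives nothing, so a direct computation is unavoidable.
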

    
    \begin{proof}
    	First suppose that $X = \{r+1\}$. By Equation \eqref{Eqn:TiBj} we have $[F_{r+1}, B_{r+2}]_q = T_{r+1}^{-1}(B_{r+2})$ and $T_{r+1}(B_r) = [B_r, F_{r+1}]_q$.
    	Hence
    		\begin{equation} \label{Eqn:T(r+1)chain1}
    			T_{r+1}\big( \big[ B_{r-1}, [B_r, [F_{r+1}, B_{r+2}]_q ]_q \big]_q \big) 
    				=	\big[ B_{r-1}, [ [ B_r, F_{r+1}]_q, B_{r+2} ]_q \big]_q.
    		\end{equation}
    	By Equation \eqref{Eqn:qcomm_trick} we have
    		\begin{align*}
	    		\big[ [B_r, F_{r+1}]_q, B_{r+2} \big]_q
	    			&=	\big[ B_r, [F_{r+1}, B_{r+2}]_q \big]_q + q\big[ [B_r, B_{r+2}], F_{r+1} \big]\\
	    			&=	\big[ B_r, [F_{r+1}, B_{r+2}]_q \big]_q + \frac{q}{q-q^{-1}}[ \cs_r\Z_r - \cs_{r+2}\Z_{r+2}, F_{r+1}].
    		\end{align*}
    	Since $[B_{r-1}, \K{r+2}]_q = 0$ it follows that $\big[ B_{r-1}, [\Z_r, F_{r+1}] \big]_q = 0$. On the other hand we have
    		\begin{align*}
	    		[\Z_{r+2}, F_{r+1}] 
	    			&=	-(1-q^{-2}) [E_{r+1}\K{r}, F_{r+1}]\\
	    			&=	-(1-q^{-2}) \K{r}[E_{r+1}, F_{r+1}]\\
	    			&=	-q^{-1}\K{r}(K_{r+1} - K_{r+1}^{-1})
    		\end{align*}
    	and hence 
    		\begin{align*}
	    		\big[B_{r-1}, [\Z_{r+2}, F_{r+1}] \big]_q
	    			&=	-q^{-1}[B_{r-1}, \K{r}]_q (K_{r+1} - K_{r+1}^{-1})\\
	    			&=	(q-q^{-1}) B_{r-1}\K{r}(K_{r+1} - K_{r+1}^{-1}).
    		\end{align*}
    	It follows that 
    		\begin{align*}
    			\big[B_{r-1}, [ [B_r, F_{r+1}]_q, B_{r+2}]_q \big]_q 
    				&= \big[B_{r-1}, [B_r, [F_{r+1}, B_{r+2}]_q ]_q \big]_q\\
    					 &\quad{}+ q\cs_{r+2}B_{r-1}\K{r}(K_{r+1}^{-1} - K_{r+1}).
    		\end{align*}
    	The result follows by substituting this into \eqref{Eqn:T(r+1)chain1}.
    	
    	We now consider the case $|X| > 1$.	Let $Y = X \setminus \{r+1\}$. Observing that $[F_X^+, B_{\t(r)}]_q = T_{r+1}^{-1}([F_Y^+, B_{\t(r)}]_q)$ we have
        	\begin{equation} \label{Eqn:T(r+1)chain2}
             	T_{r+1}\big( \big[B_{r-1}, [B_r, [F_X^+, B_{\t(r)}]_q]_q\big]_q\big)
            		= \big[ B_{r-1}, [[B_r,F_{r+1}]_q, [F_Y^+, B_{\t(r)}]_q]_q \big]_q.
        	\end{equation}
        Since $F_Y^+$ commutes with $B_r$, Equation \eqref{Eqn:qcomm_trick} implies
         \begin{align*}
            \big[[B_r, F_{r+1}]_q, [F_Y^+, B_{\t(r)}]_q \big]_q &- [B_r,[F_X^+,B_{\t(r)}]_q]_q \\
                &= \frac{q}{q-q^{-1}}\big[[F_Y^+,\cs_r\Z_r -\cs_{\t(r)}\Z_{\t(r)} ]_q,F_{r+1} \big].
         \end{align*}
         Since $[B_{r-1}, \K{\t(r)}]_q=0$ it follows that
            \[ \big[ B_{r-1}, [[F_Y^+, \Z_r]_q, F_{r+1}]\big]_q = 0. \]
         On the other hand, using \cref{Eqn:EF-FE} it follows that
            \begin{align*}
                [F_Y^+,E_X^-] &=\big[[F_Y^+,E_Y^-], E_{r+1}\big]_{q^{-1}}\\
                    &= \frac{1}{q-q^{-1}}[K_Y^{-1} - K_Y,E_{r+1}]_{q^{-1}}\\
                    &= q^{-1}K_Y^{-1}E_{r+1}.
            \end{align*}
        This implies that
            \begin{align*}
                \big[[F_Y^+, \Z_{\t(r)}]_q,F_{r+1}\big]
                    &= -(q-q^{-1})\big[[F_Y^+, E_X^-], F_{r+1} \big]_{q^{-1}} \K{r}\\
                    &= -(1-q^{-2})[ K_Y^{-1}E_{r+1}, F_{r+1}]_{q^{-1}}\K{r}\\
                    &= q^{-1}\K{r}(K_{r+1}^{-1}-K_{r+1})K_Y^{-1}.
            \end{align*}
        As a result we obtain
            \begin{align*}
                \big[B_{r-1}, [[B_r,F_{r+1}]_q, &[F_Y^+,B_{\t(r)}]_q]_q\big]_q - \big[B_{r-1}, [B_r, [F_X^+, B_{\t(r)}]_q ]_q \big]_q\\
                    &= -q(q-q^{-1})^{-1}\cs_{\t(r)}\big[ B_{r-1}, [[F_Y^+, \Z_{\t(r)}]_q,F_{r+1}]]_q\\
                    &= (q-q^{-1})^{-1}\cs_{\t(r)}\big[ B_{r-1}, \K{r}(K_{r+1}- K_{r+1}^{-1})K_Y^{-1} \big]_q\\
                    &= q\cs_{\t(r)}B_{r-1}\K{r}(K_{r+1}^{-1} - K_{r+1})K_Y^{-1}.
            \end{align*}
        By substituting this into \eqref{Eqn:T(r+1)chain2}, we obtain the required result. 
    \end{proof}

\providecommand{\bysame}{\leavevmode\hbox to3em{\hrulefill}\thinspace}
\providecommand{\MR}{\relax\ifhmode\unskip\space\fi MR }
\providecommand{\MRhref}[2]{%
	\href{http://www.ams.org/mathscinet-getitem?mr=#1}{#2}
}
\providecommand{\href}[2]{#2}

\end{document}